\theoremstyle{plain}
\newtheorem{theorem}{Theorem}[section]
\newtheorem{maintheorem}{Theorem}
\newtheorem{proposition}[theorem]{Proposition}
\newtheorem{lemma}[theorem]{Lemma}
\newtheorem{corollary}[theorem]{Corollary}
\theoremstyle{definition}
\newtheorem{definition}[theorem]{Definition}
\newtheorem{construction}[theorem]{Construction}
\newtheorem{remark}[theorem]{Remark}
\newcommand{\nc}{\newcommand}
\nc{\dmo}{\DeclareMathOperator}
\nc{\Q}{\mathbb{Q}}
\nc{\F}{\mathbb{F}}
\nc{\R}{\mathbb{R}}
\nc{\Z}{\mathbb{Z}}
\nc{\C}{\mathbb{C}}
\nc{\Ell}{\mathcal{L}}
\nc{\M}{\mathcal{M}}
\nc{\K}{\mathcal{K}}
\nc{\I}{\mathcal{I}}
\nc{\T}{\mathcal T}
\nc{\U}{\mathcal U}
\nc{\disk}{\mathbb{D}}
\nc{\hyp}{\mathbb{H}}
\nc{\CP}{\mathbb{CP}}
\nc{\cS}{\mathcal{S}}
\dmo{\Mod}{Mod}
\dmo{\PMod}{PMod}
\dmo{\LMod}{LMod}
\dmo{\Diff}{Diff}
\dmo{\Homeo}{Homeo}
\dmo{\dist}{dist}
\dmo\BDiff{BDiff}
\dmo\SO{SO}
\dmo\Hom{Hom}
\dmo\SL{SL}
\dmo\Sp{Sp}
\dmo\rank{rank}
\dmo\sig{sig}
\dmo\Out{Out}
\dmo\Aut{Aut}
\dmo\Inn{Inn}
\dmo\GL{GL}
\dmo\PSL{PSL}
\dmo\BHomeo{BHomeo}
\dmo\EHomeo{EHomeo}
\dmo\EDiff{EDiff}
\nc\Sig{\Sigma}
\dmo\Teich{Teich}
\dmo\Fix{Fix}
\nc{\pair}[1]{\langle #1 \rangle}
\nc{\abs}[1]{\left| #1 \right|}
\nc{\action}{\circlearrowright}
\nc{\norm}[1]{\left | \left | #1 \right | \right |}
\nc{\abcd}[4]{\left(\begin{array}{cc} #1 & #2 \\ #3 & #4 \end{array}\right)}
\dmo{\Isom}{Isom}
\nc{\normal}{\vartriangleleft}
\dmo{\Vol}{Vol}
\dmo{\im}{Im}
\dmo{\Push}{Push}
\dmo{\Conf}{Conf}
\dmo{\PConf}{PConf}
\dmo{\id}{id}
\dmo{\Jac}{Jac}
\dmo{\Pic}{Pic}
\dmo{\Stab}{Stab}
\dmo{\Arf}{Arf}
\dmo{\End}{End}
\dmo{\Gal}{Gal}
\dmo{\lcm}{lcm}
\dmo{\ab}{ab}
\dmo{\opp}{op}
\dmo{\SU}{SU}
\dmo{\OT}{\Omega \mathcal{T}}
\dmo{\OM}{\Omega \mathcal{M}}
\dmo{\spin}{spin}
\dmo{\even}{even}
\dmo{\odd}{odd}
\dmo{\comp}{\mathcal{H}}
\dmo{\Mgk}{\mathcal{M}_{g, \underline{\kappa}}}
\dmo{\orb}{orb}
\dmo{\AJ}{AJ}
\dmo{\Ck}{\mathsf{C}(\underline{\kappa})}
\dmo{\Int}{Int}
\dmo{\pr}{pr}
\dmo{\lab}{lab}
\dmo{\Sym}{Sym}
\nc{\Span}[1]{\operatorname{Span}(#1)}
\newcommand{\sing}{\underline{\kappa}}
\renewcommand{\epsilon}{\varepsilon}
\renewcommand{\tilde}{\widetilde}
\renewcommand{\le}{\leqslant}
\nc{\coloneq}{\mathrel{\mathop:}\mkern-1.2mu=}
\nc{\margin}[1]{\marginpar{\scriptsize #1}}
\nc{\para}[1]{\medskip\noindent\textbf{#1.}}
\nc{\red}[1]{\textcolor{red}{#1}}
\nc{\blue}[1]{\textcolor{blue}{#1}}
\nc{\proofof}[1]{\noindent {\em Proof (of #1).}}
\nc{\lb}{[}
\nc{\rb}{]}
\title[Framed mapping class groups and Abelian differentials]{Framed mapping class groups and the monodromy of \\strata of Abelian differentials}
\author{Aaron Calderon and Nick Salter}
\email{aaron.calderon@yale.edu}
\email{nks@math.columbia.edu}
\thanks{AC is supported by NSF Award No. DGE-1122492. NS is supported by NSF Award No. DMS-1703181.}
\address{AC: Department of Mathematics, Yale University, 10 Hillhouse Ave, New Haven, CT 06511}
\address{NS: Department of Mathematics, Columbia University, 2990 Broadway, New York, NY 10027}
\date{May 13, 2020}
\begin{document}
\begin{abstract}
This paper investigates the relationship between strata of abelian differentials and various mapping class groups afforded by means of the topological monodromy representation. Building off of prior work of the authors, we show that the fundamental group of a stratum surjects onto the subgroup of the mapping class group which preserves a fixed framing of the underlying Riemann surface, thereby giving a complete characterization of the monodromy group. In the course of our proof we also show that these ``framed mapping class groups'' are finitely generated (even though they are of infinite index) and give explicit generating sets.
\end{abstract}
\vspace*{-2.5em}
\maketitle
\vspace*{-2em}

\section{Introduction}

The moduli space $\OM_g$ of holomorphic 1--forms ({\em abelian differentials}) of genus $g$ is a complex $g$--dimensional vector bundle over the moduli space $\M_g$. The complement of its zero section is naturally partitioned into {\em strata}, sub-orbifolds with fixed number and degree of zeros. Fixing a partition $\sing:=(\kappa_1, \ldots, \kappa_n)$ of $2g-2$, we let $\OM_g(\sing)$ denote the stratum consisting of those pairs $(X, \omega)$ where $\omega$ is an abelian differential on $X \in \M_g$ with zeros of orders $\sing$.

As strata are quasi-projective varieties their (orbifold) fundamental groups are finitely presented. Kontsevich and Zorich famously conjectured that strata should be $K(G, 1)$'s for ``some sort of mapping class group'' \cite{KZ_strings}, but little progress has been made in this direction. This paper continues the work begun in \cite{C_strata1} and \cite{CS_strata2}, where the authors investigate these orbifold fundamental groups by means of a ``topological monodromy representation.'' 

Any (homotopy class of) loop in $\OM_g(\sing)$ based at $(X, \omega)$ gives rise to a(n isotopy class of) self--homeomorphism $X \rightarrow X$ which preserves $Z = \text{Zeros}(\omega)$. This gives rise to the {\em topological monodromy representation} 
\[\rho: \pi_1^{\orb}(\comp) \rightarrow \Mod_g^n\]
where $\comp$ is the component of $\OM_g(\sing)$ containing $(X, \omega)$ and $\Mod_g^n$ is the mapping class group of $X$ relative to $Z$.

\para{The fundamental invariant: framings}
The horizontal vector field $1/\omega$ of any $(X, \omega) \in \OM_g(\sing)$ defines a trivialization $\bar \phi$ of the tangent bundle of $X \setminus Z$, or an ``absolute framing'' (see \S\S\ref{subsection:framings} and \ref{subsection:punctured}; the terminology reflects the finer notion of a ``relative framing'' to be discussed below). The mapping class group $\Mod_g^n$ generally does not preserve (the isotopy class of) this absolute framing, and its stabilizer $\Mod_g^n[\bar \phi]$ is of infinite index. On the other hand, the canonical nature of $1/\omega$ means that the image of $\rho$ does leave some such absolute framing fixed. Our first main theorem identifies the image of the monodromy representation as the stabilizer of an absolute framing.

\begin{maintheorem}\label{mainthm:absolute_monodromy}
Suppose that $g \ge 5$ and $\sing$ is a partition of $2g-2$. Let $\comp$ be a non-hyperelliptic component of $\OM_g(\sing)$; then 
\[\rho\left( \pi_1^{\orb} ( \comp ) \right) \cong \Mod_{g}^n[\bar \phi]\]
where $\bar \phi$ is the absolute framing induced by the horizontal vector field of any surface in $\OM_g(\sing)$.
\end{maintheorem}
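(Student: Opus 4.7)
The plan is to establish the two inclusions separately. The containment $\rho(\pi_1^{\orb}(\comp)) \subseteq \Mod_g^n[\bar\phi]$ is immediate from the canonicity of the horizontal vector field: any loop $\gamma$ in $\comp$ provides a continuous family of framings $\{\bar\phi_t\}$ along $\gamma$, so the induced monodromy homeomorphism preserves $\bar\phi$ up to isotopy. The substance of the theorem is the reverse inclusion, and I would approach it in three stages.

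The first stage is to produce an explicit finite generating set for $\Mod_g^n[\bar\phi]$ itself; as advertised in the abstract, this finite generation is already nontrivial given that the subgroup is of infinite index. I would work through the winding-number function associated to $\bar\phi$: a simple closed curve $c$ is \emph{admissible} if its $\bar\phi$-winding number vanishes, in which case $T_c \in \Mod_g^n[\bar\phi]$. The target here is a Humphries-style theorem asserting that $\Mod_g^n[\bar\phi]$ is generated by Dehn twists about finitely many admissible curves, together with a short list of elements that permute or rotate the marked points (such as half-twists exchanging two zeros of equal order). The assumption $g \geq 5$ most likely enters at this stage, ensuring that the relevant complex of admissible curves is sufficiently connected to run the generation argument.

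The second stage is to exhibit each such generator as an element of the monodromy image. The principal tool is the \emph{cylinder shear}: if $c$ is the core of a horizontal cylinder in some $(X', \omega') \in \comp$, then shearing this cylinder through one full period traces a loop in $\comp$ whose monodromy is $T_c$. To realize the Dehn twist about an arbitrary admissible curve $c \subset X$, one would use path-connectedness of $\comp$ to first deform $(X, \omega)$ to a surface in which the image of $c$ is the core of a horizontal cylinder, then perform the shear, then return; the concatenation is a based loop at $(X, \omega)$ with monodromy $T_c$. The auxiliary permutation-type generators should be realized by explicit paths in $\comp$ along which pairs of equal-order zeros are exchanged (via local surgery or Teichm\"uller-type motion).

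The main obstacle will be this geometric realization step: given an admissible curve $c$ of a prescribed topological type (separating or not, and if separating, bounding a subsurface containing a prescribed subset of $Z$), one must show that $c$ can be arranged as the core of a horizontal cylinder in some surface in $\comp$ after a suitable deformation through the component. This is a substantive claim about the geometry of horizontal cylinder decompositions and should require a case analysis on the topological type of $c$, invoking density of horizontally periodic surfaces in each component. The non-hyperelliptic hypothesis enters essentially here, since hyperelliptic components are rigidly constrained by the involution and admit far fewer cylinder configurations; the statement would have to be modified in that case to account for a strictly smaller framed stabilizer.
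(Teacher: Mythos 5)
Your forward inclusion and the broad two-step shape (finite generation of the stabilizer, then realization of generators by loops in the stratum) match the paper, but there are two genuine gaps in the reverse inclusion, one of which is fatal to the plan as written.

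First, the generating-set claim in your stage one is false in general. The Dehn twists lying in $\Mod_g^n[\bar\phi]$ are the twists about nonseparating curves of winding number zero \emph{and} the twists about all separating curves (which act trivially on absolute winding numbers by twist-linearity, since they are nullhomologous). The subgroup generated by the admissible (winding-number-zero) twists alone is precisely the image of the relatively framed group $\Mod_{g,n}[\phi]$ under boundary-capping, and Proposition \ref{proposition:relimg} shows this is a \emph{proper} normal subgroup of $\PMod_g^n[\bar\phi]$ with quotient $PR'/\pair{(1,\dots,1)}$ whenever the arithmetic condition of Corollary \ref{corollary:whentrivial} fails. The missing generators are not point-permuting half-twists (those only account for the $\Sym(\sing)$ quotient, handled separately by Lemma \ref{lemma:permutation_monodromy}); they are Dehn twists about certain \emph{separating} curves of winding number $\pm 1$ or $\pm 2$ (the auxiliary curve system of Corollary \ref{cor:genset_abs}). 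This breaks your stage three: a separating curve of nonzero winding number can never be the core of a cylinder (cylinder cores have constant slope, hence winding number zero), so these generators cannot be realized by cylinder shears — and indeed Corollary \ref{cor:pi1notgenbycyl} proves that the subgroup generated by cylinder shears does \emph{not} surject onto the monodromy group in general. The paper produces these elements by an entirely different mechanism: Boissy's classification of components of prong-marked strata shows the monodromy into the pronged mapping class group $\Mod_{g,n}^*$ surjects onto the prong rotation subgroup $PR'$ (Corollary \ref{corollary:PR_monodromy}), and under the forgetful map to $\PMod_g^n$ the prong rotations become exactly the needed separating twists. Some input of this kind — loops that rotate separatrix data rather than shear cylinders — is unavoidable.

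Second, your "main obstacle" paragraph harbors a circularity. The statement that an arbitrary admissible curve $c$ can be deformed within $\comp$ to become a cylinder core is Corollary \ref{cor:when_cyl}, which the paper \emph{deduces from} Theorem \ref{mainthm:absolute_monodromy}: one needs the framed change-of-coordinates principle plus the already-established fact that the monodromy group is the full framed stabilizer to move $c$ onto a known cylinder. The way around this is not a case analysis on the topological type of $c$ but the opposite economy: exhibit a single explicit prototype differential (Lemma \ref{lemma:prototypes}, a Thurston--Veech-type surface) whose horizontal and vertical cylinders realize one fixed $E$-arboreal spanning configuration $\mathcal C$, shear those finitely many cylinders to get $\pair{T_c \mid c \in \mathcal C}$ inside the monodromy, and then invoke the purely group-theoretic Theorem \ref{mainthm:genset} to conclude that these twists already generate $\Mod_{g,n}[\phi]$. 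Assembling the full theorem then still requires the tower of covers \eqref{spaces} relating $\Mod_{g,n}$, $\Mod_{g,n}^*$, and $\PMod_g^n$, which is where the separating twists and the zero permutations are reinstated.
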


For an explanation as to why we restrict to the non-hyperelliptic components of strata, see the discussion after Theorem \ref{thm:KZ_strata}. The bound $g \ge 5$ is an artifact of the method of proof and can probably be relaxed to $g \ge 3$. We invoke $g \ge 5$ in Proposition \ref{lemma:pushmakesT}, Proposition \ref{proposition:minsep}, and Lemma \ref{lemma:genus2connected}; it is not needed elsewhere.

Equivalently, the universal property of $\M_{g,n}$ implies there is a map 
\[\begin{array}{rrcl}
\ell: & \comp  & \rightarrow & \Mgk \\
		& (X, \omega) & \mapsto & (X, \text{Zeros}(\omega)).
\end{array}\]
where $\Mgk$ denotes the moduli space of Riemann surfaces with $n$ marked points labeled by $(\kappa_1, \ldots, \kappa_n)$, in which one may only permute marked points if they have the same label.
\footnote{Of course, $\Mgk$ is a finite cover of $\M_{g,n}$, corresponding to the group $\Sym(\sing)$ of label--preserving permutations.}
 In this language, Theorem \ref{mainthm:absolute_monodromy} characterizes the image of $\ell$ at the level of (orbifold) fundamental groups.

\begin{remark}
Via the mapping class group's action on relative homology, Theorem \ref{mainthm:absolute_monodromy} also implicitly characterizes the image of the {\em homological} monodromy representation
\[\rho_H: \pi_1^{\orb} (\comp) \rightarrow \Aut(H_1(X, Z; \Z)).\]
In a companion paper \cite{CS_framedhom}, we give an explicit description of the image of $\rho_H$ as the kernel of a certain crossed homomorphism on $\Aut(H_1(X, Z; \Z))$.
\end{remark}

\para{Application: realizing curves and arcs geometrically} Using the identification of Theorem \ref{mainthm:absolute_monodromy}, we can apply a framed version of the ``change of coordinates'' principle (see Proposition \ref{proposition:CCP}) to deduce the following characterization of which curves can be realized as the core curves of embedded cylinders. To formulate this, we observe that the data of an absolute framing $\overline{\phi}$ gives rise to a ``winding number function'' (also denoted $\overline \phi$) that sends an oriented simple closed curve to the $\Z$--valued holonomy of its forward--pointing tangent vector relative to the framing (c.f. \S\ref{subsection:framings}). 

\begin{corollary}[c.f. Corollary 1.1 of \cite{CS_strata2}]\label{cor:when_cyl}
Fix $g \ge 5$ and a partition $\sing$ of $2g-2$. Pick some $(X, \omega)$ in a non-hyperelliptic component $\comp$ of $\OM_g(\sing)$ and let $\bar \phi$ denote the induced (absolute) framing. Pick a nonseparating simple closed curve $c \subset X \setminus Z$.

Then there is a path $\gamma:[0,1] \rightarrow \comp$ with $\gamma(0)=(X,\omega)$ and such that the parallel transport of $c$ along $\gamma$ is a cylinder on $\gamma(1)$ if and only if the winding number of $c$ with respect to $\bar \phi$ is $0$.
\end{corollary}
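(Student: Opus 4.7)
The proof will have two directions. For necessity, the key observation is that the framing $\bar\phi$ is canonically determined by $1/\omega$, so as $(X,\omega)$ varies along $\gamma$, the induced framing varies consistently with the parallel transport of curves. Consequently, winding numbers are an invariant of parallel transport. If the transported curve $c$ is the core of a cylinder on $\gamma(1)$, then its tangent vector is horizontal at every point, hence has winding number $0$ with respect to the induced framing $\bar\phi_{\gamma(1)}$, which forces $\bar\phi(c) = 0$ on the original surface.

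For sufficiency, the plan is to combine Theorem \ref{mainthm:absolute_monodromy} with the framed change-of-coordinates principle (Proposition \ref{proposition:CCP}). First I would locate some $(X',\omega') \in \comp$ which carries at least one horizontal cylinder, whose core curve $c_0 \subset X' \setminus Z'$ necessarily satisfies $\bar\phi'(c_0)=0$. Such a surface can be found inside any component of any stratum by a standard construction (e.g.\ square-tiled surfaces are dense in $\comp$), and I would connect $(X,\omega)$ to $(X',\omega')$ by an arbitrary path $\gamma_1 \subset \comp$. The parallel transport of $c$ along $\gamma_1$ produces a nonseparating simple closed curve $c' \subset X' \setminus Z'$ which, by the invariance noted above, has $\bar\phi'(c') = \bar\phi(c) = 0$.

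The framed change-of-coordinates principle (Proposition \ref{proposition:CCP}) then produces a mapping class $f \in \Mod_g^n[\bar\phi']$ sending $c'$ to $c_0$; by Theorem \ref{mainthm:absolute_monodromy}, this $f$ lies in the image of the monodromy representation, so is realized as the monodromy of some loop $\gamma_2$ based at $(X',\omega')$. Concatenating $\gamma_1$ with $\gamma_2$ gives a path $\gamma$ whose parallel transport takes $c$ to $f(c') = c_0$, which is the core of a cylinder on $\gamma(1) = (X',\omega')$.

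The main technical point is ensuring the existence of $(X',\omega')$ with a horizontal cylinder in the given component $\comp$, together with verifying that Proposition \ref{proposition:CCP} applies to nonseparating curves in the punctured setting (so that winding number is the complete invariant of the framed mapping class group action on the set of isotopy classes). Neither should be a genuine obstacle: the former is standard, and the latter is exactly what a framed change-of-coordinates statement is designed to provide. Everything else is bookkeeping about how parallel transport in $\comp$ intertwines with monodromy and with the canonical framings.
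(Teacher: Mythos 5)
Your proof is correct and follows essentially the same route as the paper: necessity because the core curve of a cylinder has constant slope, and sufficiency via the framed change--of--coordinates principle (Proposition \ref{proposition:CCP}) combined with Theorem \ref{mainthm:absolute_monodromy}. The only (immaterial) difference is that the paper uses the fact that $(X,\omega)$ itself already carries a cylinder with admissible core curve and realizes the required framed mapping class as a loop based at $(X,\omega)$, whereas you first transport along a path to an auxiliary surface $(X',\omega')$ carrying a horizontal cylinder and then concatenate with a loop there; both arguments go through.
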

\begin{proof}
The condition that $\bar \phi(c)=0$ is necessary, as the core curve of a cylinder has constant slope.

To see that it is sufficient, we note that there is some cylinder on $X$ with core curve $d$ and the winding number of $d$ with respect to $\bar \phi$ is $0$. Therefore, by the framed change--of--coordinates principle (Proposition \ref{proposition:CCP}), there is some element $g \in \Mod_g^n[\bar\phi]$ taking $d$ to $c$. By Theorem \ref{mainthm:absolute_monodromy}, $g$ lies in the monodromy group, so there is some $\gamma \in \pi_1^{\orb}(\comp)$ whose monodromy is $g$. This $\gamma$ is the desired path.
\end{proof}

We can also deduce a complementary result for arcs using the same principle. Recall that a {\em saddle connection} on an abelian differential is a nonsingular geodesic segment connecting two zeros.

\begin{corollary}\label{cor:when_saddle}
Fix $g \ge 5$ and a partition $\sing$ of $2g-2$. Pick some $(X, \omega)$ in a non-hyperelliptic component $\comp$ of $\OM_g(\sing)$ and fix a nonseparating arc $a$ connecting distinct zeros of $Z$.

Then there is a path $\gamma:[0,1] \rightarrow \comp$ with $\gamma(0)=(X,\omega)$ and such that the parallel transport of $a$ along $\gamma$ is a realized as a saddle connection on $\gamma(1)$.
\end{corollary}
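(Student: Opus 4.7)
\medskip\noindent\emph{Proof proposal.} The argument will closely parallel that of Corollary \ref{cor:when_cyl}. The plan is to first produce some nonseparating saddle connection $b$ on $X$ joining two zeros carrying the same labels as the endpoints of $a$; then invoke the framed change of coordinates principle (Proposition \ref{proposition:CCP}) to find an element $\varphi \in \Mod_g^n[\bar\phi]$ with $\varphi(b) = a$; and finally apply Theorem \ref{mainthm:absolute_monodromy} to realize $\varphi$ as the monodromy of some $\gamma \in \pi_1^{\orb}(\comp)$. Parallel transport of $a$ along $\gamma$ (traversed in the appropriate direction) will then be isotopic to $b$, so the resulting arc on $\gamma(1)$ is a saddle connection.

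To produce $b$ on $X$ itself, I would invoke the classical fact that the saddle connection directions on any translation surface are dense in $S^1$, so by perturbing within $\comp$ if necessary one can realize a straight segment between any prescribed pair of zeros; for $g \ge 2$ a generic such choice will be nonseparating. If $X$ does not already admit such a saddle connection, we simply replace it by a nearby surface in $\comp$, which is harmless since the conclusion only concerns the existence of a path in $\comp$.

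The essential point is the application of Proposition \ref{proposition:CCP} to arcs. For closed curves the relevant invariant is the $\Z$--valued winding number $\bar\phi(c)$, which produces the nontrivial obstruction appearing in Corollary \ref{cor:when_cyl}. For an arc joining two \emph{distinct} zeros, however, no such $\Z$--valued invariant exists: the absolute framing $\bar\phi$ has conical singularities at $Z$, and the incoming direction of an arc at either endpoint can be freely rotated by an ambient isotopy that drags the interior of the arc around the zero. Consequently the framed change of coordinates principle should yield a single $\Mod_g^n[\bar\phi]$--orbit of nonseparating arcs between any prescribed ordered pair of distinctly-labeled zeros, giving the required $\varphi$. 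The main obstacle is verifying this transitivity -- the framed analog of the usual change-of-coordinates for arcs -- which I expect to follow directly from the arc version of Proposition \ref{proposition:CCP} and which underlies the entire corollary.
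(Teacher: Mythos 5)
The skeleton of your argument --- produce a target saddle connection, move it onto $a$ by a framed change of coordinates, realize the resulting mapping class as monodromy, and read off a path --- is the same as the paper's. But the step you yourself flag as ``the main obstacle'' is a genuine gap, and the transitivity statement you propose to fill it with is false in general: $\PMod_g^n[\bar\phi]$ need not act transitively on nonseparating arcs joining two fixed distinct zeros $p,q$. Although the absolute framing assigns no $\Z$--valued winding number to such an arc, a residual invariant survives. Lift $\bar a$ to a legal arc $a$ on the blow-up, based at a fixed prong-marking; then $\phi(a)\in\Z+\tfrac12$ is well defined modulo $\gcd(\kappa_p+1,\kappa_q+1)$, and by Theorem \ref{theorem:absolute} together with Proposition \ref{theorem:pronged} every element of $\PMod_g^n[\bar\phi]$ lifts to $\Mod_{g,n}^*[\phi]$, whose prong rotations lie only in $PR'$, not in all of $PR$. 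Concretely, if $\kappa_p$ and $\kappa_q$ are the only odd parts of $\sing$, then rotating the prongs by $(c_p,c_q)$ with $c_p+c_q$ even changes $\phi(a)$ by the even integer $c_q-c_p$, so $\phi(a)\bmod 2$ is a $\PMod_g^n[\bar\phi]$--invariant of the absolute arc; and both values occur among nonseparating arcs from $p$ to $q$ (apply $T_c$ for a curve $c$ with $\bar\phi(c)$ odd and $\langle \bar a,c\rangle=1$ and use twist-linearity). So ``find one saddle connection and map it to $a$'' cannot succeed for every $a$.

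The paper avoids proving any transitivity on absolute arcs. Instead it works on the prong-marked stratum and proves Lemma \ref{lem:sc_wn}: on a one-cylinder Jenkins--Strebel representative of the component, the saddle connections from $p$ to $q$ realize \emph{every} winding number $s\in\Z+\tfrac12$ relative to the prong-marking (the straight segments across the cylinder already realize every residue modulo $\phi(\Delta_q)$, and full boundary twists, which do not change the projected arc, do the rest). One then transports the chosen lift $a$ to that surface, picks the target $b$ with exactly matching winding number, applies change of coordinates inside $\Mod_{g,n}[\phi]$ --- where transitivity on legal arcs of fixed winding number is genuinely available --- and invokes Theorem \ref{thm:relative_monodromy} rather than Theorem \ref{mainthm:absolute_monodromy}. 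This production of saddle connections of every winding number is the real content of the corollary and is what your proposal is missing; your remaining steps (existence of some saddle connection, handling of labels via Lemma \ref{lemma:permutation_monodromy}) are not where the difficulty lies.
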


The proof of this corollary uses machinery developed throughout the paper and is therefore deferred to Section \ref{subsec:saddleCCP}. In Section \ref{section:corollaries} we collect other corollaries that we can obtain by the methods of the paper: we also give a classification of components of strata with marking data (Section \ref{section:markingcor}) and we show that for $\mathcal H$ a sufficiently general stratum--component, the monodromy image $\rho(\pi_1^{orb}(\mathcal H)) \le \PMod_g^n[\bar \phi]$ is {\em not} generated by shears about cylinders (Section \ref{section:gencor}). 

\para{Other monodromy groups} Theorem \ref{mainthm:absolute_monodromy} is a consequence of our characterization of the images of certain other monodromy representations:
in Theorem \ref{thm:relative_monodromy}, we compute the monodromy of a ``prong--marked'' stratum into the mapping class group $\Mod_{g,n}$ of a surface with boundary. Theorem \ref{thm:blowup_monodromy} computes the monodromy of a stratum into the ``pronged mapping class group,'' denoted $\Mod^*_{g,n}$, a refinement which captures the combinatorics of the zeros of the differential.
In both Theorems \ref{thm:relative_monodromy} and \ref{thm:blowup_monodromy} we find that the monodromy group is the stabilizer $\Mod_{g,n}[\phi]$, respectively $\Mod_{g,n}^*[\phi]$, of an appropriate ``relative framing'' $\phi$. 

A {\em relative framing} is an isotopy class of framing of $\Sigma_{g,n}$ where the isotopies are required to be trivial on $\partial \Sigma_{g,n}$ (see \S\ref{section:framings}). To promote an absolute framing $\bar \phi$ into a relative framing $\phi$ we ``blow up'' the zeros of a differential (see \S\ref{subsec:flattoframed}); under this transformation, a zero of order $k$ becomes a boundary component with 
winding number $-1-k$, so an element of $\OM_g(\sing)$ induces a relative framing $\phi$ on its blow-up with ``signature'' $-1-\sing := (-1 - \kappa_1, \ldots, -1-\kappa_n)$ (see \S\ref{subsection:framings}). Thus each boundary component has negative winding number; a framing with this property is said to be {\em of holomorphic type}. 

\para{Generating the framed mapping class group} The monodromy computations in Theorems \ref{mainthm:absolute_monodromy}, \ref{thm:relative_monodromy}, and \ref{thm:blowup_monodromy} rest on a development in the theory of stabilizers of relative framings as subgroups of $\Mod_{g,n}$: we determine simple explicit finite generating sets. 

We introduce some terminology used in the statement. Let $\mathcal C = \{c_1, \dots, c_k\}$ be a collection of curves on a surface $\Sigma_{g,n}$, pairwise in minimal position, with the property that the geometric intersection number $i(c_i, c_j)$ is at most $1$ for all pairs $c_i, c_j \in \mathcal C$. Associated to such a configuration is its {\em intersection graph} $\Lambda_\mathcal C$, whose vertices correspond to the elements of $\mathcal C$, with $c_i$ and $c_j$ joined by an edge whenever $i(c_i, c_j) = 1$. Such a configuration $\mathcal C$ {\em spans} $\Sigma_{g,n}$ if there is a deformation retraction of $\Sigma_{g,n}$ onto the union of the curves in $\mathcal C$. We say that $\mathcal C$ is {\em arboreal} if the intersection graph $\Lambda_{\mathcal C}$ is a tree, and {\em $E$-arboreal} if $\Lambda_{\mathcal C}$ moreover contains the $E_6$ Dynkin diagram as a subgraph. See Figure \ref{figure:genset} for the examples of spanning configurations we exploit in the pursuit of Theorem \ref{mainthm:absolute_monodromy}. 

When working with framings of meromorphic type we will need to consider sets of curves more general than spanning configurations (see the discussion in Section \ref{subsection:finishB}). To that end we define an {\em $h$-assemblage of type $E$} on $\Sigma_{g,n}$ as a set of curves $\mathcal C = \{c_1, \dots, c_k, c_{k+1}, \dots, c_\ell\}$ such that (1) $\mathcal C_1 = \{c_1,\dots, c_k\}$ is an $E$-arboreal spanning configuration on a subsurface $S \subset \Sigma_{g,n}$ of genus $g(S) = h$,  (2) for $j \ge k$, let $S_j$ denote a regular neighborhood of the curves $\{c_1, \dots, c_j\}$; then for $j > k$, we require that $c_j \cap S_{j-1}$ be a single arc (possibly, but not necessarily, entering and exiting along the same boundary component of $S_j$), and (3) $S_\ell = \Sigma_{g,n}$. In other words, an assemblage of type $E$ is built from an $E$-arboreal spanning configuration on a subsurface by sequentially attaching (neighborhoods of) further curves, decreasing the Euler characteristic by exactly one at each stage but otherwise allowing the new curves to intersect individual old curves arbitrarily.

\begin{maintheorem}\label{mainthm:genset}
Let $\Sigma_{g,n}$ be a surface of genus $g \ge 5$ with $n \ge 1$ boundary components.

\noindent (I) Suppose $\phi$ is a framing of $\Sigma_{g,n}$ of holomorphic type. Let
$\mathcal C = \{c_1, \dots, c_k\}$ be an $E$-arboreal spanning configuration of curves on $\Sigma_{g,n}$ such that $\phi(c) = 0$ for all $c \in \mathcal C$. Then 
\[
\Mod(\Sigma_{g,n})[\phi] = \pair{T_c \mid c \in \mathcal C}.
\]
\noindent (II) If $\phi$ is an arbitrary framing (of holomorphic or meromorphic type) and $\mathcal C = \{c_1,\dots, c_\ell\}$ is an $h$-assemblage of type $E$ for $h \ge 5$ of curves such that $\phi(c) = 0$ for all $c \in \mathcal C$, then
\[
\Mod(\Sigma_{g,n})[\phi] = \pair{T_c \mid c \in \mathcal C}.
\]
\end{maintheorem}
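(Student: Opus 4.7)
The plan is to prove Part (II) by induction on $\ell - k$, with Part (I) as the base case (note that $\ell = k$ together with $S_\ell = \Sigma_{g,n}$ forces $h = g$). Part (I) itself I would attack by a transitivity argument: reduce the theorem to showing that the subgroup $H := \langle T_c \mid c \in \mathcal C\rangle$ acts transitively on the set of isotopy classes of nonseparating simple closed curves of winding number zero, and then exploit the $E_6$ subgraph to produce this transitivity.

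\textbf{Base case (Part I).} First I would establish (likely as a separate lemma proven elsewhere in the paper) that $\Mod(\Sigma_{g,n})[\phi]$ is generated by Dehn twists about nonseparating simple closed curves of winding number zero. Combined with the framed change-of-coordinates principle (Proposition \ref{proposition:CCP}), the theorem then reduces to the transitivity claim above: once $H$ is known to act transitively on such curves, every $T_d$ with $\phi(d) = 0$ is conjugate within $H$ to some $T_{c_i}$ and hence lies in $H$. For the transitivity, the $E_6$ subconfiguration is the essential input: its Dehn twists generate, via Matsumoto--Wajnryb-style braid manipulations, a subgroup of the framed stabilizer of a genus-three subsurface rich enough to act transitively on winding-zero curves supported there. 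The arboreal tree structure of $\Lambda_{\mathcal C}$ then propagates this local transitivity globally, since one may conjugate by twists along paths in $\Lambda_{\mathcal C}$ to transport any given winding-zero curve into the $E_6$ subsurface.

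\textbf{Inductive step (Part II).} For $j > k$ the curve $c_j$ meets $S_{j-1}$ in a single arc, so $S_j$ is obtained from $S_{j-1}$ by attaching a band. Comparing the inclusion-induced homomorphism $\Mod(S_{j-1}) \to \Mod(S_j)$ via a Birman-type exact sequence---handling separately the subcases in which $c_j$ returns to the same boundary component of $S_{j-1}$ versus joins two distinct ones---one verifies that adjoining the single new generator $T_{c_j}$ to the inductively generated $\langle T_{c_1}, \dots, T_{c_{j-1}}\rangle = \Mod(S_{j-1})[\phi|_{S_{j-1}}]$ yields precisely $\Mod(S_j)[\phi|_{S_j}]$.

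\textbf{Main obstacle.} The principal difficulty lies in Part (I), specifically in establishing transitivity of $H$ on winding-zero curves via the $E_6$ subconfiguration. The $E_6$ hypothesis (rather than a smaller $A_n$ or $D_n$ tree) is essential because smaller Dynkin configurations do not yield enough flexibility within a genus-three subsurface to generate the full framed stabilizer there; this is also a plausible source of the bound $g, h \ge 5$. A secondary technical challenge in Part (II) is the careful bookkeeping of boundary winding numbers in the meromorphic setting, since boundary components of positive winding number cannot arise from zeros of abelian differentials and thus require separate handling in the inductive step.
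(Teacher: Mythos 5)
Your high-level decomposition of Part (I) --- first show $\Mod_{g,n}[\phi]$ is generated by all admissible twists, then show that the specific twists $\{T_c \mid c\in\mathcal C\}$ generate that group --- matches the paper's (Proposition \ref{prop:admisseverything} and Proposition \ref{proposition:fingen} respectively), but both halves of your sketch have genuine gaps. The first half is not ``a separate lemma proven elsewhere'' that you can cite: the equality $\mathcal T_\phi = \Mod_{g,n}[\phi]$ is the technical heart of Theorem \ref{mainthm:genset} and occupies Sections \ref{section:basecase}--\ref{section:arcgraph}. It is proved by induction on $n$: the base case $n=1$ reduces via a framed Birman exact sequence to the $r$-spin result of \cite{CS_strata2} plus the containment $\mathcal K_{g,1}\le\mathcal T_\phi$ (manufactured from lantern, chain, and $E_6$ relations), and the inductive step requires building the two-sided restricted arc graph $\mathcal A^s(\phi;p,q)$, proving vertex- and edge-transitivity of the $\Mod_{g,n}[\phi]$-action, and proving connectivity via the admissible surgery lemma. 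None of this appears in your outline. For the second half, your proposed mechanism --- transporting an arbitrary winding-number-zero curve into the $E_6$ subsurface by conjugating along paths in the tree $\Lambda_{\mathcal C}$ --- does not work as stated: an arbitrary admissible curve can intersect the configuration in an uncontrolled way, and nothing forces twists in $\mathcal C$ to push it into a genus-$3$ piece. The paper's actual engine is the framed subsurface push subgroup machinery: one shows $\tilde\Pi(b)\le\mathcal T_{\mathcal C}$ for a suitable curve $b$ (Lemma \ref{lemma:bpush}, assembled via Lemma \ref{lemma:inductive}), then invokes Proposition \ref{lemma:pushmakesT} to conclude $\mathcal T_\phi\le\mathcal T_{\mathcal C}$.

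For Part (II), there is no Birman-type exact sequence associated to attaching a $1$-handle (Birman exact sequences arise from capping or forgetting boundary, and the inclusion $\Mod(S_{j-1})\hookrightarrow\Mod(S_j)$ is injective with infinite-index, non-normal image), so the inductive step as you describe it has no engine. The paper instead proves a stabilization lemma (Lemma \ref{corollary:stabilization}): $\Mod(S^+)[\phi]=\pair{\Mod(S)[\phi],T_a}$ when $S^+$ is $S$ plus a handle along an admissible curve $a$; its proof again runs through Proposition \ref{lemma:pushmakesT} and Lemma \ref{lemma:inductive}, together with the already-established equality $\mathcal T_\phi=\Mod(S^+)[\phi]$ on the larger surface. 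Your observation that the meromorphic case needs care is correct, but the resolution is not bookkeeping of boundary winding numbers: it is that Part (I) is applied only to the initial genus-$h$ subsurface $S_k$ (whose induced framing is automatically of holomorphic type by Lemma \ref{lemma:arbholo}), and the stabilization lemma handles all subsequent attachments regardless of type.
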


Theorem \ref{mainthm:genset} also implies a finite generation result for stabilizers of absolute framings.

\begin{corollary}\label{cor:absstab_fg}
Let $g, \sing$ and $\phi$ be as above. Let $\bar \phi$ be the absolute framing on $\Sigma_g^n$ obtained by shrinking the boundary components of $\Sigma_{g,n}$ to punctures; then $\Mod_g^n[\bar \phi]$ is generated by finitely many Dehn twists.
\end{corollary}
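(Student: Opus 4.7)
The plan is to derive this from Theorem \ref{mainthm:genset}(II) by pushing a finite Dehn-twist generating set through the boundary-capping homomorphism $\kappa:\Mod(\Sigma_{g,n}) \to \PMod_g^n$, which collapses each boundary component to a puncture. The kernel of $\kappa$ is the free abelian group generated by the boundary Dehn twists $T_{\partial_1},\ldots,T_{\partial_n}$, and $\kappa$ sends every Dehn twist about a non-boundary curve to a Dehn twist. Fix a framing $\phi$ on $\Sigma_{g,n}$ of signature $-1-\sing$ inducing $\bar\phi$ after capping.

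I would first establish that $\kappa$ restricts to a surjection $\Mod(\Sigma_{g,n})[\phi]\twoheadrightarrow\PMod_g^n[\bar\phi]$. The containment $\subseteq$ is immediate from the definitions. For surjectivity, lift any $f\in\PMod_g^n[\bar\phi]$ to some $\tilde f\in\Mod(\Sigma_{g,n})$; then $\tilde f^*\phi$ and $\phi$ cap to the same absolute framing, so they differ only by rotations localized at the boundary components. Since such rotations are precisely the effect of the boundary Dehn twists, which moreover project to the identity under $\kappa$, one can compose $\tilde f$ with a suitable word in the $T_{\partial_i}^{\pm 1}$ to produce a lift inside $\Mod(\Sigma_{g,n})[\phi]$. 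Theorem \ref{mainthm:genset}(II) applied to any $h$-assemblage of type $E$ (with $h \ge 5$) on which $\phi$ vanishes produces a finite Dehn-twist generating set for $\Mod(\Sigma_{g,n})[\phi]$, and applying $\kappa$ then yields a finite generating set of Dehn twists for $\PMod_g^n[\bar\phi]$ (any boundary twists among the original generators simply become trivial).

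Finally, to pass from $\PMod_g^n[\bar\phi]$ to $\Mod_g^n[\bar\phi]$, I would note that the quotient $\Mod_g^n[\bar\phi]/\PMod_g^n[\bar\phi]$ embeds in the finite group $\Sym(\sing)$ of label-preserving permutations, so $\Mod_g^n[\bar\phi]$ is at least finitely generated. The main obstacle is arranging the additional coset representatives to themselves be expressible as products of Dehn twists that preserve $\bar\phi$, rather than as half-twists (which do not generally lie inside the framed stabilizer and are not Dehn twists). For each label-preserving permutation $\sigma$ that extends to an element of $\Mod_g^n[\bar\phi]$, I would produce an explicit factorization of some realizing mapping class as a product of twists about a $\sigma$-symmetric configuration of simple closed curves of winding number zero, constructed by applying the framed change-of-coordinates principle (Proposition \ref{proposition:CCP}). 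Augmenting the Dehn-twist generators from the previous paragraph by this finite collection of additional twists produces the desired finite Dehn-twist generating set for $\Mod_g^n[\bar\phi]$.
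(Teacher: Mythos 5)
Your first step—surjectivity of the capping map $\Mod_{g,n}[\phi]\to\PMod_g^n[\bar\phi]$—is false in general, and this is the central difficulty the paper has to work around. The error is in the sentence ``such rotations are precisely the effect of the boundary Dehn twists.'' If $\tilde f$ is a lift of $f\in\PMod_g^n[\bar\phi]$, then $\tilde f^*\phi$ and $\phi$ agree on all simple closed curves but may differ on legal arcs by an arbitrary integer shift at each boundary component (subject only to an Arf-invariant constraint), whereas the full boundary twist $T_{\Delta_i}$ shifts the winding number of an arc ending on $\Delta_i$ by $\pm\phi(\Delta_i)=\mp(1+\kappa_i)$, not by $\pm 1$. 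The corrections available from $\ker(\kappa)$ therefore span only a proper sublattice, and the paper's Proposition \ref{proposition:relimg} computes the resulting obstruction: the image of $\Mod_{g,n}[\phi]$ in $\PMod_g^n[\bar\phi]$ is a normal subgroup with quotient $PR'/\pair{(1,\dots,1)}$, which is nontrivial for most $\sing$ (e.g.\ $\sing=(4,4)$, $g=5$, gives quotient $\Z/5\Z$); indeed the introduction states explicitly that the twists of Theorem \ref{mainthm:genset} generate only a finite-index subgroup of the absolute stabilizer. The unit shifts you need are realized by \emph{fractional} boundary twists, which live in the pronged mapping class group $\Mod_{g,n}^*$ rather than in $\Mod_{g,n}$. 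The paper's route is: $\Mod_{g,n}^*[\phi]$ surjects onto $\PMod_g^n[\bar\phi]$ (Theorem \ref{theorem:absolute}), and $\Mod_{g,n}^*[\phi]$ is generated by $\Mod_{g,n}[\phi]$ together with finitely many ``auxiliary twists'' $A_k$, $B_{i,j}$ (Proposition \ref{theorem:pronged}); since these auxiliary twists are fractional boundary twists times honest Dehn twists about separating auxiliary curves $d_k$, $c_{i,j}$, their images downstairs are Dehn twists, yielding the finite Dehn-twist generating set of Corollary \ref{cor:genset_abs}. Your proposal is missing exactly these extra separating-curve generators.

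The last paragraph of your proposal also cannot work as written: a Dehn twist about a simple closed curve is supported in an annulus avoiding the punctures, so every product of Dehn twists lies in $\PMod_g^n$ and no such product realizes a nontrivial permutation of the punctures. There is no ``$\sigma$-symmetric configuration'' trick available. (This tension is present in the literal statement of the corollary as well; the explicit generating set the paper actually produces, in Corollary \ref{cor:genset_abs}, is for $\PMod_g^n[\bar\phi]$.)
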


An explicit finite generating set for $\Mod_g^n[\bar \phi]$ is given in Corollary \ref{cor:genset_abs}. In general, the set of Dehn twists described in Theorem \ref{mainthm:genset} only generates a finite--index subgroup of $\Mod_g^n[\bar \phi]$ (Proposition \ref{proposition:relimg}).

Our methods of proof also yield a generalization of the main mapping class group--theoretic result of \cite{CS_strata2}, allowing us to greatly expand our list of generating sets for ``$r$-spin mapping class groups,'' the analogue of framed mapping class groups for closed surfaces (\S \ref{subsection:framings}). See Corollary \ref{corollary:rspingen}.

\begin{remark}
Both $\Mod_{g,n}[\phi]$ and $\Mod_g^n[\bar \phi]$ are of infinite index in their respective ambient mapping class groups, and so {\em a priori} could be infinitely generated. To the best of the authors' knowledge, Theorem \ref{mainthm:genset} and Corollary \ref{cor:absstab_fg} are the first proofs that these groups are finitely generated. This is another instance of a surprising and poorly--understood theme in the study of mapping class groups: {\em stabilizers of geometric structures often have unexpectedly strong finiteness properties}. The most famous instance of this principle is Johnson's proof that the Torelli group is finitely generated for all $g \ge 3$ \cite{johnson_FG}; this was recently and remarkably improved by Ershov--He and Church--Ershov--Putman to establish finite generation for each term in the Johnson filtration \cite{ershov_he, CEP}. 
\end{remark}

\begin{remark}
In contemporary work \cite{PS_singularities}, the second author and P. Portilla Cuadrado apply Theorem \ref{mainthm:genset} to give a description in the spirit of Theorem \ref{mainthm:absolute_monodromy} of the geometric monodromy group of an arbitrary isolated plane curve singularity as a framed mapping class group. The counterpart to Corollary \ref{cor:when_cyl} then yields an identification of the set of vanishing cycles for Morsifications of arbitrary plane curve singularities. 
\end{remark}

\para{Context} As mentioned above, this paper serves as a sequel to \cite{CS_strata2}. The main result of that work considers a weaker version of the monodromy representation attached to a stratum of abelian differentials. In \cite{CS_strata2}, we study the monodromy representation valued in the {\em closed} mapping class group $\Mod_g$; here we enrich our monodromy representation so as to track the location of the zeroes.
There, we find that an object called an ``$r$--spin structure'' (c.f. Section \ref{subsection:framings}) governs the behavior of the monodromy representation. Here, the added structure of the locations of the zeroes allows us to refine these $r$--spin structures to the more familiar notion of a globally invariant framing of the fibers. Where the technical core of \cite{CS_strata2} is an analysis of the group theory of the stabilizer in $\Mod_g$ of an $r$--spin structure, here the corresponding work is to understand these ``framed mapping class groups'' and to work out their basic theory, including the surprising fact that these infinite--index subgroups admit the remarkably simple finite generating sets described in Theorem \ref{mainthm:genset}. 

 In recent preprints, Hamenst\"adt has also analyzed the monodromy representation into $\Mod_g^n$. In \cite{Hamen_strata} she gives generators for the image in terms of ``completely periodic admissible configurations,'' which are analogous to the spanning configurations appearing in Theorem \ref{mainthm:genset}. In \cite{Hamen_strata2}, she identifies the image monodromy into the {\em closed} mapping class group $\Mod_g$ as the stabilizer of an ``$r$--spin structure,'' recovering and extending work of the authors (see \S \ref{subsection:framings} as well as \cite{CS_strata2}). The paper \cite{Hamen_strata2} also contains a description of generators for the fundamental groups of certain strata.

\subsection{Structure of the paper}
This paper is roughly divided into two parts: the first deals exclusively with relative framings on surfaces with boundary and their associated framed mapping class groups, while the second deals with variations on framed mapping class groups and their relationship with strata of abelian differentials. Readers interested only in Theorem \ref{mainthm:genset} can read Sections \ref{section:framings}--\ref{section:arcgraph} independently, while readers interested only in Theorem \ref{mainthm:absolute_monodromy} need only read the introductory Section \ref{section:framings} together with Sections \ref{section:absolute} -- \ref{section:corollaries} (provided they are willing to accept Theorem \ref{mainthm:genset} as a black box).

\para{Outline of Theorem \ref{mainthm:genset}}
The proof of Theorem \ref{mainthm:genset} has two steps, these steps roughly parallel those of \cite[Theorem B]{CS_strata2}. For the first step, we show in Proposition \ref{proposition:fingen} that the Dehn twists on a spanning configuration of admissible curves as specified in the Theorem generate the ``admissible subgroup'' $\mathcal T_\phi \le \Mod_{g,n}[\phi]$ (see Section \ref{section:framedmcg}). The proof of this step relies on the theory of ``subsurface push subgroups'' from \cite{salter_toric} and extends these results, establishing a general inductive procedure to build subsurface push subgroups from admissible twists and sub-subsurface push subgroups (Lemma \ref{lemma:inductive}).

The second step is to show that the admissible subgroup is the {\em entire} stabilizer of the relative framing; the proof of this step spans both Sections \ref{section:basecase} and \ref{section:arcgraph}. In \cite{CS_strata2} and \cite{salter_toric}, the analogous step is accomplished using the ``Johnson filtration'' of the mapping class group, a strategy which does not work for surfaces with multiple boundary components. Instead, we prove that $\mathcal T_\phi = \Mod_{g,n}[\phi]$ by induction on the number of boundary components of $\Sigma_{g,n}$.

The base case of the induction (when there is a single boundary component of winding number $1-2g$) takes place in Section \ref{section:basecase}. Its proof relies heavily on the analysis of \cite{CS_strata2} and the relationship between framings and ``$r$--spin structures,'' their analogues on closed surfaces (see the end of \S\ref{subsection:framings}). Using a version of the Birman exact sequence adapted to framed mapping class groups (Lemma \ref{lemma:framedBES}), we show that the equality $\mathcal T_\phi = \Mod_{g,n}[\phi]$ is equivalent to the statement that $\mathcal T_\phi$ contains ``enough separating twists.'' We directly exhibit these twists in Proposition \ref{proposition:minsep}, refining \cite[Lemma 6.4]{CS_strata2} and its counterpart in \cite{salter_toric}; the reader is encouraged to think of Proposition \ref{proposition:minsep} as the ``canonical version'' of this statement.

The inductive step of the proof that $\mathcal T_\phi = \Mod_{g,n}[\phi]$ is contained in Section \ref{section:arcgraph}. The overall strategy is to introduce a connected graph $\mathcal A^s$ on which $\Mod_{g,n}[\phi]$ acts vertex-- and edge--transitively (see Sections \ref{section:action} through \ref{section:asl}). The heart of the argument is thus to establish these transitivity properties (Lemma \ref{lemma:actionproperties}) and the connectedness of $\mathcal A^s$ (Lemma \ref{lemma:connected}); these both require a certain amount of care, and the arguments are lengthy. Standard techniques then imply $\Mod_{g,n}[\phi]$ is generated by the stabilizer of a vertex (which we can identify with $\Mod_{g,n-1}[\phi']$ for some $\phi'$) together with an element that moves along an edge (Lemma \ref{lemma:actiongenset}). Applying the inductive hypothesis and explicitly understanding the action of certain Dehn twists on $\mathcal A^s$ together yield that $\T_\phi = \Mod_{g,n}[\phi]$, completing the proof of Theorem \ref{mainthm:genset}.

\para{Variations on framed mapping class groups}
Section \ref{section:absolute} is an interlude into the theory of other framed mapping class groups. In Section \ref{section:pronged} we introduce the theory of {\em pronged surfaces,} surfaces with extra tangential data which mimic the zero structure of an abelian differential. After discussing the relationship between the mapping class groups $\Mod_{g,n}^*$ of pronged surfaces and surfaces with boundaries or marked points, we introduce a theory of relative framings of pronged surfaces and hence a notion of framed, pronged mapping class group $\Mod_{g,n}^*[\phi]$. The main result of this subsection is Proposition \ref{theorem:pronged}, which exhibits $\Mod_{g,n}^*[\phi]$ as a certain finite extension of $\Mod_{g,n}[\phi]$.

We then proceed in Section \ref{subsection:punctured} to a discussion of absolute framings of pointed surfaces, as in the beginning of this Introduction. When a surface has marked points instead of boundary components, framings can only be considered up to absolute isotopy. Therefore, the applicable notion is not a relative but an absolute framing $\bar \phi$. In this section we prove Theorem \ref{theorem:absolute}, which states that the (pronged) relative framing stabilizer $\Mod_{g,n}^*[\phi]$ surjects onto the (pointed) absolute framing stabilizer $\PMod_{g}^n[\bar \phi]$. Combining this theorem with work of the previous subsection also gives explicit generating sets for $\PMod_{g}^n[\bar \phi]$ (see Corollary \ref{cor:genset_abs}).

\para{Outline of Theorem \ref{mainthm:absolute_monodromy}}
The proof of Theorem \ref{mainthm:absolute_monodromy} is accomplished in Section \ref{section:monodromy}. After recalling background material on abelian differentials (\S\ref{subsec:flatbasics}) and exploring the different sorts of framings a differential induces (\S\ref{subsec:flattoframed}), we record the definitions of certain marked strata, first introduced in \cite{BSW_horo} (\S\ref{subsec:markings}). These spaces fit together in a tower of coverings \eqref{spaces} which evinces the structure of the pronged mapping class group, as discussed in Section \ref{section:pronged}. By a standard continuity argument, the monodromy of each covering must stabilize a framing (see Lemma \ref{lemma:mon1_preserves_framing} and Corollaries \ref{cor:mon2_preserves_framing} and \ref{cor:mon3_preserves_framing}).

Using these marked strata, we can upgrade the $\Mod_g^n$--valued monodromy of $\comp$ into a $\Mod^*_{g,n}$--valued homomorphism, and passing to a certain finite cover of the stratum therefore results in a space $\comp^{\pr}$ whose monodromy lies in $\Mod_{g,n}$. By realizing the generating set of Theorem \ref{mainthm:genset} as cylinders on a prototype surface in $\comp^{\pr}$, we can explicitly construct deformations whose monodromy is a Dehn twist, hence proving that the $\Mod_{g,n}$--valued monodromy group of $\comp^{\pr}$ is the entire stabilizer of the appropriate framing (Theorem \ref{thm:relative_monodromy}).

To deduce Theorem \ref{mainthm:absolute_monodromy} from the monodromy result for $\comp^{\pr}$ requires an understanding of the interactions between all three types of framed mapping class groups. Using the diagram of coverings \eqref{spaces} together with the structural results of Section \ref{section:absolute}, we conclude that the $\Mod^*_{g,n}$--valued monodromy of $\comp$ is exactly the framing stabilizer $\Mod_{g,n}^*[\phi]$ (Theorem \ref{thm:blowup_monodromy}). An application of Theorem \ref{theorem:absolute} together with a discussion of the permutation action of $\Mod_g^n[\phi]$ on $Z$ finishes the proof of Theorem \ref{mainthm:absolute_monodromy}.

The concluding Section \ref{section:corollaries} contains applications of our analysis to the classification of components of certain covers of strata (Corollaries \ref{cor:compOTlab} and \ref{cor:compOTpr}) as well as to the relationship between cylinders and the fundamental groups of strata (\S \ref{section:gencor}). This section also contains the proof of Corollary \ref{cor:when_saddle}.

\subsection{Acknowledgments}
Large parts of this work were accomplished when the authors were visiting MSRI for the ``Holomorphic Differentials in Mathematics and Physics'' program in the fall of 2019, and both authors would like to thank the venue for its hospitality, excellent working environment, and generous travel support. The first author gratefully acknowledges financial support from NSF grants DMS-1107452, -1107263, and -1107367 ``RNMS: Geometric Structures and Representation Varieties'' (the GEAR Network) as well as from NSF grant  DMS-161087.

The first author would like to thank Yair Minsky for his support and guidance. The authors would also like to thank Barak Weiss for prompting them to consider monodromy valued in the pronged mapping class group, as well as for alerting them to Boissy's work on prong--marked strata \cite{Boissy_framed}. They are grateful to Ursula Hamenst\"adt for some enlightening discussions and productive suggestions.

\section{Framings and framed mapping class groups}\label{section:framings}

\subsection{Framings}\label{subsection:framings}
We begin by recalling the basics of framed surfaces. Our conventions ultimately follow those of Randal-Williams \cite[Sections 1.1, 2.3]{RW}, but we have made some convenient cosmetic alterations and use language compatible with our previous papers \cite{salter_toric, CS_strata2}. See Remark \ref{remark:RW} below for an explanation of how to reconcile these two presentations. 

\para{Framings, (relative) isotopy} Let $\Sigma_{g,n}$ denote a compact oriented surface of genus $g$ with $n\ge1$ boundary components $\Delta_1, \dots, \Delta_n$. Through Section \ref{section:arcgraph} we will work exclusively with boundary components, but in Section \ref{section:absolute}, we will also consider surfaces equipped with marked points. We formulate our discussion here for surfaces with boundary components; we will briefly comment on the changes necessary to work with marked points in Section \ref{section:absolute}.

 Throughout this section we fix an orientation $\theta$ and a Riemannian metric $\mu$ of $\Sigma_{g,n}$, affording a reduction of the structure group of the tangent bundle $T\Sigma_{g,n}$ to $\SO(2)$. A {\em framing} of $\Sigma_{g,n}$ is an isomorphism of $\SO(2)$--bundles
\[
\phi: T\Sigma_{g,n} \to \Sigma_{g,n} \times \R^2.
\]
With $\theta, \mu$ fixed, framings are in one-to-one correspondence with nowhere-vanishing vector fields $\xi$; in the sequel we will largely take this point of view. In this language, we say that two framings $\phi, \psi$ are {\em isotopic} if the associated vector fields $\xi_\phi$ and $\xi_\psi$ are homotopic through nowhere-vanishing vector fields.
 
 Suppose that $\phi$ and $\psi$ restrict to the same framing $\delta$ of $\partial\Sigma_{g,n}$.
In this case, we say that $\phi$ and $\psi$ are {\em relatively isotopic} if they are isotopic through framings restricting to $\delta$ on $\partial\Sigma_{g,n}$. With a choice of $\delta$ fixed, we say that $\phi$ is a {\em relative framing} if $\phi$ is a framing restricting to $\delta$ on $\partial\Sigma_{g,n}$. 

\para{(Relative) winding number functions} Let $(\Sigma_{g,n}, \phi)$ be a (relatively) framed surface. We explain here how the data of the (relative) isotopy class of $\phi$ can be encoded in a topological structure known as a {\em (relative) winding number function}. Let $c: S^1 \to \Sigma_{g,n}$ be a $C^1$ immersion. Given two vectors $v,w \in T_x \Sigma_{g,n}$, we denote the angle (relative to the metric $\mu$) between $v,w$ by $\angle(v,w)$. We define the {\em winding number} $\phi(c)$ of $c$ as the degree of the ``Gauss map'' restricted to $c$:
\[
\phi(c) = \int_{S^1} d \angle(c'(t), \xi_\phi(c(t))).
\]
The winding number $\phi(c)$ is clearly an invariant of the isotopy class of $\phi$, and is furthermore an invariant of the isotopy class of $c$ as an immersed curve in $\Sigma_{g,n}$.\footnote{It is not, however, an invariant of the {\em homotopy} class of the map $c$, since the winding number will change under the addition or removal of small self-intersecting loops. In this paper we will be exclusively concerned with winding numbers of {\em embedded} curves and arcs, so we will not comment further on this. See \cite{HJ} for further discussion.} 

Possibly after altering $\phi$ by an isotopy, we can assume that each component $\Delta_i$ of $\partial\Sigma_{g,n}$ contains a point $p_i$ such that $\xi_{\phi}(p_i)$ is orthogonally inward-pointing. 
We call such a point $p_i$ a {\em legal basepoint} for $\Delta_i$. We emphasize that even though $\Delta_i$ may contain several legal basepoints, we choose {\em exactly one} legal basepoint on each $\Delta_i$, so that all arcs based at $\Delta_i$ are based at the same point.

Let $a: [0,1] \to \Sigma_{g,n}$ be a $C^1$ immersion with $a(0), a(1)$ equal to distinct legal basepoints $p_i, p_j$; assume further that $a'(0)$ is orthogonally inward-pointing and $a'(1)$ is orthogonally outward pointing. We call such an arc {\em legal}. Then the winding number
\[
\phi(a) : = \int_0^1 d \angle(a'(t), \xi_\phi(a(t)))
\]
is necessarily half-integral, and is invariant under the {\em relative} isotopy class of $\phi$ and under isotopies of $a$ through legal arcs. 

Thus a framing $\phi$ gives rise to an {\em absolute winding number function} which we denote by the same symbol. Let $\mathcal S$ denote the set of isotopy classes of oriented simple closed curves on $\Sigma_{g,n}$. Then the framing $\phi$ determines the winding number function
\[
\phi: \mathcal S \to \Z; \qquad x \mapsto \phi(x).
\]
Likewise, let $\mathcal S^+$ be the set obtained from $\mathcal S$ by adding the set of isotopy classes of legal arcs. Then $\phi$ also determines a {\em relative winding number function}
\[
\phi: \mathcal S^+ \to \tfrac{1}{2} \Z; \qquad x \mapsto \phi(x).
\] 

\para{Signature; holomorphic/meromorphic type} The {\em signature} of a framing $\delta$ of $\partial\Sigma_{g,n}$ (or of a framing $\phi$ of $\Sigma_{g,n}$ restricting to $\delta$ on $\partial\Sigma_{g,n}$) is the vector 
\[
\sig(\delta):=(\delta(\Delta_1), \dots \delta(\Delta_n)) \in \Z^n,
\]
where each $\Delta_i$ is oriented with $\Sigma_{g,n}$ lying to the left. A relative framing $\phi$ is said to be {\em of holomorphic type} if $\sig(\Delta_i) \le -1$ for all $i$ and is {\em of meromorphic type} otherwise. In Section \ref{subsec:flattoframed} we will see that if $\omega$ is an Abelian differential on a Riemann surface $X$, then the relative framing induced by $\omega$ is indeed of holomorphic type. Given a partition $\sing= (\kappa_1, \dots, \kappa_n)$ of $2g-2$, we say that a relative framing $\phi$ has signature $-1 - \sing$ if the boundary components have signatures $(-1 - \kappa_1, \dots, -1 - \kappa_n)$. 

\begin{remark}\label{remark:RW}
For the convenience of the reader interested in comparing the statements of this section with their counterparts in \cite{RW}, we briefly comment on the places where the two expositions diverge. We have used the term ``framing'' where Randal-Williams uses ``$\theta^r$--structure'' with $r = 0$, and we use the term ``(relative) winding number function''  where Randal-Williams uses an equivalent structure denoted ``$q_\xi$.'' Randal-Williams also adopts some different normalization conventions. If $x$ is a curve, then $q_\xi(x) = \phi(x) - 1$, and if $x$ is an arc, $q_\xi(x) = \phi(x) - 1/2$ (in particular, $q_\xi$ is integer-valued on arcs).
\end{remark}

The lemma below allows us to pass between framings and winding number functions. Its proof is a straightforward exercise in differential topology (c.f. \cite[Proposition 2.4]{RW}).
\begin{lemma}\label{lemma:framingWNF}
Let $\Sigma_{g,n}$ be a surface with $n \ge 1$ boundary components and let $\phi$ and $\psi$ be framings. Then $\phi$ and $\psi$ are isotopic as framings if and only if the associated absolute winding number functions are equal. If $\phi \mid_{\partial \Sigma_{g,n}} = \psi \mid_{\partial \Sigma_{g,n}}$, then $\phi$ and $\psi$ are relatively isotopic if and only if the associated relative winding number functions are equal. 

Moreover, if $\Sigma_{g,n}$ is endowed with the structure of a CW complex for which each $0$--cell is a legal basepoint and each $1$--cell is either isotopic to a simple closed curve or a legal arc, then $\phi$ and $\psi$ are (relatively) isotopic if and only if the (relative) winding numbers of each $1$--cell are equal.
\end{lemma}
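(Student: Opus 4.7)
The forward implications in each case are immediate from the invariance of winding numbers under (relative) isotopy, as already noted. For the converse, the plan is to translate the statement into a problem about homotopy classes of maps into $S^1$. Using the reduction of the structure group of $T\Sigma_{g,n}$ to $\SO(2)$ afforded by $(\theta,\mu)$, I would identify framings with nowhere-vanishing unit vector fields. Given framings $\phi, \psi$, define $F : \Sigma_{g,n} \to S^1 = \SO(2)$ by letting $F(x)$ be the rotation carrying $\xi_\phi(x)$ to $\xi_\psi(x)$. Then $\phi$ and $\psi$ are isotopic as framings iff $F$ is nullhomotopic; if $\phi|_{\partial} = \psi|_{\partial}$, then $F$ is constantly $1$ on $\partial\Sigma_{g,n}$, and $\phi,\psi$ are relatively isotopic iff $F$ is nullhomotopic rel $\partial\Sigma_{g,n}$.

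Next I would compute the effect of $F$ on $1$-dimensional (relative) cycles. Direct integration from the definitions yields that for a closed curve $c$,
\[
\psi(c) - \phi(c) = \int_c d\angle(\xi_\phi,\xi_\psi) = \deg(F \circ c),
\]
and the analogous identity holds for a legal arc $a$, with the right-hand side interpreted as the relative degree of $F \circ a$ (the orthogonality conventions at legal basepoints ensure that this is an integer, even though $\phi(a)$ and $\psi(a)$ are individually half-integral). Hence equality of the (relative) winding number functions is equivalent to $F_*$ vanishing on the classes of all simple closed curves (resp.\ of all simple closed curves and all legal arcs).

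Since $S^1 = K(\Z, 1)$, the homotopy class of $F$ is captured by the induced map $F_*$ on $H_1(\Sigma_{g,n}; \Z)$ in the absolute setting, and the relative homotopy class (assuming $F|_\partial \equiv 1$) is captured by the induced map on $H_1(\Sigma_{g,n}, \partial\Sigma_{g,n}; \Z)$. To finish the first two statements I would invoke the standard facts that $H_1(\Sigma_{g,n};\Z)$ is generated by classes of simple closed curves, while $H_1(\Sigma_{g,n}, \partial\Sigma_{g,n};\Z)$ is generated by simple closed curves together with legal arcs (one can exhibit a basis by taking a spanning tree of arcs among the $\Delta_i$, together with a symplectic basis of $H_1(\Sigma_{g,n})$). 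Thus equal winding number functions force $F_*$ to vanish on the appropriate $H_1$, which forces $F$ to be (relatively) nullhomotopic.

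For the ``moreover'' statement I would apply cellular approximation to replace $F$ by a cellular map with respect to the given CW structure on $\Sigma_{g,n}$ and the standard CW structure on $S^1$ with a single $0$- and $1$-cell; the (relative) homotopy class of a cellular map to $S^1$ is then determined by the degrees on the $1$-cells of $\Sigma_{g,n}$, which by the computation above coincide with the differences of winding numbers on each $1$-cell. The main subtlety to guard against is the bookkeeping around the relative setting: one needs to match the ``rel $\partial$'' condition on $F$ with the legality condition at the endpoints of arcs, and to verify that legal arcs together with simple closed curves do generate $H_1(\Sigma_{g,n}, \partial\Sigma_{g,n};\Z)$. Once this is set up, the argument reduces to standard obstruction theory for maps into an Eilenberg-MacLane space.
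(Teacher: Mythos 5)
The paper does not actually write out a proof of this lemma---it is dismissed as "a straightforward exercise in differential topology (c.f. [RW, Proposition 2.4])"---and your argument is precisely the standard exercise being alluded to: encode the difference of the two framings as a map $F:\Sigma_{g,n}\to S^1$, observe that (relative) nullhomotopy of $F$ is detected by its class in $H^1(\Sigma_{g,n};\Z)$ (resp.\ $H^1(\Sigma_{g,n},\partial\Sigma_{g,n};\Z)$) since $S^1$ is a $K(\Z,1)$, and identify the pairing of that class with a curve or legal arc as the difference of winding numbers. Your proof is correct, including the points that need care (integrality of the relative degree on legal arcs, and that simple closed curves together with legal arcs generate relative $H_1$, e.g.\ via a distinguished geometric basis), so it matches the intended argument.
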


\begin{remark} Following Lemma \ref{lemma:framingWNF}, we will be somewhat lax in our terminology. Often we will use the term ``(relative) framing'' to refer to the entire (relative) isotopy class, or else conflate the (relative) framing with the associated (relative) winding number function.
\end{remark}

\para{Properties of (relative) winding number functions} The terminology of ``winding number function'' originates with the work of Humphries and Johnson \cite{HJ} (although we are discussing what they call {\em generalized} winding number functions). We recall here some properties of winding number functions which they identified. \footnote{In the non-relative setting.}

\begin{lemma}\label{lemma:HJ}
Let $\phi$ be a relative winding number function on $\Sigma_{g,n}$ associated to a relative framing of the same name. Then $\phi$ satisfies the following properties.
\begin{enumerate}
\item \label{item:TL} (Twist--linearity) Let $a \subset \Sigma_{g,n}$ be a simple closed curve, oriented arbitrarily. Then for any $x \in \mathcal S^+$,
\[
\phi(T_a(x)) = \phi(x) + \pair{x,a} \phi(a),
\]
where $\pair{\cdot, \cdot}: H_1(\Sigma_{g,n}, \partial \Sigma_{g,n};\Z) \times H_1(\Sigma_{g,n}; \Z) \to \Z$ denotes the relative algebraic intersection pairing.
\item \label{item:HC} (Homological coherence) Let $S \subset \Sigma_{g,n}$ be a subsurface with boundary components $c_1, \dots, c_k$, oriented so that $S$ lies to the left of each $c_i$. Then
\[
\sum_{i = 1}^k \phi(c_i) = \chi(S),
\]
where $\chi(S)$ denotes the Euler characteristic. 
\end{enumerate}
\end{lemma}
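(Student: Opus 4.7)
The plan is to combine two ingredients: (i) the classical computations of Humphries--Johnson \cite{HJ}, which establish both properties for the absolute winding number of a simple closed curve on a closed (or punctured) surface, and (ii) the fact that $\phi$ is additive under smoothed concatenation, which allows us to extend the twist-linearity computation to legal arcs. Once this additivity is in hand, both (\ref{item:TL}) and (\ref{item:HC}) reduce to essentially local, pictorial calculations.

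For twist-linearity (\ref{item:TL}), the first step is to record the key additivity property: if $c_1,c_2$ meet at a single point $p$ with $c_1'(1) = c_2'(0)$, then $\phi(c_1 \cdot c_2) = \phi(c_1) + \phi(c_2)$, directly from the definition of $\phi$ as a degree integral. I would then put $a$ and $x$ in minimal position and describe $T_a(x)$ as a curve (respectively arc) obtained by resolving each intersection $p \in a\cap x$: insert a parallel copy of $a^{\epsilon_p}$, where $\epsilon_p = \pm 1$ is the local sign, and smooth. After smoothing, additivity gives $\phi(T_a(x)) = \phi(x) + \sum_p \epsilon_p \phi(a)$, and $\sum_p \epsilon_p = \pair{x,a}$ by the definition of the relative algebraic intersection pairing. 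For $x$ a simple closed curve this recovers the Humphries--Johnson computation; for $x$ a legal arc the only additional verification is that the endpoint data at $\partial \Sigma_{g,n}$ is unchanged by the resolution, which holds because the legal basepoints lie away from $a$.

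For homological coherence (\ref{item:HC}), the cleanest approach is via Poincaré--Hopf applied to the restriction $\xi_\phi|_S$, which is a nowhere-vanishing vector field on $S$. Along each boundary component $c_i$, the winding number $\phi(c_i)$ is precisely the rotation of $c_i'$ relative to $\xi_\phi$, and with the orientation convention that $S$ lies to the left of each $c_i$, the standard relative Poincaré--Hopf formula (or equivalently a flat Gauss--Bonnet computation, using a flat metric singular only at isolated interior points compatible with $\xi_\phi$) yields $\sum_i \phi(c_i) = \chi(S)$. Alternatively, one can give an inductive proof: verify the identity on a disk (where $\phi(c_1)=1$) and on an annulus (where $\phi(c_1)+\phi(c_2)=0$), and then reduce the general case by cutting $S$ along a properly embedded arc, using additivity of both sides under gluing.

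The main obstacle I anticipate is bookkeeping rather than conceptual: in the arc case of (\ref{item:TL}), one must check that the geometric resolution at each $p \in a \cap x$ can be carried out through legal arcs, so that the relative isotopy class is preserved and winding number remains well-defined as a half-integer. This is a local check near each intersection point and near each legal basepoint, and poses no real difficulty.
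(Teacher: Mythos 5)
The paper does not actually prove this lemma: it is stated as a recollection of properties identified by Humphries--Johnson, with a citation to \cite{HJ} and a footnote acknowledging that \cite{HJ} works only in the non-relative setting. Your proposal therefore supplies an argument where the paper supplies a reference, and the argument you give is correct and is essentially the Humphries--Johnson computation together with the routine extension to legal arcs. Two small points deserve explicit care in a write-up. First, in the twist-linearity step, additivity of $\phi$ under concatenation only applies after you smooth the corners where $x$ enters and exits the inserted parallel copy of $a^{\epsilon_p}$, and you should verify that the turning contributed by these two corners cancels; this is not automatic for every surgery operation --- compare the curve--arc sum formula (Lemma \ref{lemma:cas} in the paper), where an analogous smoothing contributes a net $+1$. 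For the Dehn twist the entering and exiting corners turn by opposite amounts, so the net correction is zero and the formula $\phi(T_a(x)) = \phi(x) + \sum_p \epsilon_p\,\phi(a)$ holds as you claim; also note that $\epsilon_p\,\phi(a)$ is independent of the chosen orientation of $a$, consistent with ``oriented arbitrarily.'' Second, in the inductive proof of homological coherence, when you cut $S$ along a properly embedded arc the two traversals of the arc cancel in the winding-number sum but the four smoothed corners contribute a total of $+1$, matching the increase of $\chi$ by $1$; stating this bookkeeping explicitly makes the induction airtight. With those details filled in, your proof is complete and self-contained, which is arguably preferable to the paper's bare citation given that the relative/arc case is not literally in \cite{HJ}.
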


\para{Functoriality} In the body of the argument we will have occasion to consider maps between surfaces equipped with framings and related structures. We record here some relatively simple observations about this. Firstly, as we have already implicitly used, if $S \subset \Sigma_{g,n}$ is a subsurface, any (isotopy class of) framing $\phi$ of $\Sigma_{g,n}$ restricts to a (isotopy class of) framing of $S$. There is a converse as well; the proof is an elementary exercise in differential topology.

\begin{lemma}\label{lemma:extension}
Let $S \subset \Sigma_{g,n}$ be a subsurface, and let $\phi$ be a framing of $S$. Enumerate the components of $\Sigma_{g,n} \setminus S$ as $S_1, \dots, S_k$. Call such a component {\em relatively closed} if $\partial S_i \subset \partial S$. Then $\phi$ extends to a framing $\tilde \phi$ of $\Sigma_{g,n}$ if and only if for each relatively closed component $S_i$, there is an equality
\[
\sum_{c\text{ a component of }\partial S_i}\phi(c) = \chi(S_i),
\]
with each $c$ oriented with $S_i$ to the left. 

In particular, suppose that $S = \Sigma_{g,n} \setminus D$, where $D$ is an embedded disk, and let $\phi$ be a framing of $S$. Then $\phi$ extends over $D$ to give a framing of $\Sigma_{g,n}$ if and only if $\phi(\partial D) = 1$ when $\partial D$ is oriented with $D$ to the left. 
\end{lemma}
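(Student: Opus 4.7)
The plan is to prove necessity by a direct application of the homological coherence property from Lemma \ref{lemma:HJ}, and to prove sufficiency by extending $\phi$ component-by-component using an obstruction-theoretic argument, reducing ultimately to the disk case which is essentially Poincar\'e--Hopf.

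For necessity, suppose $\tilde\phi$ is a framing of $\Sigma_{g,n}$ extending $\phi$, and let $S_i$ be a relatively closed component. Since $\partial S_i \subset \partial S$, the values of $\tilde\phi$ on $\partial S_i$ agree with those of $\phi$. Applying Lemma \ref{lemma:HJ}(\ref{item:HC}) to the framing $\tilde\phi$ and the subsurface $S_i$ immediately yields $\sum_c \phi(c) = \chi(S_i)$, with each $c$ oriented with $S_i$ on its left. Specializing to $S_i = D$ with $\chi(D) = 1$ recovers the ``only if'' direction of the second half of the statement as well.

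For sufficiency, I would treat each component $S_i$ of $\Sigma_{g,n}\setminus S$ independently. Since $S_i$ is a surface with nonempty boundary, its tangent bundle is trivializable, and after fixing such a trivialization, framings of $S_i$ correspond to maps $S_i \to S^1$. Because $S_i$ has the homotopy type of a $1$-complex, obstruction theory produces a single primary obstruction to extending a prescribed map $\partial S_i \to S^1$, living in $H^2(S_i, \partial S_i; \Z) \cong \Z$. The key computation identifies this obstruction with the quantity $\chi(S_i) - \sum_c \phi(c)$, summing over boundary components of $S_i$ oriented with $S_i$ on the left. If $S_i$ is relatively closed, the boundary data is entirely prescribed and the hypothesis of the lemma says exactly that this obstruction vanishes. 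If $S_i$ is not relatively closed, at least one boundary component of $S_i$ lies in $\partial\Sigma_{g,n}$, where the framing is unconstrained; I would prescribe the winding numbers on those free boundary circles so that the total winding around $\partial S_i$ equals $\chi(S_i)$, thereby killing the obstruction. The disk case of the second half then drops out as the special case $S_i = D$.

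The main obstacle is the identification of the obstruction class in $H^2(S_i,\partial S_i;\Z)$ with the numerical defect $\chi(S_i)-\sum_c \phi(c)$. Rather than appeal to general obstruction theory, the cleanest route is to first establish the disk case directly (a non-vanishing vector field on $\partial D$ extends over $D$ iff its winding number relative to a trivialization of $TD\mid_{\partial D}$ is zero, which under the conventions of Section \ref{subsection:framings} translates to $\phi(\partial D) = 1$) and then reduce the general case to it: cut $S_i$ along a maximal system of properly embedded arcs with endpoints on $\partial S_i$ so that the complement is a disk $D'$, extend $\phi$ across each arc (a trivial $1$-dimensional extension problem), and compute via additivity of winding numbers that $\phi(\partial D') = 1$ precisely when $\sum_c \phi(c) = \chi(S_i)$. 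Some care is required to keep track of half-integer contributions from legal arcs and to match the orientation conventions, but no deeper input beyond Lemma \ref{lemma:framingWNF} and the disk case is needed.
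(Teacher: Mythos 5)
Your argument is correct. Note that the paper does not actually supply a proof of this lemma --- it is explicitly left as ``an elementary exercise in differential topology'' --- and your write-up is precisely the intended argument: necessity via homological coherence (Lemma \ref{lemma:HJ}.\ref{item:HC}) applied to the relatively closed complementary components, and sufficiency by trivializing $TS_i$, identifying the single obstruction in $H^2(S_i,\partial S_i;\Z)\cong\Z$ with the defect $\chi(S_i)-\sum_c\phi(c)$ (or equivalently by cutting $S_i$ along arcs to a disk and invoking the degree-zero extension criterion), with the free boundary circles on $\partial\Sigma_{g,n}$ used to absorb the obstruction for components that are not relatively closed. The only place needing care is the one you already flag --- the corner contributions when cutting to a disk and the sign conventions relating $\phi(c)$ to the degree relative to the chosen trivialization --- and these are fixed by checking consistency with $\phi(\partial D)=1=\chi(D)$ in the disk case.
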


\para{Closed surfaces; $\mathbf{r}$--spin structures} For $g \ge 2$, the closed surface $\Sigma_g$ does not admit any nonvanishing vector fields, but there is a ``mod $r$ analogue'' of a framing called an {\em $r$--spin structure}. As $r$--spin structures will play only a passing role in the arguments of this paper (c.f. Section \ref{section:minstrat}), we present here only the bare bones of the theory. See \cite[Section 3]{salter_toric} for a much more complete discussion.
\begin{definition}\label{definition:rspin}
Let $\Sigma_g$ be a closed surface, and as above, let $\mathcal S$ denote the set of oriented simple closed curves on $\Sigma_g$. An {\em $r$--spin structure} is a function
\[
\widehat \phi: \mathcal S \to \Z/r\Z
\]
that satisfies the twist--linearity and homological coherence properties of Lemma \ref{lemma:HJ}.
\end{definition}

Above we saw how a nonvanishing vector field $\xi_\phi$ on $\Sigma_{g,n}$ gives rise to a winding number function $\phi$ on $\Sigma_{g,n}$. Suppose now that $\xi$ is an arbitrary vector field on $\Sigma_g$ with isolated zeroes $p_1, \dots, p_k$. For $i = 1, \dots, k$, let $\gamma_i$ be a small embedded curve encircling $p_i$ (oriented with $p_i$ to the left) and define
\[
r = \gcd\{WN_\xi(\gamma_i) -1 \mid 1 \le i \le k\},
\]
where $WN_\xi(\gamma_i)$ means to take the $\Z$-valued winding number of $\gamma_i$ viewed as a curve on $\Sigma_g \setminus \{p_1, \dots, p_n\}$ endowed with the framing given by $\xi$. Then it can be shown (c.f. \cite[Section 1]{HJ}) that the map
\[
\widehat \phi: \mathcal S \to \Z/r\Z; \qquad \phi(x) = WN_\xi(x) \pmod r
\]
determines an $r$--spin structure. 

\subsection{The action of the mapping class group}\label{section:framingaction}
Recall that the mapping class group $\Mod_{g,n}$  of $\Sigma_{g,n}$ is the set of isotopy classes of self--homeomorphisms of $\Sigma_{g,n}$ which restrict to the identity on $\partial \Sigma_{g,n}$. Therefore $\Mod_{g,n}$ acts on the set of relative (isotopy classes of) framings, and hence the set of relative winding number functions, by pullback. As we require $\Mod_{g,n}$ to act from the left, there is the formula 
\[
(f \cdot \phi)(x) = \phi(f^{-1}(x)).
\]
 
We recall here the basic theory of this action, as developed by Randal-Williams \cite[Section 2.4]{RW}. Throughout this section, we fix a framing $\delta$ of $\partial \Sigma_{g,n}$ and may therefore choose a legal basepoint on each boundary component once and for all.

\para{The (generalized) Arf invariant} The $\Mod_{g,n}$ orbits of relative framings are classified by a generalization of the classical Arf invariant. To define this, we introduce the notion of a {\em distinguished geometric basis} for $\Sigma_{g,n}$. For $i = 1, \dots, n$, let $p_i$ be a legal basepoint on the $i^{th}$ boundary component of $\Sigma_{g,n}$. A {\em distinguished geometric basis} is a collection 
\[
\mathcal B = \{x_1, y_1, \dots, x_g, y_g\} \cup \{a_2, \dots, a_{n}\}
\]
of $2g$ oriented simple closed curves $x_1, \dots, y_g$ and $n-1$ legal arcs $a_2, \dots, a_{n}$ that satisfy the following intersection properties.
\begin{enumerate}
\item $\pair{x_i, y_i} = i(x_i, y_i) = 1$ (here $i(\cdot, \cdot)$ denotes the geometric intersection number) and all other pairs of elements of $\{x_1, \dots, y_g\}$ are disjoint.
\item Each arc $a_i$ is a legal arc running from $p_1$ to $p_{i}$ and is disjoint from all curves $x_1, \dots, y_g$.
\item The arcs $a_i, a_j$ are disjoint except at the common endpoint $p_1$. 
\end{enumerate}
\begin{remark}\label{remark:gsbCW}
A distinguished geometric basis can easily be used to determine a CW--structure on $\Sigma_{g,n}$ satisfying the hypotheses of Lemma \ref{lemma:framingWNF}. In particular, a (relative) winding number function (and hence the associated (relative) isotopy class of framing) is determined by its values on a distinguished geometric basis. Moreover, for any vector $(w_1, \dots, w_{2g+n-1}) \in \Z^{2g} \times (\Z+\tfrac{1}{2})^{n-1}$, there exists a framing $\phi$ of $\Sigma_{g,n}$ realizing the values $(w_1, \dots, w_{2g+n-1})$ on a chosen distinguished geometric basis (this is a straightforward construction). 
\end{remark}

Let $\mathcal B$ be a distinguished geometric basis for the framed surface $(\Sigma_{g,n}, \phi)$. The {\em Arf invariant} of $\phi$ relative to $\mathcal B$ is the element of $\Z/2\Z$ given by
\begin{equation}\label{equation:arf}
\Arf(\phi, \mathcal B) = \sum_{i = 1}^g (\phi(x_i)+1)(\phi(y_i) + 1) + \sum_{j = 2}^{n} (\phi(a_j)+\tfrac{1}{2})(\phi(\Delta_j)+1) \pmod 2
\end{equation}
(compare to \cite[(2.4)]{RW}).

\begin{lemma}[c.f. Proposition 2.8 of \cite{RW}]\label{lemma:invariant}
Let $(\Sigma_{g,n}, \phi)$ be a framed surface. Then the Arf invariant $\Arf(\phi, \mathcal B)$ is independent of the choice of distinguished geometric basis $\mathcal B$.
\end{lemma}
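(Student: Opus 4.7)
The strategy is to reinterpret the right-hand side of \eqref{equation:arf} as the evaluation on $\mathcal{B}$ of an intrinsic algebraic invariant of $\phi$, so that basis-independence follows from standard $\mathbb{F}_2$-linear algebra. Concretely, consider the $\mathbb{F}_2$-vector space $V = H_1(\Sigma_{g,n}, P; \mathbb{F}_2)$, where $P = \{p_1, \dots, p_n\}$ is the set of legal basepoints, endowed with the mod-2 relative intersection pairing $\pair{\cdot,\cdot}$. Define $q_\phi : V \to \mathbb{F}_2$ by $q_\phi([c]) = \phi(c)+1 \pmod 2$ on curve classes and $q_\phi([a]) = \phi(a) + \tfrac{1}{2} \pmod 2$ on arc classes. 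The curves $x_i, y_i$ and arcs $a_j$ of any distinguished geometric basis $\mathcal{B}$ descend to a basis of $V$.

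The first step is to show that $q_\phi$ is well-defined on $V$. Two homologous (relative) cycles are related by handle slides and, in the arc case, by concatenation with boundary-parallel pieces; twist--linearity (Lemma \ref{lemma:HJ}\eqref{item:TL}) controls how $\phi$ changes under slides, and homological coherence (Lemma \ref{lemma:HJ}\eqref{item:HC}) ensures that the net change is even, so the reduction to $\mathbb{F}_2$ is unambiguous. The second step is to establish the quadratic refinement identity
\[
q_\phi(\alpha + \beta) = q_\phi(\alpha) + q_\phi(\beta) + \pair{\alpha, \beta} \pmod 2,
\]
which I would verify geometrically: pick simple representatives of $\alpha$ and $\beta$ meeting transversally, represent $\alpha+\beta$ by a band sum obtained by smoothing the intersection points in a manner compatible with the framing, and compute the winding number of the result using twist--linearity together with the local rotation contributed by each smoothed crossing. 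Once $q_\phi$ is known to be a quadratic refinement of $\pair{\cdot, \cdot}$, the handle portion of \eqref{equation:arf} is recognized as the classical Arf invariant of $q_\phi$ restricted to the symplectic subspace spanned by $\{x_1, y_1, \dots, x_g, y_g\}$, while the arc portion is a coupling between $q_\phi$ on the arc classes and the \emph{fixed} boundary data $\phi(\Delta_j)$ that is determined by $\delta$ rather than by $\mathcal{B}$. Basis-independence of the handle contribution is the classical Arf theorem; basis-independence of the arc contribution follows from the quadratic refinement, since any two admissible systems of arcs differ by a sequence of transvections that $q_\phi$ absorbs via the refinement identity.

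The main obstacle is the arc part. Because arcs carry half-integral winding numbers tethered to the legal basepoints, each smoothing at a basepoint produces a boundary-rotation contribution of $\pm \tfrac{1}{2}$ that must be tracked carefully in the quadratic refinement computation; the normalization constants $+1$ for curves and $+\tfrac{1}{2}$ for arcs in the definition of $q_\phi$ are chosen precisely so that these contributions cancel with the $\chi$-terms from homological coherence. As a less conceptual but more direct alternative, one can generate all transitions between distinguished geometric bases by a finite list of elementary moves (orientation reversals, handle swaps, transvections $y_i \mapsto T_{x_i}(y_i)$, and arc replacements $a_j \mapsto T_c(a_j)$) and verify by explicit calculation using twist--linearity that \eqref{equation:arf} is preserved mod $2$ under each; this bypasses the need to develop the $\mathbb{F}_2$-quadratic formalism in the relative setting at the cost of a lengthier case analysis.
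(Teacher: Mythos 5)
The paper does not actually prove this lemma --- it is imported from Randal-Williams (Proposition 2.8 of \cite{RW}), so there is no in-house argument to compare against. Your strategy of reinterpreting \eqref{equation:arf} as the classical Arf invariant of a quadratic refinement $q_\phi$ of a mod-$2$ intersection pairing is the standard route and is the right idea, but two points need repair. First, the basis claim is false as stated: a distinguished geometric basis has $2g+(n-1)$ elements, whereas $H_1(\Sigma_{g,n},P;\F_2)$ has dimension $2g+2(n-1)$, so $\mathcal B$ does not descend to a basis of $V$. The fix is to adjoin the boundary classes $[\Delta_2],\dots,[\Delta_n]$: then $\{x_i,y_i\}_{i=1}^g\cup\{a_j,\Delta_j\}_{j=2}^n$ is a symplectic basis for $V$ (one checks $\pair{a_j,\Delta_k}=\delta_{jk}$ and that all other new pairings vanish), and \eqref{equation:arf} is \emph{literally} $\sum q_\phi(e)q_\phi(f)$ summed over hyperbolic pairs. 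This actually simplifies your argument --- there is no need to treat the ``arc portion'' separately via transvections, since once $q_\phi$ is known to be a quadratic refinement the whole expression is the classical Arf invariant and basis-independence over all symplectic bases (a fortiori over those coming from distinguished geometric bases, which all share the same $\Delta_j$-part) is immediate.

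Second, and more seriously, the well-definedness of $q_\phi$ on homology classes together with the refinement identity is the entire mathematical content of the lemma, and ``two homologous cycles are related by handle slides \dots the net change is even'' is not a proof. The choice of coefficient pair matters here: $q_\phi$ does \emph{not} descend to $H_1(\Sigma_{g,n},\partial\Sigma_{g,n};\F_2)$, since $a$ and $T_{\Delta_j}(a)$ are homologous rel boundary but have winding numbers differing by $\phi(\Delta_j)$, which may be odd; it is only on $H_1(\Sigma_{g,n},P;\F_2)$, where $[T_{\Delta_j}(a)]=[a]+[\Delta_j]\neq[a]$, that consistency is even possible, and your write-up never confronts this. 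Establishing well-definedness requires a Johnson-type argument enumerating the moves that realize a mod-$2$ homology between simple representatives (twist-linearity plus homological coherence handle each individual move, as you say, but one must prove the list of moves suffices), and the arc case needs its own treatment because of the half-integer normalization. Your fallback of checking \eqref{equation:arf} against a finite list of elementary basis moves has the same shape of gap: it is only a proof once one shows (e.g.\ via the change-of-coordinates principle and a generating set for $\Mod_{g,n}$) that those moves connect any two distinguished geometric bases. So the architecture is sound and matches the standard treatment, but the crux is asserted rather than proved.
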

\begin{remark}
We caution the reader that while the Arf invariant does not depend on the choice of basis, it {\em does} depend on the choice of legal basepoints on each boundary component. Since $\Mod_{g,n}$ fixes the boundary pointwise it preserves our choice of legal basepoint on each boundary component.
\end{remark}

Following Lemma \ref{lemma:invariant}, we write $\Arf(\phi)$ to indicate the Arf invariant $\Arf(\phi, \mathcal B)$ computed on an arbitrary choice of distinguished geometric basis. The next result shows that for $g \ge 2$, the Arf invariant classifies $\Mod_{g,n}$ orbits of relative isotopy classes of framings.

\begin{proposition}[c.f. Theorem 2.9 of \cite{RW}]\label{proposition:orbits}
Let $g \ge 2$ and $n \ge 1$ be given, and let $\phi, \psi$ be two relative framings of $\Sigma_{g,n}$ which agree on $\partial \Sigma_{g,n}$. Then there is an element $f \in \Mod_{g,n}$ such that $f\cdot \psi = \phi$ if and only if $\Arf(\phi) = \Arf(\psi)$. 
\end{proposition}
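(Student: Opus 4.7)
The necessity direction follows quickly from Lemma~\ref{lemma:invariant}: if $f\cdot\psi=\phi$ and $\mathcal{B}$ is any distinguished geometric basis, then $f^{-1}(\mathcal{B})$ is again a distinguished geometric basis since $f$ fixes $\partial\Sigma_{g,n}$ pointwise, preserving legal basepoints and intersection patterns. The formula \eqref{equation:arf} then gives $\Arf(\phi,\mathcal{B})=\Arf(\psi,f^{-1}(\mathcal{B}))$, which by Lemma~\ref{lemma:invariant} equals $\Arf(\psi,\mathcal{B})$, so $\Arf(\phi)=\Arf(\psi)$.

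For the sufficiency direction, the plan is to fix a distinguished geometric basis $\mathcal{B}=\{x_1,y_1,\ldots,x_g,y_g,a_2,\ldots,a_n\}$ and apply Remark~\ref{remark:gsbCW}: it suffices to exhibit some $f\in\Mod_{g,n}$ such that $f\cdot\psi$ has the same winding numbers as $\phi$ on every element of $\mathcal{B}$. The main tool is the twist-linearity property (Lemma~\ref{lemma:HJ}), which describes the effect of a Dehn twist $T_c$ on the winding number of a curve or arc in terms of $\phi(c)$ and the algebraic intersection pairing. My plan is to construct, for each basis element $z\in\mathcal{B}$, a ``surgical'' simple closed curve $c_z$ with the following two properties: (i) $c_z$ meets $z$ once and is disjoint from every other element of $\mathcal{B}$, and (ii) $\phi(c_z)=1$. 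Granted such curves, the Dehn twist $T_{c_z}$ modifies the winding number of $z$ alone, by $\pm 1$, and composing such twists lets us freely adjust the winding numbers on $\mathcal{B}$. A direct check using \eqref{equation:arf} then confirms that the set of tuples achievable in this way is exactly the level set of the Arf invariant containing the tuple of $\psi$; since $\Arf(\phi)=\Arf(\psi)$, this level set also contains the tuple of $\phi$, completing the reduction.

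The principal obstacle is the construction of the curves $c_z$: we need a simple closed curve meeting a prescribed basis element once, missing all others, and realizing a prescribed winding number. For the case $z = x_i$ or $z = y_i$, the hypothesis $g\ge 2$ ensures the existence of a genus-$1$ subsurface in the complement of $\mathcal{B}\setminus\{z\}$, inside which one can execute the standard change-of-coordinates surgery on a transverse curve to $z$ to adjust its winding number by any desired integer. For the arc case $z=a_j$, the surgery must be carried out in a neighborhood of $\Delta_j$ disjoint from the other legal basepoints and from the rest of $\mathcal{B}$; homological coherence (Lemma~\ref{lemma:HJ}) guarantees that the desired value $\phi(c_z)=1$ is consistent with the prescribed signature of $\phi$ on $\partial\Sigma_{g,n}$. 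With the curves $c_z$ in hand, the remainder of the argument is bookkeeping using the explicit formula \eqref{equation:arf}, following the strategy of \cite[Theorem 2.9]{RW}.
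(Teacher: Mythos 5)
Your necessity argument is correct. The sufficiency direction, however, rests on a construction that is impossible in general: the ``surgical'' curves $c_z$ with $\phi(c_z)=1$, meeting one basis element once and missing all the others, need not exist, and if they always did your argument would prove too much. By twist-linearity and homological coherence (Lemma~\ref{lemma:HJ}), the function $x \mapsto \phi(x)+1 \bmod 2$ descends to a $\Z/2$--quadratic form $q$ on $H_1(\Sigma_{g,n};\Z/2)$ refining the intersection pairing; in particular the parity of $\phi(c)$ depends only on $[c] \in H_1(\Sigma_{g,n};\Z/2)$. Your condition (i) forces $[c_{x_i}] = [y_i]$ plus a combination of boundary classes, so the parity of $\phi(c_{x_i})$ is determined by $\phi(y_i)$ and the boundary signature. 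When $n=1$ and $\phi(y_i)$ is even, every simple closed curve meeting $x_i$ once and disjoint from the rest of $\mathcal{B}$ has even winding number, and no $c_{x_i}$ with $\phi(c_{x_i})=1$ exists. Concretely, on $\Sigma_{2,1}$ with $(\phi(x_1),\phi(y_1),\phi(x_2),\phi(y_2))=(0,0,0,0)$, your $T_{c_{x_1}}$ would carry this framing to one with tuple $(\pm1,0,0,0)$; formula \eqref{equation:arf} gives Arf invariants $0$ and $1$ respectively, contradicting the invariance you established in your first paragraph (and Lemma~\ref{lemma:invariant}). More globally, if each entry of the tuple could be shifted by $\pm1$ independently then every tuple would be reachable and all framings with fixed boundary data would be equivalent, which is exactly what the proposition denies.

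The real content of the sufficiency direction is to identify which tuples \emph{are} reachable. One must twist along curves whose winding numbers are themselves constrained by the current tuple (curves in the classes $[x_i]$, $[y_i]$, $[x_i]+[y_i]$, classes mixing two handles, and boundary-parallel corrections for the arcs), and then verify that the resulting moves act transitively on each level set of the Arf invariant; this requires the hypothesis $g\ge 2$ in an essential way and a genuine bookkeeping induction. The paper does not reprove this --- it cites Theorem 2.9 of Randal-Williams, whose proof carries out precisely that analysis --- so your proposal is not a repackaging of the paper's argument but an independent attempt, and as written it does not close.
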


For surfaces of genus $1$, the action is more complicated. In this work we will only need to study the case of one boundary component; this was treated by Kawazumi \cite{kawazumi}. Let $(\Sigma_{1,1}, \phi)$ be a framed surface. Consider the set
\[
\mathfrak a(\phi) = \{\phi(x) \mid x \subset \Sigma_{1,1}\mbox{ an oriented s.c.c}\}.
\]
The twist-linearity formula (Lemma \ref{lemma:HJ}.\ref{item:TL}) implies that $\mathfrak a(\phi)$ is in fact an ideal of $\Z$. We define the {\em genus-$1$ Arf invariant} of $\phi$ to be the unique nonnegative integer $\Arf_1(\phi) \in \Z_{\ge 0}$ such that 
\begin{equation}\label{equation:arf1}
\mathfrak a (\phi) = \Arf_1(\phi) \Z.
\end{equation}
\begin{remark}
The normalization conventions for $\Arf$ as in \eqref{equation:arf} and $\Arf_1$ as in \eqref{equation:arf1} are different. In Section \ref{section:septwist} we will reconcile them, but here we chose to present the ``natural'' definition of $\Arf_1$.
\end{remark}

\begin{lemma}[c.f. Theorem 0.3 of \cite{kawazumi}]\label{lemma:genus1arf}
Let $\phi$ and $\psi$ be relative framings of $\Sigma_{1,1}$. Then there is $f \in \Mod_{1,1}$ such that $f \cdot \psi = \phi$ if and only if $\Arf_1(\phi) = \Arf_1(\psi)$. 
\end{lemma}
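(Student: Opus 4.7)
The ``only if'' direction is immediate: $\Mod_{1,1}$ acts on the set of isotopy classes of non--peripheral oriented simple closed curves, so $\mathfrak{a}(f\cdot\phi) = \mathfrak{a}(\phi)$ for every $f \in \Mod_{1,1}$, and hence $\Arf_1$ is an invariant of the orbit.

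For the ``if'' direction, the plan is to identify $\Arf_1(\phi)$ with $\gcd(\phi(x_1), \phi(y_1))$ for a distinguished geometric basis $\{x_1, y_1\}$ on $\Sigma_{1,1}$, and then to exploit the surjection $\Mod_{1,1} \twoheadrightarrow \SL_2(\Z)$ together with the transitivity of the $\SL_2(\Z)$--action on integer pairs of fixed $\gcd$. First I would fix such a basis (no arcs are needed since $n=1$) and write $\alpha := \phi(x_1)$, $\beta := \phi(y_1)$. By Lemma \ref{lemma:framingWNF} and Remark \ref{remark:gsbCW}, a relative framing on $\Sigma_{1,1}$ is determined up to relative isotopy by the pair $(\alpha, \beta) \in \Z^2$, and every such pair is realized. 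Moreover, homological coherence forces $\phi(\partial\Sigma_{1,1}) = \chi(\Sigma_{1,1}) = -1$, so $\phi$ and $\psi$ automatically agree on $\partial\Sigma_{1,1}$ and there is no issue of relative isotopy versus isotopy.

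The central step is to prove that $\mathfrak{a}(\phi) = \alpha\Z + \beta\Z$. The inclusion $\supseteq$ follows from twist--linearity applied to iterates of the basis twists: $\phi(T_{y_1}^k x_1) = \alpha + k\beta$ and $\phi(T_{x_1}^k y_1) = \beta + k\alpha$ for all $k \in \Z$. For $\subseteq$, every non--peripheral oriented simple closed curve $x \subset \Sigma_{1,1}$ has the form $x = f(x_1)$ for some $f \in \Mod_{1,1}$ (the standard change of coordinates on the torus). Factoring $f$ as a product of Dehn twists $T_{c_k}\cdots T_{c_1}$ and applying twist--linearity inductively, with each intermediate curve having $\phi$--value in $\alpha\Z + \beta\Z$ by the induction hypothesis, yields $\phi(x) \in \alpha\Z + \beta\Z$. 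Therefore $\Arf_1(\phi) = \gcd(\alpha, \beta)$ (with the convention $\gcd(0,0) = 0$).

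To conclude, the homological representation $\Mod_{1,1} \to \SL_2(\Z)$ is surjective, and repeating the inductive argument of the previous paragraph shows that for each $f \in \Mod_{1,1}$ the pair $((f\cdot\phi)(x_1), (f\cdot\phi)(y_1))$ is obtained from $(\alpha,\beta)$ by the $\SL_2(\Z)$--action induced by $f^{-1}_*$. Since $\SL_2(\Z)$ acts transitively on the set of integer vectors of any fixed $\gcd$, any two framings with $\Arf_1(\phi) = \Arf_1(\psi)$ can be matched on the basis $\{x_1, y_1\}$ by some $f \in \Mod_{1,1}$, whence $f \cdot \psi = \phi$ by Lemma \ref{lemma:framingWNF}. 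I expect the main obstacle to be the verification that $\mathfrak{a}(\phi) = \alpha\Z + \beta\Z$, since it amounts to a ``linearity'' statement for $\phi$ on primitive homology classes that does not follow from twist--linearity in one shot but requires the inductive factorization argument together with the $\SL_2(\Z)$--generation of the homological image of $\Mod_{1,1}$.
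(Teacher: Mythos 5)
Your proposal is essentially correct, but note that the paper offers no proof of this lemma at all: it is imported wholesale from Kawazumi (Theorem 0.3 of \cite{kawazumi}), so there is no internal argument to compare against. Your self-contained derivation --- identify a relative framing of $\Sigma_{1,1}$ with the pair $(\alpha,\beta) = (\phi(x_1),\phi(y_1))$ via Remark \ref{remark:gsbCW} (for $n=1$ there are no legal arcs, so this pair does determine the relative isotopy class), show $\Arf_1(\phi) = \gcd(\alpha,\beta)$, and then observe that $T_{x_1}$ and $T_{y_1}$ act on such pairs by the two elementary matrices that generate $\SL_2(\Z)$, whose orbits on $\Z^2$ are classified by the $\gcd$ --- is the natural route and it works. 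Two small points should be tightened. First, the iterated twists $T_{y_1}^k(x_1)$ and $T_{x_1}^k(y_1)$ only produce the values $\alpha + \beta\Z$ and $\beta + \alpha\Z$, whose union is in general a \emph{proper} subset of $\alpha\Z + \beta\Z$ (try $\alpha = 2$, $\beta = 3$); the containment $\alpha\Z + \beta\Z \subseteq \mathfrak a(\phi)$ should instead be deduced from the fact, already recorded in the paper, that $\mathfrak a(\phi)$ is an ideal containing $\alpha$ and $\beta$. Second, in the inductive verification of $\mathfrak a(\phi) \subseteq \alpha\Z + \beta\Z$ you must factor $f$ into twists about $x_1$ and $y_1$ \emph{specifically} (possible since these generate $\Mod_{1,1}$); for a factorization into twists about arbitrary curves $c_i$ the induction would be circular, since the step requires knowing $\phi(c_i) \in \alpha\Z + \beta\Z$ in advance. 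With these repairs the induction indeed yields the linearity statement $\phi(c) = p\alpha + q\beta$ for $[c] = p[x_1] + q[y_1]$ primitive, which simultaneously gives $\Arf_1(\phi) = \gcd(\alpha,\beta)$ and the identification of the $\Mod_{1,1}$--action on pairs with the $\SL_2(\Z)$--action, completing the proof.
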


\subsection{Framed mapping class groups}\label{section:framedmcg}
Having studied the orbits of the $\Mod_{g,n}$ action on the set of framings in the previous section, we turn now to the stabilizer of a framing.
\begin{definition}[Framed mapping class group]
Let $(\Sigma_{g,n}, \phi)$ be a (relatively) framed surface. The {\em framed mapping class group} $\Mod_{g,n}[\phi]$ is the stabilizer of the relative isotopy class of $\phi$:
\[
\Mod_{g,n}[\phi] = \{ f \in \Mod_{g,n} \mid f \cdot \phi = \phi\}.
\]
\end{definition}

\begin{remark}
We pause here to note one somewhat counterintuitive property of {\em relatively} framed mapping class groups. Suppose that $\phi, \phi'$ are distinct as relative isotopy classes of framings, but are equal as absolute framings (in terms of relative winding number functions, this means that $\phi$ and $\phi'$ agree when restricted to the set of simple closed curves but assign different values to arcs). Then the associated relatively framed mapping classes are {\em equal}: $\Mod_{g,n}[\phi] = \Mod_{g,n}[\phi']$. This is not hard to see: allowing the framing on the boundary to move under isotopy changes the winding numbers of all arcs in the same way, so that the $\phi'$-winding number of an arc can be computed from the $\phi$-winding number by adjusting by a universal constant. Necessarily then $\phi(f(\alpha)) = \phi(\alpha)$ for a mapping class $f$ and an arc $\alpha$ if and only if $\phi'(f(\alpha)) = \phi(\alpha)$.

\end{remark}

\para{Admissible curves, admissible twists, and the admissible subgroup} In our study of $\Mod_{g,n}[\phi]$, a particularly prominent role will be played by the Dehn twists that preserve $\phi$. An {\em admissible curve} on a framed surface $(\Sigma_{g,n}, \phi)$ is a nonseparating simple closed curve $a$ such that $\phi(a) = 0$. It follows from the twist--linearity formula (Lemma \ref{lemma:HJ}.\ref{item:TL}) that the associated Dehn twist $T_a$ preserves $\phi$. We call the mapping class $T_a$ an {\em admissible twist}. Finally, we define the {\em admissible subgroup} to be the group generated by all admissible twists:
\[
\mathcal T_\phi := \pair{ T_a \mid a \mbox{ admissible for }\phi}.
\]

\para{Change--of--coordinates for framed surfaces} The classical ``change--of--coordinates principle'' for surfaces is a body of techniques for constructing special configurations of curves and subsurfaces on a fixed surface (c.f. \cite[Section 1.3]{FarbMarg}). The underlying mechanism is the classification of surfaces, which provides a homeomorphism between a given surface and a ``reference surface;'' if a desired configuration exists on the reference surface, then the configuration can be pulled back along the classifying homeomorphism. 

A similar principle exists for framed surfaces, governing when configurations of curves with prescribed winding numbers exist on framed surfaces. The classification results Proposition \ref{proposition:orbits} and Lemma \ref{lemma:genus1arf} assert that the Arf invariant provides the only obstruction to constructing desired configurations of curves in the presence of a framing. We will make extensive and often tacit use of the ``framed change--of--coordinates principle'' throughout the body of the argument. Here we will illustrate some of the more frequent instances of which we avail ourselves. Recall that a {\em $k$--chain} is a sequence $c_1, \dots, c_k$ of curves such that $i(c_i, c_{i+1}) = 1$ for $i = 1, \dots, k-1$, and $i(c_i, c_j) = 0$ for $\abs{i-j} \ge 2$.

\begin{proposition}[Framed change--of--coordinates]\label{proposition:CCP}
Let $(\Sigma_{g,n}, \phi)$ be a relatively framed surface with $g \ge 2$ and $n \ge 1$. A configuration $x_1, \dots, x_k$ of curves and/or arcs with prescribed intersection pattern and winding numbers $\phi(x_i) = s_i$ exists if and only if
\begin{enumerate}
\item[(a)] a configuration $\{x_1', \dots, x_k'\}$ of the prescribed topological type exists in the ``unframed'' setting where the values $\phi(x_i')$ are allowed to be arbitrary,
\item[(b)] there exists some framing $\psi$ such that $\psi(x_i) = s_i$ for all $i$, and
\item[(c)] if $\Arf(\psi)$ is determined by the constraints of (b), then $\Arf(\phi) = \Arf(\psi)$. 
\end{enumerate}
In particular:
\begin{enumerate}
\item For $s \in \Z$ arbitrary, there exists a nonseparating curve $c \subset \Sigma_{g,n}$ with $\phi(c) = s$.
\item For $n = 1$, there exists a $2g$--chain of admissible curves on $\Sigma_{g,1}$ if and only if the pair \\$(g \pmod 4, \Arf(\phi)) \in (\Z/4\Z, \Z/2\Z)$ is one of the four listed below:
\begin{equation}\label{equation:goodarf}
(0,0), (1,1), (2,1), (3,0).
\end{equation}
Such a chain is called a {\em maximal} chain of admissible curves.
\end{enumerate}
\end{proposition}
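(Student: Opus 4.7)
The plan is to reduce the proposition to the classification of relative framings up to $\Mod_{g,n}$-action. Necessity of (a) and (b) is tautological (take $\psi = \phi$ and $\{x_i'\} = \{x_i\}$), and necessity of (c) follows because $\phi$ itself realizes the constraints in (b), so whenever those constraints force a unique Arf invariant, $\Arf(\phi)$ must equal that value.

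For sufficiency, given (a) and (b) I would pick a topological configuration $\{x_1', \ldots, x_k'\} \subset \Sigma_{g,n}$ and a framing $\psi$ satisfying $\psi(x_i') = s_i$. Extending $\{x_i'\}$ to a distinguished geometric basis $\mathcal B$ (possible by the unframed change-of-coordinates principle, since a distinguished geometric basis is characterized purely topologically), I would use Remark~\ref{remark:gsbCW} to prescribe $\psi$ arbitrarily on the elements of $\mathcal B \setminus \{x_i'\}$. When the Arf invariant is not forced by the constraints on $\{x_i'\}$, I would choose these free values so as to achieve $\Arf(\psi) = \Arf(\phi)$; when it is forced, hypothesis (c) guarantees the equality automatically. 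Proposition~\ref{proposition:orbits} then supplies $f \in \Mod_{g,n}$ with $f \cdot \psi = \phi$, and $\{f(x_i')\}$ is the desired configuration on $(\Sigma_{g,n}, \phi)$.

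The first particular case is immediate: a nonseparating curve $c$ exists topologically on $\Sigma_{g,n}$, any integer value $s$ can be realized as $\psi(c)$ inside some framing by Remark~\ref{remark:gsbCW}, and the Arf invariant is unconstrained by a single winding-number value, so (c) is vacuous. The second particular case requires more: a $2g$-chain of admissible curves fills $\Sigma_{g,1}$ topologically, and from such a chain I would build a distinguished geometric basis by replacing every other chain curve with its image under a suitable twist in adjacent chain elements, so as to restore disjointness between distinct pairs $(x_i, y_i)$. Tracking the winding numbers via the twist-linearity formula (Lemma~\ref{lemma:HJ}.\ref{item:TL}) and substituting into~\eqref{equation:arf} should produce $\Arf(\psi) = g(g+1)/2 \pmod 2$, which matches the list~\eqref{equation:goodarf} for the four residue classes of $g \pmod 4$.

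The main obstacle is this last calculation. The chain itself is not a distinguished geometric basis, because consecutive curves share intersections, so one must commit to an explicit surgery and then account for how each twist shifts winding numbers via Lemma~\ref{lemma:HJ}.\ref{item:TL}. The bookkeeping is routine but delicate; once it is in hand, everything else in the proposition reduces without difficulty to the orbit classification of Proposition~\ref{proposition:orbits}.
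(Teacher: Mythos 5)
Your overall architecture is the same as the paper's: reduce everything to the orbit classification of Proposition~\ref{proposition:orbits} by manufacturing a model framing $\psi$ with the desired winding numbers and matching Arf invariant, then transport by the resulting mapping class. Part (1) is handled identically to the paper. The necessity direction, which the paper leaves implicit, is fine as you state it.

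The one place where you diverge, and where your sketch has a concrete problem, is the Arf computation in part (2). You propose to turn the $2g$--chain into a distinguished geometric basis by ``replacing every other chain curve with its image under a suitable twist in adjacent chain elements, so as to restore disjointness.'' Twisting about an adjacent chain curve does not remove the intersection with it: for a Dehn twist one has $i(T_a(b),a)=i(b,a)$, so $T_{c_{j\pm1}}(c_j)$ still meets $c_{j\pm 1}$ once, and no single such twist produces the required disjointness between consecutive pairs. (More globally, note that an arbitrary configuration --- the chain in particular --- cannot simply be ``extended to'' a distinguished geometric basis, since the basis has a rigid disjointness pattern; so the general sufficiency argument also needs the configuration-specific massaging you attempt here.) The paper sidesteps surgery entirely: it keeps the even-indexed chain curves $a_2,a_4,\dots,a_{2g}$, sets $b_1=a_1$, and chooses auxiliary curves $b_2,\dots,b_g$ disjoint from the odd-indexed chain curves so that $b_k,a_{2k+1},b_{k+1}$ cobound pairs of pants and $\{b_1,a_2,\dots,b_g,a_{2g}\}$ is a distinguished geometric basis. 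Homological coherence (Lemma~\ref{lemma:HJ}.\ref{item:HC}) then forces $\psi(b_k)=1-k$, and substituting into \eqref{equation:arf} gives $\Arf(\psi)\equiv \sum_{k=1}^g(2-k)\equiv g(g+1)/2 \pmod 2$, confirming the value you guessed and the list \eqref{equation:goodarf}. If you adopt that construction (or commit to a correct explicit surgery and track winding numbers via Lemma~\ref{lemma:HJ}.\ref{item:TL}), your proof closes up; as written, the key step is still open and the proposed mechanism for it would fail.
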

\begin{proof}
We will prove (1) and (2), from which it will be clear how the general argument works. We begin with (1). Let $\mathcal B = \{x_1 \dots, y_g, a_2, \dots, a_n\}$ be a distinguished geometric basis. Following Remark \ref{remark:gsbCW}, a relative framing $\psi$ of $\Sigma_{g,n}$ can be constructed by (freely) specifying the values $\psi(b)$ for each element $b \in \mathcal B$. Set $\psi(x_1) = s$ and let $\psi(y_1)$ be arbitrary. Since $g \ge 2$, it is possible to choose the values of $\psi(x_2), \psi(y_2)$ such that $\Arf(\psi) = \Arf(\phi)$. By Proposition \ref{proposition:orbits}, there exists a diffeomorphism $f: \Sigma_{g,n} \to \Sigma_{g,n}$ such that $f\cdot \psi = \phi$. We see that $f(x_1)$ is the required curve: 
\[
\phi(f(x_1)) = (f^{-1} \cdot \phi)(c_1) = \psi(x_1) = s
\] 
as required.

For (2), consider a maximal chain $a_1, \dots, a_{2g}$ on $\Sigma_{g,1}$. Define $b_1 : = a_1$ and choose curves $b_2, b_3, \dots, b_g$ each disjoint from all $a_j$ with $j$ odd, such that
\[
\mathcal B = \{b_1, a_2, b_2, a_4,  \dots, b_g, a_{2g}\}
\]
is a distinguished geometric basis. We now construct a framing $\psi$ such that each $a_i$ is admissible. By construction, the curves $b_k, a_{2k+1}, b_{k+1}$ form pairs of pants for each $1 \le k \le g-1$. By the homological coherence property (Lemma \ref{lemma:HJ}.\ref{item:HC}), if each $a_i$ is to be admissible, we must have $\psi(b_k) = 1-k$ for $1 \le k \le g$ when $b_k$ is oriented so that the pair of pants cobounded by $b_{k-1}$ and $a_{2k-1}$ lies to the left. $\Arf(\psi)$ is determined by these conditions, and is computed to be 
\[
\Arf(\psi) = \begin{cases}
		0 & g \equiv 0,3 \pmod 4\\
		1 & g \equiv 1,2 \pmod 4.
\end{cases}
\]
If the pair $(g, \Arf(\phi))$ is one of those listed in \eqref{equation:goodarf}, then by Proposition \ref{proposition:orbits}, there exists $f: \Sigma_{g,1} \to \Sigma_{g,1}$ such that $f \cdot \psi = \phi$. As above, we find that $f(a_1), \dots, f(a_{2g})$ is the required maximal chain of admissible curves. Conversely, if $(g, \Arf(\phi))$ does not appear in \eqref{equation:goodarf}, then the Arf invariant of $\phi$ obstructs the existence of a maximal chain of admissible curves.
\end{proof}

\section{Finite generation of the admissible subgroup}\label{section:fingen}
Theorem \ref{mainthm:genset} asserts that the framed mapping class group $\Mod_{g,n}[\phi]$ is generated by any spanning configuration $\mathcal C$ of admissible Dehn twists so long as the intersection graph $\Lambda_\mathcal C$ is a tree containing $E_6$ as a subgraph (recall the definition of ``spanning configuration'' prior to the statement of Theorem \ref{mainthm:genset}). In this section, we take the first step to establishing this result. Proposition \ref{proposition:fingen} establishes that such a configuration of twists generates the admissible subgroup. In the subsequent sections we will show that there is an equality $\mathcal T_\phi = \Mod_{g,n}[\phi]$, establishing Theorem \ref{mainthm:genset}. 

Recall (c.f. the discussion preceding Theorem \ref{mainthm:genset}) that a collection $\mathcal C$ of curves is said to be an {\em $E$-arboreal spanning configuration} if each pair of curves intersects at most once, and the intersection graph is a tree containing $E_6$ as a subgraph.

\begin{proposition}[Generating the admissible subgroup]\label{proposition:fingen}
Let $\Sigma_{g,n}$ be a surface of genus $g \ge 5$ with $n \ge 1$ boundary components, and let $\phi$ be a framing of holomorphic type. Let $\mathcal C$ be an $E$-arboreal spanning configuration of admissible curves on $\Sigma_{g,n}$, and define
\[
\mathcal T_\mathcal C : = \pair{T_c \mid c \in \mathcal C}.
\]
Then $\mathcal T_\mathcal C= \mathcal T_\phi$.
\end{proposition}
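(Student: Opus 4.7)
\medskip

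\noindent\textbf{Proof plan.} The inclusion $\mathcal{T}_\mathcal{C} \subseteq \mathcal{T}_\phi$ is immediate, since every $c \in \mathcal{C}$ is admissible by hypothesis. For the reverse inclusion, the target is to show that $T_c \in \mathcal{T}_\mathcal{C}$ for every admissible simple closed curve $c$ on $\Sigma_{g,n}$. By the standard conjugation identity $T_{f(c)} = f T_c f^{-1}$, this will follow once we exhibit, for each admissible $c$, an element $f \in \mathcal{T}_\mathcal{C}$ and a generator $c_0 \in \mathcal{C}$ with $f(c_0) = c$. So the entire task is to manufacture enough of an orbit inside $\mathcal{T}_\mathcal{C}$ to reach every admissible curve from some chosen $c_0 \in \mathcal{C}$.

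The approach, following the strategy advertised in the introduction, is to build up a ``subsurface push subgroup'' in the sense of \cite{salter_toric} inside $\mathcal{T}_\mathcal{C}$, inductively exhausting $\Sigma_{g,n}$. First I would treat the base case: the $E_6$ subconfiguration $\mathcal{C}_0 \subset \mathcal{C}$ consists of six admissible curves spanning a genus--three subsurface $S_0$, and the associated Dehn twists are known from \cite{salter_toric} to generate a subsurface push subgroup $\Pi_0 \le \mathcal{T}_\mathcal{C}$ supported on $S_0$. This is where the $E_6$ hypothesis and the constraint $g \ge 5$ enter: $E_6$ is the minimal Dynkin diagram for which this base generation result holds, and the ambient genus bound ensures the remaining handles can be attached with room to spare.

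Next I would run an induction on the tree $\Lambda_\mathcal{C}$. Since $\Lambda_\mathcal{C}$ is a tree containing $E_6$, the remaining curves can be ordered $c_{k+1}, c_{k+2}, \dots, c_m$ so that each $c_j$ meets the span $S_{j-1}$ of the previously processed curves along a single tree edge; the span $S_j$ is then obtained from $S_{j-1}$ by attaching a one-handle. The inductive lemma (the anticipated Lemma~\ref{lemma:inductive}) asserts that the subsurface push subgroup $\Pi_{j-1}$ on $S_{j-1}$, together with the admissible twist $T_{c_j}$, generates the subsurface push subgroup $\Pi_j$ on $S_j$. Iterating until $S_m = \Sigma_{g,n}$ produces a subsurface push subgroup $\Pi = \Pi_m \le \mathcal{T}_\mathcal{C}$ on the full surface.

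To finish, I would invoke the framed change--of--coordinates principle (Proposition~\ref{proposition:CCP}) and basic properties of push subgroups to conclude that $\Pi$ already acts transitively on the set of admissible nonseparating curves on $\Sigma_{g,n}$: the Arf constraint presents no obstruction on this orbit because all admissible curves carry winding number zero and are topologically equivalent. Given any admissible $c$, pick $c_0 \in \mathcal{C}$ and $f \in \Pi \le \mathcal{T}_\mathcal{C}$ with $f(c_0) = c$; then $T_c = f T_{c_0} f^{-1} \in \mathcal{T}_\mathcal{C}$, so $\mathcal{T}_\phi \subseteq \mathcal{T}_\mathcal{C}$.

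The main obstacle I expect is the inductive step: showing that a single one-handle attachment can be realized purely by combining the existing push subgroup with the new admissible twist $T_{c_j}$. This is a delicate cousin of the Birman exact sequence and requires careful bookkeeping of how $T_{c_j}$ interacts with point-pushing classes supported in $S_{j-1}$. The holomorphic-type hypothesis on $\phi$ enters precisely here, guaranteeing that at every stage the winding-number constraints on the intermediate subsurface are consistent, so that the push subgroups $\Pi_j$ really do exist and are produced by the stated generating set.
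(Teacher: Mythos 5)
Your proposal gets the first half of the architecture right: the paper's proof of Proposition \ref{proposition:fingen} does proceed by building a framed subsurface push subgroup inside $\mathcal T_{\mathcal C}$ via repeated one--handle attachments governed by the tree structure of $\Lambda_{\mathcal C}$ (this is Lemma \ref{lemma:bpush}, proved in four steps using Lemma \ref{lemma:inductive} and the network criterion Lemma \ref{lemma:networkgenset}). One technical point you elide: the push subgroup is not built ``on the full surface'' but on the complement $\Sigma_{g,n}\setminus\{b\}$ of an auxiliary nonseparating curve $b$ with $\phi(b)=-1$, since the push construction needs a distinguished boundary component of winding number $-1$, and the actual boundary components of $\Sigma_{g,n}$ have winding numbers $-1-\kappa_i$. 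Relatedly, your base case is not automatic: the paper does not simply cite that the $E_6$ twists generate a push subgroup; it starts from a $D_5$ subconfiguration satisfying the pair--of--pants hypothesis of the network criterion and then has to manufacture an extra twist via the $D_5$ chain relation to climb up to the $E_6$ span (Lemma \ref{lemma:S1}).

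The genuine gap is in your last step. You claim that the push subgroup $\Pi$ acts transitively on the set of admissible nonseparating curves, so that every admissible twist is a $\Pi$--conjugate of some $T_{c_0}$ with $c_0\in\mathcal C$. This is false: a (framed) subsurface push subgroup is isomorphic to (a subgroup of) $\pi_1(UT\overline S)$ and is far too small to act transitively on admissible curves. Moreover, the framed change--of--coordinates principle (Proposition \ref{proposition:CCP}) produces an element of $\Mod_{g,n}[\phi]$ realizing a given configuration, not an element of $\Pi$ or of $\mathcal T_{\mathcal C}$, so it cannot be used to certify that the conjugating element lies in $\mathcal T_{\mathcal C}$ --- which is exactly what the conjugation trick requires. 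The paper bridges precisely this gap with Proposition \ref{lemma:pushmakesT} (imported from \cite{salter_toric}): if a subgroup $H$ contains the twists $T_{a_0},T_{a_1}$ of a $3$--chain $(a_0,a_1,b)$ with $\phi(a_0)=\phi(a_1)=0$, $\phi(b)=-1$, together with $\tilde\Pi(b)$, then $H\supseteq \mathcal T_\phi$. That statement is the technical core of the argument and its proof is a substantial induction, not an orbit--transitivity computation; without it (or a replacement for it) your proof does not close.
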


The proof of Proposition \ref{proposition:fingen} closely follows the approach developed in \cite{salter_toric}. The heart of the argument (Lemma \ref{lemma:bpush}) is to show that our finite collection of twists generates a version of a point-pushing subgroup for a subsurface. This will allow us to express all admissible twists supported on this subsurface with our finite set of generators. Having shown this, we can import our method from \cite{salter_toric} (appearing below as Proposition \ref{lemma:pushmakesT}) which allows us to propagate this argument across the set of subsurfaces, proving the result. 

\subsection{Framed subsurface push subgroups}

Let $S \subset \Sigma_{g,n}$ be a subsurface and suppose $\Delta \subset \partial S$ is a boundary component. Let $\overline{S}$ denote the surface obtained from $S$ by capping $\Delta$ with a disk, and let $UT\overline{S}$ denote the associated unit tangent bundle. Recall the {\em disk--pushing homomorphism} $P: \pi_1(UT\overline{S}) \to \Mod(S)$ \cite[Section 4.2.5]{FarbMarg}. The inclusion $S \into \Sigma_{g,n}$ induces a homomorphism $i: \Mod(S) \to \Mod(\Sigma_{g,n})$ which restricts to give a {\em subsurface push homomorphism} $\mathcal P : = i \circ P$:
\[
\mathcal P: \pi_1(UT\overline{S}) \to \Mod(\Sigma_{g,n}).
\]
The {\em framed subsurface push subgroup} $\tilde \Pi(S)$ is the intersection of this image with $\Mod_{g,n}[\phi]$:
\[
\tilde \Pi(S):= \im(\mathcal P) \cap \Mod_{g,n}[\phi].
\]
Note that $\tilde \Pi(S)$ is defined relative to the boundary component $\Delta$, suppressed in the notation. In practice, the choice of $\Delta$ will be clear from context. 

There is an important special case of the above construction. Let $b \subset \Sigma_{g,n}$ be an oriented nonseparating curve satisfying $\phi(b) = -1$. The subsurface $\Sigma_{g,n} \setminus \{b\}$ has a distinguished boundary component $\Delta$ corresponding to the left-hand side of $b$. For this choice of $(S, \Delta)$, we streamline notation, defining
\[
\tilde \Pi(b) := \tilde \Pi(\Sigma_{g,n} \setminus{b})
\]
(constructed relative to $\Delta$). As $\phi(\Delta) = -1$ (oriented so that $S$ lies to the left), it follows from Lemma \ref{lemma:extension} that the framing of $S$ can be extended over the capping disk to $\overline{S}$. Such a framing of $\overline{S}$ gives rise to a section $s: \overline{S} \to UT\overline{S}$, and hence a splitting $s_*: \pi_1(\overline{S}) \to \pi_1(UT\overline{S})$. 

\begin{lemma}
If $\phi(\Delta) = -1$, there is an equality
\[
\tilde \Pi(S)  = \mathcal P (s_*(\pi_1(\overline S))).
\]
\end{lemma}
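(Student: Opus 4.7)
The plan is to exploit the splitting of the central extension
\[
1 \to \Z\langle \zeta \rangle \to \pi_1(UT\overline{S}) \to \pi_1(\overline{S}) \to 1
\]
afforded by the framing section $s$, combined with a direct computation of which powers of $T_b = \mathcal{P}(\zeta)$ preserve the framing. Since $\zeta$ is central and $s_*$ is a splitting, the extension is a direct product, so every $\eta \in \pi_1(UT\overline{S})$ has a unique expression $\eta = s_*(\gamma)\, \zeta^k$ with $\gamma \in \pi_1(\overline{S})$ and $k \in \Z$.

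I would first establish the easier containment $\mathcal{P}(s_*(\pi_1(\overline{S}))) \subseteq \tilde\Pi(S)$. The section $s$ lifts a loop $\gamma \subset \overline{S}$ to the loop in $UT\overline{S}$ whose fiber value at each time is the normalized vector field $\xi_\phi$. The corresponding disk-push moves the capping disk along $\gamma$ while continuously rotating its frame so as to remain aligned with $\xi_\phi$; consequently, the resulting self-homeomorphism of $S$ carries $\xi_\phi$ to an isotopic nonvanishing vector field, and hence preserves $\phi$.

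For the opposite containment, suppose $\mathcal{P}(\eta) \in \tilde\Pi(S)$ and write $\eta = s_*(\gamma)\zeta^k$. Then
\[
\mathcal{P}(\eta) = \mathcal{P}(s_*(\gamma)) \cdot T_b^k,
\]
and since the first factor preserves $\phi$ by the previous step, so does $T_b^k$. Twist--linearity (Lemma \ref{lemma:HJ}.\ref{item:TL}) then gives $(T_b^k \cdot \phi)(x) - \phi(x) = -k\langle x,b\rangle \phi(b) = k\langle x,b\rangle$ for every $x \in \mathcal{S}^+$; since $b$ is nonseparating it admits a transverse simple closed curve with $\langle x,b\rangle = 1$, forcing $k=0$. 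Hence $\mathcal{P}(\eta) = \mathcal{P}(s_*(\gamma))$ lies in the claimed subgroup.

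The only real content is the first containment, where one must verify that the framing-preserving property of the disk-push is invariant under isotopies of $\gamma$ and of the section, so that a well-defined statement about mapping classes results. This is essentially automatic from the fact that winding numbers are isotopy invariants of both the curve and the framing.
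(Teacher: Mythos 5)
Your overall skeleton is the same as the paper's: use the splitting $s_*$ of the central extension $1 \to \Z \to \pi_1(UT\overline S) \to \pi_1(\overline S) \to 1$, show that the image of the section preserves $\phi$, and show that no nontrivial power of $T_\Delta$ does. Your second containment is essentially identical to the paper's argument (they also reduce to showing $\pair{T_\Delta} \cap \Mod_{g,n}[\phi] = \{e\}$ via twist--linearity and the nonseparating hypothesis on $b$).

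The gap is in the first containment. Your justification is that the disk-push along $s_*(\gamma)$ ``carries $\xi_\phi$ to an isotopic nonvanishing vector field, and hence preserves $\phi$.'' As stated, this proves too much: the fiber generator $\zeta$ maps to $T_\Delta$, which is also realized by an ambient isotopy of $\overline S$ (rotating the capping disk through a full turn), so exactly the same dragging argument shows that $T_\Delta$ carries $\xi_\phi$ to a vector field homotopic to $\xi_\phi$ on $\overline S$ --- yet $T_\Delta \notin \Mod_{g,n}[\phi]$, as your own second step establishes. The issue is that ``isotopic'' must mean \emph{relatively} isotopic rel all of $\partial S$, including $\Delta$ (so that the framing isotopy on $S$ glues back along $b$ to a relative isotopy on $\Sigma_{g,n}$, and so that winding numbers of arcs and of curves crossing $\Delta$ are controlled); the dragging isotopy is manifestly not rel $\Delta$, since the disk moves. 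Distinguishing $s_*(\gamma)$ from $s_*(\gamma)\zeta^k$ at this point requires an actual computation of the effect on winding numbers, which is what the paper supplies: for $\gamma$ a simple based loop one has the explicit multitwist formula
\[
\mathcal P (s_*(\gamma)) = T_{\gamma^{L}} T_{\gamma^{R}}^{-1} T_{\Delta}^{\phi(\gamma^{L})},
\]
where $\gamma^L, \gamma^R, \Delta$ cobound a pair of pants, and the exponent $\phi(\gamma^L)$ on $T_\Delta$ is exactly the correction that makes twist--linearity (together with homological coherence, using $\phi(\Delta) = -1$) yield preservation of $\phi$ on every curve and legal arc. Your closing remark that the verification ``is essentially automatic from the fact that winding numbers are isotopy invariants'' is where this content is being elided; supplying the displayed formula and the twist--linearity computation would close the gap and would reproduce the paper's proof.
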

\begin{proof}
Let $x_1,\dots, x_n \in \pi_1(\overline S)$ be a system of generators such that each $x_i$ is represented by a simple based loop on $\overline S$. Under $\mathcal P \circ s_*$, each such $x_i$ is sent to a multitwist:
\[
\mathcal P (s_*(x_i)) = T_{x_{i}^{L}} T_{x_{i}^{R}}^{-1} T_{\Delta}^{\phi(x_{i}^{L})},
\]
where the curves $x_i^L, x_i^R \subset S$ are characterized by the following two conditions:
\begin{enumerate}
\item $x_i^L \cup x_i^R \cup \Delta$ form a pair of pants (necessarily lying to the left of $\Delta$),
\item $x_i^L$ (resp. $x_i^R$) lies to the left (resp. right) of $x_i$ as a based oriented curve.
\end{enumerate}
By the twist-linearity formula (Lemma \ref{lemma:HJ}.\ref{item:TL}), $\mathcal P(s_*(x_i))$ preserves $\phi$. As the set of $x_i$ generates $\pi_1(\overline S)$, it follows that $\mathcal P(s_*(\pi_1(\overline S))) \le \tilde \Pi(S)$. 

To establish the opposite containment, we recall that $s_*$ gives a splitting of the sequence
\[
\xymatrix{
1 \ar[r] & 	\Z \ar[r] &\pi_1(UT\overline{S}) \ar[r]^-{p_*} & \pi_1(\overline{S}) \ar[r] & 1
}
\]
 and so it suffices to show that $\tilde \Pi(S) \cap \ker(\mathcal P \circ p_*) = \{e\}$. Under $\mathcal P$, the generator of $\ker{p_*}$ is sent to $T_{\Delta}$. As $\phi(\Delta) = - 1$ and $\Delta$ was constructed by cutting along the non-separating curve $b \subset \Sigma_{g,n}$, the twist-linearity formula shows that $\pair{T_\Delta} \cap \Mod_{g,n}[\phi]  = \{e\}$ and the result follows. 
\end{proof}

\subsection{Generating framed push subgroups} We want to show that our finitely--generated subgroup $\mathcal T_\mathcal C$ contains a framed subsurface push subgroup $\tilde \Pi(S)$ for a subsurface $S$ that is ``as large as possible''. In Lemma \ref{lemma:inductive} below, we show that this can be accomplished inductively by successively showing containments $\tilde \Pi(S_i) \le \mathcal T_\mathcal C$ for an increasing union of subsurfaces $\dots \subset S_i \subset S_{i+1} \subset \dots$.

  \begin{figure}[ht]
\labellist
\small
\pinlabel $\Delta$ [r] at 33.12 46.08
\pinlabel $c$ [tl] at 196.16 86.40
\pinlabel $a$ [tl] at 206.24 34.56
\pinlabel $a'$ [t] at 138.24 34.56
\pinlabel $S$ [b] at 126.72 2.88
\pinlabel $S^+$ [b] at 210.24 2.88
\pinlabel $a''$ [tl] at 64.80 63.36
\endlabellist
\includegraphics[scale=1]{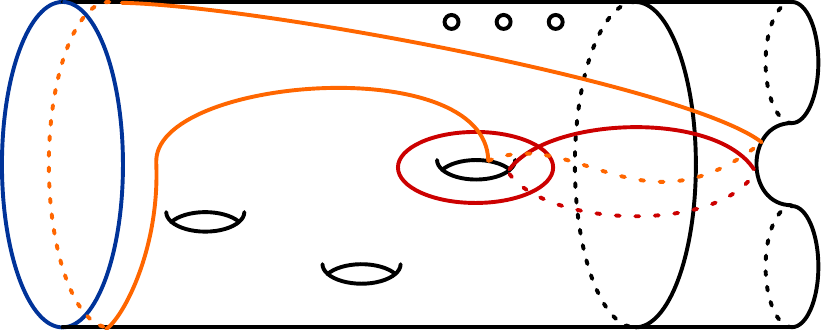}
\caption{The configuration discussed in Lemma \ref{lemma:inductive}, shown in the case where $a$ enters and exits along the same boundary component $c$.}
\label{figure:inductive}
\end{figure}
 \begin{lemma}\label{lemma:inductive} 
 Let $S \subset \Sigma_{g,n}$ be a subsurface and let $\Delta$ be a boundary component of $S$ such that $\phi(\Delta) = -1$, giving rise to the associated framed subsurface push subgroup $\tilde \Pi(S)$. Let $a \subset \Sigma_{g,n}$ be an admissible curve disjoint from $\Delta$ such that $a \cap S$ is a single essential arc (it does not matter if $a$ enters and exits $S$ by the same or by different boundary components). Let $a' \subset S$ be an admissible curve satisfying $i(a, a') = 1$. Let $S^+$ be the subsurface given by a regular neighborhood of $S \cup a$. Then  $\tilde \Pi(S^+) \le \pair{T_a, T_{a'}, \tilde \Pi(S)}$.
 \end{lemma}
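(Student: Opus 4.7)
The plan is to leverage the explicit description of $\tilde\Pi(S^+)$ as pushes of loops in $\pi_1(\overline{S^+})$ afforded by the previous lemma. Since the arc $a$ is disjoint from $\Delta$, the distinguished boundary component $\Delta$ (with $\phi(\Delta) = -1$) is still a boundary component of $S^+$, and the framing $\phi$ still extends over the corresponding capping disk of $\overline{S^+}$. Therefore $\tilde\Pi(S^+) = \mathcal{P}(s_*(\pi_1(\overline{S^+})))$, and it suffices to realize the pushes of a generating set of $\pi_1(\overline{S^+})$ inside $\langle T_a, T_{a'}, \tilde\Pi(S)\rangle$.

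The first step is to choose a generating set of $\pi_1(\overline{S^+})$ of the form $\{\gamma_1,\dots,\gamma_k,\gamma_0\}$, where $\gamma_1,\dots,\gamma_k$ are simple loops in $\overline{S}$ generating $\pi_1(\overline{S})$ (suitably modified to account for whether $a$'s endpoints lie on the same or on two different components of $\partial S$, which changes the handle decomposition) and $\gamma_0$ is a single new generator, which can be taken to be a simple loop crossing the arc $a$ exactly once in $S^+$. The pushes $\mathcal{P}(s_*(\gamma_i))$ for $i = 1,\dots,k$ are manifestly in $\tilde\Pi(S)$ (once one observes that the extension of the framing over $\overline{S^+}$ restricts to the extension over $\overline{S}$), and so these generators are already in our subgroup without needing $T_a$ or $T_{a'}$.

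The heart of the argument is to express $\mathcal{P}(s_*(\gamma_0))$ in terms of $T_a, T_{a'}$, and elements of $\tilde\Pi(S)$. My plan is to choose the model loop $\gamma_0$ so that $T_a^{-1}(\gamma_0)$ is isotopic to a simple loop $\gamma_0'$ lying entirely in $\overline{S}$; this is possible because a single application of $T_a$ drags the portion of $\gamma_0$ that crosses the handle across $a$ and into $S$ (here the hypothesis $i(a, a') = 1$ ensures the existence of a convenient simple curve $a''$, of winding number $0$, that can be used to produce a model for $\gamma_0$ and verify the isotopy; cf.\ the curve $a''$ indicated in Figure \ref{figure:inductive}). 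By the naturality of the point-pushing map with respect to conjugation by homeomorphisms,
\[
\mathcal{P}(s_*(\gamma_0)) = T_a \cdot \mathcal{P}(s_*(\gamma_0')) \cdot T_a^{-1},
\]
so this push lies in $\langle T_a, \tilde\Pi(S)\rangle$ provided we can guarantee that $\mathcal{P}(s_*(\gamma_0'))$ is itself in $\tilde\Pi(S)$; this amounts to checking that the section $s$ on $\overline{S^+}$ restricts correctly on $\overline{S}$, together with a computation with the twist-linearity formula ensuring that the boundary-twist correction factors are all admissible.

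The main obstacle, and where the admissible curve $a'$ truly earns its keep, is a case analysis: depending on whether the two ends of $a \cap S$ sit on the same or different boundary components of $S$, the topology of $S^+$ and the form of $\gamma_0$ change, and in the "same boundary component" case $a'$ is needed to produce an auxiliary admissible curve $a''$ (roughly $a'' = T_{a'}^{\pm 1}(a)$ or a band-sum of $a$ and $a'$) whose twist $T_{a''} \in \langle T_a, T_{a'}\rangle$ converts boundary-twist discrepancies on components of $\partial S^+$ into admissible multi-twists supported in $S$. Once these bookkeeping issues are resolved, each generator's push is realized as a product in $\langle T_a, T_{a'}, \tilde\Pi(S)\rangle$, completing the proof of the containment $\tilde\Pi(S^+) \le \langle T_a, T_{a'}, \tilde\Pi(S)\rangle$.
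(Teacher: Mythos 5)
Your skeleton is genuinely different from the paper's. The paper does not conjugate by $T_a$ alone: it takes the new generator of $\tilde \Pi(S^+)$ to be the explicit multitwist $T_a T_{a''}^{-1}$ (the push along the core of the pair of pants bounded by $a$, $a''$ and $\Delta$, with $a''$ admissible by homological coherence), and conjugates it into $\tilde \Pi(S)$ by $T_a T_{a'}$, using the braid relation $T_aT_{a'}(a) = a'$ to pull $a$ itself into $S$. Your route --- choosing the new $\pi_1$--generator $\gamma_0$ transverse to $a$ so that one twist about $a$ already pulls it into $S$ --- would, if completed, give the slightly stronger containment $\tilde \Pi(S^+) \le \pair{T_a, \tilde \Pi(S)}$.

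The gap is in the decisive step, the claim that $T_a^{\pm 1}(\gamma_0)$ is based-homotopic into $S$. Write $\pi_1(\overline{S^+},*) = \pi_1(\overline{S},*) * \pair{t}$ with $t$ represented by a loop crossing the handle once. If $\gamma_0$ represents $t$ and meets $a$ once at a point $z$, splicing in a copy of $a^{\mp 1}$ at $z$ gives, for the sign with vanishing algebraic handle-crossing, $[T_a^{\mp 1}(\gamma_0)] = t\,v_1\,t^{-1}\,v_2$ with $v_2 \in \pi_1(\overline S)$ and $v_1$ the class of the loop that follows $a$ from $z$ to the handle and returns along the corresponding segment of $\gamma_0$. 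This element lies in $\pi_1(\overline S)$ --- which is what you need, since $\tilde\Pi(S) = \mathcal P(s_*(\pi_1(\overline S)))$ and the conjugate $\mathcal P(s_*(t))\,\mathcal P(s_*(v_1))\,\mathcal P(s_*(t))^{-1}$ is otherwise exactly the element you are trying to control --- only when $v_1$ is trivial, i.e.\ only when $\gamma_0$ is chosen to run parallel to $a$ from the crossing point all the way to and through the handle. For a generic simple $\gamma_0$ with $i(\gamma_0, a) = 1$ the two handle crossings of the twisted curve do not cancel, and ``$T_a$ drags the crossing portion into $S$'' is false; your proposal neither imposes this normalization on $\gamma_0$ nor verifies the resulting homotopy. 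Relatedly, the role you assign to $a'$ (``converting boundary-twist discrepancies'') is not where it is actually needed in your argument: its real job is to guarantee that a loop $\gamma_0$ with $i(\gamma_0,a)=1$ exists at all. If $a \cap S$ separates $S$ with both ends of the handle attached to the same complementary component, then $a$ separates $S^+$ and every loop in $S^+$ meets $a$ evenly; the hypothesis that some $a' \subset S$ satisfies $i(a,a')=1$ is precisely what rules this out. With $\gamma_0$ pinned down as above the argument closes up, but as written the essential verification is missing.
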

 
 \begin{proof}
 Let $a'' \subset S^+$ be a curve such that $a \cup a'' \cup \Delta$ forms a pair of pants and such that $a'' \cap S$ is a single arc based at the same boundary component of $S$ as $a \cap S$. By homological coherence (Lemma \ref{lemma:HJ}.\ref{item:HC}), $a''$ is admissible. Observe that $T_a T_{a''}^{-1} \in \tilde \Pi(S^+)$.  Applying $T_a T_{a'}$ takes both $a$ and $a''$ to admissible curves on $S$, and so
 \[
 (T_a T_{a'}) T_a T_{a''}^{-1} (T_a T_{a'})^{-1} \in \tilde \Pi(S).
 \]
 Consequently $T_a T_{a''}^{-1} \in \pair{T_a, T_{a'}, \tilde \Pi(S)}$. Let $x_1, \dots, x_k \in \tilde \Pi(S)$ be a generating set. The inclusion $S \into S^+$ induces an inclusion $\tilde \Pi(S) \into \tilde \Pi(S^+)$, and $\tilde \Pi(S^+)$ is generated by $x_1, \dots, x_k$ and $T_a T_{a''}^{-1}$. These elements are all contained in the group $\pair{T_a, T_{a'}, \tilde \Pi(S)}$.
 \end{proof}

\para{Generation via networks} The inductive criterion Lemma \ref{lemma:inductive} leads to the notion of a {\em network}, which is a configuration of curves designed such that Lemma \ref{lemma:inductive} can be repeatedly applied. Here we discuss the basic theory.
\begin{definition}[Networks]\label{definition:network}
Let $S$ be a surface of finite type. For the purposes of the definition, punctures and boundary components are interchangeable; we convert both into boundary components. A {\em network} on $S$ is any collection $\mathcal N = \{a_1,\dots, a_n\}$ of simple closed curves ({\em not} merely isotopy classes) on $S$ such that $\#(a_i \cap a_j) \le 1$ for all pairs of curves $a_i, a_j \in \mathcal N$, and such that there are no triple intersections. 

A network $\mathcal N$ has an associated {\em intersection graph} $\Lambda_{\mathcal N}$, whose vertices correspond to curves $x \in \mathcal N$, with vertices $x,y$ adjacent if and only if $\#(x\cap y) = 1$.  A network is said to be {\em connected} if $\Lambda_{\mathcal N}$ is connected, and {\em arboreal} if $\Lambda_{\mathcal N}$ is a tree. A network is {\em filling} if 
\[
S \setminus \bigcup_{a \in \mathcal N} a
\]
is a disjoint union of disks and boundary--parallel annuli.
\end{definition}

A network $\mathcal N$ determines a subgroup $\mathcal T_\mathcal N \le \Mod(\Sigma_{g,n})$ by taking the group generated by the Dehn twists about curves in $\mathcal N$:
\[
\mathcal T_\mathcal N := \pair{T_a \mid a \in \mathcal N}.
\]

The following appears in slightly modified form as \cite[Lemma 9.4]{salter_toric}. 
\begin{lemma}\label{lemma:networkgenset}
Let $S \subset \Sigma_{g,n}$ be a subsurface with a boundary component $\Delta$ satisfying $\phi(\Delta) = -1$. Let $\mathcal N$ be a network of admissible curves on $S$ that is connected, arboreal, and filling, and suppose that there exist $a, a' \in \mathcal N$ such that $a \cup a' \cup \Delta$ forms a pair of pants. Then $\tilde \Pi(S) \le \mathcal T_\mathcal N$. 
\end{lemma}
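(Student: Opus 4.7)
The plan is to prove the result by induction along a carefully chosen filtration of $S$ by subsurfaces built out of the network, using Lemma \ref{lemma:inductive} as the engine. Because $\Lambda_{\mathcal N}$ is a tree, one can enumerate $\mathcal N = \{c_1, \ldots, c_N\}$ with $c_1 = a$, $c_2 = a'$, and with the property that for every $j \ge 3$, the vertex $c_j$ is adjacent in $\Lambda_{\mathcal N}$ to exactly one earlier vertex, which we denote $c_{i(j)}$ (this is any enumeration along a BFS/DFS tree rooted at the edge $\{a,a'\}$, or at one of its endpoints). Let $S_j$ be a regular neighborhood of $P \cup c_3 \cup \cdots \cup c_j$, where $P$ is the pair of pants cobounded by $a \cup a' \cup \Delta$, thickened so that each $c_k$ with $k \le j$ lies in the interior of $S_j$ and so that $\Delta$ is a boundary component of $S_j$. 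The goal is to show $\tilde \Pi(S_j) \le \mathcal T_{\mathcal N}$ by induction on $j$, starting from $j=2$ where $S_2$ is a thickening of $P$.

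For the base case, capping off $\Delta$ turns $S_2$ into an annulus, so $\pi_1(\overline{S_2}) \cong \Z$ is generated by a loop parallel to $a$. Applying the multitwist formula $\mathcal P(s_*(x)) = T_{x^L}T_{x^R}^{-1}T_\Delta^{\phi(x^L)}$ established just before Lemma \ref{lemma:extension}, the push subgroup $\tilde \Pi(S_2)$ is generated by $T_a T_{a'}^{-1} T_\Delta^{\phi(a)}$. Since $a$ is admissible we have $\phi(a)=0$, so this generator reduces to $T_aT_{a'}^{-1}$, which lies in $\mathcal T_{\mathcal N}$.

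For the inductive step, suppose $\tilde \Pi(S_{j-1}) \le \mathcal T_{\mathcal N}$ with $j \ge 3$. The treelike structure of $\Lambda_{\mathcal N}$ guarantees that $c_j$ meets $c_1 \cup \cdots \cup c_{j-1}$ in exactly one point, lying on $c_{i(j)}$; combined with the fact that curves of $\mathcal N$ live in the interior of $S$ (hence are disjoint from $\Delta$) and that $S_{j-1}$ is a regular neighborhood of the earlier curves together with $P$, this means $c_j \cap S_{j-1}$ is a single essential arc. The parent $c_{i(j)} \subset S_{j-1}$ is admissible and satisfies $i(c_{i(j)}, c_j) = 1$, so the hypotheses of Lemma \ref{lemma:inductive} are met with $(S,a,a') := (S_{j-1}, c_j, c_{i(j)})$ and $S^+ = S_j$. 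Lemma \ref{lemma:inductive} then yields
\[
\tilde \Pi(S_j) \le \pair{T_{c_j}, T_{c_{i(j)}}, \tilde \Pi(S_{j-1})} \le \mathcal T_{\mathcal N},
\]
closing the induction.

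To finish, one invokes the filling hypothesis: $S \setminus S_N$ consists only of disks and boundary-parallel annuli, so $S_N$ and $S$ have the same push subgroup after capping (the disks add nothing to $\pi_1$, and the only new boundary twist that could arise lies along $\Delta$ itself, which cannot be an element of $\Mod_{g,n}[\phi]$ since $\phi(\Delta) = -1 \ne 0$). Hence $\tilde \Pi(S) = \tilde \Pi(S_N) \le \mathcal T_{\mathcal N}$, as desired. The main obstacle I expect is the bookkeeping for the base case and for the essentiality/arc-count at each inductive stage: one must choose the regular neighborhoods $S_j$ consistently so that each newly added $c_j$ crosses the boundary of $S_{j-1}$ in exactly two points, on either side of the single intersection with $c_{i(j)}$, so that $c_j \cap S_{j-1}$ is genuinely a single essential arc rather than a more complicated multi-arc configuration.
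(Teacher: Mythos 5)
The paper does not actually prove this lemma --- it imports it as a slight modification of \cite[Lemma 9.4]{salter_toric} --- so there is no internal proof to compare against; your inductive strategy via Lemma \ref{lemma:inductive} is certainly the intended one. However, there is a concrete combinatorial error in your setup: the enumeration you describe, with $c_1 = a$, $c_2 = a'$, and every $c_j$ for $j \ge 3$ adjacent to \emph{exactly one} earlier vertex, does not exist. Since $a$, $a'$, and $\Delta$ cobound an embedded pair of pants $P$, the curves $a$ and $a'$ are disjoint, so $\{a,a'\}$ is \emph{not} an edge of $\Lambda_{\mathcal N}$. Worse, $[a] + [a'] = [\Delta]$ in $H_1(S;\Z/2)$, and every other curve of $\mathcal N$ lies in the interior of $S$ and hence has vanishing mod-$2$ intersection with $\Delta$; since intersection numbers in a network are $0$ or $1$, every $c \in \mathcal N \setminus \{a,a'\}$ meets $a$ if and only if it meets $a'$. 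In a tree this forces $a$ and $a'$ to be leaves hanging off a single common neighbor $c^*$ (two common neighbors would create a $4$-cycle, and connectivity forces at least one). So whichever curve you add third must be $c^*$, which is adjacent to \emph{two} earlier vertices and meets $c_1 \cup c_2$ in two points --- your assertion that ``$c_j$ meets $c_1\cup\cdots\cup c_{j-1}$ in exactly one point'' fails at this step.

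The gap is repairable without changing your architecture, which is why I would call this an error in bookkeeping rather than in strategy. The curve $c^*$ meets $\partial P$ in exactly two points, one on $a$ and one on $a'$, so $c^* \cap P$ is a single essential arc crossing the pair of pants from one cuff to the other; hence $c^* \cap S_2$ is still a single essential arc, entering and exiting $S_2$ through \emph{different} boundary components --- a case Lemma \ref{lemma:inductive} explicitly permits. Applying that lemma with $(S, a, a') := (S_2, c^*, a)$ (noting $i(c^*, a) = 1$ and $a \subset S_2$ is admissible) gives $\tilde\Pi(S_3) \le \pair{T_{c^*}, T_a, \tilde\Pi(S_2)} \le \mathcal T_{\mathcal N}$. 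Once $\{a, a', c^*\}$ spans a connected subtree, the parity observation above also guarantees that every subsequent curve is disjoint from $a$, $a'$, and $P$, and the tree property then does give that each remaining $c_j$ is adjacent to exactly one earlier vertex; from there your inductive step and your treatment of the filling hypothesis at the end go through as written. You should also flag (as you partly do in your closing sentence) that the ``single essential arc'' claim at each stage requires the regular neighborhoods $S_j$ to be chosen small enough that a curve disjoint from $c_1, \dots, c_{j-1}$ and from $P$ is genuinely disjoint from $S_{j-1}$, but that is a standard matter.
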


\subsection{The key lemma} 

Proposition \ref{lemma:pushmakesT}, to be stated below, gives a criterion for a group $H$ to contain the admissible subgroup $\mathcal T_\phi$. It asserts that containing a framed subsurface push subgroup of the form $\tilde \Pi(b)$ is ``nearly sufficient.'' In preparation for this, we show here that $\mathcal T_\mathcal C$ contains such a subgroup. Ideally, we would like to use the network generation criterion (Lemma \ref{lemma:networkgenset}), but the configuration $\mathcal C$ does not satisfy the hypotheses and so more effort is required. 

\begin{figure}[h]
\labellist
\small
\pinlabel $b$ at 60 70
\pinlabel $c$ at 100 22
\endlabellist
\includegraphics[scale=1]{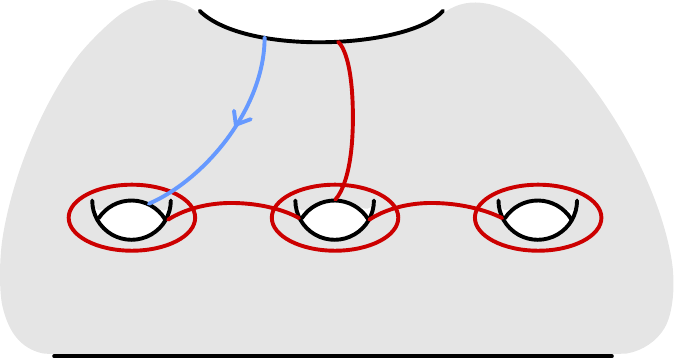}
\caption{The curve $b$ of Lemma \ref{lemma:bpush}, shown in relation to the $E_6$ subgraph. $b$ is constructed so as to be disjoint from all curves intersecting the central vertex $c$ of the $E_6$ subgraph, but it may intersect other elements of $\mathcal C$ not pictured in the figure.}
\label{figure:b}
\end{figure}

\begin{lemma}\label{lemma:bpush}
Let $\mathcal C$ be an $E$-arboreal spanning configuration of admissible curves, and let $b$ be the curve indicated in Figure \ref{figure:b}. Then $\tilde \Pi(b) \le \mathcal T_\mathcal C$.
\end{lemma}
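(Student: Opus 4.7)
The plan is to show $\tilde\Pi(b) \le \mathcal T_\mathcal C$ by building a nested sequence of subsurfaces $S_0 \subset S_1 \subset \cdots \subset S_N = \Sigma_{g,n}\setminus b$, each sharing a distinguished boundary component $\Delta$ (corresponding to the left side of $b$, so $\phi(\Delta)=-1$), and inductively showing that $\tilde\Pi(S_i) \le \mathcal T_\mathcal C$ at every stage. The engine of the induction is Lemma \ref{lemma:inductive}: provided that each new curve $c$ of $\mathcal C$ added at stage $i$ meets $S_{i-1}$ in a single essential arc and has an admissible neighbor $a' \in \mathcal C \cap S_{i-1}$ with $i(c,a')=1$, the containment $\tilde\Pi(S_{i-1}) \le \mathcal T_\mathcal C$ propagates to $\tilde\Pi(S_i)$.

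For the base case, I would use the $E_6$ subgraph of $\Lambda_\mathcal C$ as the anchor. Its six admissible curves form a connected arboreal network $\mathcal N_0$ (by definition of $E_6$) filling a regular neighborhood $S_0$ with $\chi(S_0) = -5$. The placement of $b$ prescribed in Figure \ref{figure:b} — disjoint from all curves adjacent to the central vertex $c$ — can be arranged so that one boundary component of $S_0$ is isotopic to $b$ and so becomes $\Delta$ after cutting. Homological coherence (Lemma \ref{lemma:HJ}.\ref{item:HC}) forces $\phi(\Delta) = -1$, and the three-valent branching at $c$ exhibits two of its neighbors in $\mathcal N_0$ as cobounding a pair of pants with $\Delta$. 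This verifies the hypotheses of Lemma \ref{lemma:networkgenset} and yields $\tilde\Pi(S_0) \le \mathcal T_{\mathcal N_0} \le \mathcal T_\mathcal C$.

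For the inductive step, since $\Lambda_\mathcal C$ is a tree and $\mathcal C$ spans $\Sigma_{g,n}$, I would order $\mathcal C \setminus \mathcal N_0$ as $c_1, \ldots, c_N$ via a breadth-first traversal of $\Lambda_\mathcal C$ rooted at $\mathcal N_0$, so that each $c_i$ has a unique already-placed tree-neighbor $a_i' \in \mathcal N_0 \cup \{c_1,\dots,c_{i-1}\}$. The single-intersection property of $\mathcal C$ then guarantees $c_i \cap S_{i-1}$ is a single essential arc, and Lemma \ref{lemma:inductive} applied with $a = c_i$ and $a' = a_i'$ yields $\tilde\Pi(S_i) \le \mathcal T_\mathcal C$ where $S_i$ is a regular neighborhood of $S_{i-1} \cup c_i$. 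After exhausting the ordering, the spanning hypothesis gives $S_N = \Sigma_{g,n}\setminus b$, completing the proof.

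The main obstacle is the base case: one must genuinely confirm, using the specific geometry of Figure \ref{figure:b} and the framed change-of-coordinates principle (Proposition \ref{proposition:CCP}), that the $E_6$ configuration together with $b$ sits inside $\Sigma_{g,n}$ so that $\Delta$ appears as a boundary component of $S_0$ with $\phi(\Delta)=-1$ and the pair-of-pants condition holds. A secondary subtlety concerns curves of $\mathcal C$ that cross $b$ transversely: the inductive step must be arranged so that each newly added curve is either disjoint from $b$ or can first be moved off of $b$ by an element of $\mathcal T_\mathcal C$ already known to lie in $\mathcal T_\mathcal C$; handling these ``bad'' curves by such a preliminary conjugation is where the argument requires the most care.
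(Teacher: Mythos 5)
Your high-level strategy (anchor on a subconfiguration of $\mathcal C$, then grow the subsurface one curve at a time via Lemma \ref{lemma:inductive}) is the same as the paper's, and your inductive step for curves disjoint from $b$ matches the paper's final step. But there are two genuine gaps. First, the base case as you state it cannot work: the regular neighborhood of an $E_6$ tree configuration is a copy of $\Sigma_{3,1}$, so by homological coherence (Lemma \ref{lemma:HJ}.\ref{item:HC}) its unique boundary component has winding number $\chi = -5$, not $-1$; hence that boundary is not isotopic to $b$, and Lemma \ref{lemma:networkgenset} does not apply with $\Delta$ equal to it. The curve $b$ of Figure \ref{figure:b} lies in the interior of the $E_6$ neighborhood and meets at least one of the six $E_6$ curves, which is exactly why the paper's base case is instead the $D_5$ subconfiguration: its neighborhood is disjoint from $b$ (indeed $S_0 = S_0'$ there), it has a boundary component of winding number $-1$ playing the role of $\Delta$, and it satisfies the pair-of-pants hypothesis of Lemma \ref{lemma:networkgenset}.

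Second --- and this is the heart of the lemma --- you flag but do not resolve the problem of curves of $\mathcal C$ that cross $b$. Since $\tilde \Pi(b)$ is a push subgroup of the cut surface $\Sigma_{g,n}\setminus b$, every attaching curve fed into Lemma \ref{lemma:inductive} must lie in the complement of $b$, yet several curves of $\mathcal C$ (including one of the $E_6$ curves itself) do not. The paper manufactures replacements inside $\mathcal T_\mathcal C$ by explicit constructions: for the sixth $E_6$ curve $a''$ it uses the $D_5$ relation to show that the boundary multitwist $T_{d_1}T_{d_2}^3$ lies in $\mathcal T_\mathcal C$ and takes $a = T_{d_1}T_{d_2}^3(a'')$; for each remaining $c_i$ with $i(c_i,b)\ne 0$ it takes an explicit image of $c_i$ under a product of twists about $E_6$ curves (Figure \ref{figure:bint}), using the tree hypothesis to reduce the possible intersection patterns of such $c_i$ with $S_1'$ to two cases, and it processes all of these curves before attaching the rest of $\mathcal C$. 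Without carrying out these constructions (or something equivalent), the ``preliminary conjugation'' you invoke is not justified, and the proof is incomplete.
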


This will be proved in four steps. In each stage, we will consider a subconfiguration $\mathcal C_k \subset \mathcal C$ and the associated subsurface $S_k$ spanned by these curves. We define $S_k' \subset S_k$ by removing a regular neighborhood of $b$, and we show that $\tilde \Pi(S_k') \le \mathcal T_\mathcal C$. 

 In the first step, recorded as Lemma \ref{lemma:S0} below, we take $\mathcal C_0$ to be the $D_5$ subconfiguration of the $E_6$ configuration. In the second step (Lemma \ref{lemma:S1}), we take $\mathcal C_1 = E_6$. In the third step (Lemma \ref{lemma:S2}), we take $\mathcal C_2$ to be the union of $E_6$ with all curves $c \in \mathcal C$ intersecting $b$, and finally the full surface ($\mathcal C_3 = \mathcal C$) is dealt with in Step 4.

\para{\boldmath Step 1: $D_5$}
 \begin{figure}[h]
\labellist
\small
\pinlabel $S_0'$ at 100 60 
\pinlabel $S_1'$ at 350 60
\endlabellist
\includegraphics[scale=1]{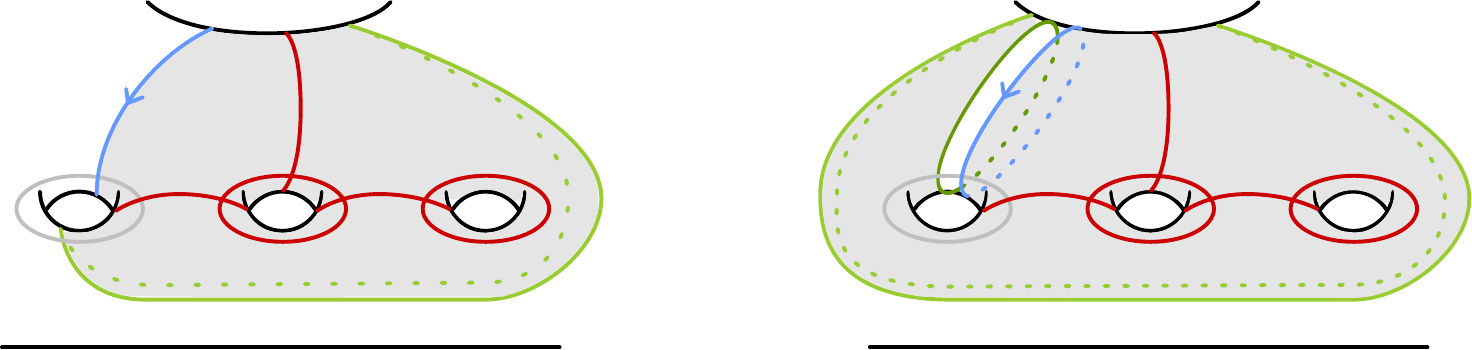}
\caption{At left, the surface $S_0 = S_0'$. At right, the surface $S_1'$.}
\label{figure:S0}
\end{figure}
\begin{lemma}\label{lemma:S0}
Let $S_0' \subset \Sigma_{g,n}$ be the subsurface shown in Figure \ref{figure:S0}. There is a containment $\tilde \Pi(S_0') \le \mathcal T_\mathcal C$.
\end{lemma}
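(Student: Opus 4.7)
The strategy is to apply the network-generation criterion (Lemma~\ref{lemma:networkgenset}) directly to the sub-configuration $\mathcal C_0$ on the subsurface $S_0'$. Three of the hypotheses are immediate: $\mathcal C_0$ consists of admissible curves (as $\mathcal C_0 \subset \mathcal C$); its intersection graph $D_5$ is a connected tree, so $\mathcal C_0$ is a connected arboreal network; and $\mathcal C_0$ fills $S_0' = S_0$ by the very definition of $S_0$ as the regular neighborhood of $\mathcal C_0$. It therefore remains to exhibit a boundary component $\Delta \subset \partial S_0'$ with $\phi(\Delta) = -1$ together with two curves $a, a' \in \mathcal C_0$ for which $a \cup a' \cup \Delta$ bounds a pair of pants.

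To produce the pair of pants, I would label $\mathcal C_0 = \{c_1, c_2, c_3, c_4, c_5\}$ so that $c_3$ is the trivalent vertex of $D_5$, its three neighbors are $c_2, c_4, c_5$, and $c_1$ is adjacent to $c_2$ (so the three arms of $D_5$ emanating from $c_3$ have lengths $2, 1, 1$). Take $a = c_4$ and $a' = c_5$, the two leaves directly attached to the fork. These are disjoint, and each meets $c_3$ exactly once while meeting no other curve of $\mathcal C_0$. Consequently, inside $S_0'$ there is a pair of pants $P$ one of whose boundary components is $c_4$, another is $c_5$, and whose third boundary component $\Delta$ is a boundary component of $S_0'$: the remaining curves $c_1, c_2$ attach to the ``opposite side'' of $c_3$ (they meet neither $c_4$ nor $c_5$, and interact with $c_3$ only via the intersection $c_2 \cap c_3$), so they leave the $c_4 c_5$-side boundary of the regular neighborhood of $c_3 \cup c_4 \cup c_5$ untouched. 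Homological coherence (Lemma~\ref{lemma:HJ}.\ref{item:HC}) applied to $P$ then gives
\[
\phi(c_4) + \phi(c_5) + \phi(\Delta) = \chi(P) = -1,
\]
and since $c_4, c_5$ are admissible, $\phi(\Delta) = -1$. Lemma~\ref{lemma:networkgenset} now yields $\tilde \Pi(S_0') \le \mathcal T_{\mathcal C_0} \le \mathcal T_\mathcal C$.

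The main obstacle is the geometric verification that $\Delta$ really is a boundary component of $S_0'$, not merely of the smaller subsurface $N(c_3 \cup c_4 \cup c_5)$. This is the only place where the specific tree structure of $D_5$ is used: the length-$2$ arm $c_2 - c_1$ attaches to $N(c_3 \cup c_4 \cup c_5)$ only across the boundary component lying on the $c_2$-side of $c_3$, so the other boundary component---the one cut out by the two fork leaves $c_4$ and $c_5$---survives into $S_0'$. Once this is confirmed by inspection of the local picture at $c_3$, the proof is complete.
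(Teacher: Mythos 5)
Your proof is correct and takes essentially the same approach as the paper: the paper's own proof simply asserts that the $D_5$ network $\mathcal C_0$ satisfies the hypotheses of Lemma~\ref{lemma:networkgenset} (relying on Figure~\ref{figure:S0}), while you have written out the verification. Your identification of the pair of pants bounded by the two fork leaves together with the boundary component of $S_0'$ untouched by the length-two arm, and the homological coherence computation giving $\phi(\Delta) = -1$, are exactly the content implicit in the paper's appeal to the figure.
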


\begin{proof}
Let $\mathcal C_0$ be the network shown in Figure \ref{figure:S0} consisting of the five red (dark) curves. This satisfies the hypotheses of Lemma \ref{lemma:networkgenset}, so that $\tilde \Pi(S_0') \le \mathcal T_{\mathcal C_0}$. Each element of $\mathcal C_0$ is an element of $\mathcal C$, so that the claim follows. 
\end{proof}

\para{\boldmath Step 2: $E_6$} 

\begin{figure}[h]
\labellist
\Small
\pinlabel $d_1$ at 330 20
\pinlabel $d_2$ at 387 70
\pinlabel $a''$ at 405 55
\pinlabel $a$ at 155 70
\pinlabel $a'$ at 115 55
\endlabellist
\includegraphics[scale=1]{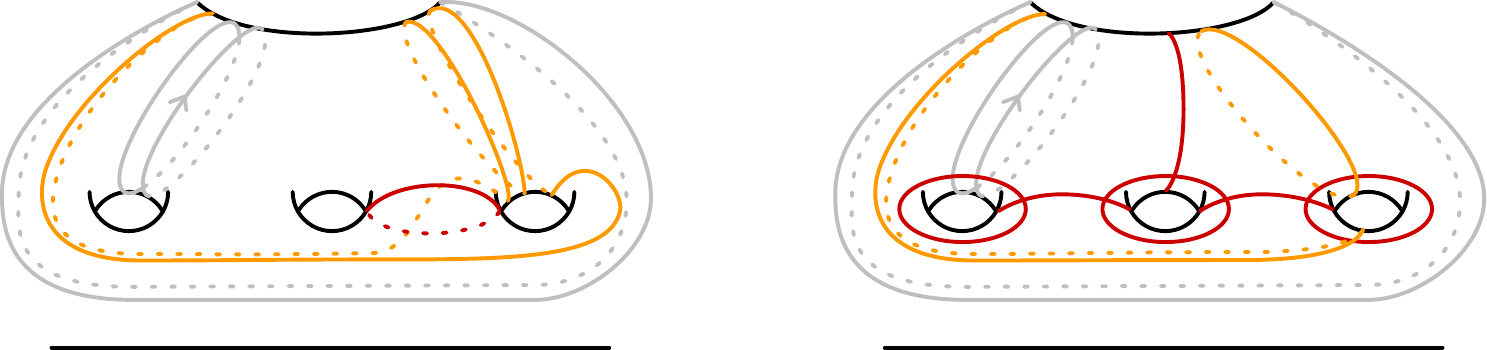}
\caption{At left, the curves $a$ and $a'$; the latter is an element of the $E_6$ configuration inside $\mathcal C$. At right, the boundary components $d_1, d_2$ for the configuration of $D_5$ type, and the curve $a''$, also part of $E_6 \subset \mathcal C$.}
\label{figure:lastcurve}
\end{figure}

\begin{lemma}\label{lemma:S1}
Let $S_1' \subset \Sigma_{g,n}$ be the subsurface shown in Figure \ref{figure:S0}. There is a containment $\tilde \Pi (S_1') \le \mathcal T_\mathcal C$. 
\end{lemma}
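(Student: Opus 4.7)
The plan is to promote the containment $\tilde \Pi(S_0') \le \mathcal T_\mathcal C$ from Lemma \ref{lemma:S0} to $\tilde \Pi(S_1') \le \mathcal T_\mathcal C$ via a single application of the inductive construction of Lemma \ref{lemma:inductive}. The surface $S_1$ is a regular neighborhood of the $E_6$ subconfiguration of $\mathcal C$, which differs from the underlying $D_5$ subconfiguration by the addition of exactly one curve $a'' \in \mathcal C$ (shown on the right in Figure \ref{figure:lastcurve}) meeting one curve $a' \in D_5 \subset \mathcal C$ at a single transverse point. Since the entire $E_6$ subconfiguration is disjoint from $b$ by the construction depicted in Figure \ref{figure:b}, the curve $a''$ is disjoint from $N(b)$, so the intersection $a'' \cap S_0'$ is a single essential arc, and the regular neighborhood of $S_0' \cup a''$ is precisely $S_1'$.

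To invoke Lemma \ref{lemma:inductive} we need a boundary component $\Delta$ of $S_0'$ disjoint from $a''$ with $\phi(\Delta) = -1$. The surface $S_0' = S_0 \setminus N(b)$ acquires, in addition to the two boundary components $d_1, d_2$ of $S_0$, two further boundary components coming from the two sides of the annulus $N(b)$. Since $\phi(b) = -1$ by hypothesis, one of these new boundary components (call it $\Delta$) has winding number $-1$ once oriented so that $S_0'$ lies on its left. Because $a''$ is disjoint from $b$, it is disjoint from $\Delta$.

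With the hypotheses of Lemma \ref{lemma:inductive} verified (taking $S = S_0'$, $a = a''$, and the admissible intersecting curve $a' \subset S_0'$), we deduce
\[
\tilde \Pi(S_1') = \tilde \Pi\bigl((S_0')^+\bigr) \le \pair{T_{a''}, T_{a'}, \tilde \Pi(S_0')}.
\]
Since $a'', a' \in \mathcal C$ both twists $T_{a''}, T_{a'}$ lie in $\mathcal T_\mathcal C$, and by Lemma \ref{lemma:S0} we have $\tilde \Pi(S_0') \le \mathcal T_\mathcal C$. The desired containment follows.

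The main technical care is in the two geometric verifications underlying Lemma \ref{lemma:inductive}: first, that the boundary component $\Delta$ really has winding number $-1$ (which requires choosing the correct orientation on the component of $\partial N(b)$ in question and invoking $\phi(b) = -1$), and second, that $a'' \cap S_0'$ is a single essential arc rather than several disjoint arcs (which uses the tree structure of $E_6$ together with disjointness from $b$). The auxiliary curves $a, a'$ depicted in the left panel of Figure \ref{figure:lastcurve} presumably illustrate this $\Delta$ and the intersecting pair $(a'', a')$ in explicit form on the spanning configuration.
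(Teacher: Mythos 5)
Your high-level strategy (one application of Lemma \ref{lemma:inductive} to pass from $S_0'$ to $S_1'$) matches the paper's, but the execution has a genuine gap at the crucial point: you attach the curve $a''$ itself, and this produces the wrong subsurface. By definition $S_1$ is the span of the $E_6$ configuration $= D_5 \cup \{a''\}$, so a regular neighborhood of $S_0' \cup a'' = S_0 \cup a''$ is $S_1$, \emph{not} $S_1' = S_1 \setminus N(b)$. Since $b$ is an essential, non-boundary-parallel curve in $S_1$ (its winding number is $-1$ while the unique boundary component of the genus-$3$ surface $S_1$ has winding number $-5$ by homological coherence), $S_1'$ is a proper subsurface of $S_1$ with different topology, so your application of Lemma \ref{lemma:inductive} proves a containment for the wrong push subgroup. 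The paper instead attaches the curve $a = T_{d_1}T_{d_2}^3(a'')$, chosen precisely so that $S_0' \cup a$ has regular neighborhood $S_1'$ (i.e., the attaching arc routes around $b$); the real technical content of the proof, which your argument omits entirely, is showing $T_a \in \mathcal T_\mathcal C$, and this requires the $D_5$ chain relation to establish $T_{d_1}T_{d_2}^3 \in \mathcal T_\mathcal C$ so that $T_a$ is a conjugate of $T_{a''}$ by an element of $\mathcal T_\mathcal C$.

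A secondary error: you assert that $S_0' = S_0 \setminus N(b)$ acquires two extra boundary components from the two sides of $N(b)$, and you take one of these as the distinguished boundary component $\Delta$. But the paper's Figure \ref{figure:S0} states explicitly that $S_0 = S_0'$, i.e., $b$ is disjoint from the $D_5$-spanned subsurface, so no such boundary components exist. The correct $\Delta$ is one of the two original boundary components $d_1, d_2$ of $S_0$ (the one with $\phi = -1$, which cobounds a pair of pants with two of the $D_5$ curves), and this is the same $\Delta$ already used in Lemma \ref{lemma:S0} via Lemma \ref{lemma:networkgenset}. Your identification of the curves $a, a'$ in Figure \ref{figure:lastcurve} is likewise a misreading: the $a$ there is the twisted curve $T_{d_1}T_{d_2}^3(a'')$, not an illustration of $\Delta$.
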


\begin{proof}
We appeal to Lemma \ref{lemma:inductive}. It suffices to find a curve $a \subset \Sigma_{g,n}$ such that (1) $a \cap S_0'$ is a single arc, (2) $S_1$ deformation-retracts onto $S_0' \cup a$, (3) there is a curve $a' \subset S_0'$ such that $i(a,a') = 1$ and $T_{a'} \in \mathcal T_\mathcal C$, and (4) $T_a \in \mathcal T_\mathcal C$. A curve $a$ satisfying (1),(2),(3) is shown in Figure \ref{figure:lastcurve}. 

We claim that $T_a \in \mathcal T_\mathcal C$. To see this, we consider the right-hand portion of Figure \ref{figure:lastcurve}. We see that five of the curves in the configuration of $E_6$ type determine a configuration of type $D_5$; the boundary components of the subsurface spanned by these curves are denoted $d_1$ and $d_2$. Applying the {\em $D_5$ relation} (c.f. \cite[Lemma 5.9]{CS_strata2}) to this configuration shows that $T_{d_1} T_{d_2}^3 \in \mathcal T_\mathcal C$. We then set 
\[
a = T_{d_1} T_{d_2}^3(a''),
\]
and as $a'' \in \mathcal C$, it follows that $T_a \in \mathcal T_ \mathcal C$ as claimed. 
\end{proof}

\para{\boldmath Step 3: Curves intersecting $b$}

\begin{figure}[h]
\labellist
\tiny
\pinlabel $c_i$ at 60 195
\pinlabel $a_1$ at 65 120
\pinlabel $a_2$ at 105 130
\pinlabel $a_3$ at 142 120
\pinlabel $a_0$ at 147 160
\pinlabel $a$ at 405 170
\pinlabel $a_4$ at 395 142
\pinlabel $c_i$ at 65 53
\pinlabel $a$ at 297 43
\endlabellist
\includegraphics[scale=0.8]{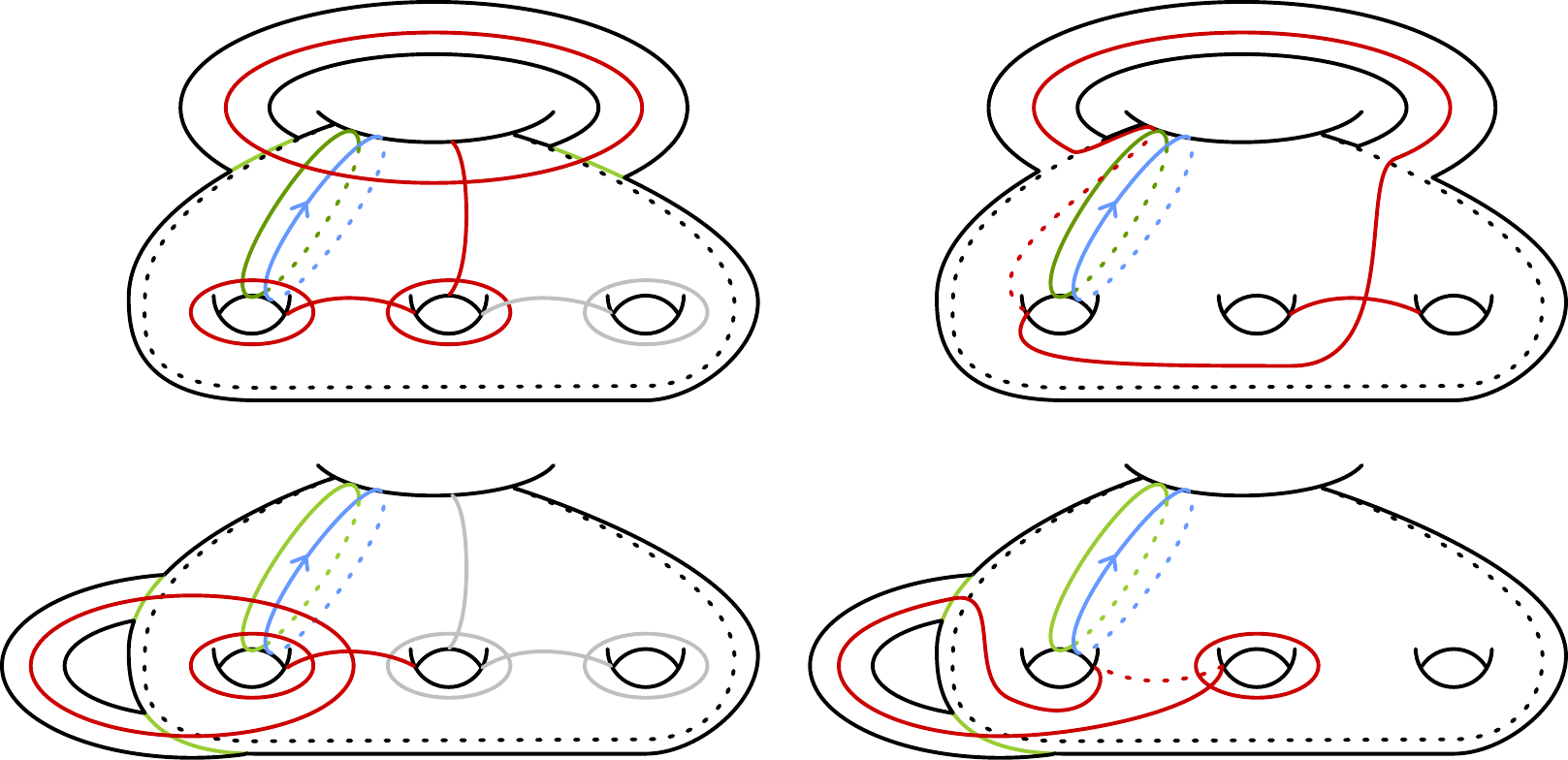}
\caption{At left, a curve $c_i \in \mathcal C_2$. At right, a twist $a$ satisfying the hypotheses of Lemma \ref{lemma:inductive}. The top and bottom of the figure depicts the two possible intersection patterns for $c_i$ with $S_1'$.}
\label{figure:bint}
\end{figure}

\begin{lemma}\label{lemma:S2}
Let $\mathcal C_2$ be the configuration of curves given as the union of $E_6 \subset \mathcal C$ with all curves $c_i \in \mathcal C$ such that $i(c_i, b) \ne 0$. Let $S_2$ be the surface spanned by these curves, and let $S_2'$ be obtained from $S_2$ by removing a neighborhood of $b$. Then $\tilde \Pi(S_2) \le \mathcal T_\mathcal C$. 
\end{lemma}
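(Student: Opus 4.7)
The plan is to build up $\tilde\Pi(S_2') \le \mathcal{T}_\mathcal{C}$ by iterated applications of Lemma \ref{lemma:inductive}, beginning with the containment $\tilde\Pi(S_1') \le \mathcal{T}_\mathcal{C}$ of Lemma \ref{lemma:S1}. Enumerate $\mathcal{C}_2 \setminus E_6 = \{c_1, \ldots, c_m\}$ and produce a chain of subsurfaces $T_0 = S_1' \subset T_1 \subset \cdots \subset T_m = S_2'$, where each $T_i$ is a regular neighborhood of $T_{i-1}$ together with a carefully chosen auxiliary admissible curve $a_i$.

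For each $i$, the naive choice $a_i = c_i$ fails the hypotheses of Lemma \ref{lemma:inductive}: because $c_i$ crosses $b$, and both boundary components $b^+, b^-$ of $N(b)$ lie in $\partial T_{i-1}$, the intersection $c_i \cap T_{i-1}$ is not a single arc. Following the right-hand side of Figure \ref{figure:bint}, I would instead take $a_i$ to be the curve obtained from $c_i$ by re-routing each strand of $c_i$ that crosses $b$ so as to travel along one side of $\partial N(b)$. By construction, $a_i \cap T_{i-1}$ is a single essential arc, and a regular neighborhood of $T_{i-1} \cup a_i$ agrees with the subsurface of $S_2'$ spanned by $T_{i-1}$ together with $c_i \cap S_2'$. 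Crucially, the rerouting can be realized by a product $f_i \in \tilde\Pi(S_1') \le \mathcal{T}_\mathcal{C}$ of subsurface push elements supported on $S_1'$ applied to $c_i$, so that $a_i = f_i(c_i)$. Since $f_i$ preserves $\phi$ and $c_i \in \mathcal{C}$ is admissible, the curve $a_i$ is admissible, and $T_{a_i} = f_i T_{c_i} f_i^{-1} \in \mathcal{T}_\mathcal{C}$.

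To apply Lemma \ref{lemma:inductive} one further needs an admissible curve $a_i' \subset T_{i-1}$ with $i(a_i', a_i) = 1$; such a curve is furnished by the framed change-of-coordinates principle (Proposition \ref{proposition:CCP}) applied to $T_{i-1} \supseteq S_1'$, which has genus at least $5$, and $T_{a_i'}$ lies in $\mathcal{T}_\mathcal{C}$ by the same argument as for $T_{a_i}$. Lemma \ref{lemma:inductive} then yields $\tilde\Pi(T_i) \le \langle T_{a_i}, T_{a_i'}, \tilde\Pi(T_{i-1})\rangle \le \mathcal{T}_\mathcal{C}$, and since $T_m = S_2'$ the argument concludes. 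The main obstacle I anticipate is verifying concretely that the rerouting element $f_i$ lies in $\tilde\Pi(S_1')$: the two intersection patterns depicted in Figure \ref{figure:bint}, in which $c_i$ enters and exits $T_{i-1}$ through the same boundary component of $N(b)$ versus through distinct components, demand genuinely different explicit constructions of $f_i$ as products of disk-push elements along admissible curves supported in $S_1'$, leveraging the splitting of the subsurface push sequence for $(S_1', b^+)$ and the fact that the relevant based loops on $\overline{S_1'}$ can be written as products of simple loops homotopic to admissible curves.
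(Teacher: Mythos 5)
Your overall strategy --- iterate Lemma \ref{lemma:inductive} starting from $\tilde \Pi(S_1') \le \mathcal T_{\mathcal C}$, attaching the curves of $\mathcal C_2 \setminus E_6$ one at a time after replacing each $c_i$ by a rerouted curve $a_i = f_i(c_i)$ with $f_i \in \mathcal T_{\mathcal C}$ --- is exactly the paper's. But the mechanism you propose for the rerouting cannot work. Elements of $\tilde \Pi(S_1')$ lie in the image of $\Mod(S_1') \to \Mod(\Sigma_{g,n})$, so they are represented by homeomorphisms equal to the identity outside $S_1'$; since $S_1'$ is by definition disjoint from the neighborhood $N(b)$, any $f_i \in \tilde \Pi(S_1')$ fixes $c_i \cap N(b)$ pointwise, and $f_i(c_i)$ crosses $b$ exactly where $c_i$ does. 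This is fatal for two reasons: Lemma \ref{lemma:inductive} requires the attached curve to be \emph{disjoint from} $\Delta$ (here $\Delta$ is a boundary circle of $N(b)$, and a curve still crossing $b$ meets both components of $\partial N(b)$), and $f_i(c_i) \cap T_{i-1}$ remains disconnected for precisely the reason you identified for $c_i$ itself. So the obstacle you flag at the end is not a matter of ``verifying concretely'' that a suitable $f_i$ lies in $\tilde \Pi(S_1')$ --- no element of that subgroup can perform the rerouting, because the rerouting must change the curve inside $N(b)$.

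The paper's fix is to take $f_i$ to be an explicit product of Dehn twists about curves of the $E_6$ configuration itself, namely $a = T_{a_1}T_{a_2}T_{a_3}T_{a_0}(c_i)$ or $a = T_{a_1}^{-1}T_{a_2}^{-1}(c_i)$ according to which of the two local pictures occurs. These twists are supported on annuli that do meet $N(b)$, so they can pull the offending strand of $c_i$ off of $b$, and they lie in $\mathcal T_{\mathcal C}$ because the $a_j$ belong to $\mathcal C$; hence $T_a = f_i T_{c_i} f_i^{-1} \in \mathcal T_{\mathcal C}$, just as in your conjugation argument. Two further points you elide: the paper first uses the tree hypothesis on $\Lambda_{\mathcal C}$ to show each $c_i \in \mathcal C_2 \setminus E_6$ meets exactly one of $a_0, a_2$ (once) and that distinct such $c_i, c_j$ are disjoint, which is what reduces the analysis to two pictures and makes the attaching order irrelevant; and the auxiliary curve $a'$ of Lemma \ref{lemma:inductive} is taken to be a curve of $E_6$ (so its twist is in $\mathcal T_{\mathcal C}$ by definition), rather than produced by change of coordinates, which sidesteps your unsupported claim that $T_{a_i'} \in \mathcal T_{\mathcal C}$ ``by the same argument.''
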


\begin{proof}
Since the intersection graph of $\mathcal C$ is a tree, a curve $c_i \in \mathcal C_2 \setminus E_6$ must be in one of the two configurations shown in Figure \ref{figure:bint}: it must intersect exactly one of the curves $a_0$ or $a_2$. Moreover, distinct $c_i, c_j \in \mathcal C_2 \setminus E_6$ must be pairwise-disjoint. Thus we can attach the curves $c_i$ in an arbitrary order to assemble $S_2'$ from $S_1'$, appealing to Lemma \ref{lemma:inductive} at each step. 

The right-hand portion of Figure \ref{figure:bint} shows a curve $a$ that satisfies the hypotheses of Lemma \ref{lemma:inductive} for this pair of subsurfaces. $a$ is obtained from $c_i$ via a sequence of twists about curves in $\mathcal C$:
\[
a = T_{a_1}T_{a_2}T_{a_3}T_{a_0}(c_i)
\]
in the top scenario, and
\[
a = T_{a_1}^{-1} T_{a_2}^{-1}(c_i)
\]
in the bottom. Thus, for each such curve $c_i \in \mathcal C_2 \setminus E_6$, the associated curve $a$ satisfies $T_a \in \mathcal T_\mathcal C$. The claim now follows from repeated applications of Lemma \ref{lemma:inductive}.
\end{proof}

\para{Step 4: Attaching the remaining curves}

The proof of Lemma \ref{lemma:bpush} now follows with no further special arguments.

\begin{proof}[Proof of Lemma \ref{lemma:bpush}] 
Step 3 (Lemma \ref{lemma:S2}) shows that $\tilde \Pi(S_2') \le \mathcal T_\mathcal C$, where $S_2$ is the span of $E_6$ and all curves intersecting $a_0$ and $a_2$, and $S_2'$ is obtained from $S_2$ by removing a neighborhood of $b$. Let $c_i \in \mathcal C \setminus  \mathcal C_2$ be adjacent to some element $c_j \in \mathcal C_2$. Then Lemma \ref{lemma:inductive} applies directly to the pair $a = c_i$ and $a' = c_j$. We repeat this process next with curves $c_i$ of graph distance $2$ to $\mathcal C_2$, then graph distance $3$, etc., until the vertices of $\mathcal C \setminus \mathcal C_2$ are exhausted. At the final stage, we have shown that $\tilde \Pi(\Sigma_{g,n}\setminus \{b\}) = \tilde \Pi(b) \le \mathcal T_\mathcal C$ as claimed.
 \end{proof}

\subsection{Finite generation of the admissible subgroup}

Proposition \ref{lemma:pushmakesT} below is taken from \cite[Proposition 8.2]{salter_toric}. There, it is formulated for $r$--spin structures on closed surfaces of genus $g \ge 5$, but the result and its proof hold {\em mutatis mutandis} for framings of $\Sigma_{g,n}$ with $g \ge 5$. 
\begin{proposition}[C.f. Proposition 8.2 of \cite{salter_toric}]\label{lemma:pushmakesT}
Let $\phi$ be a framing of $\Sigma_{g,n}$ for $g \ge 5$. Let $(a_0, a_1, b)$ be an ordered $3$--chain of curves with $\phi(a_0) = \phi(a_1) = 0$ and $\phi(b) = -1$. Let $H \le \Mod(\Sigma_{g,n})$ be a subgroup containing $T_{a_0}, T_{a_1}$, and the framed subsurface push subgroup $\tilde \Pi(b)$. Then $H$ contains $\mathcal T_\phi$. 
\end{proposition}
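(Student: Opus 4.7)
The proof follows the template of \cite[Proposition 8.2]{salter_toric}, with $r$-spin structures replaced by framings. Since $\mathcal{T}_\phi$ is generated by admissible Dehn twists, it suffices to show $T_c \in H$ for every admissible curve $c$.

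My first step is to show that every admissible twist about a curve disjoint from $b$ lies in $H$. The elements of $\tilde{\Pi}(b)$ produced by simple based loops on $\overline{S}$ are pair-of-pants multitwists $T_{x^L}T_{x^R}^{-1}T_\Delta^{\phi(x^L)}$, with $x^L, x^R, \Delta$ cobounding a pair of pants and $\Delta$ parallel to $b$. When both $x^L$ and $x^R$ are admissible, homological coherence (Lemma \ref{lemma:HJ}) combined with $\phi(\Delta) = -1$ forces $\phi(x^L) = 0 = \phi(x^R)$, so the $T_\Delta$-factor drops out and $T_{x^L}T_{x^R}^{-1} \in H$. Starting from $T_{a_0} \in H$ (admissible and disjoint from $b$) and swapping across successive pairs of pants of this form, one propagates membership in $H$ along a network of admissible curves spanning $\Sigma_{g,n}\setminus b$; connectivity of this swap network follows from the framed change-of-coordinates principle (Proposition \ref{proposition:CCP}) applied inside the complement of $b$.

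The second step uses $T_{a_1} \in H$ to reach admissible twists about curves crossing $b$. Since $a_1$ intersects $b$ exactly once, conjugating $T_{a_1}$ by any $g \in H$ yields $T_{g(a_1)} \in H$, and $g(a_1)$ is again an admissible curve meeting $b$ once (as $g$ preserves $\phi$ and $g(b) = b$ for $g \in \tilde{\Pi}(b)$). Taking $g$ to be a product of admissible twists disjoint from $b$ produced in step one, together with push-subgroup elements that transport $a_1$ around $b$, one realizes $T_c \in H$ for every admissible $c$ intersecting $b$ once. A final application of Proposition \ref{proposition:CCP} completes the argument: since admissible nonseparating curves form a single $\Mod_{g,n}[\phi]$-orbit, every admissible curve is $H$-equivalent to $a_0$ or $a_1$, and the previous steps cover this orbit inside $H$.

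The main obstacle will be the bookkeeping in the first step: verifying that the swap network emanating from $a_0$ is connected and genuinely spans $\Sigma_{g,n}\setminus b$ up to isotopy of admissible curves. The hypothesis $g \ge 5$ enters precisely here, guaranteeing enough topological room both to realize the required triples $(x^L, x^R, \Delta)$ with prescribed admissible winding numbers and to perform the change-of-coordinates argument within the (possibly lower-genus) complement of $b$.
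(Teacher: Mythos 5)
The paper does not actually prove this proposition: it imports it from \cite[Proposition 8.2]{salter_toric} with the remark that the argument there carries over \emph{mutatis mutandis} from $r$--spin structures on closed surfaces to framings of $\Sigma_{g,n}$. So you are reconstructing an omitted argument, and your overall shape --- realize every admissible twist as a conjugate of $T_{a_0}$ or $T_{a_1}$ by an element of $\pair{T_{a_0}, T_{a_1}, \tilde\Pi(b)}$, exploiting the explicit pair--of--pants multitwists $T_{x^L}T_{x^R}^{-1}T_{\Delta}^{\phi(x^L)}$ --- is the right one. But both mechanisms you offer for the required transitivity fail. In Step 1, a ``swap'' $T_{x^L}T_{x^R}^{-1}$ relates the two push-offs $x^L, x^R$ of a single based loop $x$ in $\overline{S}$, and these are isotopic to each other (and to $x$) once $\Delta$ is capped; likewise every element of $\tilde\Pi(b)$ acts trivially on isotopy classes of curves of $S$ viewed in $\overline{S}$. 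Hence the component of your swap network containing $a_0$ consists entirely of admissible curves that become isotopic to $a_0$ in $\overline{S}$ --- it cannot reach admissible curves in other free homotopy (or even homology) classes, so it does not ``span $\Sigma_{g,n}\setminus b$.'' No appeal to Proposition \ref{proposition:CCP} can repair this: change--of--coordinates produces configurations realizing prescribed winding numbers, but says nothing about the orbits of the particular subgroup at hand.

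The second, more basic gap is your closing step: ``since admissible nonseparating curves form a single $\Mod_{g,n}[\phi]$--orbit, every admissible curve is $H$--equivalent to $a_0$ or $a_1$.'' Transitivity of $\Mod_{g,n}[\phi]$ does not descend to the subgroup $\pair{T_{a_0}, T_{a_1}, \tilde\Pi(b)}$; showing that this subgroup already acts transitively on admissible curves (equivalently, that every admissible twist is one of its conjugates of $T_{a_0}$) is precisely the content of the proposition, not something one may assume. A posteriori the two orbits coincide because $\mathcal T_\phi = \Mod_{g,n}[\phi]$, but that is Theorem \ref{mainthm:genset}, which is proved \emph{using} this proposition --- so the reasoning is circular. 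In particular, admissible curves $c$ with $i(c,b)\ge 2$ are never handled. The cited argument of \cite{salter_toric} proceeds instead by a surgery induction: one reduces the intersection of $c$ with $b$ and with the chain step by step, using elements of $\tilde\Pi(b)$ to correct winding numbers after each surgery so that the intermediate curves remain admissible. That inductive reduction is the missing core of the proof, and it is also where the hypothesis $g\ge 5$ is genuinely consumed.
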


\proofof{Proposition \ref{proposition:fingen}} Since $\mathcal T_\mathcal C \le \mathcal T_\phi$ by construction, it suffices to apply Proposition \ref{lemma:pushmakesT} with the subgroup $H = \mathcal T_\mathcal C$. Lemma \ref{lemma:bpush} asserts that $\tilde \Pi(b) \le \mathcal T_\mathcal C$. When $\mathcal C$ is of type 1 (resp. 2), the chain $(a_4, a_3, b)$ (resp. $(a_8,a_7, b)$) satisfies the hypotheses of Proposition \ref{lemma:pushmakesT}. The result now follows by Proposition \ref{lemma:pushmakesT}. \qed

We observe that this argument can also be combined with results of our earlier paper \cite{CS_strata2} to give a vast generalization of the types of configurations which generate $r$-spin mapping class groups (c.f. Section \ref{subsection:framings}). In particular, the following results gives many new generating sets for the closed mapping class group $\Mod_g$.

\begin{corollary}\label{corollary:rspingen}
Let $\mathcal C$ denote a filling network of curves on a {\em closed} surface $\Sigma_g$ with $g \ge 5$. Suppose that the intersection graph $\Lambda_\mathcal C$ is a tree which contains the $E_6$ Dynkin diagram as a subgraph and that $\mathcal C$ cuts the surface into $n$ polygons with $4(k_1 + 1), \ldots, 4(k_n +1)$ many sides.

Set $r = \gcd(k_1, \ldots, k_n)$; then there exists an $r$-spin structure $\hat \phi$ on $\Sigma_g$ such that 
\[\Mod_g[\hat \phi] = \pair{ T_c \mid c \in \mathcal C}.\]
\end{corollary}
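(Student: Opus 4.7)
The plan is to mirror the strategy of Proposition~\ref{proposition:fingen}, with the main new input being the construction of the $r$-spin structure $\widehat\phi$ itself. The argument divides into three stages: (1) construct $\widehat\phi$; (2) verify $\langle T_c : c \in \mathcal C\rangle \le \Mod_g[\widehat\phi]$; (3) establish the reverse containment by adapting the arguments of Lemmas~\ref{lemma:S0}--\ref{lemma:bpush} and then applying the closed-surface version of Proposition~\ref{lemma:pushmakesT} (i.e.\ \cite[Proposition 8.2]{salter_toric} in its original setting).

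For stage (1), I would declare $\widehat\phi(c) = 0 \pmod r$ for each $c \in \mathcal C$ and extend. Since $\mathcal C$ is filling, $\Sigma_g \setminus \bigcup_{c \in \mathcal C} c$ is a disjoint union of open disks $P_1, \dots, P_n$, where $P_i$ is a $4(k_i+1)$-gon. By Lemma~\ref{lemma:HJ} (applied mod $r$), for this specification to extend to an $r$-spin structure one needs the winding number along $\partial P_i$ to equal $\chi(P_i) = 1 \pmod r$. A direct turning-number calculation, carried out on a flat model polygon whose edges are tangent to the framing and whose corners contribute $\pm\tfrac{1}{4}$ each, shows that the winding number along $\partial P_i$ (with each bounding arc having winding number $0$) is exactly $k_i + 1$. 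Since $r = \gcd(k_1, \ldots, k_n)$ divides each $k_i$, this is congruent to $1$ mod $r$, and the homological coherence check succeeds, producing the desired $\widehat\phi$. Stage (2) is then immediate: the mod-$r$ twist-linearity formula shows $T_c \cdot \widehat\phi = \widehat\phi$ whenever $\widehat\phi(c) = 0$.

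For stage (3), set $\mathcal T_\mathcal C := \langle T_c : c \in \mathcal C\rangle$. The arguments of Lemmas~\ref{lemma:S0}, \ref{lemma:S1}, \ref{lemma:S2}, and~\ref{lemma:bpush} depend only on the twist-linearity and homological coherence of the winding-number function (both of which hold for $\widehat\phi$) together with the combinatorial structure of the $E$-arboreal configuration. They therefore go through \emph{mutatis mutandis} to produce a non-separating curve $b$ with $\widehat\phi(b) \equiv -1 \pmod r$ together with a subsurface push subgroup $\tilde\Pi(b) \le \mathcal T_\mathcal C$. Combined with two admissible twists from the $E_6$ subconfiguration forming a 3-chain with $b$, invoking \cite[Proposition 8.2]{salter_toric} in its native closed-surface, $r$-spin formulation yields $\mathcal T_{\widehat\phi} \le \mathcal T_\mathcal C$, where $\mathcal T_{\widehat\phi}$ is the subgroup generated by all admissible twists. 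The final ingredient is the main mapping-class-group theorem of \cite{CS_strata2}, which asserts $\mathcal T_{\widehat\phi} = \Mod_g[\widehat\phi]$. Chaining these containments with stage (2) completes the proof.

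The step I expect to be the main obstacle is the turning-number computation in stage (1): one must carefully track the sign conventions for how the $4(k_i + 1)$ transverse corners of $\partial P_i$ contribute to the integer winding number, in order to obtain the clean formula $k_i+1$. Everything else is a direct transcription of arguments already developed in this section, together with cited results from \cite{salter_toric} and \cite{CS_strata2}.
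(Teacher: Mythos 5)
Your stages (1) and (2) are fine and match the paper (the turning-number count giving $k_i+1$ on each complementary $4(k_i+1)$-gon is exactly why the neighborhood of $\mathcal C$ carries a framing of signature $(-1-k_1,\dots,-1-k_n)$), and your final appeal to \cite{CS_strata2} for $\mathcal T_{\hat\phi}=\Mod_g[\hat\phi]$ is also the paper's move. The gap is in stage (3). A symptom: after stage (1) your argument only ever uses that $r$ divides each $k_i$, never that $r=\gcd(k_1,\dots,k_n)$. If stage (3) were valid as written it would apply verbatim with $r$ replaced by any common divisor $r'$ of the $k_i$, giving $\pair{T_c \mid c\in\mathcal C}=\Mod_g[\hat\phi_{r'}]$ for every such $r'$ — in particular, taking $r'=1$, that the twists in $\mathcal C$ generate all of $\Mod_g$, which is false whenever $r\ge 2$ since $\mathcal T_{\mathcal C}$ preserves the $r$-spin structure. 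So something must break, and it breaks at ``$\tilde\Pi(b)\le\mathcal T_{\mathcal C}$ goes through mutatis mutandis.'' In the closed $r$-spin setting, $\tilde\Pi(b)=\im(\mathcal P)\cap\Mod_g[\hat\phi]$ is strictly larger than the image of the bordered push subgroup that Lemmas \ref{lemma:S0}--\ref{lemma:bpush} actually control: since $\hat\phi(b)=-1$ only mod $r$, the intersection $\pair{T_\Delta}\cap\Mod_g[\hat\phi]$ is no longer trivial (it contains $T_b^r$), and more generally $\tilde\Pi(b)$ contains pushes whose integer winding-number defect is a nonzero multiple of $r$. Those extra elements are not produced by the arboreal-network arguments, and they depend on $r$ (smaller $r$ gives a bigger group to capture). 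Putting them into $\mathcal T_{\mathcal C}$ is precisely where $\gcd(k_1,\dots,k_n)=r$ must enter, and your write-up supplies no mechanism for it.

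The paper sidesteps this by never redoing the push-subgroup machinery mod $r$. It takes $S$ to be a regular neighborhood of $\mathcal C$, a genus-$g$ surface with $n$ boundary components carrying an honest $\Z$-valued framing $\phi$ for which $\mathcal C$ is an $E$-arboreal spanning configuration of admissible curves; Proposition \ref{proposition:fingen} (together with Proposition \ref{prop:admisseverything}) then gives $\mathcal T_\phi=\pair{T_c\mid c\in\mathcal C}$ on $S$ with no new work. The only remaining step is to show that $\mathcal T_\phi$ surjects onto $\mathcal T_{\hat\phi}$ under capping, and this is where the gcd is used: a $\hat\phi$-admissible curve $c$ lifts to $\tilde c\subset S$ with $\phi(\tilde c)=rN$, and writing $r=\sum m_ik_i$ one corrects $\tilde c$ to a genuinely $\phi$-admissible curve by applying $\bigl(\mathcal P(\gamma_1)^{m_1}\cdots\mathcal P(\gamma_n)^{m_n}\bigr)^{-N}$, a product of boundary pushes that becomes trivial in $\Mod_g$. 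If you want to salvage your closed-surface route, you would need to add an analogous argument showing that the ``defect-$r$'' pushes (equivalently, $r$-th powers of the relevant boundary twists) lie in $\mathcal T_{\mathcal C}$; that argument is not a cosmetic adaptation of Lemmas \ref{lemma:S0}--\ref{lemma:bpush}.
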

\begin{proof}
The $r$-spin structure $\hat \phi$ is uniquely determined by stipulating that each curve $c \in \mathcal C$ is admissible.
To see that the twists in $\mathcal C$ generate the stabilizer of this spin structure, we first observe that by Proposition 6.1 of \cite{CS_strata2}, the $\hat \phi$-admissible subgroup $\mathcal T_{\hat \phi}$ is equal to $\Mod_g[\hat \phi]$. Therefore, it suffices to prove that $\pair{ T_c \mid c \in \mathcal C} =\mathcal T_{\hat \phi} $.

Let $S$ denote a neighborhood of the curve system $\mathcal C$; by insisting that each curve of $\mathcal C$ is admissible in $S$, we see that $S$ is naturally a surface of genus $g$ equipped with a framing $\phi$ of signature $(-1 - k_1, \ldots, -1-k_n)$. Now by Proposition \ref{proposition:fingen},
\[
\mathcal T_\phi = \pair{ T_c \mid c \in \mathcal C}
\]
and as $\Sigma_g$ is obtained from $S$ by capping off each boundary component with a disk, we need only show that $\mathcal T_\phi$ surjects onto $\mathcal T_{\hat \phi}$ under the capping homomorphism.

To show the desired surjection, consider any $\hat \phi$-admissible curve $c$ on $\Sigma_g$. Pick any $\tilde c$ on $S$ which maps to $c$ under capping; then since $\hat \phi$ is just the reduction of $\phi$ mod $r$, necessarily
\[\phi(\tilde c) = rN \text{ for some } N \in \Z.\]
For each boundary component $\Delta_i$ of $S$, pick some loop $\gamma_i$ based on $\Delta_i$ which intersects $\tilde c$ exactly once. Now since $r= \gcd(k_1, \ldots, k_n)$ there is some linear combination 
\[m_1 k_1 + \ldots + m_n k_n = r\]
and so by the twist-linearity formula (Lemma \ref{lemma:HJ}.\ref{item:TL}), the curve
\[\tilde c' = \left( \mathcal P(\gamma_1)^{m_1} \ldots \mathcal P(\gamma_n)^{m_n} \right)^{-N}(\tilde c)\]
must be $\phi$-admissible, where $\mathcal P(\gamma_i)$ denotes the push of the boundary component $\Delta_i$ about $\gamma_i$.

But now $T_{\tilde c'}$ is in $\mathcal T_\phi$ and $\mathcal P(\gamma_1)^{m_1} \ldots \mathcal P(\gamma_n)^{m_n}$ is in the kernel of the boundary-capping map, and so the image of $T_{\tilde c'}$ in $\Mod_g$ is the same as that of $T_{\tilde c}$, which by construction is $T_c$. Hence  $\mathcal T_\phi$ surjects onto $\mathcal T_{\hat \phi}$, finishing the proof.
\end{proof}

\section{Separating twists and the single boundary case}\label{section:basecase}
\subsection{Separating twists}\label{section:septwist}
We come now to the first of two sections dedicated to showing the equality $\mathcal T_\phi = \Mod_{g,n}[\phi]$. This will be accomplished by induction on $n$. In this section, we establish the base case $n = 1$, while in the next section we carry out the inductive step. 

The base case $n = 1$ is in turn built around a close connection with the theory of $r$--spin structures on closed surfaces (c.f. Definition \ref{definition:rspin}) as studied in the prior papers \cite{salter_toric, CS_strata2}. We combine this work with a version of the Birman exact sequence (c.f. \eqref{equation:framedBES}) to reduce the problem of showing $\mathcal T_\phi = \Mod_{g,1}[\phi]$ to the problem of showing that $\mathcal T_\phi$ contains a sufficient supply of Dehn twists about {\em separating} curves. 

Below and throughout, the group $\mathcal K_{g,1}$ is defined to be the group generated by separating Dehn twists:
\[
\mathcal K_{g,1} := \pair{T_c \mid c \subset \Sigma_{g,1} \mbox{ separating}}.
\]
$\mathcal K_{g,1}$ is known as the {\em Johnson kernel}. It is a deep theorem of Johnson that $\mathcal K_{g,1}$ can be identified with the kernel of a certain ``Johnson homomorphism'' \cite{Johnson_kernel}, but we will not need to pursue this any further here. 

\begin{proposition}\label{proposition:minsep}
Fix $g \ge 5$, and let $\phi$ be a framing of $\Sigma_{g,1}$. Then $\mathcal K_{g,1} \le \mathcal T_\phi$.
\end{proposition}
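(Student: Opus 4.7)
The goal is to show that every separating Dehn twist $T_c$ on $\Sigma_{g,1}$ is a product of twists about admissible curves. Let $c$ separate $\Sigma_{g,1}$ into two pieces, with $S$ the component not containing the boundary $\Delta$, and write $h = g(S) \in \{1,\dots,g-1\}$. By homological coherence (Lemma \ref{lemma:HJ}(2)), $\phi(c) = 1-2h$ when $c$ is oriented with $S$ on its left.

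The central mechanism is the classical chain relation: given a chain $a_1,\dots,a_{2h}$ of simple closed curves filling $S$, one has $(T_{a_1}\cdots T_{a_{2h}})^{4h+2}=T_c$. If every $a_i$ is admissible, this immediately exhibits $T_c \in \mathcal T_\phi$. By Proposition \ref{proposition:CCP}(2), applied inside $S\cong \Sigma_{h,1}$ with the restricted framing, such an admissible chain exists exactly when $(h \bmod 4,\Arf(\phi|_S))$ lies in the ``good'' list \eqref{equation:goodarf}; for $h=1$, the corresponding condition, via Lemma \ref{lemma:genus1arf}, is $\Arf_1(\phi|_S)=0$. Thus Step 1 of the proof handles the ``good Arf'' case directly.

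For separating curves $c$ with $\Arf(\phi|_S)$ incompatible with $h$, no admissible chain spans $S$, and a more indirect argument is required. My plan is to enlarge $S$ to a subsurface $S'\subset \Sigma_{g,1}$ of genus $h'>h$, still with a single boundary component $c'=\partial S'$ disjoint from $\Delta$, chosen so that $\phi|_{S'}$ \emph{does} satisfy the Arf compatibility condition. Step 1 then gives $T_{c'} \in \mathcal T_\phi$. Simultaneously, I would exhibit in the pair-of-pants-like region $S'\setminus S \cong \Sigma_{h'-h,2}$ an admissible chain of odd length $2(h'-h)+1$ joining $c$ to $c'$; the odd chain relation yields $T_c T_{c'} \in \mathcal T_\phi$, and combining with the previous step gives $T_c \in \mathcal T_\phi$. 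Both subsidiary constructions are instances of the framed change-of-coordinates principle (Proposition \ref{proposition:CCP}), which translates each existence question into a constraint on Arf invariants. For the residual cases where $h$ is close to $g-1$ and enlargement inside $\Sigma_{g,1}$ is tight, I would supplement this with the $D_5$-type relation invoked in Lemma \ref{lemma:S1}, which provides auxiliary admissible mapping classes that can be used to shift the Arf invariant of the relevant chain configuration.

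The principal obstacle is Step 2: one must realize $S'$ and the chain inside $S'\setminus S$ so that the necessary Arf-invariant constraints are satisfied \emph{simultaneously} and compatibly with the globally fixed framing $\phi$. This is the bookkeeping that refines \cite[Lemma 6.4]{CS_strata2}; the hypothesis $g\ge 5$ enters precisely to guarantee that $\Sigma_{g,1}$ has enough room outside $S$ to support the enlarged subsurface $S'$ together with its required admissible configuration, no matter which separating curve $c$ we begin with.
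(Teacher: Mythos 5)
Your Step 1 is sound and matches the paper's Lemma \ref{lemma:basictwists}: when $(g(c) \bmod 4, \Arf(\phi|_{\Int(c)}))$ is compatible, a maximal admissible chain exists by framed change--of--coordinates and the chain relation gives $T_c \in \mathcal T_\phi$. But your Step 2 --- the entire content of the proposition, since it is exactly the ``bad Arf'' types that are at issue --- rests on a construction that homological coherence forbids. If $a_1, \dots, a_{2m+1}$ is an odd chain of \emph{admissible} curves, then (as in the computation inside the proof of Proposition \ref{proposition:CCP}(2)) the two boundary components of its regular neighborhood both have winding number $-m$ when oriented with the neighborhood to the left. In your setup the region $S' \setminus S \cong \Sigma_{m,2}$ has boundary components $c$ and $c'$ with winding numbers $2h-1$ and $1-2h'$ respectively (with $S'\setminus S$ to the left); these are distinct from each other and both different from $-m$ for every $h \ge 1$, $h' > h$. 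So no admissible odd chain can have neighborhood cobounded by $c$ and $c'$, and the relation $T_cT_{c'} \in \mathcal T_\phi$ cannot be produced this way. Note also that the obstruction is not about ``room'' in $\Sigma_{g,1}$ (so $g \ge 5$ does not rescue it), and the $D_5$ relation does not ``shift Arf invariants'' in the way you suggest.

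The paper's actual route is different in two respects. First, it invokes Johnson's theorem that $\mathcal K_{g,1}$ is generated by separating twists of genus $1$ and $2$, so only the types $(1,0)$, $(1,1)$, $(1,1^+)$, $(2,0)$, $(2,1)$ need to be realized; $(2,1)$ and $(1,1^+)$ fall to the chain relation as in your Step 1. Second, for the remaining bad types it replaces your chain-joining idea with a sequence of three \emph{lantern relations} (Configurations (A)--(C) of Figure \ref{figure:configs}): a lantern whose interior curves have known-good types expresses combinations such as $T_xT_y^{-1}$, $T_aT_bT_c^{-1}$, $T_cT_d^{-1}$ of unknown separating twists as elements of $\mathcal T_\phi$, and solving this system isolates each of $T_{(2,0)}$, $T_{(1,0)}$, $T_{(1,1)}$ individually. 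Some such relation among twists on a sphere with four or more holes (rather than a chain relation on a two-holed subsurface) is the missing ingredient your argument needs.
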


A separating curve $c \subset (\Sigma_{g,1}, \phi)$ has two basic invariants. To define these, let $\Int(c)$ be the component of $\Sigma_{g,1} \setminus c$ that does not contain the boundary component of $\Sigma_{g,1}$. The first invariant of $c$ is the {\em genus} $g(c)$, defined as the genus of $\Int(c)$. The second invariant of $c$ is the {\em Arf invariant} $\Arf(c)$. When $g(c) \ge 2$, define $\Arf(c)$ to be the Arf invariant of $\phi\mid_{\Int(c)}$. As discussed in Section \ref{section:framingaction}, the Arf invariant in genus $1$ is special. For uniformity of notation later, if $g(c) = 1$ and $\Arf_1(\Int(c)) \ne 0$, define
\[
\Arf(c) := \Arf_1(\phi\mid_{\Int(c)}) +1 \pmod 2.
\]
In the special case where $\Arf_1(\phi\mid_{\Int(c)}) = 0$, we declare $\Arf(c)$ to be the symbol $1^+$ (for the purposes of arithmetic, we treat this as $1 \in \Z/2\Z$). Altogether, we define the {\em type} of a separating curve $c$ to be the pair $(g(c), \Arf(c))$. 

\begin{lemma}\label{lemma:basictwists}
Let $\phi$ be a framing of a surface $\Sigma_{g,1}$. Let $c$ be a separating curve of type $(g,\epsilon)$. For the pairs of $(g, \epsilon)$ listed below, the separating twist $T_c$ is contained in $\mathcal T_\phi$.
\begin{enumerate}
\item $(1+4k,1)$ for $k \ge 1$,
\item $(2+4k,1)$ for $k \ge 0$,
\item $(3+4k,0)$ for $k \ge 0$,
\item $(4k,0)$ for $k \ge 1$,
\item $(1,1^+)$,
\item $(3,1)$.
\end{enumerate}
\end{lemma}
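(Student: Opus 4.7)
The plan is to realize each separating twist $T_c$ as a product of admissible Dehn twists via the \emph{chain relation}: for a chain $c_1, \dots, c_k$ on any surface, one has $(T_{c_1}\cdots T_{c_k})^{2k+2} = T_d$ when $k$ is even (in which case a regular neighborhood of the chain has a single boundary component $d$) and $(T_{c_1}\cdots T_{c_k})^{k+1} = T_{d_1} T_{d_2}$ when $k$ is odd (two boundary components $d_1, d_2$). In each of the cases below, the required admissible chain will be produced using the framed change-of-coordinates principle (Proposition~\ref{proposition:CCP}).

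Items (1)--(4) correspond precisely to those pairs $(g \bmod 4, \epsilon)$ appearing in the ``good'' list \eqref{equation:goodarf}. For these, Proposition~\ref{proposition:CCP}(2) applied inside $\Int(c) \cong \Sigma_{g, 1}$ yields a maximal admissible chain $a_1, \dots, a_{2g}$. Since a regular neighborhood of a $2g$-chain is a copy of $\Sigma_{g, 1}$, this neighborhood fills $\Int(c)$ and its single boundary component is $c$; the even chain relation then gives
\[
T_c = (T_{a_1} T_{a_2} \cdots T_{a_{2g}})^{4g+2} \in \mathcal T_\phi.
\]
Item (5) is analogous: the hypothesis $\Arf_1(\phi|_{\Int(c)}) = 0$ forces every simple closed curve on $\Int(c) \cong \Sigma_{1, 1}$ to have winding number zero, so any geometric symplectic pair $(a, b)$ is a filling admissible $2$-chain, and the chain relation yields $T_c = (T_a T_b)^6 \in \mathcal T_\phi$.

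Item (6) is the only case in which $(g, \epsilon)$ is ``bad'' and no maximal admissible chain exists in $\Int(c) \cong \Sigma_{3, 1}$. The plan is to split $\Int(c)$ by an auxiliary separating curve $d' \subset \Int(c)$ of type $(2, 1)$ and use a shorter chain relation in the complement. Using the framed change-of-coordinates principle I would produce such a $d'$ together with an admissible $3$-chain $\alpha_1, \alpha_2, \alpha_3$ filling the complementary subsurface $P \cong \Sigma_{1, 2}$ (where $\partial P = c \sqcup d'$). The odd chain relation then gives
\[
(T_{\alpha_1} T_{\alpha_2} T_{\alpha_3})^4 = T_c \cdot T_{d'},
\]
and since $T_{d'} \in \mathcal T_\phi$ by item (2), $T_c \in \mathcal T_\phi$.

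The main obstacle in item (6) is to verify that all three conditions---that the genus-$2$ piece $\Int(d')$ has $\Arf = 1$, that the $3$-chain on $P$ is admissible, and that the total $\Arf(\phi|_{\Int(c)}) = 1$---can be arranged simultaneously. Homological coherence (Lemma~\ref{lemma:HJ}) fixes the boundary winding numbers along $c$ and $d'$, so this reduces to building a model framing $\psi$ on $\Sigma_{3, 1}$ realizing the configuration and checking, by direct computation from \eqref{equation:arf} applied to a distinguished geometric basis adapted to the splitting along $d'$, that $\Arf(\psi) = 1$. Proposition~\ref{proposition:orbits} then transports this model into $(\Int(c), \phi|_{\Int(c)})$, completing the proof.
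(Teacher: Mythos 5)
Your items (1)--(5) are correct and follow the paper's own route: the listed types are exactly those for which Proposition \ref{proposition:CCP}(2) produces a maximal admissible chain in $\Int(c)$ (with $(1,1^+)$ meaning $\Arf_1=0$, so that every nonseparating curve in $\Int(c)$ is admissible), and the chain relation then yields $T_c$.

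Item (6), however, contains a genuine gap: the configuration you propose does not exist, so the ``main obstacle'' you defer is in fact fatal. Suppose $\alpha_1,\alpha_2,\alpha_3$ were an admissible $3$-chain filling $P\cong\Sigma_{1,2}$ with $\partial P = c\sqcup d'$. Cutting $P$ along the disjoint curves $\alpha_1\cup\alpha_3$ decomposes it into two pairs of pants, one of which has boundary consisting of $c$ together with one copy each of $\alpha_1$ and $\alpha_3$. Homological coherence (Lemma \ref{lemma:HJ}.\ref{item:HC}) applied to that pair of pants gives $\phi(c)+\phi(\alpha_1)+\phi(\alpha_3)=-1$ with all three curves oriented with the pair of pants to the left; since $\phi(c)=\chi(\Int(c))=-5$, this forces $\phi(\alpha_1)+\phi(\alpha_3)=4$, so $\alpha_1$ and $\alpha_3$ cannot both be admissible. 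No model framing $\psi$ can evade this, because the constraint is forced by $\phi(c)$ alone. The same computation shows that an admissible odd-length chain whose neighborhood has $c$ as a boundary component must have length $11$ (neighborhood $\Sigma_{5,2}$), which cannot embed in $\Int(c)\cong\Sigma_{3,1}$, and the even-length option is the maximal $6$-chain already killed by the Arf obstruction --- so no chain relation can produce $T_c$ in this case. The paper's argument replaces the chain by a different relation: a genus-$3$, Arf-$1$ subsurface does support an admissible configuration $a_0,\dots,a_5$ of type $E_6$ (by framed change of coordinates), and Matsumoto's $E_6$ relation expresses the boundary twist $T_c$ as a product of these six admissible twists. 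You would need this (or some other non-chain relation) to repair case (6).
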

\begin{proof}
In cases (1)--(5), the Arf invariant of the surface agrees with the Arf invariant of a surface of the same genus which supports a maximal chain of admissible curves. By the change--of--coordinates principle for framed surfaces (c.f. Proposition \ref{proposition:CCP}.2), it follows that every such surface supports a maximal chain of admissible curves. Applying the chain relation (c.f. \cite[Proposition 4.12]{FarbMarg}) to this maximal chain shows that the separating twist about the boundary component is an element of $\mathcal T_\phi$. 

Consider now case (6), where the subsurface $S_c$ determined by $c$ has genus $3$ and Arf invariant $1$. In this case, the framed change--of--coordinates principle implies that $S_c$ supports a configuration $a_0, \dots, a_5$ of admissible curves with intersection pattern given by the $E_6$ Dynkin diagram. By the ``$E_6$ relation'' (c.f. \cite[Theorem 1.4]{Matsumoto}), $T_c$ can be expressed as a product of the admissible twists $T_{a_0}, \dots, T_{a_5}$.
\end{proof}

In the proof of Proposition \ref{proposition:minsep}, it will be important to understand the additivity properties of the Arf invariant when a surface is decomposed into subsurfaces.

\begin{lemma}[c.f. Lemma 2.11 of \cite{RW}]\label{lemma:Arfadd}
Suppose that $(\Sigma, \phi)$ is a framed surface and $\mathbf{c} = \bigcup c_i$ is a separating multicurve. Let $(S_1, \psi_1)$ and $(S_2, \psi_2)$ denote the two components of $\Sigma \setminus \mathbf{c}$, equipped with their induced framings. Then
\[\Arf(\phi) = \Arf(\psi_1) + \Arf(\psi_2) + \sum_{c_i \in \mathbf{c}} \left( \phi(c_i) + 1 \right) \pmod 2.\]
In particular, if each $c_i$ is separating and bounds a {\em closed} subsurface on one side, then each $\phi(c_i)$ is odd and hence the Arf invariant is additive.
\end{lemma}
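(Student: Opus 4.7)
The plan is to construct a distinguished geometric basis $\mathcal B$ for $\Sigma$ adapted to the decomposition $\Sigma = S_1 \cup_{\mathbf{c}} S_2$, then expand $\Arf(\phi, \mathcal B)$ via \eqref{equation:arf} and compare with the expansions of $\Arf(\psi_1)$ and $\Arf(\psi_2)$. Concretely, I would choose legal basepoints on both sides of each $c_i$ and pick distinguished geometric bases $\mathcal B_1, \mathcal B_2$ of $S_1, S_2$ so that the arcs in $\mathcal B_j$ terminating on some $c_i$ can be concatenated across $c_i$ into arcs of $\Sigma$ whose endpoints lie on $\partial \Sigma$. Since every $c_i$ lies in the interior of $\Sigma$, the $2g$ symplectic curves of $\mathcal B_1 \cup \mathcal B_2$, together with the concatenated arcs and those arcs of $\mathcal B_j$ avoiding $\mathbf{c}$, form a distinguished geometric basis $\mathcal B$ of $\Sigma$.

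Plugging $\mathcal B$ into \eqref{equation:arf}, the symplectic-curve contributions to $\Arf(\phi, \mathcal B)$ coincide (as a disjoint union) with those of $\Arf(\psi_1, \mathcal B_1)$ and $\Arf(\psi_2, \mathcal B_2)$, and the arcs of $\mathcal B$ account for those arcs of $\mathcal B_1 \cup \mathcal B_2$ that avoid $\mathbf{c}$. The discrepancy therefore localizes at $\mathbf{c}$: for each $c_i$, the arcs $\alpha_i \subset S_1$ and $\beta_i \subset S_2$ landing on $c_i$ contribute summands $(\phi(\alpha_i)+\tfrac12)(\phi(c_i)+1)$ and $(\phi(\beta_i)+\tfrac12)(\phi(c_i)+1)$ inside the two piecewise Arf invariants, whereas in $\Arf(\phi, \mathcal B)$ only the concatenation $\alpha_i \cup \beta_i$ appears and with a different boundary component on the other end.

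The main obstacle is the careful bookkeeping of these arc terms. The key relation is that the winding number of a concatenation $\alpha_i \cup \beta_i$ differs from $\phi(\alpha_i) + \phi(\beta_i)$ by a half-integer correction determined by the tangent direction at the basepoint on $c_i$; combining this with the twist-linearity and homological coherence properties (Lemma \ref{lemma:HJ}) to control winding numbers around each $c_i$ allows one to show that, modulo $2$, all of the cross-terms collapse to give exactly $\sum_{c_i \in \mathbf{c}}(\phi(c_i)+1)$ as the net correction, proving the claimed formula.

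For the additional assertion, if each $c_i$ is separating and bounds a closed subsurface $\Omega_i$ on one side, then applying homological coherence (Lemma \ref{lemma:HJ}.\ref{item:HC}) to $\Omega_i$---whose unique boundary component is $c_i$---yields $\phi(c_i) = \chi(\Omega_i) = 1 - 2g(\Omega_i)$, which is odd. Hence each $\phi(c_i) + 1 \equiv 0 \pmod 2$, the correction sum vanishes, and the Arf invariant is additive.
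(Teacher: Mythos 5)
The paper offers no proof of this lemma; it is quoted from \cite[Lemma 2.11]{RW}. Your overall strategy --- build a distinguished geometric basis of $\Sigma$ adapted to the decomposition, expand \eqref{equation:arf} on all three surfaces, and match terms --- is the standard route and is essentially how such additivity statements are proved, and your verification of the final parity assertion via homological coherence is correct. But as written the argument has a concrete false step. When the multicurve $\mathbf{c}$ has $k \ge 2$ components one has $g = g(S_1) + g(S_2) + (k-1)$, so the symplectic curves of $\mathcal B_1 \cup \mathcal B_2$ number only $2g - 2(k-1)$ and do \emph{not} form the symplectic part of a distinguished geometric basis of $\Sigma$. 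The missing $k-1$ handles are spanned by the curves $c_2, \dots, c_k$ themselves (since $[c_1] = -\sum_{i \ge 2}[c_i]$, only $k-1$ of them are independent) together with dual curves $d_i$ crossing $\mathbf{c}$, which are built precisely from the arcs of $\mathcal B_1, \mathcal B_2$ landing on the $c_i$ --- arcs that, as you should also observe, have no counterpart among the arcs of $\mathcal B$ because the $c_i$ are interior curves of $\Sigma$, not boundary components. The resulting terms $(\phi(c_i)+1)(\phi(d_i)+1)$ in $\Arf(\phi,\mathcal B)$ are where much of the correction $\sum_i(\phi(c_i)+1)$ actually lives, so this is not a cosmetic omission; note that the paper genuinely needs $k \ge 2$ (e.g.\ in the proof of Lemma \ref{lemma:actionproperties}, where $\mathbf{c} = c_\alpha^+ \cup c_\alpha^-$), and one cannot reduce to $k=1$ by cutting one curve at a time, since the individual $c_i$ need not be separating.

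The second issue is that the heart of the lemma --- that after the concatenation corrections, twist-linearity, and homological coherence are applied, the cross-terms collapse to exactly $\sum_i(\phi(c_i)+1)$ modulo $2$ --- is asserted rather than computed. Since that identity \emph{is} the statement being proved, the write-up establishes the shape of the answer but not the answer. Two points deserve explicit care in that computation: the legal basepoints of $c_i$ seen from $S_1$ and from $S_2$ are \emph{different} points (the framing cannot point orthogonally into both sides at once), so the concatenated arcs require a connecting segment along $c_i$ whose winding-number contribution depends on $\phi(c_i)$; and the arcs of $\mathcal B_2$ are based on a boundary component of $S_2$ rather than at $p_1$, so they must be re-based before they can appear in $\mathcal B$. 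The tools you name are the right ones, but the proof is complete only once these contributions are summed and shown to equal $\sum_i(\phi(c_i)+1)$.
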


The proof of Proposition \ref{proposition:minsep} is built around the well-known {\em lantern relation}. 
\begin{figure}[h]
\labellist
\Small
\pinlabel $a$ [b] at 37.44 61.88
\pinlabel $b$ [c] at 5.32 34.56
\pinlabel $c$ [t] at 37.44 -0.88
\pinlabel $d$ [l] at 70.88 34.56
\pinlabel $x$ [r] at 29.36 33.12
\pinlabel $y$ [tr] at 47.52 44.64
\pinlabel $z$ [br] at 47.52 23.04
\endlabellist
\includegraphics[scale=1]{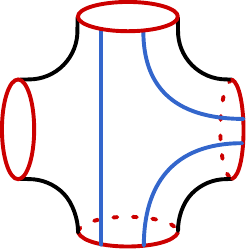}
\caption{The lantern relation. The arcs labeled $x,y,z$ determine curves by taking a regular neighborhood of the arc and the incident boundary components.}
\label{figure:lantern}
\end{figure}
\begin{lemma}[Lantern relation]\label{lemma:lantern}
For the curves $a,b,c,d,x,y,z$ of Figure \ref{figure:lantern}, there is a relation
\[
T_a T_b T_c T_d = T_x T_y T_z.
\]
\end{lemma}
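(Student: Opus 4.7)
The lantern relation is classical, going back to Dehn and rediscovered by Johnson, so the cleanest approach is simply to cite a standard reference such as Farb--Margalit \cite[Proposition 5.1]{FarbMarg}. If an in-paper proof is desired, the plan is as follows.

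First I would observe that both sides of the claimed relation are supported on a regular neighborhood $S$ of the configuration $a \cup b \cup c \cup d \cup x \cup y \cup z$, which is homeomorphic to the four-holed sphere $\Sigma_{0,4}$ (the four boundary curves of $S$ being $a, b, c, d$). Because the twists $T_a, T_b, T_c, T_d$ about these boundary components are central in $\Mod(\Sigma_{0,4})$, it suffices to verify the relation after passing to the pure mapping class group of the four-holed sphere and, via capping, to compare the images of both sides in $\Mod(\Sigma_{0,3})$ or in the appropriate quotient where one can invoke an explicit presentation.

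The concrete verification I would carry out uses the Alexander method. I would fix a collection of disjoint essential arcs on $S$ joining the boundary components $a,b,c,d$ in pairs, chosen so that together with $\partial S$ they determine a CW--structure whose isotopy class pins down any element of $\Mod(S)$ (by \cite[Proposition 2.8]{FarbMarg}). Then I would draw the images of these arcs under $T_x T_y T_z$ and under $T_a T_b T_c T_d$ and check that the two resulting arc systems are isotopic rel $\partial S$. This is a finite picture-chasing exercise on $\Sigma_{0,4}$ and presents no genuine difficulty; the only subtlety is keeping track of orientations and the power of each boundary twist, which the homological coherence already constrains.

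The main obstacle, such as there is one, is purely bookkeeping: one must be careful that the twist about each boundary component $a,b,c,d$ appears to exactly the first power on the left-hand side, which is ensured by the geometry of Figure~\ref{figure:lantern} (each of $x, y, z$ meets exactly two of the four boundary components in its defining pair of pants). Since nothing in the sequel depends on a novel proof of the lantern relation, I would present this as a single short paragraph citing \cite{FarbMarg} and leave the verification to the reader.
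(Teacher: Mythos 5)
Your proposal is correct and matches the paper's treatment: the paper states the lantern relation as a classical fact with no proof, exactly as you suggest doing by citing a standard reference such as Farb--Margalit. The Alexander-method verification you sketch is the standard textbook argument and would be a valid fallback, but nothing more is needed.
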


We will use the lantern relation to ``manufacture'' new separating twists using an initially--limited set of twists.

\begin{proof}[Proof of Proposition \ref{proposition:minsep}] According to \cite[Theorem 1]{johnson_PAMS}, it suffices to show that $T_c \in \mathcal T_\phi$ for $c$ a separating curve of genus $1$ or $2$. If $c$ has type $(2,1)$, then $T_c \in \mathcal T_{\phi}$ by Lemma \ref{lemma:basictwists}. For the remaining types $(1,1), (1,0), (2,0)$, we will appeal to a sequence of lantern relations (Configurations (A), (B), (C)) as shown in Figure \ref{figure:configs}. Each of the configurations below occupy a surface of genus $\le 4$, and by hypothesis, $g \ge 5$. Thus, in each configuration, the specified winding numbers do not constrain the Arf invariant. Therefore by the framed change--of--coordinates principle (Proposition \ref{proposition:CCP}), there is no obstruction to constructing such configurations. We also remark that we will use the additivity of the Arf invariant (Lemma \ref{lemma:Arfadd}) without comment throughout.

\begin{figure}[h]
\labellist
\small
\pinlabel (A) [l] at 63.36 14.40
\pinlabel (B) [l] at 230.40 14.40
\pinlabel (C) [l] at 365.76 14.40
\tiny
\pinlabel $x$ [bl] at 58.36 92.16
\pinlabel $y$ [br] at 51.84 48.96
\pinlabel $0$ [c] at 43.20 116.08
\pinlabel $1^+$ [c] at 100.24 61.92
\pinlabel $1^+$ [c] at 43.20 8.64
\pinlabel $a$ [bl] at 224.64 91.48
\pinlabel $b$ [bl] at 234.92 79.20
\pinlabel $c$ [br] at 223.76 73.88
\pinlabel $0$ [c] at 209.68 116.08
\pinlabel $0$ [c] at 264.28 61.92
\pinlabel $1^+$ [c] at 211.68 8.64
\pinlabel $0$ [bl] at 207.56 47.60
\pinlabel $d$ [bl] at 209.56 58.36
\pinlabel $1^+$ [c] at 345.48 116.08
\pinlabel $1^+$ [c] at 345.48 8.64
\pinlabel $0$ [c] at 392.44 69.00
\pinlabel $z$ at 227.64 30
\pinlabel $w$ at 180 81
\pinlabel $c$ at 360 62
\pinlabel $d$ at 330 62
\endlabellist
\includegraphics[scale=1]{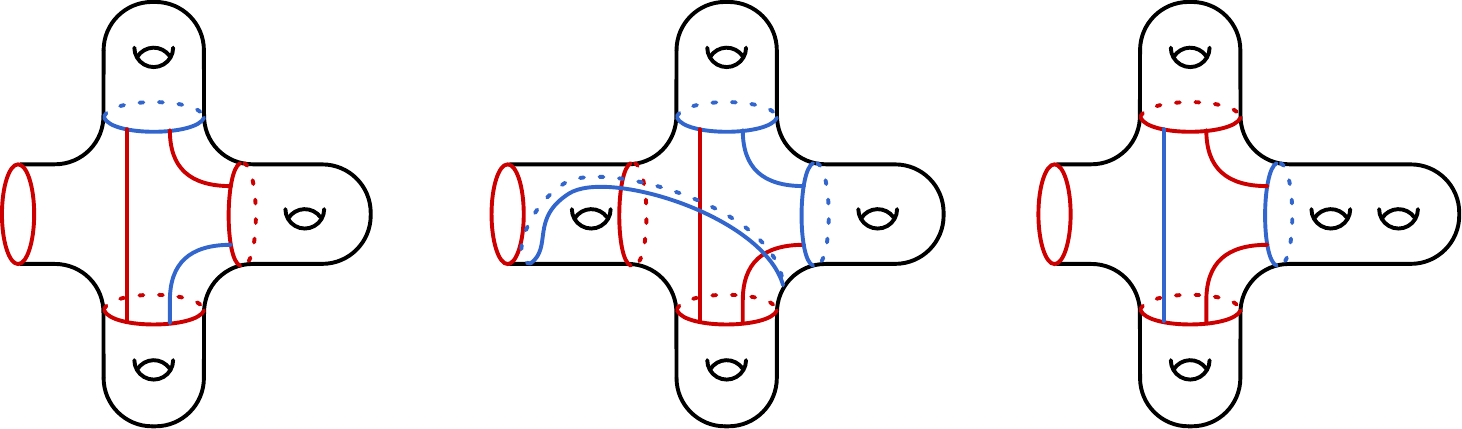}
\caption{The lantern relations used in the proof of Proposition \ref{proposition:minsep}. Curves and arcs colored \red{red} correspond to twists known to be in $\mathcal T_\phi$ from Lemma \ref{lemma:basictwists}, while those in \blue{blue} correspond to twists not yet known to be in $\mathcal T_\phi$. Numbers inside subsurfaces indicate Arf invariants.} 
\label{figure:configs}
\end{figure}

We say that separating curves $x, y \subset \Sigma_{g,1}$ are {\em nested} if there is a containment $\Int(x) \subset \Int(y)$ or $\Int(y) \subset \Int(x)$. Configuration (A) shows that $T_x T_y^{-1} \in \mathcal T_\phi$ for any $x$ of type $(1,0)$ and $y$ of type $(2,0)$ such that $x$ and $y$ are not nested. Turning to Configuration (B), we apply the lantern relation to the curves on the subsurface bounded by $a,b,z,w$ to find that $T_a T_b T_c^{-1} \in \mathcal T_\phi$; here $c$ is any curve of type $(2,0)$ and $a,b$ have type $(1,0)$ and are nested inside $c$. Applying Configuration (A) with $x = a, y = c$ and then with $x = b, y = c$ shows that $T_c^{-1} T_d^2 \in \mathcal T_\phi$ for an arbitrary pair $c,d$ of curves of type $(2,0)$. 

Consider now Configuration (C). The associated lantern relation shows that $T_c T_d^{-1} \in \mathcal T_\phi$ for $c,d$ again both of type $(2,0)$. As also $T_c^{-1} T_d^2 \in \mathcal T_\phi$ by the above paragraph, it follows that $T_c \in \mathcal T_\phi$ for $c$ an arbitrary curve of type $(2,0)$. 

Returning to Configuration (A), it now follows that $T_x \in \mathcal T_\phi$ for $x$ an arbitrary curve of type $(1,0)$. It remains only to show $T_a \in \mathcal T_\phi$ for $a$ a curve of type $(1,1)$. To obtain this, we return to Configuration (B), but replace the curve of type $(1,1^+)$ with a general curve of type $(1,1)$. The remaining twists in the lantern relation are now all known to be elements of $\mathcal T_\phi$, and hence curves of type $(1,1)$ are elements of $\mathcal T_\phi$ as well.
\end{proof}

\subsection{The minimal stratum}\label{section:minstrat}
We are now prepared to prove the main result of the section.
\begin{proposition}\label{proposition:minstrat}
Let $g \ge 5$ be given, and let $\phi$ be a framing of $\Sigma_{g,1}$. Then there is an equality
\[
\mathcal T_\phi = \Mod_{g,1}[\phi].
\]
\end{proposition}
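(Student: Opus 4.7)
The inclusion $\mathcal T_\phi \le \Mod_{g,1}[\phi]$ is immediate from the twist-linearity formula (Lemma \ref{lemma:HJ}), so the substance of the argument lies in the reverse containment.

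My plan is to invoke the framed Birman exact sequence (Lemma \ref{lemma:framedBES}) to reduce to the closed surface case and combine this with the authors' prior analysis of $r$-spin mapping class groups from \cite{CS_strata2}. By homological coherence, $\phi(\partial\Sigma_{g,1}) = 1-2g$, so capping off the boundary with a marked disk and reducing the winding number function modulo $r := 2g-2$ produces an $r$-spin structure $\hat \phi$ on the closed surface $\Sigma_g$. Lemma \ref{lemma:framedBES} then affords a short exact sequence
\[
1 \longrightarrow K \longrightarrow \Mod_{g,1}[\phi] \xrightarrow{\pi} \Mod_g[\hat\phi] \longrightarrow 1,
\]
whose kernel $K$ is, by design, generated by (separating) Dehn twists arising from the boundary twist $T_\partial$ together with framed point-pushing elements that must preserve $\phi$.

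Granted this sequence, establishing $\mathcal T_\phi = \Mod_{g,1}[\phi]$ reduces to two checks. First, I would show $\pi(\mathcal T_\phi) = \Mod_g[\hat\phi]$ by invoking the main theorem of \cite{CS_strata2}, which identifies $\Mod_g[\hat\phi]$ with the group generated by Dehn twists about $\hat\phi$-admissible curves. Any $\hat\phi$-admissible curve $c \subset \Sigma_g$ is the image of some curve $\tilde c \subset \Sigma_{g,1}$ with $\phi(\tilde c) \equiv 0 \pmod r$; the framed change-of-coordinates principle (Proposition \ref{proposition:CCP}), combined with an adjustment of $\tilde c$ by an appropriate point-push across the boundary, then produces a genuinely $\phi$-admissible lift $c'$ with $\pi(T_{c'}) = T_c$. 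Second, I would verify $K \le \mathcal T_\phi$: by the design of Lemma \ref{lemma:framedBES}, each generator of $K$ is a separating Dehn twist, hence an element of the Johnson kernel $\mathcal K_{g,1}$, which lies in $\mathcal T_\phi$ by Proposition \ref{proposition:minsep}.

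The main obstacle, already dispatched in Proposition \ref{proposition:minsep}, is producing enough separating twists inside $\mathcal T_\phi$ via a delicate sequence of lantern relations among framed curves of prescribed winding numbers and Arf invariants. With this in hand, together with the reduction furnished by the framed Birman exact sequence and the closed-surface $r$-spin result of \cite{CS_strata2}, the equality $\mathcal T_\phi = \Mod_{g,1}[\phi]$ follows formally.
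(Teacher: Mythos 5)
Your proposal follows the paper's proof essentially verbatim: the framed Birman exact sequence (Lemma \ref{lemma:framedBES}) reduces the claim to (a) surjectivity of $p(\mathcal T_\phi)$ onto $\Mod_g[\widehat\phi]$, which the paper also handles by invoking Theorem B of \cite{CS_strata2} applied to an admissible configuration supplied by framed change--of--coordinates, and (b) the containment $H_g \le \mathcal T_\phi$, which is exactly the role of Proposition \ref{proposition:minsep}. The one step you gloss over is the identification of the kernel with a subgroup of the Johnson kernel: it is not true ``by design'' that the generators of $H_g$ are separating twists (the kernel is the preimage in $\pi_1(UT\Sigma_g)$ of the commutator subgroup, not a group visibly generated by separating twists), and the paper instead closes this gap by citing Putman's theorem that $\mathcal K_{g,1} \cap \pi_1(UT\Sigma_g) = H_g$.
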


As discussed above, this will be proved by relating the framing $\phi$ on $\Sigma_{g,1}$ to a $(2g-2)$--spin structure on $\Sigma_g$ by way of a version of the Birman exact sequence. In the standard Birman exact sequence for the capping map $p: \Sigma_{g,1} \to \Sigma_g$, the kernel is given by the subgroup $\pi_1(UT\Sigma_g)$. In Lemma \ref{lemma:framedBES}, the subgroup $H_g \le \pi_1(UT\Sigma_g)$ is defined to be the preimage in $\pi_1(UT\Sigma_g)$ of $[\pi_1(\Sigma_g), \pi_1(\Sigma_g)] \le \pi_1(\Sigma_g)$.
\begin{lemma}\label{lemma:framedBES}
Let $\phi$ be a framing of $\Sigma_{g,1}$. Then there is $(2g-2)$--spin structure $\widehat \phi$ on $\Sigma_g$ such that the boundary-capping map $p: \Mod_{g,1} \to \Mod_g$ induces the following exact sequence:
\begin{equation}\label{equation:framedBES}
1 \to H_g \to \Mod_{g,1}[\phi] \to \Mod_{g}[\widehat \phi].
\end{equation}
\end{lemma}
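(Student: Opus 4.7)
The plan is to refine the classical Birman exact sequence for the capping map
\[
1 \to \pi_1(UT\Sigma_g) \to \Mod_{g,1} \xrightarrow{p} \Mod_g \to 1
\]
by restricting $p$ to $\Mod_{g,1}[\phi]$. First I construct $\widehat\phi$: by homological coherence (Lemma \ref{lemma:HJ}.\ref{item:HC}) applied to $\Sigma_{g,1}$, we have $\phi(\Delta) = \chi(\Sigma_{g,1}) = 1-2g$, so the nonvanishing vector field $\xi_\phi$ extends over a capping disk to a vector field on $\Sigma_g$ with a single zero $p_0$. A small loop around $p_0$ with $p_0$ on its left has winding number $-\phi(\Delta) = 2g-1$, so the recipe recalled at the end of Section \ref{subsection:framings} yields a $(2g-2)$-spin structure $\widehat\phi$ on $\Sigma_g$. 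The containment $p(\Mod_{g,1}[\phi]) \subseteq \Mod_g[\widehat\phi]$ is then immediate: for $f \in \Mod_{g,1}[\phi]$ and a simple closed curve $x \subset \Sigma_g$, represent $x$ by $\tilde x \subset \Sigma_{g,1}$ disjoint from the cap, so that $p(f)(x)$ is represented by $f(\tilde x)$ and
\[
\widehat\phi(p(f)(x)) \equiv \phi(f(\tilde x)) = \phi(\tilde x) \equiv \widehat\phi(x) \pmod{2g-2}.
\]

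The heart of the argument is to identify $\ker(p|_{\Mod_{g,1}[\phi]}) = \pi_1(UT\Sigma_g) \cap \Mod_{g,1}[\phi]$ with $H_g$. Since the space of framings of $\Sigma_{g,1}$ with fixed restriction to $\partial\Sigma_{g,1}$ is an affine space over $H^1(\Sigma_{g,1};\Z)$, the rule $f \mapsto f\cdot\phi - \phi$ defines a crossed homomorphism $\delta_\phi\colon \Mod_{g,1} \to H^1(\Sigma_{g,1};\Z)$ with kernel $\Mod_{g,1}[\phi]$. Because disk-pushes act trivially on $H_1(\Sigma_{g,1})$ (they are isotopic to the identity after capping), the restriction $\delta_\phi|_{\pi_1(UT\Sigma_g)}$ is a genuine homomorphism. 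For $\beta \in \pi_1(UT\Sigma_g)$ projecting to a simple loop $\alpha \in \pi_1(\Sigma_g)$, writing $\mathcal P(\beta) = T_{\alpha^L} T_{\alpha^R}^{-1} T_\Delta^k$ as in the pushoff formula of Section \ref{section:fingen} and then repeatedly applying twist-linearity (Lemma \ref{lemma:HJ}.\ref{item:TL}) together with $[\Delta] = 0$ in $H_1(\Sigma_{g,1})$, I obtain for any closed curve $c \subset \Sigma_{g,1}$ the identity
\[
\delta_\phi(\mathcal P(\beta))([c]) = \pair{[c],[\alpha]}\bigl(\phi(\alpha^R) - \phi(\alpha^L)\bigr) = \pm (2g-1)\pair{[c],[\alpha]},
\]
the second equality coming from homological coherence on the pair of pants cobounded by $\alpha^L$, $\alpha^R$, and $\Delta$. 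Crucially, since $n=1$ forces $\mathcal S^+ = \mathcal S$, there are no legal arcs on which to test, so the $T_\Delta^k$ factor is invisible to $\delta_\phi$. Thus $\delta_\phi(\mathcal P(\beta)) = 0$ if and only if $[\alpha] = 0$ in $H_1(\Sigma_g)$, equivalently $\alpha \in [\pi_1(\Sigma_g),\pi_1(\Sigma_g)]$, equivalently $\beta \in H_g$.

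The main technical hurdle will be extending this calculation beyond simple loops. Since $\delta_\phi|_{\pi_1(UT\Sigma_g)}$ is a homomorphism into an abelian group, it factors through the abelianization of $\pi_1(UT\Sigma_g)$; it therefore suffices to verify the vanishing on a generating set of $\pi_1(\Sigma_g)$ composed of simple loops, for instance a symplectic basis for $H_1(\Sigma_g)$. A secondary point requiring some care is the orientation convention in the pair-of-pants computation of $\phi(\alpha^R) - \phi(\alpha^L)$, but this affects only signs and not the ultimate identification of the kernel.
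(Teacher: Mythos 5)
Your proof is correct and follows essentially the same route as the paper's: the core of both arguments is the computation, via twist--linearity and homological coherence on the pair of pants cobounded by $\gamma_L$, $\gamma_R$, and $\partial \Sigma_{g,1}$, that pushing along a simple loop $\gamma$ changes the winding number of a closed curve $a$ by $\pair{a,\gamma}$ times a fixed nonzero constant, followed by an extension to non-simple loops using the fact that disk-pushes act trivially on homology. Your packaging of that extension --- a crossed homomorphism restricts to a genuine homomorphism on the disk-pushing subgroup and hence factors through its abelianization --- is a clean reformulation of the paper's explicit factorization $\gamma = \gamma_1 \cdots \gamma_n$ into simple loops; it has the small advantage of giving both containments ($H_g \subseteq \Mod_{g,1}[\phi] \cap \ker p$ and the reverse) from a single formula, whereas the paper obtains the easy containment from $H_g \le \mathcal K_{g,1} \le \Mod_{g,1}[\phi]$. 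One arithmetic slip: homological coherence on the pair of pants gives $\abs{\phi(\alpha^L) - \phi(\alpha^R)} = \abs{\chi - \phi(\Delta)} = \abs{-1 - (1-2g)} = 2g-2$, not $2g-1$. This is harmless for identifying the kernel, since only the nonvanishing of the constant matters there, but the value $2g-2$ is precisely what makes the capped-off structure a $(2g-2)$--spin structure, so it is worth getting right.
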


\begin{proof}
The framing $\phi$ determines a nonvanishing vector field on $\Sigma_{g,1}$. Capping the boundary component, this can be extended to  vector field on $\Sigma_g$ with a single zero. This vector field gives rise to the $(2g-2)$-spin structure $\widehat \phi$ (c.f. Section \ref{subsection:framings}), and if $f \in \Mod_{g,1}$ preserves $\phi$, then necessarily $p(f)$ preserves $\widehat \phi$. 

It remains to show that $\ker(p) \cap \Mod_{g,1}[\phi] = H_g$. Since $H_g \le \mathcal K_{g,1} \le \Mod_{g,1}[\phi]$, one containment is clear. For the converse, we first consider the action of a simple based loop $\gamma \in \pi_1(UT\Sigma_g)$ on the winding number of an arbitrary simple closed curve $a$. Let $\gamma_{L}$ (resp. $\gamma_{R}$) denote the curves on $\Sigma_{g,1}$ lying to the left (resp. right) of $\gamma$. Then $\gamma$ acts via the mapping class $\mathcal P(\gamma) = T_{\gamma_L} T_{\gamma_R}^{-1}$. The twist-linearity formula (Lemma \ref{lemma:HJ}.\ref{item:TL}) as applied to $\mathcal P(\gamma)$ shows that
\begin{eqnarray}\label{equation:pushformula}
\phi(\mathcal P(\gamma)(a)) 	=  &\phi(a) + \pair{a, \gamma_{L}} \phi(\gamma_{L}) - \pair{a, \gamma_{R}} \phi(\gamma_{R})\nonumber \\
							= &\phi(a) + \pair{a, \gamma} (\phi(\gamma_{L}) - \phi(\gamma_{R}))\nonumber \\
							= &\phi(a) + \pair{a, \gamma} (2 - 2g).
\end{eqnarray}
Here, the second equality holds since $\gamma_{L}, \gamma_{R}$, and $\gamma$ all determine the same homology class, and the third equality holds by homological coherence (Lemma \ref{lemma:HJ}.\ref{item:HC}), since $\gamma_{L} \cup \gamma_{R} \cup \partial \Sigma_{g,1}$ cobound a pair of pants and necessarily $\phi(\Delta_1) = 1 - 2g$.

This formula will show that $\mathcal P(\gamma) \in \Mod_{g,1}[\phi]$ if and only if $\gamma \in H_g$. To see this,  let $\gamma$ be an arbitrary curve, not necessarily simple, and factor $\gamma = \gamma_1 \dots \gamma_n$ with each $\gamma_i$ simple. Since $\mathcal P(\gamma_i)$ acts trivially on homology, there is an equality $[\mathcal P(\gamma_1 \dots \gamma_i)(a)] = [a]$ of elements of $H_1(\Sigma_{g,1};\Z)$ for each $i = 1, \dots, n$, and hence
\[
\pair{\mathcal P(\gamma_1 \dots \gamma_i)(a), \gamma_{i+1}} = \pair{a, \gamma_{i+1}}.
\]
Thus applying \eqref{equation:pushformula} successively with $\mathcal P(\gamma_i)$ acting on $\mathcal P (\gamma_1 \dots \gamma_{i-1})(a)$ for $i = 1, \dots, n$ shows that
\begin{equation}\label{equation:gammaa}
\phi(\mathcal P(\gamma)(a)) = \phi(a) + \pair{a, \gamma} (2 - 2g).
\end{equation}
If $\gamma$ is not contained in $H_g$, then there exists some simple curve $a$ such that $\pair{a, \gamma} \ne 0$. Therefore, equation \eqref{equation:gammaa} shows that $\phi(\mathcal P(\gamma)(a)) \ne \phi(a)$ and hence $\mathcal P(\gamma) \not \in \Mod_{g,1}[\phi]$. 
\end{proof}

\begin{proof}[Proof of Proposition \ref{proposition:minstrat}]
Consider the forgetful map $p: \Mod_{g,1} \to \Mod_g$, with kernel $\ker(p) = \pi_1(UT\Sigma_g)$. By Lemma \ref{lemma:framedBES}, it suffices to see that (a) 
\[
p(\mathcal T_\phi) = \Mod_g[\widehat{\phi}],
\]
and (b)
\[
\mathcal T_\phi \cap \pi_1(UT\Sigma_g) = H_g.
\]

To see that $p(\mathcal T_\phi) = \Mod_g[\widehat \phi]$, we appeal to \cite[Theorem B]{CS_strata2}. By the framed change--of--coordinates principle (Proposition \ref{proposition:CCP}), there exists a configuration of admissible curves on $\Sigma_{g,1}$ (in the notation of \cite[Definition 3.11]{CS_strata2}) of type $\mathsf{C}(2g-2, \Arf(\phi))$. Either such configuration satisfies the hypotheses of \cite[Theorem B]{CS_strata2}, showing that $p(\mathcal T_\phi) = \Mod_g[\widehat \phi]$. 

The containment $H_g \le \mathcal T_\phi$ will follow from the work of Section \ref{section:septwist}. By Proposition \ref{proposition:minsep}, there is a containment $\mathcal K_{g,1} \le \mathcal T_\phi$. According to \cite[Theorem 4.1]{putman_johnson}, 
\[
\mathcal K_{g,1} \cap \pi_1(UT\Sigma_g) = H_g,
\]
showing the claim. 
\end{proof}

We observe that this proof also shows that \eqref{equation:gammaa} can be upgraded into the following:
\[
1 \to H_g \to \Mod_{g,1}[\phi] \to \Mod_{g}[\widehat \phi] \rightarrow 1.
\]

\section{The restricted arc graph and general framings}\label{section:arcgraph}
The goal of this section is to prove that for $g \ge 5$, given an arbitrary surface $\Sigma_{g,n}$ equipped with a relative framing $\phi$, there is an equality $\Mod_{g,n}[\phi] = \mathcal T_\phi$. We will argue by induction on $n$. The base case $n = 1$ was established in Proposition \ref{proposition:minstrat} above. To induct, we study the action of $\Mod_{g,n}[\phi]$ on a certain subgraph of the arc graph, and identify the stabilizer of a vertex with a certain $\Mod_{g,n-1}[\phi']$. In Section \ref{section:action}, we introduce the {\em $s$-restricted arc graph} $\mathcal A^s(\phi; p,q)$ and show that $\Mod_{g,n}[\phi]$ acts transitively on vertices and edges. In Section \ref{section:connected}, we prove that $\mathcal A^s(\phi; p,q)$ is connected, modulo a surgery argument. In Section \ref{section:asl}, we prove this ``admissible surgery lemma.'' Finally in Section \ref{section:induction} we use these results to prove that $\Mod_{g,n}[\phi] = \mathcal T_\phi$ for $g \ge 5$ and $n \ge 0$ arbitrary. 

\subsection{The restricted arc complex}\label{section:action}
We must first clarify some conventions and terminology about configurations of arcs. If $a \subset \Sigma_{g,n}$ is an arc and $c \subset \Sigma_{g,n}$ is a curve, then the geometric intersection number $i(a,c)$ is defined as for pairs of curves: $i(a,c)$ denotes the minimum number of intersections of transverse representatives of the isotopy classes of $a$ and $c$. If $a, b$ are both arcs (possibly based at one or more common point), we define $i(a,b)$ to be the minimum number of intersections of transverse representatives of the isotopy classes of $a, b$ {\em on the interiors of $a$, $b$}. In other words, intersections at common endpoints are not counted. We say that arcs $a,b$ are {\em disjoint} if $i(a,b) = 0$, so that ``disjoint'' properly means ``disjoint except at common endpoints.'' As usual, we say that an arc $a$ is {\em nonseparating} if the complement $\Sigma_{g,n}\setminus a$ is connected, and we say that a pair of arcs $a,b$ is {\em mutually nonseparating} if the complement $\Sigma_{g,n} \setminus\{a,b\}$ is connected (possibly after passing to well--chosen representatives of the isotopy classes in order to eliminate inessential components). 

Our objective is to identify a suitable subgraph of the arc graph on which $\Mod_{g,n}[\phi]$ acts transitively. Before presenting the full definition (see Definition \ref{definition:complex} below), we first provide a motivating discussion. By definition, an element of $\Mod_{g,n}[\phi]$ must preserve the winding number of every arc, and so to ensure transitivity we must restrict the vertices of this subcomplex to be arcs of a fixed winding number $s \in \Z+\tfrac{1}{2}$. However, this alone is insufficient, as Lemma \ref{lemma:sidednesswn} below makes precise.

If $\alpha$ and $\beta$ are two disjoint (legal) arcs which connect the same points $p$ and $q$ on boundary components $\Delta_p$ and $\Delta_q$, then the action of $\Mod_{g,n}[\phi]$ must preserve the winding number of each boundary component of a neighborhood of $\Delta_p \cup \alpha \cup \beta \cup \Delta_q$. The winding numbers of these curves depends on the $\phi$ values of $\alpha$, $\beta$, $\Delta_p$, and $\Delta_q$, but also on the configuration of $\alpha$ and $\beta$.

\begin{figure}[ht]
\centering
\labellist
\small
\endlabellist
\includegraphics[scale=1]{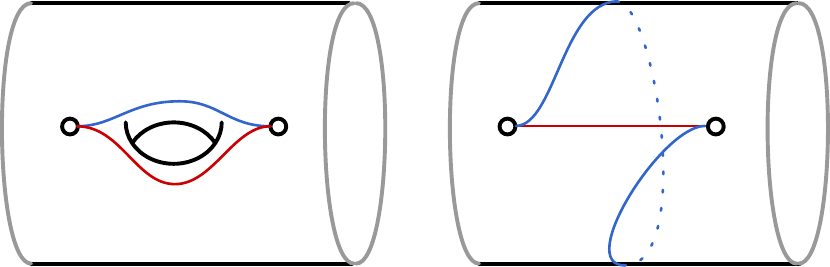}
\caption{Sidedness. Left: a one--sided pair. Right: a two--sided pair.}
\label{figure:sidedness}
\end{figure}

To that end, we say that a pair of arcs $\{\alpha, \beta\}$ as above is {\em one--sided} (respectively, {\em two--sided}) if $\alpha$ leaves $p$ and enters $q$ on the same side (respectively, opposite sides) of $\beta$ for every disjoint realization of $\alpha$ and $\beta$ on $\Sigma$. See Figure \ref{figure:sidedness}.

A quick computation yields an equivalent formulation in terms of winding numbers, which for clarity of exposition we state only in the case when $\phi(\alpha) = \phi(\beta)$. The proof follows by inspection of Figure \ref{figure:sidedness}.

\begin{lemma}\label{lemma:sidednesswn}
Let $\{\alpha, \beta\}$ be a pair of arcs as above with $\phi(\alpha) = \phi(\beta)$. Let $c^\pm$ denote the two curves forming the boundary of a neighborhood of $
\Delta_p \cup \alpha \cup \beta \cup \Delta_q$, oriented so that the subsurface containing $\Delta_p$ and $\Delta_q$ lies on their right. Then $\{\alpha, \beta\}$ is
\begin{itemize}
\item one--sided if and only if $\{\phi(c^+), \phi(c^-)\} = \{1, \phi(\Delta_p) + \phi(\Delta_q) +1\}.$
\item two--sided if and only if $\{\phi(c^+), \phi(c^-)\} = \{\phi(\Delta_p) +1, \phi(\Delta_q) +1\}$.
\end{itemize}
In particular, if $c$ is an admissible curve with $i(\alpha, c)=1$, then $\{\alpha, T_c(\alpha)\}$ is two--sided.
\end{lemma}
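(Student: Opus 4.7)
The proof is a direct computation based on homological coherence (Lemma \ref{lemma:HJ}.\ref{item:HC}). First, I observe that in both cases the neighborhood $N$ of $\Delta_p \cup \alpha \cup \beta \cup \Delta_q$ has Euler characteristic $\chi(N) = V - E = 2 - 4 = -2$, so since $\partial N$ has the four components $\Delta_p, \Delta_q, c^+, c^-$, the neighborhood $N$ must be a four-holed sphere. Applying homological coherence to $N$ and accounting for the fact that the lemma orients $c^\pm$ with $N$ on the \emph{right} (opposite to the standard ``subsurface on the left'' convention of Lemma \ref{lemma:HJ}.\ref{item:HC}) immediately yields
\[\phi(c^+) + \phi(c^-) = \phi(\Delta_p) + \phi(\Delta_q) + 2,\]
which matches the sum of the asserted values in both cases.

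To pin down the individual values, I consider the closed loop $\gamma$ obtained by smoothing the concatenation $\alpha \cdot \bar\beta$ after pushing the shared endpoints off $p, q$. Using $\phi(\bar\beta) = -\phi(\beta)$ and the hypothesis $\phi(\alpha) = \phi(\beta)$, the arc contributions to $\phi(\gamma)$ cancel, leaving only the tangent-rotation jumps at the two smoothed corners. A direct local analysis shows that in the one-sided case, the perturbed arcs lie on the same side of each other at both endpoints, so the two corner turns rotate in the same direction and sum to $+1$, giving $\phi(\gamma) = 1$; in the two-sided case, the perturbations switch sides between $p$ and $q$, so the corner turns have opposite signs and cancel, giving $\phi(\gamma) = 0$.

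Finally, I identify $\gamma$ as the boundary of an appropriate subsurface and apply homological coherence once more. In the one-sided case, $\gamma$ cobounds an annulus in $N$ with one of $c^\pm$ (say $c^+$), so $\phi(c^+) = 1$, and the first paragraph's relation then forces $\phi(c^-) = \phi(\Delta_p) + \phi(\Delta_q) + 1$. In the two-sided case, $\gamma$ separates $N$ into two pairs of pants with boundary components $\{\gamma, \Delta_p, c^+\}$ and $\{\gamma, \Delta_q, c^-\}$; applying homological coherence to each pair of pants together with $\phi(\gamma) = 0$ produces $\phi(c^+) = \phi(\Delta_p) + 1$ and $\phi(c^-) = \phi(\Delta_q) + 1$, as claimed. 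For the final assertion, $T_c(\alpha)$ is obtained from $\alpha$ by splicing in a single copy of $c$ at the unique intersection point, so in any disjoint realization the insertion forces $T_c(\alpha)$ to lie on one side of $\alpha$ before the splice and on the opposite side after; thus $T_c(\alpha)$ leaves $p$ on one side of $\alpha$ and enters $q$ on the opposite side, showing that the pair is two-sided. The main care is required in the corner-rotation calculation, since this is precisely where the topological one-sided/two-sided distinction enters the winding number data.
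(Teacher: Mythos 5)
Your proposal is correct and matches the paper's approach: the paper offers no written argument beyond ``the proof follows by inspection of Figure \ref{figure:sidedness},'' and what you have written is precisely that inspection carried out carefully --- homological coherence pins down the sum $\phi(c^+)+\phi(c^-)$, and the corner-rotation count for the smoothed loop $\alpha\cdot\bar\beta$ together with its position in the four-holed sphere (boundary-parallel in the one-sided case, splitting off two pairs of pants in the two-sided case) pins down the individual values. The only caveat worth noting is that you establish the two forward implications, with the converses following because exactly one of one-sided/two-sided holds and the two value-sets are distinct whenever $\phi(\Delta_p)$ and $\phi(\Delta_q)$ are nonzero (as in all uses in the paper); this is a feature of the lemma's statement rather than a gap in your argument.
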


Having identified sidedness as a further obstruction to transitivity, we come to the definition of the complex under discussion. For any $s \in \Z+\frac{1}{2}$, we say that an arc $\alpha$ is an $s$--arc if $\phi(\alpha)=s$.

\begin{definition}\label{definition:complex}
Let $(\Sigma_{g,n}, \phi)$ be a framed surface with $n \ge 2$. Suppose that $p$ and $q$ are legal basepoints on distinct boundary components $\Delta_p$ and $\Delta_q$ and fix some $s \in \Z+\frac{1}{2}$. Then the {\em restricted $s$--arc graph} $\mathcal A_\pm^s(\phi; p,q)$ is defined as follows:
\begin{itemize}
\item A vertex of $\mathcal A_\pm^s(\phi; p,q)$ is an isotopy class $\alpha$ of $s$--arcs connecting $p$ and $q$.
\item Two vertices $\alpha$ and $\beta$ are connected by an edge if they are disjoint and mutually nonseparating.
\end{itemize}
The {\em two--sided restricted $s$--arc graph} $\mathcal A^s(\phi; p,q)$ is the subgraph of $\mathcal A_\pm^s(\phi; p,q)$ such that:
\begin{itemize}
\item The vertex set of $\mathcal A^s(\phi; p,q)$ is the same as that of $\mathcal A_\pm^s(\phi; p,q)$.
\item Two arcs $\alpha$ and $\beta$ are connected in $\mathcal A^s(\phi; p,q)$ if and only if they are connected in $\mathcal A^s_\pm(\phi; p,q)$ and the pair $\{\alpha, \beta\}$ is two--sided.
\end{itemize}
\end{definition}

\subsection{Transitivity}\label{subsection:transitive}
In this subsection we prove that the action of $\Mod_{g,n}[\phi]$ on $\mathcal A^s(\phi; p,q)$ is indeed transitive on both edges and vertices. The definition of $\mathcal A^s(\phi; p,q)$ above was rigged so that the proof of Lemma \ref{lemma:actionproperties} follows as an extended consequence of the framed change--of--coordinates principle (Proposition \ref{proposition:CCP}). The length of the proof is thus a consequence more of careful bookkeeping than genuine depth.

\begin{lemma}\label{lemma:actionproperties}
The action of $\Mod_{g,n}[\phi]$ on $\mathcal A^s(\phi; p,q)$ is transitive on vertices and on edges. 
\end{lemma}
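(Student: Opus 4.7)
My plan is to derive both transitivity statements from a natural enhancement of the framed change-of-coordinates principle (Proposition \ref{proposition:CCP}): any two configurations of the same topological type with matching winding numbers on $(\Sigma_{g,n}, \phi)$ lie in the same $\Mod_{g,n}[\phi]$-orbit. The basic strategy is two-step: first find a topological mapping class realizing the desired configuration change, then adjust within the stabilizer of $\phi$ by invoking Proposition \ref{proposition:orbits} on a cut surface.

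For vertex transitivity, given two $s$-arcs $\alpha, \beta$ from $p$ to $q$, the ordinary (unframed) change-of-coordinates principle furnishes $g \in \Mod_{g,n}$ with $g(\alpha) = \beta$. Then $\phi$ and $g \cdot \phi$ agree on $\partial \Sigma_{g,n}$ (since $g$ fixes the boundary pointwise) and on $\beta$ (since $(g\cdot \phi)(\beta) = \phi(\alpha) = s = \phi(\beta)$). I would cut $\Sigma_{g,n}$ along $\beta$ to obtain a surface $\Sigma'$ of genus $g$ with $n-1$ boundary components, apply Proposition \ref{proposition:orbits} (or Lemma \ref{lemma:genus1arf} in the exceptional case $g = 1$, $n = 2$) to produce $h \in \Mod(\Sigma')$ with $h \cdot (g \cdot \phi)|_{\Sigma'} = \phi|_{\Sigma'}$, and then extend $h$ by the identity across $\beta$ to obtain $hg \in \Mod_{g,n}[\phi]$ satisfying $hg(\alpha) = \beta$.

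For edge transitivity, vertex transitivity lets me reduce to comparing two edges $(\alpha, \beta)$ and $(\alpha, \gamma)$ sharing a common vertex $\alpha$. The topological classification of two-sided pairs of disjoint, mutually nonseparating arcs between $p$ and $q$---again a direct consequence of the unframed change-of-coordinates principle---supplies $g \in \Mod_{g,n}$ fixing $\alpha$ with $g(\beta) = \gamma$. Cutting $\Sigma_{g,n}$ along the embedded graph $\alpha \cup \gamma$ produces a connected surface (by mutual nonseparability) on which the same procedure applies.

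The principal technical point I anticipate is verifying that the Arf invariants of the restricted framings agree after cutting, so that Proposition \ref{proposition:orbits} is applicable. I expect this to follow from a direct computation with the Arf formula \eqref{equation:arf}: extending a distinguished geometric basis on the cut surface to one on $\Sigma_{g,n}$ by adjoining the cut arc(s), the Arf invariant of the restriction is expressible in terms of $\Arf(\phi)$ together with the winding numbers of the cut arcs and of $\Delta_p, \Delta_q$. Since $\phi$ and $g \cdot \phi$ share all of this data, their restrictions have matching Arf invariants. The two-sidedness hypothesis built into $\mathcal A^s$ is essential for edge transitivity: Lemma \ref{lemma:sidednesswn} shows that one-sided and two-sided pairs determine different winding number patterns on the boundary curves of a neighborhood of $\Delta_p \cup \alpha \cup \beta \cup \Delta_q$, so any $\Mod_{g,n}[\phi]$-equivalence must preserve sidedness---this is precisely why one restricts from $\mathcal A_\pm^s$ to $\mathcal A^s$.
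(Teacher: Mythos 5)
Your proposal is correct in substance and runs on the same engine as the paper's proof: cut along the arc configuration, use Lemma \ref{lemma:sidednesswn} together with additivity of the Arf invariant (Lemma \ref{lemma:Arfadd}) to see that the complementary subsurfaces have matching boundary signatures and matching Arf invariants, invoke Proposition \ref{proposition:orbits} there, and then verify that the glued homeomorphism preserves the \emph{relative} framing. The organizational difference is that you prove vertex transitivity first (cutting along a single arc) and bootstrap to edge transitivity, whereas the paper proves edge transitivity directly for two arbitrary edges and then deduces vertex transitivity by producing, for any vertex $\alpha$, an adjacent vertex $T_c(\alpha)$ for an admissible $c$ with $i(c,\alpha)=1$; your route costs one extra cutting argument but is otherwise equivalent. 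Two small points to repair. First, the claim that one can choose $g$ fixing the common vertex $\alpha$ with $g(\beta)=\gamma$ is not always achievable: within a two-sided pair the two arcs play asymmetric roles (one leaves $p$ to the left of the other), and an orientation-preserving mapping class fixing the boundary must match these roles, so $g$ may be forced to send $\alpha\mapsto\gamma$ and $\beta\mapsto\alpha$. This is harmless because edges are unordered (and is precisely why the paper warns that the action is not transitive on \emph{oriented} edges), but ``fixing $\alpha$'' should be weakened to ``taking the unordered edge to the unordered edge.'' Second, the final gluing step deserves the care the paper gives it: relative isotopy classes of framings are detected by winding numbers of arcs as well as curves, so one must check the glued map on a full distinguished geometric basis of $\Sigma_{g,n}$ (the paper does this by extending a basis of the complementary piece across the cut using the auxiliary arcs $s^\pm_\beta$). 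Your formulation via isotopies rel the cut locus does handle this, but only because the copies of the cut arcs lie in the boundary of the cut surface, so the two piecewise isotopies glue to a relative isotopy of $\Sigma_{g,n}$; that point, and the preliminary isotopy arranging pointwise agreement of the two framings along the cut locus, should be made explicit.
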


We caution the reader that the action of $\Mod_{g,n}[\phi]$ is {\em not} transitive on {\em oriented} edges of $\mathcal A^s(\phi; p,q)$.

\begin{proof}
We begin by showing that edge transitivity implies vertex transitivity. Suppose that $\alpha$ and $\beta$ are two vertices of $\mathcal A^s(\phi; p,q)$; we will exhibit an element of $\Mod_{g,n}[\phi]$ taking $\alpha$ to $\beta$.

By the framed change--of--coordinates principle (Proposition \ref{proposition:CCP}), there is some admissible curve $c$ which meets $\alpha$ exactly once, and so by Lemma \ref{lemma:sidednesswn} the arc $T_c(\alpha)$ is adjacent to $\alpha$ in $\mathcal A^s(\phi; p,q)$. Now choose some $\gamma\in \mathcal A^s(\phi; p,q)$ adjacent to $\beta$. By edge transitivity, there exists a $g \in \Mod_{g,n}[\phi]$ which takes the $\{\alpha, T_c(\alpha)\}$ edge to the $\{\beta, \gamma\}$ edge. If $g(\alpha)=\beta$, then we are done. Otherwise, $g(T_c(\alpha))=\beta$, and since $c$ is admissible, $gT_c \in \Mod_{g,n}[\phi]$.

It remains to establish edge transitivity. Up to relabeling, we may assume that $p \in \Delta_1$ and $q \in \Delta_2$.
Suppose that ${\alpha}=\{\alpha_1, \alpha_2\}$ and ${\beta}=\{\beta_1, \beta_2\}$ are two edges of $\mathcal A^s(\phi; p,q)$. We also assume that $\alpha_1$ leaves $p$ from the right--hand side of $\alpha_2$ and enters $q$ to the left, and the same for $\beta_1$ and $\beta_2$.

For each $\bullet \in \{\alpha, \beta\}$, let $c_\bullet^\pm$ denote the two boundary components of a neighborhood of $\bullet \cup \Delta_1 \cup \Delta_2$. Let $X_\bullet$ (respectively $Y_\bullet$) denote the component of $\Sigma \setminus c_\bullet^\pm$ containing (respectively, not containing) $\bullet$, equipped with the induced framing $\xi_\bullet$ (respectively, $\eta_\bullet)$. Finally, orient each curve of $c_\bullet^\pm$ so that $Y_\bullet$ lies on its left--hand side. See Figure \ref{figure:arcpair}.

\begin{figure}[ht]
\centering
\labellist
\Small
\pinlabel $q$ [br] at 124.71 87.87
\pinlabel $p$ [tl] at 68.03 85.03
\pinlabel $\Delta_1$ [br] at 57.85 90
\pinlabel $\Delta_2$ [bl] at 136.05 90
\pinlabel $\alpha_1$ [t] at 90.70 85.03
\pinlabel $\alpha_2$ [tl] at 107.71 70.86
\pinlabel $X_\alpha$ [c] at 163.06 86.53
\pinlabel $Y_\alpha$ [c] at 17.01 39.68
\pinlabel $c_\alpha^+$ [b] at 24.09 70.86
\pinlabel $c_\alpha^-$ [br] at 148.80 65.19
\endlabellist
\includegraphics[scale=1]{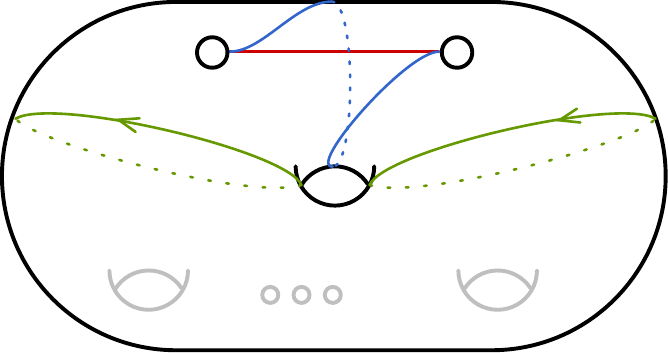}
\caption{The curves and subsurfaces determined by a two--sided pair of disjoint $s$--arcs.}
\label{figure:arcpair}
\end{figure}

Since both $\alpha$ and $\beta$ are two--sided, Lemma \ref{lemma:sidednesswn} implies that
\begin{equation}\label{eqn:boundwn}
\{ \phi(c_{\bullet}^+),\phi(c_{\bullet}^-)\} = \{\phi(\Delta_1) +1, \phi(\Delta_2) +1\}
\end{equation}
for $\bullet \in \{\alpha, \beta\}$ and we fix the convention that $\phi(c_{\bullet}^+)=\phi(\Delta_1) +1$.

The proof now follows by building homeomorphisms $X_\alpha \rightarrow X_\beta$ and $Y_\alpha \rightarrow Y_\beta$ and gluing them together. 
To that end, we must first describe these subsurfaces in more detail.

\para{Distinguished arcs in $X_\bullet$}
Recall that $c_\bullet^\pm$ are defined as the boundary components of a neighborhood of $\bullet \cup \Delta_1 \cup \Delta_2$. 
If this neighborhood is taken to be very small (with respect to some auxiliary metric on $\Sigma$), then away from $p$ and $q$ the framing restricted to $c_\bullet^\pm$ looks like the framing on segments of $\bullet \cup \Delta_1 \cup \Delta_2$. 
In particular, for each point $p' \neq p$ of $\Delta_1$ with an orthogonally inward-- or outward--pointing framing vector there is a corresponding point of $c_\bullet^+$ with an orthogonally inward-- or outward--pointing framing vector. The analogous statement of course also holds for $q \neq q' \in \Delta_2$ and $c_\bullet^-$.

Pick points $p'$ and $p'' \neq p$ on $\Delta_1$ such that the framing vector at $p'$ points orthogonally outwards and the framing vector at $p''$ points orthogonally inwards. For the sake of concreteness, we will assume that $\phi(\Delta_1)$ is negative and take $p'$ (respectively $p''$) so that the arc $\gamma'$ (respectively $\gamma''$) of $\Delta_1$ which runs clockwise connecting $p$ to $p'$ ($p''$) has winding number equal to $-1/2$ (respectively $-1$). When $\phi(\Delta_1)$ is positive, the proof is identical except the arcs $\gamma'$ and $\gamma''$ will have winding numbers $1/2$ and $1$, respectively.

Now let $x^+_\bullet$ and $y^+_\bullet$ denote the corresponding points of $c_\bullet^+$; by construction, the framing vectors at these points point orthogonally into $X_\bullet$ and $Y_\bullet$, respectively. Using $q'$ and $q'' \neq q$ on $\Delta_2$, one may similarly construct $x^-_\bullet$ and $y^-_\bullet$. See Figure \ref{figure:specialpts}.

\begin{figure}[ht]
\centering
\labellist
\Small
\pinlabel $y_\alpha^+$ [tr] at 8.50 18.42
\pinlabel $p''$ [bl] at 55.27 56.69
\pinlabel $s_\alpha^+$ [t] at 68.03 28.34
\pinlabel $\Delta_1$ [t] at 76.53 104.87
\pinlabel $p'$ [br] at 107.20 40.94
\pinlabel $x_\alpha^+$ [tl] at 123.30 14.17
\pinlabel $p$ [br] at 106.29 75.11
\pinlabel $r_\alpha^+$ [tl] at 130.38 62.36
\pinlabel $\alpha_2$ [tl] at 192.74 113.38
\pinlabel $c_\alpha^+$ [t] at 195.57 53.85
\pinlabel $\alpha_1$ [t] at 223.92 70.86
\pinlabel $c_\alpha^-$ [tl] at 222.50 114.79
\endlabellist
\includegraphics[scale=1]{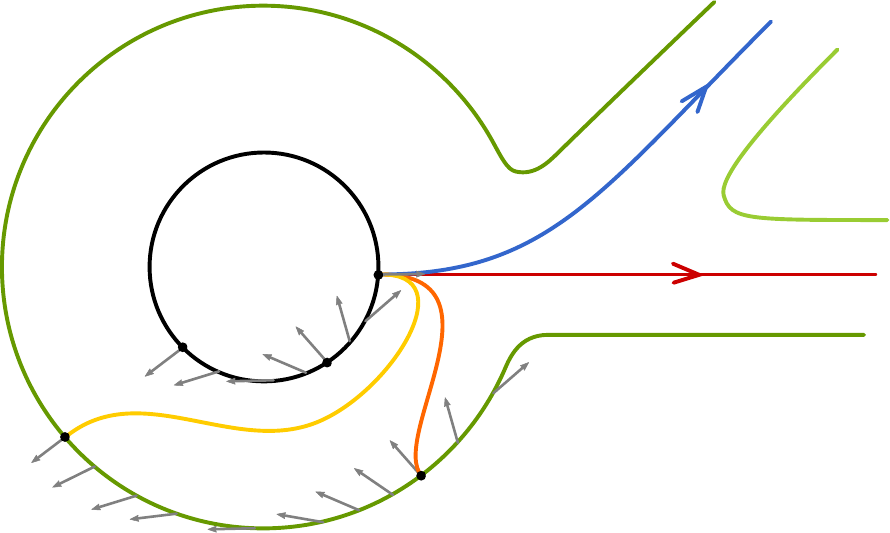}
\caption{Distinguished points and arcs in a neighborhood of $\alpha \cup \Delta_1$.}
\label{figure:specialpts}
\end{figure}

By our choice of $p'$ and $p''$, one may observe that there exist arcs $r_\bullet^\pm$ from $p$ to $x_\bullet^\pm$ with
\[\phi(r_\bullet^+) = -\frac{1}{2} \text{ and }
\phi(r_\bullet^-) = s.\]
Similarly, there exist $s_\bullet^\pm$ from $p$ to $y_\bullet^\pm$ with
\[\phi(s_\bullet^+) = -1 \text{ and }
\phi(s_\bullet^-) = s- \frac{1}{2}.\]
Now $\{\bullet_1, r_\bullet^+, r_\bullet^-\}$ forms a distinguished geometric basis for $X_\bullet$, and hence one can compute that
\begin{equation}\label{eqn:ArfX}
\Arf(\xi_\bullet) = (s+\frac{1}{2})(\phi(\Delta_2)+1) + (0)(\phi(\Delta_1)+2) + (s+\frac{1}{2})(\phi(\Delta_2)+2)
\end{equation}
which in particular does not depend on $\bullet \in \{\alpha, \beta\}$.

\para{Building homeomorphisms on subsurfaces}
By construction both $Y_\alpha$ and $Y_\beta$ are homeomorphic to $\Sigma_{g-1, n}$. By \eqref{eqn:boundwn} their boundary signatures agree:
\[\sig(\eta_\alpha) = (\phi(\Delta_1) +1, \phi(\Delta_2) +1, \phi(\Delta_3), \ldots, \phi(\Delta_n)) = \sig(\eta_\beta).\]
Moreover, by the additivity of the Arf invariant (Lemma \ref{lemma:Arfadd}) together with \eqref{eqn:boundwn} and \eqref{eqn:ArfX}, we have that
\[\Arf(\eta_\alpha)
= \Arf(\phi) + \Arf(\xi_\alpha) + p + q
= \Arf(\phi) + \Arf(\xi_\beta) + p + q
= \Arf(\eta_\beta)\]
and so by the classification of $\Mod_{g,n}$ orbits of framed surfaces (Proposition \ref{proposition:orbits}) there is a homeomorphism $f_Y: Y_\alpha \rightarrow Y_\beta$ such that $f_Y^*(\eta_\beta) = \eta_\alpha$. Moreover, $f_Y(c_\alpha^\pm) = c_\beta^\pm$ and in fact $f_Y(y_\alpha^\pm) = y_\beta^\pm$.

Now in order to extend $f_Y$ to a self--homeomorphism of $\Sigma$ which takes $\alpha$ to $\beta$, we need only specify a homeomorphism $f_X$ of $X_\alpha$ with $X_\beta$. This can be done easily by observing that $\alpha \cup r_\alpha^\pm$ cuts $X_\alpha$ into disks with the same combinatorics as $\beta \cup r_\beta^\pm$ cuts $X_\beta$, and hence there is a unique homeomorphism $f_X: X_\alpha \rightarrow X_\beta$ which takes $\alpha$ to $\beta$ and $r_\alpha^\pm$ to $r_\beta^\pm$.

Pasting $f_X$ and $f_Y$ together without twisting around $c_\alpha^\pm$, we therefore get a homeomorphism $\tilde{f}:\Sigma \rightarrow \Sigma$ which takes $\alpha$ to $\beta$.

\para{Preserving the framing}
It remains to show that $\tilde{f}$ preserves the framing $\phi$. Choose a distinguished geometric basis
\[\mathcal B_\beta = \{x_1,y_1, \ldots, x_{g-1}, y_{g-1}\} \cup \{a_2, \ldots, a_{n}\}\]
for $Y_\beta$ such that all the arcs $a_i$ of $\mathcal B_\beta$ emanate from $y_\beta^+ \in c_\beta^+$. By convention, suppose that $a_2$ runs from $y_\beta^+$ to $y_\beta^-$, and by twisting around $c_\beta^+$ if necessary, suppose that $a_2$ emerges to the left of all other $a_i$. Then $\mathcal B_\beta$ extends to a distinguished geometric basis of $\Sigma$ in the following way:
\[\widetilde{\mathcal{B}}_\beta =
\{x_1, y_1, \ldots, x_{g-1}, y_{g-1}, c_\beta^-, a_2 \cdot \overline{(s_\beta^-)} \cdot s_\beta^+\}
\cup
\{\beta_2 , s_\beta^+ \cdot a_3, \ldots, s_\beta^+ \cdot a_n\}\]
where $a \cdot b$ represents the concatenation of the arcs $a$ and $b$ and $\overline{a}$ represents the arc $a$ traveled backwards.
See Figure \ref{figure:basis_ext}.
By concatenating with $s_\alpha^\pm$ arcs, the basis $f_Y^{-1}(\mathcal{B})$ on $Y_\alpha$ also extends to a basis of $\Sigma$ in a similar fashion:
\begin{equation}
\begin{split}
\widetilde{\mathcal{B}}_\alpha = &
\{f_Y^{-1}(x_1), f_Y^{-1}(y_1), \ldots, f_Y^{-1}(x_{g-1}), f_Y^{-1}(y_{g-1}), c_\alpha^-, f_Y^{-1}(a_2) \cdot \overline{(s_\alpha^-)} \cdot s_\alpha^+\} \\
& \cup
\{\alpha_2 , s_\alpha^+ \cdot f_Y^{-1}(a_3), \ldots, s_\alpha^+ \cdot f_Y^{-1}(a_n)\}
\end{split}
\end{equation}

\begin{figure}[ht]
\centering
\labellist
\Small
\pinlabel $Y_\beta$ [c] at 62.36 195.57
\pinlabel $a_2$ [bl] at 214.25 173.23
\pinlabel $a_3$ [bl] at 209.74 107.71
\pinlabel $a_4$ [bl] at 189.90 53.85
\pinlabel $c_\beta^+$ [b] at 257.34 218.09
\pinlabel $y_\beta^+$ [br] at 239.00 212.58
\pinlabel $c_\beta^-$ [c] at 243.76 133.22
\pinlabel $y_\beta^-$ [br] at 242.42 153.06
\pinlabel $s_\beta^-$ [bl] at 288.36 128.96
\pinlabel $s_\beta^+$ [b] at 276.35 208.58
\pinlabel $\beta_1$ [br] at 288.27 191.32
\pinlabel $\beta_2$ [tr] at 273.52 168.65
\pinlabel $\Delta_1$ [c] at 317.45 225.33
\pinlabel $\Delta_2$ [c] at 317.45 160.14
\pinlabel $\Delta_3$ [c] at 242.34 97.79
\pinlabel $\Delta_4$ [c] at 242.34 31.18
\pinlabel $X_\beta$ [t] at 277.77 120.18
\endlabellist
\includegraphics[scale=1]{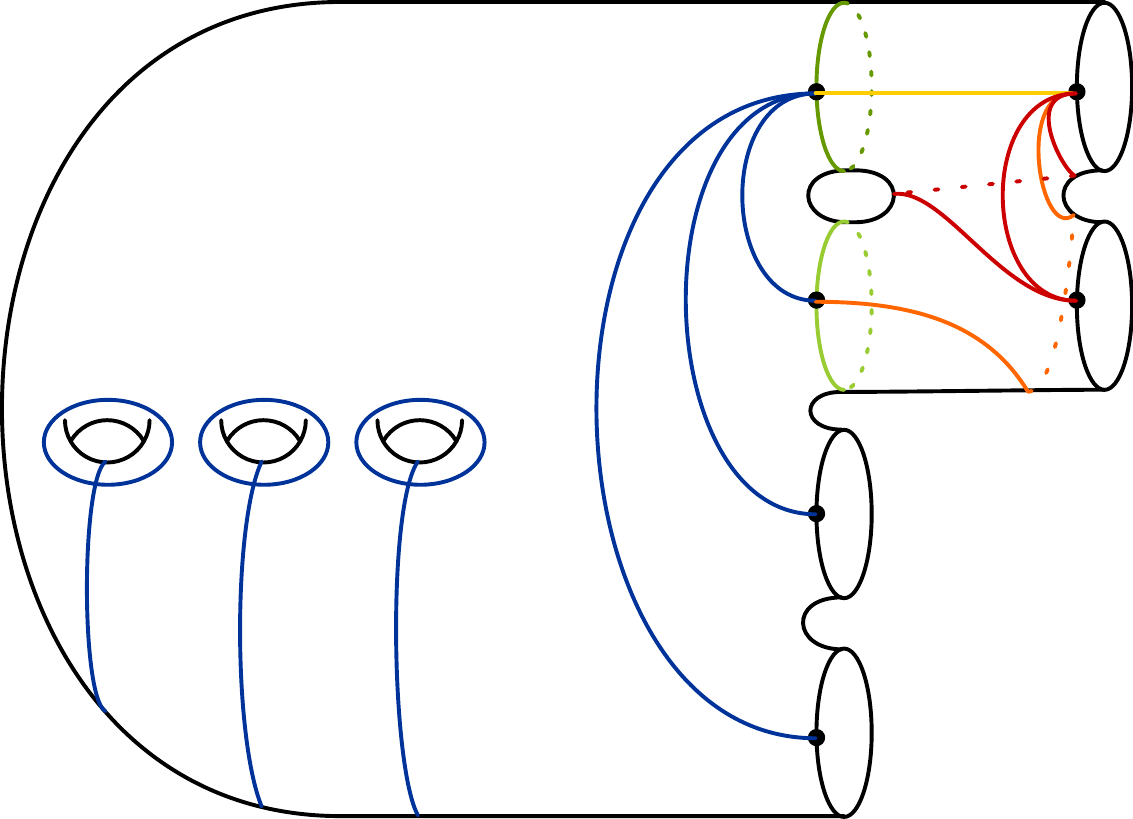}
\caption{Extending a distinguished geometric basis from $Y_\beta$ to $\Sigma$ by using the arcs $s_\beta^\pm$ of $X_\beta$.}
\label{figure:basis_ext}
\end{figure}

Now by construction we have that $f_X(s_\alpha^\pm) = s_\beta^\pm$ and
$\phi(s_\alpha^\pm) = s-1/2= \phi(s_\beta^\pm)$,
so we have that for each element $x \in \widetilde{\mathcal{B}}_\alpha$,
\[\tilde{f}(x) \in \widetilde{\mathcal{B}}_\beta \text{ and } \phi(\tilde{f}(x)) = \phi(x).\]
Therefore $\tilde{f}$ preserves the winding numbers of a distinguished geometric basis, and so by Remark \ref{remark:gsbCW}, $\tilde{f} \in \Mod_{g,n}[\phi]$.
\end{proof}

\subsection{Connectedness}\label{section:connected}
\begin{lemma}\label{lemma:connected}
Let $(\Sigma_{g,n}, \phi)$ be a framed surface with $g \ge 5$ and $n \ge 2$. Let $p,q$ be distinct boundary components, and let $s \in \Z$ be arbitrary. Then $\mathcal A^s(\phi; p,q)$ is connected. 
\end{lemma}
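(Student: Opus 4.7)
The strategy is induction on the geometric intersection number $i(\alpha,\beta)$, with two main tools: the framed change-of-coordinates principle (Proposition \ref{proposition:CCP}) in the base case, and the ``admissible surgery lemma'' proved in Section \ref{section:asl} in the inductive step. Fix two vertices $\alpha, \beta$ of $\mathcal A^s(\phi;p,q)$.

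\textbf{Base case $i(\alpha,\beta)=0$.} If $\{\alpha,\beta\}$ is already an edge of $\mathcal A^s(\phi;p,q)$, there is nothing to prove. Otherwise the pair either fails to be mutually nonseparating, or is one-sided (or both). In each such subcase the plan is to produce an auxiliary $s$-arc $\gamma$, disjoint from $\alpha \cup \beta$, such that both $\{\alpha,\gamma\}$ and $\{\gamma,\beta\}$ are edges of $\mathcal A^s(\phi;p,q)$; two applications of transitivity or of the already-established edge relation then join $\alpha$ to $\beta$ via a path of length two. If $\{\alpha,\beta\}$ is one-sided, I would use Proposition \ref{proposition:CCP} to find an admissible curve $c$ with $i(c,\alpha)=1$ and supported in a subsurface disjoint from $\beta$, and set $\gamma := T_c(\alpha)$; admissibility of $c$ ensures $\gamma$ is still an $s$-arc, and Lemma \ref{lemma:sidednesswn} immediately gives the needed two-sidedness. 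If $\{\alpha,\beta\}$ is mutually separating, then $\alpha \cup \beta \cup \Delta_p \cup \Delta_q$ cuts $\Sigma_{g,n}$ into two subsurfaces, each of positive genus because $g \ge 5$; the framed change-of-coordinates principle, applied to the complementary piece whose genus is at least $2$, then produces the desired $\gamma$ with the required winding number and sidedness data.

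\textbf{Inductive step $i(\alpha,\beta)\ge 1$.} Here I invoke the admissible surgery lemma of Section \ref{section:asl} to produce an $s$-arc $\gamma$ from $p$ to $q$ with $i(\gamma,\alpha) < i(\alpha,\beta)$ and $i(\gamma,\beta) < i(\alpha,\beta)$. The inductive hypothesis applied to each pair yields paths in $\mathcal A^s(\phi;p,q)$ from $\alpha$ to $\gamma$ and from $\gamma$ to $\beta$, and concatenating joins $\alpha$ to $\beta$.

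\textbf{Main obstacle.} The genuine difficulty is the surgery step, which is why it is isolated as the admissible surgery lemma. A naive bigon surgery at an intersection point of $\alpha$ and $\beta$ produces arcs whose winding numbers are forced by the local picture of the framing near the intersection and generically fail to equal $s$; moreover one must check that the resulting arc actually runs from $p$ to $q$ (and not $p$ to $p$ or $q$ to $q$) and that intersections with both $\alpha$ and $\beta$ strictly decrease. Correcting the winding number back to $s$ requires composing with twists about carefully chosen admissible curves, and these in turn must be placed so as not to undo the intersection reduction. The base case is technically easier but still requires a careful application of the framed change-of-coordinates principle, which is where the genus bound $g \ge 5$ is most visibly used.
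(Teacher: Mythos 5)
Your base case is broadly in line with the paper: the one\-/sided case is handled there by Lemma \ref{lemma:onetwosided}, which inserts an intermediate $s$-arc $\gamma = T_{\Delta_p}^{-1}(\alpha +_\epsilon c)$ built from a curve $c$ of carefully chosen winding number disjoint from $\alpha\cup\beta$, and the mutually separating case is handled at the end of the paper's proof by a curve--arc sum in the complementary piece of genus $\ge 2$. (Your specific choice $\gamma = T_c(\alpha)$ does give a two\-/sided edge with $\alpha$ by Lemma \ref{lemma:sidednesswn}, but two\-/sidedness and mutual nonseparation of $\{\gamma,\beta\}$ are not automatic and you would still need to arrange them, which is what the winding-number bookkeeping in Lemma \ref{lemma:onetwosided} is for.)

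The genuine gap is in the inductive step. The Admissible Surgery Lemma (Lemma \ref{lemma:admissiblesurgery}) is not a statement about an arbitrary pair of intersecting $s$-arcs: its hypotheses require a reserved subsurface $S\cong\Sigma_{2,1}$ disjoint from \emph{both} the arc $\eta$ being surgered \emph{and} the target $x$. That reserved subsurface is exactly where the winding-number correction lives --- after a bigon surgery the new arc has the wrong winding number, and one twists about curves inside $S$ (via Lemma \ref{lemma:goodtwist}) to restore the value $s$ without reintroducing intersections. For two arbitrary $s$-arcs $\alpha,\beta$ that jointly fill (or nearly fill) the surface, no such $S$ exists, so the lemma simply does not apply to the pair $(\alpha,\beta)$, and your induction on $i(\alpha,\beta)$ has no way to get started; you correctly identify the winding-number correction as the main obstacle but do not resolve it, and resolving it is the entire content of the argument. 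The paper's proof is structured precisely to sidestep this: it does \emph{not} induct on $i(\alpha,\beta)$. Instead it first reduces to the larger graph $\mathcal A^s_\pm$ via Lemma \ref{lemma:onetwosided}, then reserves a genus-$2$ subsurface $S_\alpha$ disjoint from $\alpha$ and another $S_\omega$ disjoint from the target arc, connects them by a chain $S_0,\dots,S_n$ of genus-$2$ subsurfaces with consecutive terms disjoint (Lemma \ref{lemma:genus2connected}, where $g\ge 5$ enters), and repeatedly applies the surgery lemma with target $x=\partial S_{i+1}$ to walk the arc across the surface while always keeping a free genus-$2$ piece in hand; only at the last step is the target the other arc, and by then the reserved subsurface $S_\omega$ disjoint from it is available. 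Without this relay of reserved subsurfaces, or some substitute mechanism for performing the winding-number correction in the presence of a filling pair of arcs, the proposed induction does not go through.
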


This will require the preliminary Lemmas \ref{lemma:genus2connected}, \ref{lemma:admissiblesurgery}, and \ref{lemma:onetwosided}. The first of these was proved in \cite{salter_toric}. There it was formulated only for closed surfaces, but the same proof applies for surfaces with an arbitrary number of punctures and boundary components.

\begin{lemma}[c.f. Lemma 7.3 of \cite{salter_toric}]\label{lemma:genus2connected}
Let $g \ge 5$ and $n \ge 0$ be given. Let $S$ and $S'$ be subsurfaces of $\Sigma_{g,n}$, each homeomorphic to $\Sigma_{2,1}$. Then there is a sequence $S = S_0, \dots, S_n = S'$ of subsurfaces of $\Sigma_{g,n}$ such that $S_{i-1}$ and $S_{i}$ are disjoint and $S_i\cong \Sigma_{2,1}$ for all $i = 1, \dots, n$.
\end{lemma}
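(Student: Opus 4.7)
I would prove Lemma~\ref{lemma:genus2connected} by establishing that the graph $\mathcal{G}$ whose vertices are isotopy classes of $\Sigma_{2,1}$-subsurfaces of $\Sigma_{g,n}$ and whose edges record disjointness is connected. The key feature making this tractable is that, for $g \ge 5$, the complement of any $\Sigma_{2,1}$-subsurface is homeomorphic to $\Sigma_{g-2, n+1}$ with $g - 2 \ge 3$, which is itself rich enough to contain further $\Sigma_{2,1}$-subsurfaces.

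The first reduction uses transitivity: by the classification of surfaces, any two $\Sigma_{2,1}$-subsurfaces $S, S'$ have homeomorphic complements $\Sigma_{g-2,n+1}$, so some $f \in \Mod(\Sigma_{g,n})$ carries $S$ to $S'$. Fixing a reference $\Sigma_{2,1}$-subsurface $S_0$ and a finite Dehn-twist generating set of $\Mod(\Sigma_{g,n})$ (e.g.\ a Humphries-type set adapted to a pants decomposition containing $\partial S_0$), it suffices to show that for each generator $T_c$, the subsurfaces $S_0$ and $T_c(S_0)$ lie in the same component of $\mathcal{G}$. When $c$ is disjoint from $\partial S_0$ we have $c \subset S_0$ or $c \subset \Sigma_{g,n}\setminus S_0$, so $T_c$ is supported off of $\partial S_0$ and $T_c(S_0) = S_0$ as isotopy classes, leaving nothing to prove.

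The nontrivial case is when $c$ meets $\partial S_0$. Since $T_c$ is supported in a regular annular neighborhood $N(c)$, we have $T_c(S_0) \subset S_0 \cup N(c)$. One then observes that $S_0 \cup N(c)$ has genus at most $2 + \lfloor i(c,\partial S_0)/2\rfloor$; so if we can arrange every generator $c$ to meet $\partial S_0$ in a uniformly bounded number of points, the complement of $S_0 \cup N(c)$ has genus at least $g - 2 - O(1) \ge 2$ (using $g \ge 5$) and hence contains some $\Sigma_{2,1}$-subsurface $S''$. This $S''$ is then disjoint from both $S_0$ and $T_c(S_0)$, producing the desired length-two chain $S_0, S'', T_c(S_0)$ in $\mathcal{G}$. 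The bounded-intersection hypothesis can be secured by choosing the generating set to be twists about curves of a pants decomposition of $\Sigma_{g,n}$ refining $\partial S_0$, in which case each such $c$ is disjoint from $\partial S_0$ or intersects it in at most two points.

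The main obstacle is the borderline case $g = 5$ (especially with $n = 0$), where the genus budget in the complement of $S_0 \cup N(c)$ is tightest and one must verify that the chosen generating set really can be placed in the prescribed low-intersection position with respect to $S_0$. If this uniform control cannot be achieved for a particular generator, one falls back on a braid-like relation expressing $T_c$ as a product of Dehn twists about simpler curves (each meeting $\partial S_0$ minimally), and applies the previous argument to each factor in turn; concatenating the resulting length-two chains yields a path from $S_0$ to $T_c(S_0)$ in $\mathcal{G}$. Together these cases exhaust the generators and establish the connectivity of $\mathcal{G}$, which is precisely the content of the lemma.
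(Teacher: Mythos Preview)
The paper does not supply its own proof here; it simply cites \cite[Lemma~7.3]{salter_toric} and remarks that the argument there carries over verbatim to surfaces with boundary. Your outline is the standard ``Putman connectivity trick'': use transitivity of the $\Mod(\Sigma_{g,n})$-action on $\Sigma_{2,1}$-subsurfaces, fix a basepoint $S_0$, choose a finite Dehn-twist generating set, and verify that each generator moves $S_0$ only a bounded distance in $\mathcal G$. This is the correct strategy and is how such statements are typically proved.

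There is one genuine slip in your execution. You write that ``the bounded-intersection hypothesis can be secured by choosing the generating set to be twists about curves of a pants decomposition of $\Sigma_{g,n}$ refining $\partial S_0$.'' Twists about the curves of a single pants decomposition pairwise commute and generate only a free abelian subgroup of $\Mod(\Sigma_{g,n})$, not the whole group. What you actually need is a Humphries-type generating set arranged so that $S_0$ is a regular neighborhood of an initial $4$-chain; then every generator $c$ either lies in $S_0$, is disjoint from $S_0$, or satisfies $i(c,\partial S_0)=2$. With that correction your genus count is fine: $S_0 \cup N(c)$ has genus at most $3$, so for $g \ge 5$ its complement has genus at least $2$ and hence contains a $\Sigma_{2,1}$-subsurface disjoint from both $S_0$ and $T_c(S_0)$. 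The fallback you describe for the borderline case is then unnecessary.
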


\begin{lemma}[Admissible surgery]\label{lemma:admissiblesurgery}
Fix $g \ge 3, n \ge 2$, and let $(\Sigma_{g,n}, \phi)$ be a framed surface with distinguished legal basepoints $p$ and $q$ on boundary components $\Delta_p$ and $\Delta_q$. Let $S \subset \Sigma_{g,n}$ be a subsurface homeomorphic to $\Sigma_{2,1}$ (necessarily not containing $p$ or $q$). Let $\eta$ be an $s$--arc connecting $p, q$ that is disjoint from $S$. Let $x \subset \Sigma_{g,n}$ be either a separating curve or an arc connecting $p$ to $q$, in either case disjoint from $S$. Then there is a path $\eta = \eta_0, \dots, \eta_k$ in $\mathcal A^s_\pm(\phi; p,q)$ such that $i(\eta_k,x) = 0$. 
\end{lemma}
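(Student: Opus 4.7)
My plan is to induct on the geometric intersection number $i(\eta,x)$. The base case $i(\eta,x)=0$ is trivial: take $k=0$. For the inductive step, it suffices to construct a single arc $\eta_1$ adjacent to $\eta$ in $\mathcal A^s_\pm(\phi;p,q)$ with $i(\eta_1,x) < i(\eta,x)$. Since $g \ge 3$, there is a genus-$2$ subsurface $S_1 \cong \Sigma_{2,1}$ disjoint from $\eta_1 \cup x$, so the inductive hypothesis applied to $(\eta_1, S_1, x)$ continues the path to an arc disjoint from $x$.

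The construction of $\eta_1$ proceeds in two stages. \emph{Stage 1 (topological surgery):} let $y$ be the intersection of $\eta$ and $x$ closest to $p$ along $\eta$. A standard cut-and-paste, following a pushoff of the subarc $x[p,y]$ when $x$ is an arc from $p$ to $q$, or an analogous surgery when $x$ is a separating curve (in which case $p,q$ must lie on the same component of $\Sigma_{g,n}\setminus x$ for the conclusion even to be achievable), produces an embedded arc $\tilde\eta$ from $p$ to $q$ disjoint from $\eta$, disjoint from $S$, and with $i(\tilde\eta,x) \le i(\eta,x)-1$. In general, however, $t := \phi(\tilde\eta) \neq s$.

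\emph{Stage 2 (winding number correction via $S$):} because $g \ge 3$, the complement of $\eta \cup \tilde\eta \cup x \cup S$ contains an embedded arc $\gamma$ from a point $z \in \tilde\eta$ to a point $z' \in \partial S$, disjoint from $\eta$ and $x$, meeting $\tilde\eta \cup S$ only at its endpoints. Applying Proposition \ref{proposition:CCP}.1 inside $S \cong \Sigma_{2,1}$, for every $r \in \Z$ there exists a non-separating simple closed curve $c \subset S$ passing through $z'$ with $\phi(c) = r$. Let $\eta_1$ be the band sum of $\tilde\eta$ with $c$ along $\gamma$; a direct computation from Lemma \ref{lemma:HJ} shows $\phi(\eta_1) = \phi(\tilde\eta) + \phi(c) + \kappa$, where $\kappa \in \Z$ is a constant depending only on $\gamma$ and the pushoff convention. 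Choosing $r$ so that $\phi(\eta_1) = s$ is thus always possible. Since $\tilde\eta, \gamma$, and $c$ are all disjoint from $\eta$, and $\gamma \cup c$ is disjoint from $x$, we obtain $\eta_1 \cap \eta = \emptyset$ and $i(\eta_1,x) = i(\tilde\eta,x) < i(\eta,x)$. Mutual non-separation of $\eta_1$ and $\eta$ follows from the non-separation of $c$ in $\Sigma_{g,n}\setminus\eta$, which can be arranged by the framed change-of-coordinates principle in $S$: the curve $c$ may simultaneously be chosen to realize $\phi(c) = r$ and to represent a nontrivial homology class in $\Sigma_{g,n} \setminus \eta$. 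Hence $\{\eta,\eta_1\}$ is an edge of $\mathcal A^s_\pm(\phi;p,q)$ as required.

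The principal difficulty is the simultaneous enforcement of winding number, disjointness from $\eta$, reduction of intersections with $x$, and mutual non-separation with $\eta$. The reservoir $S \cong \Sigma_{2,1}$ is engineered precisely for this task: its genus $2$ allows simple closed curves of arbitrary winding number and homology class inside, while $g \ge 3$ ensures the complement of the existing configuration has enough topology to accommodate the auxiliary arc $\gamma$. The bookkeeping of the constant $\kappa$ in the band-sum winding-number formula is routine via Lemma \ref{lemma:HJ} and the homological coherence / twist-linearity properties there recorded.
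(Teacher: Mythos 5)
Your overall strategy (induct on $i(\eta,x)$, surger to reduce intersections, then use $S$ as a reservoir to repair the winding number) is the right one, and Stage 1 is essentially salvageable — though you should surger at the intersection point closest to $p$ along $x$, not along $\eta$, since otherwise the pushoff of the subarc of $x$ still crosses $\eta$; and the separating-curve case needs more than a wave at ``an analogous surgery.'' The genuine gap is in Stage 2: the arc $\gamma$ from $\tilde\eta$ to $\partial S$ avoiding $\eta \cup x$ need not exist, and ``$g \ge 3$'' does not produce it. The obstruction is one of separation, not of genus: $x \cup \eta$ can separate $\tilde\eta$ from $S$. Concretely, if $x$ is a separating curve with $p,q$ — and hence $\tilde\eta$, once $i(\tilde\eta,x)=0$ — on the side not containing $S$, then every path from $\tilde\eta$ to $\partial S$ crosses $x$; the band sum then re-introduces two intersections with $x$, and when $i(\eta,x)=2$ the induction makes no progress (the side containing $\tilde\eta$ may be too small to repair the winding number internally). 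Even when $x$ is an arc, $\tilde\eta$ can lie entirely in complementary regions of $\eta\cup x\cup\Delta_p\cup\Delta_q$ that do not meet the region containing $S$, so requiring $\gamma$ to avoid $\eta$ as well is a further unaddressed constraint.

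The paper's proof is organized precisely to dodge this: it first replaces $\eta$ by an adjacent $s$-arc $\eta_1 = \eta +_\epsilon c$ that passes through $S$, and then performs the surgery along $x$ so that the surgered arc $\eta_2'$ still passes through $S$ (this is the purpose of the initial/terminal point bookkeeping and the two cases there). The winding number is then corrected by a Dehn twist along a curve $d_\pm$ inside $S$ meeting $\eta_2'$ once — no connecting arc is needed — at the cost of the ``positive/negative position'' analysis required to choose the sign of the twist so that the twisted arc stays disjoint from $\eta_1$. In short, your order of operations (surger first, connect to $S$ afterwards) is where the argument breaks; the reservoir must be threaded onto the arc before the surgery, and the surgery must be designed to preserve the threading.
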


\begin{lemma}\label{lemma:onetwosided}
With hypotheses as above, if $\mathcal A_\pm^s(\phi; p, q)$ is connected, then also $\mathcal A^s(\phi; p,q)$ is connected. 
\end{lemma}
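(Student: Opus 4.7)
The plan is to upgrade an arbitrary path $\alpha = \eta_0, \eta_1, \ldots, \eta_k = \beta$ in $\mathcal{A}^s_\pm(\phi;p,q)$ to a path in $\mathcal{A}^s(\phi;p,q)$ between the same endpoints. Since the two graphs share a vertex set, it suffices to show that every one-sided edge $\{\eta_{i-1}, \eta_i\}$ of the original path can be replaced by a length-two detour through some vertex $\gamma$ so that both $\{\eta_{i-1}, \gamma\}$ and $\{\gamma, \eta_i\}$ are two-sided edges of $\mathcal{A}^s$. Two-sided edges already present in the path require no modification, so we focus on a single one-sided edge, which we now call $\{\alpha, \beta\}$, and construct the desired intermediate arc $\gamma$.

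The crucial step is a local sidedness analysis at $p$ and $q$. At each of these basepoints, the link in $\Sigma_{g,n}$ is an arc cut by the endpoints of $\alpha, \beta, \gamma$ into four intervals, giving a linear ordering of the three tangent directions. A direct inspection using the definition of sidedness shows that $\{\alpha, \gamma\}$ and $\{\gamma, \beta\}$ are \emph{both} two-sided if and only if, at $p$ and $q$, the direction corresponding to $\gamma$ lies outside the interval bounded by the $\alpha$- and $\beta$-directions, but on \emph{opposite} outsides at $p$ and at $q$. Crucially, this combinatorial prescription is compatible with $\{\alpha, \beta\}$ being one-sided (and is in fact \emph{incompatible} with two-sidedness of $\{\alpha,\beta\}$, which rules out the construction being needed in that case).

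Given this prescription, we construct $\gamma$ by invoking the framed change-of-coordinates principle (Proposition \ref{proposition:CCP}). Since $\{\alpha, \beta\}$ is mutually nonseparating and $g \ge 5$, the complement $\Sigma_{g,n} \setminus N(\alpha \cup \beta)$ is connected and has genus at least $g - 2 \ge 3$, so there is abundant room to select a legal embedded arc $\gamma$ from $p$ to $q$ realizing the local tangent prescription above and disjoint from $\alpha \cup \beta$. The ambient genus allows us to modify $\gamma$ (say, by band-summing with a short two-chain in the complement) to adjust its winding number to any prescribed value and to arrange mutual nonseparation with each of $\alpha$ and $\beta$ without violating any Arf-invariant constraint; these adjustments are exactly the kind of freedoms licensed by Proposition \ref{proposition:CCP}. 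This produces $\gamma \in \mathcal{A}^s(\phi;p,q)$ adjacent via two-sided edges to both $\alpha$ and $\beta$, completing the replacement and hence the proof.

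The main obstacle is the sidedness bookkeeping carried out in the local analysis at $p$ and $q$: one must verify that the required local configuration of three outgoing tangent directions at each endpoint really is globally coherent exactly when $\{\alpha, \beta\}$ is one-sided. Once this combinatorial check is in hand, the production of $\gamma$ with the needed framing and nonseparation properties is a routine application of the change-of-coordinates machinery already developed in Section~\ref{subsection:framings}.
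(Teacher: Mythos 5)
Your proposal is correct and follows essentially the same route as the paper: reduce to replacing each one-sided edge $\{\alpha,\beta\}$ by a length-two detour through an intermediate $s$-arc $\gamma$ that exits $p$ and enters $q$ on opposite ``outsides'' of the pair, produced via the framed change-of-coordinates principle and a curve--arc sum in the complement to fix the winding number. The paper simply makes your construction explicit, taking $\gamma = T_{\Delta_p}^{-1}(\alpha +_{\varepsilon} c)$ for a nonseparating curve $c$ disjoint from $\alpha \cup \beta$ with $\phi(c) = s - \phi(\alpha) - \phi(\Delta_p) - 1$, so that the boundary twist simultaneously corrects the sidedness at $p$ and the winding number.
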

The proofs of Lemma \ref{lemma:admissiblesurgery} and Lemma \ref{lemma:onetwosided} are deferred to follow the proof of Lemma \ref{lemma:connected}. To prove Lemma \ref{lemma:connected}, we first introduce the notion of a ``curve-arc sum''.

\para{Curve--arc sums} We recall the notion of a ``curve--arc sum'' as discussed in \cite[Section 3.2]{salter_toric} and \cite[Definition 6.18]{CS_strata2}. Let $a$ be an oriented curve or arc, $b$ be an oriented curve, and $\epsilon$ be an embedded arc connecting the left sides of $a$ and $b$ and otherwise disjoint from $a \cup b$ (if $a$ is an arc we require $\epsilon \cap a$ to be a point on the interior of $a$). If $a$ is a curve (resp. arc), the {\em curve-arc sum} $a+_\epsilon b$ is the curve (resp. arc) obtained by dragging $a$ across $b$ along the path $\epsilon$; see Figure \ref{figure:cas}.
\begin{figure}[h]
\labellist
\Huge
\pinlabel $\rightsquigarrow$ at 180 45
\endlabellist
\includegraphics[scale=1]{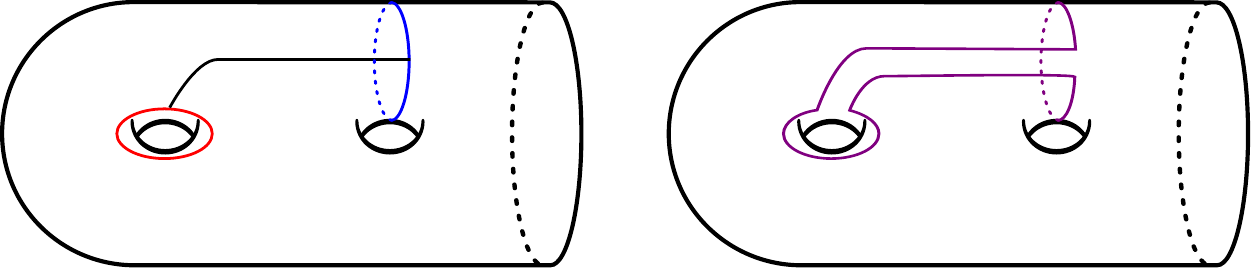}
\caption{The curve-arc sum operation.}
\label{figure:cas}
\end{figure}

\begin{lemma}[c.f. Lemma 3.13 of \cite{salter_toric}]\label{lemma:cas}
Let $a,b, \epsilon$ be as above and let $\phi$ be a relative winding number function. Then
\[
\phi(a+_\epsilon b) = \phi(a) + \phi(b) + 1.
\]
\end{lemma}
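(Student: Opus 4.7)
My plan is to compute $\phi(a +_\epsilon b)$ directly, by exhibiting $a +_\epsilon b$ as the smooth resolution of an explicit piecewise smooth concatenation and then integrating the angle function piece-by-piece.

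By Lemma \ref{lemma:framingWNF}, winding numbers depend only on the isotopy class of the framing and of the curve or arc, so I may simplify the local picture by an ambient isotopy. I would arrange a standard local model near $\epsilon$: a Euclidean rectangle $U \cong [-1,1] \times [0,1]$ in which $\epsilon$ is the vertical segment $\{0\} \times [0,1]$, the portion $a \cap U$ is the segment $[-1,1] \times \{0\}$, the portion $b \cap U$ is the segment $[-1,1] \times \{1\}$, and the framing $\xi_\phi$ is the constant upward vector field on $U$. The hypothesis that $\epsilon$ attaches to the left side of both $a$ and $b$ then determines the orientations of $a$ and $b$ inside $U$: if $a$ is oriented left-to-right at $y = 0$, then $b$ must be oriented right-to-left at $y = 1$, so that in both cases the interior of $U$, through which $\epsilon$ passes, lies on the left.

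In this standard model, $a +_\epsilon b$ is isotopic to the smooth resolution of the piecewise smooth curve obtained by traversing $a$ up to $p = \epsilon \cap a$, then $\epsilon$ from $p$ to $q = \epsilon \cap b$, then $b$ once starting and ending at $q$, then $\epsilon^{-1}$ from $q$ back to $p$, and then continuing along $a$. The winding integral is additive on the smooth pieces, and since reversing the direction of traversal of $\epsilon$ negates its contribution, the sum of the smooth-piece contributions is
\[
\phi(a) + \phi(\epsilon) + \phi(b) - \phi(\epsilon) = \phi(a) + \phi(b).
\]

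It remains to account for the four corners of the concatenation --- two at $p$ and two at $q$ --- each of which is replaced in the smoothing by a short arc whose winding contribution equals the signed angle turned at that corner. In the standard local model, each of the four corners is a $+\pi/2$ counterclockwise turn of the tangent relative to the constant framing. For instance, at the corner from $a$ into $\epsilon$ at $p$, the incoming tangent is east (along $a$) and the outgoing tangent is north (along $\epsilon$), a $+\pi/2$ turn; the analogous check at each of the other three corners also gives $+\pi/2$. The total corner contribution is thus $4 \cdot (\pi/2) = 2\pi$, contributing exactly $+1$ to the winding number. This gives $\phi(a +_\epsilon b) = \phi(a) + \phi(b) + 1$, as claimed.

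The main obstacle is verifying that all four corner turns carry the same sign. This is precisely what is forced by the hypothesis that $\epsilon$ attaches to the left of both $a$ and $b$: were $\epsilon$ to attach on the right of one of them, the associated pair of corner turns would reverse, producing $\phi(a) + \phi(b) - 1$ or $\phi(a) + \phi(b)$ instead. I would pin this down with a brief explicit case-check of the four tangent transitions in the local model as sketched above.
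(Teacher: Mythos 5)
Your computation is correct: the two traversals of $\epsilon$ cancel, the smooth pieces contribute $\phi(a)+\phi(b)$, and the four smoothed corners each turn by $+\pi/2$ precisely because $\epsilon$ attaches to the left of both $a$ and $b$, giving the extra $+1$. The paper does not prove this lemma itself (it is quoted from Lemma 3.13 of \cite{salter_toric}), but your local-model argument is the standard one underlying that reference, so there is nothing further to add.
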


\begin{proof}[Proof of Lemma \ref{lemma:connected}] Following Lemma \ref{lemma:onetwosided}, it suffices to show that $\mathcal A_\pm^s(\phi; p,q)$ is connected. Let $\alpha$ and $\omega$ be $s$--arcs. Let $S_\alpha \cong \Sigma_{2,1}$ be disjoint from $\alpha$, and likewise choose $S_\omega \cong \Sigma_{2,1}$ disjoint from $\omega$. By Lemma \ref{lemma:genus2connected}, there is a sequence $S_\alpha = S_0, \dots, S_n = S_\omega$ of subsurfaces such that $S_i \cong \Sigma_{2,1}$ and such that $S_{i-1}$ and $S_{i}$ are disjoint for all $i= 1, \dots, n$. We apply the Admissible Surgery Lemma (Lemma \ref{lemma:admissiblesurgery}) taking $(S, \eta, x) = (S_\alpha, \alpha, \partial S_1)$. This gives a path $\alpha = \alpha_0, \dots, \alpha_m$ in $\mathcal A^s_\pm(\phi; p,q)$ such that $\alpha_m$ is disjoint from $S_1$. We now repeat this process for each $S_i\ (i \ge 1)$, finding intermediate paths of $s$--arcs, beginning with one disjoint from $S_i$ and ending with one disjoint from $S_{i+1}$. 

At the end of this process we have produced a path of $s$--arcs $\alpha, \dots, \psi$ with the final arc $\psi$ disjoint from $S_\omega$. To complete the argument we apply the Admissible Surgery Lemma one final time with $(S,\eta, x) = (S_\omega, \psi, \omega)$. This produces a path $\psi = \psi_0, \dots, \psi_k$ in $\mathcal A^s_\pm(\phi; p,q)$ with $i(\psi_k, \omega) = 0$. If $\psi_k \cup \omega$ is nonseparating, then $\psi_k$ and $\omega$ are adjacent in $\mathcal A^s_\pm(\phi; p,q)$, completing the path from $\alpha$ to $\omega$. If $\psi_k \cup \omega$ is separating, then at least one side of the complement has genus $h \ge 2$, and thus there exists a nonseparating oriented curve $d$ disjoint from $\psi_k \cup \omega$ that satisfies $\phi(d) = -1$. Define $\psi_{k+1} = \psi_k+_{\epsilon} d$ for a suitable arc $\epsilon$. Then $\psi_k, \psi_{k+1}, \omega$ is a path in $\mathcal A^s_\pm(\phi;p,q)$, completing the argument in this case.
\end{proof}

\subsection{Proof of the Admissible Surgery Lemma}\label{section:asl}
The proof will require the preliminary result of Lemma \ref{lemma:goodtwist} below.

\para{A change-of-coordinates lemma} We study the existence of suitable curves on genus $2$ subsurfaces. The proof of Lemma \ref{lemma:goodtwist} is a standard appeal to the framed change--of--coordinates principle (Proposition \ref{proposition:CCP}).
\begin{lemma}\label{lemma:goodtwist}
Let $(\Sigma_{2,1}, \phi)$ be a framed surface, and let $\alpha$ be a nonseparating properly-embedded arc on $\Sigma_{2,1}$. For $t \in \Z$ arbitrary, there is an oriented nonseparating curve $c_t \subset \Sigma_{2,1}$ such that $\phi(c_t) = t$ and such that $\pair{\alpha,c_t}= 1$.
\end{lemma}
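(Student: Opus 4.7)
My plan is to apply the framed change-of-coordinates principle (Proposition \ref{proposition:CCP}) to the two--element configuration consisting of a nonseparating arc with winding number $\phi(\alpha)$ and a nonseparating curve with winding number $t$, meeting with algebraic intersection $1$, and then transport the resulting configuration so that the arc component is our specific $\alpha$.

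The first step is to verify the three conditions of Proposition \ref{proposition:CCP}. Condition (a), the existence of the topological configuration, is immediate from the classical change-of-coordinates. For (b), I will exhibit a framing $\psi$ on $\Sigma_{2,1}$ realizing the prescribed winding numbers: choose a distinguished geometric basis $\mathcal{B} = \{x_1, y_1, x_2, y_2\}$ together with a nonseparating arc $\alpha'$ positioned so that $i(\alpha', x_1) = 1$, $\alpha'$ is disjoint from $x_2, y_2$, and $\psi(\alpha')$ has nontrivial dependence on $\psi(y_1)$. Since the values of $\psi$ on $\mathcal{B}$ may be prescribed freely (Remark \ref{remark:gsbCW}), I set $\psi(x_1) = t$ and solve for $\psi(y_1)$ so that $\psi(\alpha') = \phi(\alpha)$. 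Condition (c) then holds because the Arf invariant
\[
\Arf(\psi) \;=\; (\psi(x_1)+1)(\psi(y_1)+1) + (\psi(x_2)+1)(\psi(y_2)+1) \pmod 2
\]
can be adjusted by varying the still-unconstrained parameters $\psi(x_2), \psi(y_2)$, neither of which enters the formula for $\psi(\alpha')$.

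With the three conditions verified, Proposition \ref{proposition:CCP} produces some configuration $\{\alpha', c\}$ on $(\Sigma_{2,1}, \phi)$ with $\phi(\alpha') = \phi(\alpha)$, $\phi(c) = t$, and $\pair{\alpha', c} = 1$. The final step is to transport $\alpha'$ onto $\alpha$. This reduces to the following transitivity claim: any two nonseparating legal arcs on $(\Sigma_{2,1}, \phi)$ with the same winding number are in the same $\Mod_{2,1}[\phi]$-orbit. To see this, produce both $\alpha$ and $\alpha'$ from the \emph{same} model framing $\psi_0$ (built as above, using a single arc of winding number $\phi(\alpha)$) via diffeomorphisms $f_\alpha, f_{\alpha'}$ satisfying $f_\alpha \cdot \psi_0 = f_{\alpha'} \cdot \psi_0 = \phi$ given by Proposition \ref{proposition:orbits}; then $h := f_\alpha f_{\alpha'}^{-1}$ lies in $\Mod_{2,1}[\phi]$ and satisfies $h(\alpha') = \alpha$. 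Setting $c_t := h(c)$ yields $\phi(c_t) = t$ and $\pair{\alpha, c_t} = \pair{\alpha', c} = 1$, completing the proof.

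\textbf{Main obstacle.} The delicate point is condition (b): we must ensure that pinning down $\psi(\alpha')$ and $\psi(c) = t$ does not over-constrain the Arf invariant, leaving room for (c). The key observation making this possible is that since $n = 1$, arcs do not appear in the Arf invariant formula on $\Sigma_{2,1}$, so by positioning the basis so that $\alpha'$ is disjoint from $x_2, y_2$, the values $\psi(x_2)$ and $\psi(y_2)$ remain free and can be used to match $\Arf(\phi)$.
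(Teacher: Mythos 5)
The paper disposes of this lemma in a single sentence (``a standard appeal to the framed change--of--coordinates principle''), and your overall strategy is that intended appeal; your verification of conditions (a)--(c) of Proposition \ref{proposition:CCP} is sound, and the observation that the arc does not enter the Arf formula when $n=1$ (so $\psi(x_2),\psi(y_2)$ stay free) is the right point. The genuine gap is in the final transport step. Proposition \ref{proposition:orbits} produces \emph{some} $f$ with $f\cdot\psi_0=\phi$; it gives no control over where $f$ sends a chosen arc. The existence of $f_\alpha$ satisfying \emph{both} $f_\alpha\cdot\psi_0=\phi$ and $f_\alpha(\alpha_0)=\alpha$ is equivalent to the transitivity you are trying to establish, so as written the argument is circular. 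To prove that transitivity one would cut along $\alpha$ and classify the induced framings of the complement $\Sigma_{2,1}\setminus\alpha\cong\Sigma_{1,2}$ up to the action of $\Mod_{1,2}$ --- a genus-one, two-boundary case covered neither by Proposition \ref{proposition:orbits} (which requires $g\ge 2$) nor by Lemma \ref{lemma:genus1arf} (which treats only $\Sigma_{1,1}$), and where the Kawazumi-type $\gcd$ invariant makes orbit classification genuinely delicate. A secondary problem: $\phi(\alpha)$ is not defined in this paper's framework, since both endpoints of $\alpha$ lie on the single boundary component (and in the application inside Lemma \ref{lemma:admissiblesurgery} they are arbitrary points of $\partial S$, not legal basepoints); the invariant you actually want is the pair of winding numbers of the two boundary curves of a neighborhood of $\alpha\cup\Delta_1$.

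Both issues vanish if you never move $\alpha$. Let $\partial_1,\partial_2$ be the two boundary curves of a neighborhood of $\alpha\cup\Delta_1$; each is nonseparating, disjoint from $\alpha$, and $\phi(\partial_1)+\phi(\partial_2)=-2$ by homological coherence (Lemma \ref{lemma:HJ}.\ref{item:HC}). Take any $c_0$ with $\pair{\alpha,c_0}=1$; since $[\partial_1]=-[\partial_2]$ is the class of the closed-up arc, $\pair{c_0,\partial_1}=-\pair{c_0,\partial_2}=\pm1$. The multitwists $T_{\partial_1}^{k}T_{\partial_2}^{l}$ fix $\alpha$ and, by twist-linearity, sweep $\phi(c_0)$ through the coset $\phi(c_0)+\gcd(\phi(\partial_1),\phi(\partial_2))\Z$, which is all of $\phi(c_0)+\Z$ unless both $\phi(\partial_i)$ are even; in that case one curve--arc sum $c_0+_\epsilon e$ (Lemma \ref{lemma:cas}) with $e$ a nonseparating curve of even winding number disjoint from $\alpha$ (such $e$ exists: take a genus basis $x,y$ of the complement and use $x$, $y$, or $T_x(y)$) shifts the parity. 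Every curve so produced still satisfies $\pair{\alpha,\cdot}=1$, hence is nonseparating, and $\alpha$ is held fixed throughout.
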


\begin{proof}[Proof of Lemma \ref{lemma:admissiblesurgery}] The idea is to perform a sequence of surgeries on $\eta$ in order to successively reduce $i(\eta, x)$. Such surgeries will alter the winding number, but this will be repaired by using the ``unoccupied'' subsurface $S$ to fix the winding number while preserving the intersection pattern with $x$. The care we take below in selecting a suitable location for surgery ensures that the intersection pattern with $S$ remains unaltered. Throughout the proof we will refer to Figure \ref{figure:ASL}.

\begin{figure}[h]
\labellist
\small
\pinlabel (A) [tr] at 14.40 167.04
\pinlabel (B) [tr] at 256.32 167.00
\pinlabel (C) [tr] at 14.40 15.84
\pinlabel (D) [tr] at 288.00 15.84
\tiny
\pinlabel $x$ [bl] at 130.48 225.52
\pinlabel $\eta$ [b] at 158.40 213.00
\pinlabel $\eta'$ [bl] at 100.68 175.56
\pinlabel $d$ [bl] at 54.60 213.00
\pinlabel $\epsilon$ [t] at 74.88 201.60
\pinlabel $\eta_1$ [br] at 49.08 224.52
\pinlabel $d'$ [tl] at 54.72 169.92
\pinlabel $c$ [t] at 293.76 187.20
\pinlabel $\epsilon$ [tl] at 326.32 198.60
\pinlabel $\eta$ [t] at 375.84 210.12
\pinlabel $x$ [l] at 403.84 204.48
\pinlabel $y_1$ [bl] at 145.32 85.84
\pinlabel $y_2$ [tl] at 146.88 23.04
\pinlabel $y_3$ [tl] at 134 74.88
\pinlabel $y_4$ [br] at 156.44 61.36
\pinlabel $y_5$ [tl] at 135 49.96
\pinlabel $y_6$ [br] at 155.44 38
\pinlabel $d_+$ [br] at 28.80 60.48
\pinlabel $y_i$ [br] at 373.28 100.80
\pinlabel $y_{k+1}$ [tr] at 373.28 83.52
\pinlabel $y_k$ [tr] at 373.28 20.16
\pinlabel $y_\ell$ [br] at 373.28 37.44
\pinlabel $y_m$ [tr] at 373.28 8
\pinlabel $\eta_2$ [bl] at 311.04 34.56
\endlabellist
\includegraphics[scale=1]{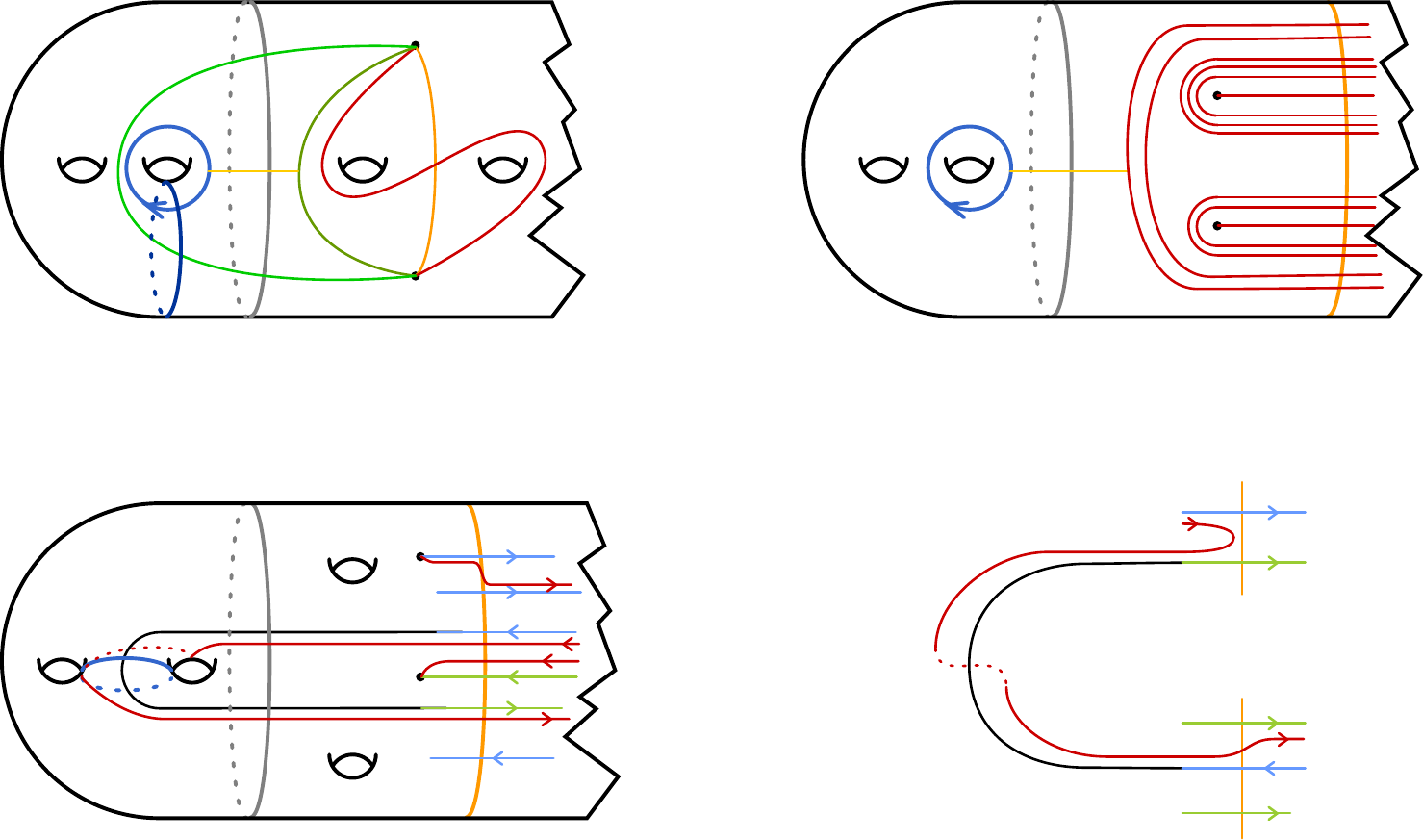}
\caption{(A): The case $i(\eta,x) = 1$, illustrated for $x$ an arc.  (B): The construction of $\eta_1$. (C): The surgery procedure on adjacent initial points ($\eta_1$ and $\eta_2$ are shown, but $\eta_2'$ is not). (D): The surgery procedure when the crossings alternate between initial and terminal. In (C,D), we have used blue to indicate initial points and green to indicate terminal points.}
\label{figure:ASL}
\end{figure}

\para{Low intersection number} If $i(\eta,x) = 0$ there is nothing more to be done. If $i(\eta,x) = 1$, then there exists an arc $\eta'$ connecting $p$ to $q$ that is disjoint from $\eta \cup x \cup S$, and such that $\Sigma_{g,n} \setminus \{\eta \cup x \cup \eta'\}$ is connected. See Figure \ref{figure:ASL}(A). Let $d \subset S$ be an oriented nonseparating curve satisfying $\phi(d) = s-\phi(\eta')-1$. Let $\epsilon$ be an arc disjoint from $\eta \cup x$ and such that $i(\epsilon, \partial S) = 1$ that connects $\eta'$ to the left side of $d$, and let $\eta_1 = \eta' +_\epsilon d$. By Lemma \ref{lemma:cas}, $\phi(\eta_1) = \phi(\eta') + \phi(d) + 1 = s$. Since there exists a curve $d' \subset S$ such that $i(d',\eta_1) = 1$ and $i(d',\eta) = 0$, it follows that $\eta \cup \eta_1$ is nonseparating, completing the argument in the case $i(\eta,x) = 1$. 

\para{The general case: outline} We now consider the case $i(\eta, x) = N \ge 2$. We will first pass to an adjacent $s$--arc $\eta_1$ that enters and exits $S$ exactly once. We will use this in combination with a surgery argument to produce an $s$-arc $\eta_2$ that is adjacent to $\eta_1$, satisfies $i(\eta_2,x) < i(\eta,x)$, and also enters and exits $S$ once. As the above arguments (treating the cases $N \le 1$) can easily be adapted to the situation where $\eta$ passes once through $S$, this will complete the proof.

\para{First steps; initial and terminal points} Let $c \subset S$ be an oriented nonseparating curve satisfying $\phi(c) = -1$. Let $\epsilon$ be an arc disjoint from $x$ and such that $i(\epsilon, \partial S) = 1$ connecting $\eta$ to the left side of $c$, and let $\eta_1 = \eta +_\epsilon c$. See Figure \ref{figure:ASL}(B). By Lemma \ref{lemma:cas}, $\eta_1$ is an $s$--arc, and by construction, $\eta \cup \eta_1$ is isotopic to $c$ and is therefore nonseparating.

Enumerate the intersection points of $\eta_1 \cap x$ as $y_1, \dots, y_N$, numbered consecutively as $\eta_1$ runs from $p$ to $q$; further set $y_0 = p$ and $y_{N+1} = q$. For some $0\le k \le N$, the arc $\eta_1$ leaves $y_k$, enters $S$, and crosses back through $y_{k+1}$. The points $y_0, \dots, y_k$ are called {\em initial}, and the points $y_{k+1}, \dots, y_{N+1}$ are called {\em terminal}. We say that points $y_i$ and $y_j$ are {\em $x$--adjacent} if $y_i$ and $y_j$ appear consecutively when running along $x$ (in either direction). 

\para{Case 1: adjacent initial/terminal points} Suppose first that there is a pair of $x$--adjacent points $y_i, y_j$ that are either both initial or both terminal (if $x$ is a curve we consider $1 \le i < j \le N$, but if $x$ is an arc, the surgeries we describe below will work for all $0 \le i < j \le N+1$). In this case, let $\eta_2'$ be obtained from $\eta_1$ by following $\eta_1$ from $p$ to $y_i$, then along $x$ to $y_j$, then finally along $\eta_1$ from $y_j$ to $q$. See Figure \ref{figure:ASL}(C). Note first that $i(\eta_1, \eta_2') = 0$ and that $i(\eta_2',x) < i(\eta_1,x)$. It remains to alter $\eta_2'$ to an arc $\eta_2$ that is also disjoint from $\eta_1$ but which has $\phi(\eta_2) = s$ and such that $\eta_1 \cup \eta_2$ is nonseparating, i.e., such that $\eta_1, \eta_2$ is an edge in $\mathcal A^s_\pm(\phi; p,q)$. 

The method will be to find a curve on $S$ to twist along to correct the winding number of $\eta_2'$, but care must be taken to ensure that the twisted arc remains disjoint from $\eta_1$. Push $\eta_2'$ off of $\eta_1$ so that it runs parallel to $\eta_1$ except at the location of the surgery. As $\eta_2'$ and $\eta_1$ run along the segment between $y_k$ and $y_{k+1}$ through $S$, the pushoff of $\eta_2'$ lies to the left or to the right of $\eta_1$ in the direction of travel. We call the former case {\em positive position} and the latter {\em negative position}. If $c$ is a curve with $i(c, \eta_2') = 1$, observe that $T_c^\pm(\eta_2')$ is disjoint from $\eta_1$ so long as the sign of the twist coincides with the sign of the position.

Define $t: = \phi(\eta_2')$. By Lemma \ref{lemma:goodtwist}, there are nonseparating curves $d_\pm \subset S$ such that $\phi(d_\pm) = \pm (s-t)$ and such that $\pair{\eta_2', d_\pm} = 1$. Set $\eta_2 = T_{d_\pm}^{\pm}(\eta_2')$, where the sign depends on the sign of the position $\eta_2'$. Then $\eta_2$ is an $s$-curve adjacent to $\eta_1$ in $\mathcal A^s(\phi; p,q)$ and $i(\eta_2,x) < i(\eta_1,x)$.

\para{Case 2: alternating initial/terminal points} It remains to consider the case where every pair of $x$--adjacent points $y_i, y_j$ has one initial and one terminal element. Here there are two possibilities to consider: either there is exactly one terminal point (and hence $N = 2$), or else at least two. If there is exactly one terminal point and $x$ is an arc, then necessarily this terminal point is $q$. Then the unique initial point is $p$, and hence $\eta_1$ is disjoint from $x$ except at endpoints and there is nothing left to be done. If $x$ is a separating curve, then necessarily there are at least two terminal points, since if $\eta_1$ crosses into the subsurface bounded by $x$ at a terminal point, it must necessarily exit through another terminal point. 

We therefore assume that every pair of $x$--adjacent points $y_i, y_j$ have one initial and one terminal element and that there are at least two terminal points. The first terminal point $y_{k+1}$ is $x$--adjacent to two distinct initial points $y_i, y_j\ (i < j\le k)$, and likewise the last initial point $y_k$ is adjacent to two distinct terminal points $y_\ell, y_m\ (k+1 \le \ell < m)$. 

A suitable surgery is illustrated in Figure \ref{figure:ASL}(D). The surgered arc $\eta_2'$ begins by following $\eta_1$ forwards from $p$ to $y_i$, then along $x$ from $y_i$ to $y_{k+1}$, continuing {\em backwards} along $\eta_1$ from $y_{k+1}$ to $y_k$. At this point there is a choice: do we follow $x$ to $y_\ell$ or $y_m$ (in both cases continuing from here forwards along $\eta_1$ to $q$)? The orientation of $\eta_1$ endows each $y_i$ with a left and right side. If $y_i$ is adjacent to the left side of $y_{k+1}$, we continue $\eta_2'$ to whichever of $y_\ell, y_m$ lies to the right of $y_k$, and if $y_i$ lies to the right of $y_{k+1}$, we continue $\eta_2'$ along the point $y_\ell, y_m$ to the left. Observe that $i(\eta_2', x) < i(\eta_1,x)$, even in the exceptional case where the chosen terminal point $y_\ell$ happens to be $y_{k+1}$. 

The construction of $\eta_2'$ above facilitates the next step of the argument, which is to adjust $\eta_2'$ to an $s$--arc $\eta_2$ adjacent to $\eta_1$ in $\mathcal A^s_\pm(\phi; p,q)$. As in the prior case, let $\phi(\eta_2') = t$, and select (by Lemma \ref{lemma:goodtwist}) nonseparating curves $d_\pm \subset S$ such that $\phi(d_\pm) = \pm(s-t)$ and such that $\pair{\eta,d_\pm} = 1$. If $y_i$ is adjacent to the left side of $y_{k+1}$, define 
\[
\eta_2 = T_{d_-}^{-1}(\eta_2'),\]
and otherwise define
\[
\eta_2 = T_{d_+}(\eta_2').
\]
In both cases, $\eta_2$ is an $s$--arc adjacent to $\eta_1$ in $\mathcal A^s_\pm(\phi; p,q)$ and $i(\eta_2, x) < i(\eta_1,x)$. 
\end{proof}
 
Having established the admissible surgery lemma (Lemma \ref{lemma:admissiblesurgery}), it remains only to give the proof of Lemma \ref{lemma:onetwosided}, showing that connectivity of the restricted $s$--arc graph $\mathcal A^s_\pm(\phi; p, q)$ implies the connectivity of the two-sided restricted $s$--arc graph $\mathcal A^s(\phi; p,q)$.

\begin{figure}[ht]
\centering
\labellist
\tiny
\pinlabel $\alpha$ [bl] at 87.87 28.34
\pinlabel $\beta$ [tl] at 96.37 12.75
\pinlabel $\gamma$ [br] at 45.35 51.02
\pinlabel $c$ [tr] at 65.19 43.93
\pinlabel $\epsilon$ [r] at 79.36 35.43
\pinlabel $\Delta_1$ at 45.35 22.68
\pinlabel $\Delta_2$ at 115.21 22.68
\endlabellist
\includegraphics[scale=1]{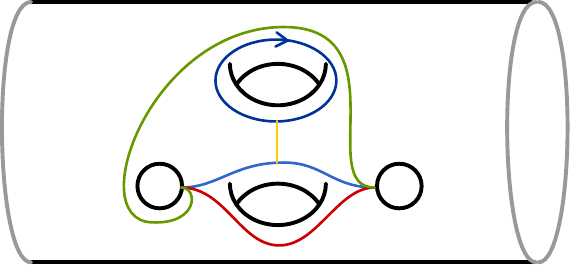}
\caption{Connecting a one--sided pair $\{\alpha, \beta\}$ with two two--sided pairs $\{\alpha, \gamma\}$ and $\{\gamma, \beta\}$.}
\label{figure:1s2s}
\end{figure}

\begin{proof}[Proof of Lemma \ref{lemma:onetwosided}] We will refer to Figure \ref{figure:1s2s} throughout. It suffices to show that if $\{\alpha,\beta\}$ is a one-sided edge in $\mathcal A_\pm^s(\phi; p, q)$, there is a path in $\mathcal A^s(\phi;p,q)$ connecting $\alpha$ to $\beta$. Without loss of generality, suppose that $\alpha$ exits $p$ and enters $q$ on the left--hand side of $\beta$.

As $\Sigma \setminus ( \alpha \cup \beta)$ is a framed surface with genus $g-1 \ge 2$ and $n$ boundary components, we can apply the framed change--of--coordinates principle (Proposition \ref{proposition:CCP}) to deduce that there exists a nonseparating curve $c$ on $\Sigma$, disjoint from $\alpha \cup \beta$, such that 
\[\phi(c)=s-\phi(\alpha)-\phi(\Delta_p)-1.\]
As $\alpha \cup \beta$ and $c$ are both nonseparating, there exists an arc $\varepsilon$ from the left--hand side of $\alpha$ to the left--hand side of $c$. Therefore, by Lemmas \ref{lemma:cas} and \ref{lemma:HJ}.\ref{item:TL}, we see that for $\gamma := T_{\Delta_p}^{-1} ( \alpha +_{\varepsilon} c)$,
\[ \phi(\gamma) = \phi( \alpha +_{\varepsilon} c) + \phi( \Delta_p)
= \phi(c)+ \phi(\alpha)+1  + \phi(\Delta_p) =s.\]
Now since $\alpha +_{\varepsilon} c$ leaves $p$ and enters $q$ on the left--hand side of $\alpha$ (by construction), we see that $\gamma$ leaves $p$ to the right of both $\alpha$ and $\beta$, but enters $q$ on the left of both $\alpha$ and $\beta$. Therefore $\{\alpha, \gamma\}$ and $\{\gamma, \beta\}$ are edges in $\mathcal A^s(\phi; p,q)$.
\end{proof}

\subsection{The inductive step}\label{section:induction}
Having completed the proof of the Admissible Surgery Lemma, we can proceed with the proof of Theorem \ref{mainthm:genset}. Our objective in this subsection is Proposition \ref{prop:admisseverything}, which shows that $\Mod_{g,n}[\phi]$ coincides with the admissible subgroup $\mathcal T_\phi$. We proceed by way of the following standard technique of geometric group theory.
\begin{lemma}\label{lemma:actiongenset}
Let $G$ be a group acting on a connected graph $X$. Suppose that $G$ acts transitively on vertices and edges of $X$. For a vertex $v$, let $G_v$ denote the stabilizer of $v$. Let $e$ be an edge connecting vertices $v,w$, and let $h \in G$ satisfy $h(w) = v$. Then $G = \pair{G_v, h}$.
\end{lemma}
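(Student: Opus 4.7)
The plan is to reduce everything to proving that $H := \pair{G_v, h}$ acts transitively on the vertex set $V(X)$. Once this is known, for an arbitrary $g \in G$ the vertex $g(v)$ lies in the $H$-orbit of $v$, so there is $k \in H$ with $k(v) = g(v)$; then $k^{-1}g$ fixes $v$, hence lies in $G_v \le H$, and we conclude $g = k(k^{-1}g) \in H$. Thus the entire content of the lemma is the orbit statement $H \cdot v = V(X)$.

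I will prove this orbit statement by induction on the combinatorial distance $d(v,u)$ in $X$, which is finite by the hypothesis that $X$ is connected. The base case $d = 0$ is trivial. For a vertex $u$ with $d(v,u) = n \ge 1$, a geodesic path $v = v_0, v_1, \dots, v_n = u$ provides a vertex $v_{n-1}$ at distance $n-1$, so by induction there exists $k' \in H$ with $k'(v) = v_{n-1}$. Since $(k')^{-1}$ is a graph automorphism, the vertex $u' := (k')^{-1}(u)$ is adjacent to $v$. Edge transitivity of $G$ then produces $\tilde g \in G$ with $\tilde g(\{v, w\}) = \{v, u'\}$, and the task becomes to realize $u'$ as $k''(v)$ for some $k'' \in H$; once that is done, $k' k'' \in H$ will send $v$ to $u$.

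The one subtlety, which I expect to be essentially the only non-bookkeeping step, is that edge transitivity is a statement about unoriented edges, so $\tilde g$ may either fix $v$ or swap $v$ and $w$. If $\tilde g(v) = v$ and $\tilde g(w) = u'$, then $\tilde g \in G_v \le H$, and using $h^{-1}(v) = w$ one computes that $\tilde g h^{-1}$ sends $v$ to $u'$, so one may take $k'' = \tilde g h^{-1} \in H$. If instead $\tilde g(v) = u'$ and $\tilde g(w) = v$, then $\tilde g h^{-1}$ fixes $v$ and hence lies in $G_v$, which forces $\tilde g = (\tilde g h^{-1}) h \in H$; then $k'' := \tilde g$ works. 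Either way $u \in H \cdot v$, closing the induction. The only genuinely non-routine point in the whole argument is this oriented-versus-unoriented edge case split; everything else follows directly from the connectedness of $X$ and the two transitivity hypotheses.
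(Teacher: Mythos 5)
Your proof is correct and is essentially the argument the paper intends: the paper simply cites Lemma 4.10 of Farb--Margalit and notes that it ``can easily be adapted'' to the case where edge transitivity is only assumed for unoriented edges, and your case split on whether $\tilde g$ fixes or swaps the endpoints of $e$ is precisely that adaptation, carried out correctly. The reduction to transitivity of $\pair{G_v,h}$ on vertices and the induction on distance are the standard route, so there is no substantive difference from the paper's approach.
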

\begin{proof}
This is very similar to \cite[Lemma 4.10]{FarbMarg}. The argument there can easily be adapted to prove this slightly stronger statement. 
\end{proof}

\begin{proposition}\label{prop:admisseverything}
Let $g \ge 5$ and $n \ge 1$ be given, and consider a surface $\Sigma_{g,n}$ equipped with a framing $\phi$ of holomorphic or meromorphic type. Then there is an equality
\[
\mathcal T_\phi = \Mod_{g,n}[\phi].
\]
\end{proposition}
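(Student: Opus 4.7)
The plan is to induct on $n$, with the base case $n=1$ being Proposition~\ref{proposition:minstrat}. So assume $n \ge 2$ and that $\mathcal T_{\phi'} = \Mod_{g,n-1}[\phi']$ for every framing $\phi'$ on $\Sigma_{g,n-1}$. I fix two distinct boundary components $\Delta_p, \Delta_q$ with legal basepoints $p,q$ and pick some $s \in \Z + \tfrac{1}{2}$ for which an $s$-arc from $p$ to $q$ exists (an instance of the framed change-of-coordinates principle, Proposition~\ref{proposition:CCP}). By Lemma~\ref{lemma:connected} the graph $\mathcal A^s(\phi;p,q)$ is connected, and by Lemma~\ref{lemma:actionproperties} the action of $\Mod_{g,n}[\phi]$ on it is vertex- and edge-transitive.

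Next I fix a vertex $\alpha$ and produce one edge concretely. Proposition~\ref{proposition:CCP} furnishes an admissible curve $c$ with $i(c,\alpha) = 1$, and Lemma~\ref{lemma:sidednesswn} then implies that $\{\alpha, T_c(\alpha)\}$ is a two-sided pair of mutually nonseparating arcs, hence an edge of $\mathcal A^s(\phi;p,q)$. Applying Lemma~\ref{lemma:actiongenset} with $v = \alpha$, $w = T_c(\alpha)$, and $h = T_c^{-1}$ gives
\[
\Mod_{g,n}[\phi] = \pair{G_\alpha, T_c^{-1}},
\]
where $G_\alpha$ denotes the stabilizer of $\alpha$ in $\Mod_{g,n}[\phi]$. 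Since $T_c^{-1} \in \mathcal T_\phi$, the proposition reduces to the containment $G_\alpha \subseteq \mathcal T_\phi$.

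To establish this, I would cut $\Sigma_{g,n}$ along $\alpha$ to obtain a surface $\Sigma' \cong \Sigma_{g,n-1}$ equipped with the induced relative framing $\phi'$, whose boundary signature is determined from $\phi$ via homological coherence (and may be either of holomorphic or meromorphic type, which is permitted by the inductive hypothesis). Since $\alpha$ has distinct endpoints lying on pointwise-fixed boundary components, the cutting map induces an isomorphism $G_\alpha \cong \Mod(\Sigma')[\phi']$: any mapping class of $\Sigma'$ glues back to a mapping class of $\Sigma_{g,n}$ fixing $\alpha$, and Lemma~\ref{lemma:framingWNF} together with a CW-decomposition of $\Sigma_{g,n}$ that has $\alpha$ as a 1-cell shows that preservation of $\phi$ by an element of $G_\alpha$ is equivalent to preservation of $\phi'$ by the cut mapping class. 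By the inductive hypothesis, $\Mod(\Sigma')[\phi'] = \mathcal T_{\phi'}$, and every $\phi'$-admissible curve on $\Sigma'$ remains nonseparating with vanishing $\phi$-winding number in $\Sigma_{g,n}$, so the generators of $\mathcal T_{\phi'}$ lie inside $\mathcal T_\phi$ under the identification. This yields $G_\alpha \subseteq \mathcal T_\phi$ and completes the induction.

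The step I expect to require the most care is the identification $G_\alpha \cong \Mod(\Sigma')[\phi']$: one needs to rule out any kernel or cokernel of the cutting map (e.g., a stray half-twist around $\alpha$), and to verify that the correspondence between $\phi$-preservation and $\phi'$-preservation is exact, not just up to a finite-index subgroup. Everything else is a rather direct assembly of the connectivity and transitivity results with the standard geometric-group-theory input of Lemma~\ref{lemma:actiongenset}.
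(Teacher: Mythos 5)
Your proposal is correct and follows essentially the same route as the paper: induction on $n$ with Proposition \ref{proposition:minstrat} as the base case, the vertex/edge transitivity of Lemma \ref{lemma:actionproperties} and connectivity of Lemma \ref{lemma:connected} feeding into Lemma \ref{lemma:actiongenset}, an edge produced by an admissible twist $T_c$ with $i(c,\alpha)=1$, and the identification of the vertex stabilizer with $\Mod_{g,n-1}[\phi']$ (the paper realizes $\Sigma_{g,n-1}$ as the complement of a neighborhood of $\Delta_p\cup\alpha\cup\Delta_q$ rather than literally cutting along $\alpha$, which is the same surface). The step you flag as delicate is exactly the one the paper also treats briefly, asserting $(\Mod_{g,n})_\alpha\cap\Mod_{g,n}[\phi]\cong\Mod_{g,n-1}[\phi']$.
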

\begin{proof} We argue by induction on the number $n$ of boundary components. The base case $n = 1$ was established above as Proposition \ref{proposition:minstrat}. To proceed, we appeal to Lemma \ref{lemma:actiongenset}, taking $G = \Mod_{g,n}[\phi]$ and $X = \mathcal A^s(\phi; p,q)$ for $s \in \Z+\frac{1}{2}$ arbitrary. 

Lemmas \ref{lemma:actionproperties} and \ref{lemma:connected} combine to show that the hypotheses of Lemma \ref{lemma:actiongenset} are satisfied for $G = \Mod_{g,n}[\phi]$ and $X = \mathcal A^s(\phi; p,q)$. Let $\alpha$ be any $s$-arc connecting $p$ to $q$. By the framed change--of--coordinates principle (Proposition \ref{proposition:CCP}), there exists an admissible curve $a$ such that $i(\alpha, a) = 1$. The arc $T_a(\alpha)$ is disjoint from $\alpha$, the adjacency is two-sided, and the union $T_a(\alpha) \cup \alpha$ is nonseparating. As $a$ is admissible, it follows that $\phi(T_a(\alpha)) = \phi(\alpha) = s$, so that $T_a(\alpha)$ is adjacent to $\alpha$ in $\mathcal A^s(\phi; p,q)$. 

By Lemma \ref{lemma:actiongenset}, it now follows that $\Mod_{g,n}[\phi]$ is generated by $T_a$ and the stabilizer $(\Mod_{g,n}[\phi])_\alpha$. By hypothesis, $T_a \in \mathcal T_\phi$. To complete the inductive step, it remains to see that $(\Mod_{g,n}[\phi])_\alpha \le \mathcal T_\phi$. Let $\Delta$ be the boundary of a neighborhood of $\alpha \cup \Delta_p \cup \Delta_q$, and consider the subsurface $\Sigma_{g,n-1} \le \Sigma_{g,n}$ obtained by ignoring this neighborhood; it inherits a canonical framing $\phi'$ from the framing $\phi$ of $\Sigma_{g,n}$. The inclusion of framed surfaces $(\Sigma_{g,n-1}, \phi') \to (\Sigma_{g,n}, \phi)$ induces inclusions
\[
\Mod_{g,n-1}[\phi'] \into (\Mod_{g,n}[\phi])_\alpha, \qquad \mathcal T_{\phi'} \into \mathcal T_\phi. 
\]

By the inductive hypothesis, $\Mod_{g,n-1}[\phi'] \cong \mathcal T_{\phi'}$. On the other hand, it is easy to see that 
\[
(\Mod_{g,n})_\alpha \cap \Mod_{g,n}[\phi] \cong \Mod_{g,n-1}[\phi'], 
\]
and hence the inclusion $\Mod_{g,n-1}[\phi'] \into (\Mod_{g,n}[\phi])_\alpha$ is an isomorphism. The result follows.
\end{proof}

\subsection{Completing Theorem \ref{mainthm:genset}}\label{subsection:finishB} Theorem \ref{mainthm:genset} has two assertions. Part (I) asserts that if $\phi$ is a framing of holomorphic type, then $\Mod_{g,n}[\phi]$ is generated by the Dehn twists about an $E$-arboreal spanning configuration of admissible curves. This claim follows immediately from the work we have done: by Proposition \ref{proposition:fingen}, the admissible subgroup $\mathcal T_\phi$ is generated by this collection of twists, and the claim now follows from Proposition \ref{prop:admisseverything}. 

It therefore remains to establish claim (II) of Theorem \ref{mainthm:genset}. We recall the statement. Recall (c.f. the paragraph preceding the statement of Theorem \ref{mainthm:genset}) the notion of an {\em $h$-assemblage of type $E$}: begin with a collection $\mathcal C_1= \{c_1, \dots, c_k\}$ forming an $E$-arboreal spanning configuration on a subsurface $S \subset \Sigma_{g,n}$ of genus $h$, and then successively add in curves $c_{k+1}, \dots, c_{\ell}$ to $S$, at each stage attaching a new $1$-handle to the subsurface. Theorem \ref{mainthm:genset}.(II) asserts that if $\mathcal C$ is an $h$-assemblage of type $E$ for $h \ge 5$, and if each $c \in \mathcal C$ is admissible for some framing $\phi$ (of holomorphic or meromorphic type), then $\Mod(\Sigma_{g,n})[\phi]$ is generated by the finite collection of Dehn twists $\{T_c \mid c \in \mathcal C\}$. 

Before proceeding to the (short) proof below, we offer a comment on why we work in such generality. There are two reasons: one of necessity, the other of convenience. On the one hand, the homological coherence criterion (Lemma \ref{lemma:HJ}.\ref{item:HC}) implies that if $\mathcal C$ is an arboreal spanning configuration of {\em admissble} curves on the framed surface $(\Sigma, \phi)$, then necessarily $\phi$ is of holomorphic type (see Lemma \ref{lemma:arbholo} below). Thus for meromorphic type we must consider generating sets built on something more general than arboreal spanning configurations. Secondly, while the results of this paper (specifically Theorem \ref{mainthm:absolute_monodromy}) do not require assemblages, for other applications (especially \cite{PS_singularities}), this more general framework is essential. 

Above we asserted that if the framed surface $(\Sigma, \phi)$ admits an $E$-arboreal spanning configuration, then $\phi$ is necessarily of holomorphic type. In the course of proving Theorem \ref{mainthm:genset}.II, we will make use of this fact.
\begin{lemma}\label{lemma:arbholo}
Let $(\Sigma, \phi)$ be a framed surface. Suppose that $\mathcal C = \{c_1, \dots, c_k\}$ is an arboreal spanning configuration of admissible curves. Then $\phi$ is of holomorphic type.
\end{lemma}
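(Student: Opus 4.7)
The plan is to apply homological coherence (Lemma~\ref{lemma:HJ}.\ref{item:HC}) directly to the complementary regions of $\mathcal{C}$ in $\Sigma$, reducing the claim to a winding number computation on the polygonal ``inner boundary'' of a complementary annulus. First I would exploit the spanning hypothesis to observe that $\Sigma$ deformation retracts onto $\bigcup\mathcal{C}$, so $\Sigma\setminus\bigcup\mathcal{C}$ is a disjoint union of open annuli, each containing exactly one boundary of $\Sigma$. Fix such a boundary $\Delta$ with containing annulus $U$, and let $\alpha$ be the inner boundary of $U$ (a closed walk in $\bigcup\mathcal{C}$, smoothed at the intersection points of $\mathcal{C}$). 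Homological coherence on $U$ then gives $\phi(\alpha)+\phi(\Delta)=\chi(U)=0$ once both boundaries are oriented with $U$ on their left, so it suffices to show $\phi(\alpha)\geq 1$.

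Second, I would decompose $\phi(\alpha)=A+n/4$, where $n$ is the number of corners of $\alpha$ and $A$ is the ``arc contribution'' obtained by integrating $d\theta$ along the arc portions of $\alpha$. Each corner sits at a transverse intersection of two curves in $\mathcal{C}$, so $U$ has interior angle $\pi/2$ there; accordingly $\alpha$ turns left by $\pi/2$ at each corner, contributing $+1/4$ to the winding integral. The conceptual heart of the proof is the vanishing $A=0$, which I would establish as follows. The graph $\bigcup\mathcal{C}$ has edges partitioned among the curves $c_j$, and an Euler-characteristic count shows its cycle rank is $|E|-|V|+1=2(k{-}1)-(k{-}1)+1=k$; together with edge-disjointness, this makes the $k$ cycles $[c_j]$ a $\mathbb{Z}$-basis of $Z_1(\bigcup\mathcal{C})=H_1(\bigcup\mathcal{C})$. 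The 1-cycle $\alpha$ thus admits a unique expansion $\alpha=\sum_j m_j c_j$ as $1$-chains, and by linearity of $d\theta$-integration combined with the admissibility of each $c_j$ I obtain $A=\sum_j m_j \phi(c_j)=0$.

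Third, since $\phi(\alpha)=n/4$ must be an integer, $n$ is a non-negative multiple of $4$. I would rule out $n=0$ by noting that a smooth $\alpha$ would coincide with a single $c_j$, which would then be isotopic to $\Delta$ in $\Sigma$ and hence separating, contradicting the nonseparating admissibility of $c_j$. Therefore $n\geq 4$, $\phi(\alpha)\geq 1$, and $\phi(\Delta)\leq -1$, as required.

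The main obstacle I anticipate is making the $1$-chain equality $\alpha=\sum_j m_j c_j$ (as opposed to a mere homological equality in $\Sigma$) fully precise at the level of the graph $\bigcup\mathcal{C}$. This reduces to the observation that the arboreal spanning structure forces the arcs of distinct $c_j$'s to be disjoint as edges of $\bigcup\mathcal{C}$, which together with the rank count forces the $[c_j]$ to be a coordinate basis of $Z_1(\bigcup\mathcal{C})$; after this combinatorial point is secured the rest of the argument is elementary linear algebra and surface topology.
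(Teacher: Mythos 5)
Your proof is correct, but it takes a genuinely different route from the paper's. The paper disposes of this lemma in one stroke via flat geometry: the Thurston--Veech construction realizes an arboreal configuration of admissible curves as the horizontal and vertical cylinders of a translation surface, the framing becomes the horizontal framing of that surface, and framings of translation surfaces are of holomorphic type by inspection of the cone angles. Your argument stays entirely inside the winding-number formalism of Section~\ref{section:framings}: you reduce to computing $\phi$ on the inner boundary $\alpha$ of each complementary annulus via homological coherence (Lemma~\ref{lemma:HJ}.\ref{item:HC}), split $\phi(\alpha)$ into corner turns and edge contributions, and kill the edge contributions by showing that $\alpha$, as a $1$-cycle in the graph $\bigcup\mathcal C$, is an integral combination of the $c_j$ at the chain level. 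The one point you flag as delicate is indeed the crux, and your sketch of its resolution is right but slightly understated: the rank count alone does not show the $c_j$ generate $Z_1(\bigcup\mathcal C)$ (a rank-$k$ sublattice of a rank-$k$ lattice can have index $>1$); what closes the gap is that the $c_j$ have pairwise disjoint edge-supports and unit coefficients, so they span a \emph{direct summand} of $C_1(\bigcup\mathcal C)$, whence $Z_1(\bigcup\mathcal C)/\langle c_1,\dots,c_k\rangle$ is both free and of rank zero, hence trivial. With that said, the signs check out (with $U$ to the left of $\alpha$ each corner is a left quarter-turn, and summing your formula over all complementary regions recovers $\sum_i\phi(\alpha_i)=k-1=-\chi(\bigcup\mathcal C)$, as it must), and the exclusion of the cornerless case via nonseparability of admissible curves is exactly what is needed. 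What each approach buys: the paper's proof is shorter but imports the flat-geometric existence statement; yours is self-contained, makes the role of the tree hypothesis completely explicit (it is exactly what forces the rank of $Z_1(\bigcup\mathcal C)$ to equal $k$, so the argument visibly fails for non-arboreal configurations), and yields the sharper quantitative conclusion $\phi(\Delta_i)=-n_i/4$ in terms of the number of corners of the complementary region, which matches the side-counts appearing in Corollary~\ref{corollary:rspingen} and Figure~\ref{figure:genset}.
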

\begin{proof}
The simplest proof uses the perspective of translation surfaces as discussed below in Section \ref{section:monodromy}. We employ the Thurston--Veech construction (see, e.g. \cite[Section 3.3]{CS_strata2}). Given a configuration of admissible curves whose intersection graph is a tree, this produces a translation surface on which each $c_i \in \mathcal C$ is represented as the core of a cylinder. The framing $\phi$ is incarnated as the framing associated to the horizontal vector field. The framing on a translation surface is necessarily of holomorphic type; the result follows.
\end{proof}

We have rigged the definition of an $h$-assemblage of type $E$ so as to make Theorem \ref{mainthm:genset}.II an immediate corollary of the following ``stabilization lemma''. Below, we use $\phi$ to denote both a framing of $\Sigma_{g,n}$ and the induced framing on subsurfaces (the latter was denoted $\phi'$ above).

\begin{lemma}\label{corollary:stabilization}
Let $(\Sigma_{g,n}, \phi)$ be a framed surface of holomorphic or meromorphic type. Let $S \subset \Sigma_{g,n}$ be a subsurface of genus at least $5$ and let $a \subset \Sigma_{g,n}$ be an admissible curve such that $a \cap S$ is a single arc; let $S^+$ denote a regular neighborhood of $S \cup a$. Then
\[
\Mod(S^+)[\phi] = \pair{\Mod(S)[\phi], T_a}.
\]
\end{lemma}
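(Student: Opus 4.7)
The plan is to mirror the inductive step of Proposition \ref{prop:admisseverything}: we will apply Lemma \ref{lemma:actiongenset} to the action of $\Mod(S^+)[\phi]$ on a two-sided restricted arc graph $\mathcal{A}^s(\phi; p, q)$ on $S^+$, choosing a base vertex $\alpha$ whose stabilizer is exactly $\Mod(S)[\phi]$ and such that $T_a$ moves $\alpha$ to an adjacent vertex. Since $g(S^+) \ge g(S) \ge 5$, Lemmas \ref{lemma:actionproperties} and \ref{lemma:connected} will supply the transitivity and connectedness needed to invoke Lemma \ref{lemma:actiongenset}.

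The key construction will be a ``cross-arc'' $\alpha \subset S^+$: a properly embedded arc meeting $a$ transversely in a single point, with endpoints at legal basepoints $p, q$ on $\partial S^+$, lying (except for its endpoints) in a regular neighborhood of $a \setminus S$. The idea is that cutting $S^+$ along $\alpha$ ``unzips'' the $1$-handle $H := S^+ \setminus S$, leaving a surface isotopic to $S$, with induced framing matching $\phi|_S$. Consequently $\Mod(S^+)[\phi]_\alpha$ will be identified with $\Mod(S)[\phi]$.

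Setting $s := \phi(\alpha)$, since $\phi(a) = 0$ and $i(\alpha, a) = 1$, twist-linearity (Lemma \ref{lemma:HJ}) will yield $\phi(T_a(\alpha)) = s$, while Lemma \ref{lemma:sidednesswn} will give that $\{\alpha, T_a(\alpha)\}$ is a disjoint two-sided pair; mutual non-separation follows directly since $\alpha \cup T_a(\alpha)$ is a small perturbation of $\alpha \cup a$ near the handle. Hence $T_a(\alpha)$ will be adjacent to $\alpha$ in $\mathcal{A}^s(\phi; p, q)$, and Lemma \ref{lemma:actiongenset} (with $h = T_a^{-1}$) will yield
\[
\Mod(S^+)[\phi] \;=\; \pair{\,\Mod(S^+)[\phi]_\alpha,\; T_a\,} \;=\; \pair{\Mod(S)[\phi],\; T_a}.
\]

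The main obstacle will lie in the attaching geometry of the handle: when $H$ joins two distinct boundary components of $S$ into a single component of $\partial S^+$, both endpoints of the cross-arc lie on the \emph{same} component of $\partial S^+$, ruling out direct use of $\mathcal{A}^s(\phi; p, q)$ for distinct $p, q$. In this subcase, provided $n(S) \ge 3$, we will extend $\alpha$ through $S$ so that it terminates on a different boundary component of $S^+$, recovering the stabilizer identification after a small additional argument. In the remaining edge case $n(S^+) = 1$, the arc-graph machinery is unavailable altogether, and one must instead bootstrap from the framed Birman exact sequence (Lemma \ref{lemma:framedBES}) and the closed-surface argument of Proposition \ref{proposition:minstrat}.
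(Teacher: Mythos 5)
Your strategy --- rerunning the arc-graph argument of Proposition \ref{prop:admisseverything} on $S^+$ with a cocore of the attached handle as base vertex --- is genuinely different from the paper's, and it breaks down exactly in the cases you flag at the end without resolving them. The paper instead applies Proposition \ref{prop:admisseverything} to $S^+$ as a black box, reducing the claim to the containment $\mathcal T_\phi(S^+) \le \pair{\Mod(S)[\phi], T_a}$, and then proves that containment with the push-subgroup machinery of Section \ref{section:fingen}: Proposition \ref{lemma:pushmakesT} gives $\mathcal T_\phi(S^+) \le \pair{T_{a_0}, T_{a_1}, \tilde \Pi(b)}$ for a $3$-chain $(a_0,a_1,b)$ lying inside $S$, and Lemma \ref{lemma:inductive} gives $\tilde \Pi(b) \le \pair{\tilde \Pi(S\setminus b), T_a, T_{a'}}$. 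That route is insensitive to how the handle attaches and to the number of boundary components of $S^+$, which is precisely where your argument has trouble.

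Concretely: (1) when the handle joins two distinct boundary components of $S$, they merge into a single component of $\partial S^+$, the cocore has both endpoints on that one component, and $\mathcal A^s(\phi;p,q)$ is unavailable. Your proposed repair --- extending $\alpha$ through $S$ to reach a different boundary component --- destroys the stabilizer identification: the stabilizer of the extended arc is the framed mapping class group of $S$ \emph{cut along} the arc $\alpha \cap S$, not of $S$, and relating those two groups is essentially the original problem over again, not "a small additional argument." The sub-case $n(S^+)=1$ is likewise deferred to a vague appeal to Lemma \ref{lemma:framedBES}; it is not clear how one would show either that $\pair{\Mod(S)[\phi], T_a}$ surjects onto $\Mod_g[\widehat \phi]$ or that it contains $H_g$. (2) Even in the favorable case, $\{\alpha, T_a(\alpha)\}$ need not be an edge: a regular neighborhood of $\alpha \cup T_a(\alpha)$ retracts onto one of $\alpha \cup a$, so $S^+ \setminus (\alpha \cup T_a(\alpha)) \cong S \setminus (a \cap S)$, and nothing in the hypotheses (or in the definition of an assemblage) forces the arc $a \cap S$ to be nonseparating in $S$. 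If it separates, mutual nonseparation fails, and Lemma \ref{lemma:actiongenset} only yields $\Mod(S^+)[\phi] = \pair{\Mod(S)[\phi], T_c}$ for some \emph{other} admissible curve $c$ crossing $\alpha$ once, which is not the statement to be proved.
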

\begin{proof}
Let $\mathcal T_\phi(S^+)$ denote the admissible subgroup of $\Mod(S^+)[\phi]$. Following Proposition \ref{prop:admisseverything}, it suffices to show the containment
\[
\mathcal T_\phi(S^+) \le \pair{\Mod(S)[\phi], T_a}.
\]
To show this, we appeal to the methods of Section \ref{section:fingen}, specifically Proposition \ref{lemma:pushmakesT} and Lemma \ref{lemma:inductive}. Let $b \subset S$ be an arbitrary oriented nonseparating curve satisfying $\phi(b)= -1$, and consider the subsurface push subgroup 
\[
\tilde \Pi(b) =  \tilde \Pi (S^+ \setminus b).
\]
By the framed change-of-coordinates principle (Proposition \ref{proposition:CCP}), $b$ can be extended to a $3$-chain $(a_0,a_1,b)$ with $a_0, a_1$ admissible curves contained in $S$. Proposition \ref{lemma:pushmakesT} asserts that there is a containment 
\[
\mathcal T_\phi (S^+)\le \pair{T_{a_0}, T_{a_1}, \tilde \Pi(b)}.
\]
As $a_0, a_1$ are admissible curves on $S$, by hypothesis, $T_{a_0}, T_{a_1} \in \Mod(S)$, and so it remains to show that $\tilde \Pi(b) \le \pair{\Mod(S)[\phi], T_a}$.

To see this, we observe that $\tilde \Pi(S \setminus b) \le \Mod(S)[\phi]$. Again by the framed change-of-coordinates principle, there is an admissible curve $a' \subset (S \setminus b)$ such that $i(a', a) = 1$. Appealing to Lemma \ref{lemma:inductive}, it follows that $\tilde \Pi(b)$ is contained in the group $\pair{\tilde \Pi(S \setminus b), T_a, T_{a'}}$, and this latter group is contained in $\pair{\Mod(S)[\phi], T_a}$. \end{proof}

\begin{proof}[Proof of Theorem \ref{mainthm:genset}.II]
Let $\mathcal C = \{c_1,\dots, c_k, c_{k+1},\dots,c_\ell\}$ be an $h$-assemblage of type $E$ with $h \ge 5$. Assume further that each $c_i$ is admissible for the framing $\phi$. Recall that for $1 \le j \le \ell$, the regular neighborhood of the curves $c_1, \dots, c_j$ is denoted by $S_j$. By hypothesis, $\{c_1, \dots, c_k\}$ forms an $E$-arboreal spanning configuration on $S_k$, and the genus of $S_k$ is $h \ge 5$. In particular, the restriction of $\phi$ to $S_k$ is of holomorphic type (Lemma \ref{lemma:arbholo}). By Theorem \ref{mainthm:genset}.I, it follows that $\Mod(S_k)[\phi]$ is contained in the group $\mathcal T(\mathcal C) = \pair{T_{c_i} \mid c_i \in \mathcal C}$.

We now argue by induction. Supposing $\Mod(S_j)[\phi] \le \mathcal T(\mathcal C)$ for some $j \ge k$, it follows from Lemma \ref{corollary:stabilization} that since $S_{j+1}$ is the stabilization of $S_j$ along $c_{j+1}$, also $\Mod(S_{j+1})[\phi] \le \mathcal T(\mathcal C)$. As $S_\ell = \Sigma_{g,n}$ by assumption, the result follows. 
\end{proof}

\section{Other framed mapping class groups}\label{section:absolute}

In this section we leverage our work on the framed mapping class group $\Mod_{g,n}[\phi]$ in order to study two variants we will encounter in our investigation of the monodromy groups of strata of abelian differentials. The most straightforward variant we consider is the stabilizer of the framing up to {\em absolute} isotopy, i.e., where the isotopy is not necessarily trivial on the boundary. In this case, we will see that there is a sensible theory even when boundary components are replaced by marked points. We carry this out in Section \ref{subsection:punctured}, culminating in Proposition \ref{theorem:absolute}.

Our analysis of this case is built on a study of an intermediate refinement we call the ``pronged mapping class group.'' This group was introduced in a slightly different form in \cite{BSW_horo}, where it was called the ``mapping class group rel boundary'' of the surface. In Section \ref{section:pronged}, we lay out the basic theory of prong structures, pronged mapping class groups, and framings on pronged surfaces, leading to the structural result Proposition \ref{theorem:pronged}. The material in Section \ref{subsection:punctured} then follows as an easy corollary.

Section \ref{subsec:reltoabs} contains an analysis of the relationship between the (relative) framed mapping class group $\Mod_{g,n}[\phi]$ and its absolute counterpart $\PMod_g^n[\bar \phi]$. The main result here is Proposition \ref{proposition:relimg}, which identifies an obstruction for $\Mod_{g,n}[\phi]$ to surject onto $\PMod_g^n[\bar \phi]$.

\begin{remark}
For clarity of exposition, we restrict our attention throughout this section to framings $\phi$ of holomorphic type. Similar statements hold for arbitrary framings but the corresponding statements become somewhat messier; see Remarks \ref{remark:meroprong} and \ref{remark:mero_absgens}.
\end{remark}

\subsection{Pronged surfaces and pronged mapping class groups}\label{section:pronged} In our study of the monodromy of strata of abelian differentials, we will encounter a variant of a puncture/boundary component known as a {\em prong structure}. Here we outline the basic theory of surfaces with prong structure and their mapping class groups.

\begin{definition}[Prong structure, pronged mapping class group]
Let $\Sigma$ be a surface of genus $g$ equipped with a Riemannian metric and $p_i \in \Sigma$ a marked point. A {\em prong point of order $k_i$} at $p_i$ is a choice of $k_i$ distinct unit vectors ({\em prongs}) $v_1, \dots, v_{k_i} \in T_{p_i} \Sigma$ spaced at equal angles. With this data specified, we will write $\vec p_i$ to refer to the set of prongs based at $p_i$, and $\vec P$ to indicate a set of prong points $\{\vec p_1, \dots, \vec p_n\}$ with underlying points $P=\{p_1, \dots, p_n\}$.

Let $\vec P$ be a collection of prong points. Define $\Diff^+(\Sigma; \vec P)$ to be the group of orientation-preserving diffeomorphisms of $\Sigma$ that preserve each prong point (elements must fix the {\em points} $p_1, \dots, p_n$ pointwise, but can induce a (necessarily cyclic) permutation of the overlying tangent vectors). The {\em pronged mapping class group} is then defined as
\[
\Mod(\Sigma, \vec P) := \pi_0(\Diff^+(\Sigma; \vec P)).
\]
In the interest of compressing notation, this will often be written simply $\Mod_{g,n}^*$, with the underlying prong structure understood from context.
\end{definition}

\para{Prongs vs. boundary components} Here we outline the relationship between prongs and boundary components. First note that a prong point of order $1$ is simply a choice of fixed tangent vector. Also recall that in the case where all prong points have order $1$, there is a natural isomorphism
\[
\Mod(\Sigma, \vec P) \cong \Mod(\Sigma^*),
\]
where $\Sigma^* \cong \Sigma_{g,n}$ is the surface with $n$ boundary components obtained by performing a real oriented blowup at each $p_i$ (see Construction \ref{construction:blowup} below). More generally, let $(\Sigma, \vec P)$ be an arbitrary pronged surface with $\vec p_i$ a prong point of order $k_i$. Let $\mu_{k} \le \C^\times$ denote the group of $k^{th}$ roots of unity, and define the ``prong rotation group''
\[
PR := \prod_{i = 1}^n \mu_{k_i}.
\]
For each $1 \le i \le n$, there is a map $D_i: \Mod_{g,n}^* \to \mu_{k_i}$ given by taking the rotational part of the derivative at $T_{p_i} \Sigma \cong \C$. We define 
\[
D:= \prod_{i = 1}^n D_i: \Mod_{g,n}^* \to PR
\]
to be the product. Then $D$ induces the following short exact sequence (compare \cite[(6)]{BSW_horo}):
\begin{equation}\label{ses:PR1}
1 \to \Mod_{g,n} \to \Mod_{g,n}^* \to PR \to 1.
\end{equation}

\para{Fractional twists} There is an explicit set-theoretic splitting of \eqref{ses:PR1}. We define a {\em fractional twist} $T_{\vec {p_i}}$ at the prong point $\vec p_i$ of order $k_i$ to be the mapping class specified in a local complex coordinate $z \le 1$ near $p_i$ by 
\[
T_{\vec p_i}(z) = z e^{(2 \pi i (1-\abs{z}))/k_i}.
\]
Intuitively, $T_{\vec p_i}$ acts by applying a ``screwing motion'' of angle $2 \pi/k_i$ at $p_i$, viewing a small neighborhood of $p_i$ as being constructed from an elastic material connected to a rigid immobile boundary component. It is then clear that $PR$ embeds into $\Mod_{g,n}^*$ as the set of fractional twists $\{T_{\vec p_i}^{j_i} \mid 0 \le j_i < k_i,\ 1 \le i \le n\}$. Define
\[
FT = \pair{T_{\vec p_i},1 \le i \le n} \le \Mod_{g,n}^*
\]
to be the group generated by the fractional twists.

On the blowup $\Sigma^*$, the fractional twist $T_{\vec p_i}$ acts as a fractional rotation of the corresponding boundary component. It will be convenient to introduce the following notation: if $\Delta_i$ is the boundary component corresponding to $\vec p_i$, write $T_{\Delta_i}^{1/k_i}$ in place of $T_{\vec p_i}$. Note that $T_{\vec p_i}^{k_i}$ can be identified with the full Dehn twist about $\Delta_i$, so that the equation
\[
(T_{\Delta_i}^{1/k_i})^{k_i} = T_{\Delta_i}
\]
holds as it should. We will therefore allow $T_{\Delta_i}$ to assume fractional exponents with denominator $k_i$.

\para{Relative framings of pronged surfaces} Let $(\Sigma, \vec P)$ be a pronged surface. For simplicity's sake, we will formulate the discussion in this paragraph in terms of the blow-up $\Sigma^*$. Consider now a nonvanishing vector field $\xi$ on $\Sigma^*$. As in Section \ref{section:framings}, when a Riemannian metric is fixed, $\xi$ gives rise to a framing $\phi$ of $\Sigma^*$. In the presence of a prong structure, we will impose a further ``compatibility'' requirement on $\phi$ at the boundary.

\begin{definition}[Compatible framing]\label{definition:prong_compat}
Let $\vec p_i$ be a prong point of order $k_i$ on $\Sigma$ and let $\Delta_i$ be the associated boundary component of $\Sigma^*$. There is a canonical identification $\Delta_i \cong UT_{p_i} \Sigma$ between $\Delta_i$ and the space of unit tangent directions at $p_i$. We say that a framing $\phi$ is {\em compatible with $\vec p_i$} if the following conditions hold:
\begin{enumerate}
\item For $v \in UT_{p_i}\Sigma$, the framing vector $\phi(v)$ is orthogonally inward-pointing on $\Sigma^*$ if and only if $v$ is a prong.
\item The restriction of $\phi$ to $UT_{p_i}\Sigma$ is invariant under the action of $\mu_{k_i}$ on $UT_{p_i}\Sigma$.
\end{enumerate}
If $\vec P$ is a prong structure, we say that $\phi$ is compatible with $\vec P$ if $\phi$ is compatible with each $\vec p \in \vec P$ in the above sense.
\end{definition}

\begin{remark}\label{remark:WN}
Observe that if $\phi$ is compatible with a prong point $\vec p_i$ of order $k_i$, then the winding number of the associated boundary component $\Delta_i$ of $\Sigma^*$ is only determined up to sign: $\phi(\Delta_i) = \pm k_i$, depending on which way $\phi$ turns between prong points (recall the standing convention that boundary components are oriented with the interior of the surface lying to the left). Throughout, we will assume that $\phi$ is of holomorphic type so that $\phi(\Delta_i) < 0$ for all $i$.
\end{remark}

Observe that the notion of relative isotopy of framings still makes sense on a pronged surface. If $\phi$ is compatible with $\vec P$, then there is a well--defined action of $\Mod_{g,n}^*$ on the set of relative isotopy classes of framings. Exactly as in Section \ref{section:framings}, we define the {\em framed mapping class group} of the framed pronged surface $(\Sigma^*, \phi)$ to be the stabilizer of $\phi$:
\[
\Mod_{g,n}^*[\phi] = \{f \in \Mod_{g,n}^* \mid f \cdot \phi = \phi\}.
\]

{\noindent \bf Winding number functions.}
Exactly as in the setting of Section \ref{section:framings}, relative isotopy classes of framings on pronged surfaces are in bijection with suitably--defined winding number functions. The definition of winding number of a closed curve needs no modification. To set up a theory of winding numbers for arcs on pronged surfaces, we adopt the natural counterparts of the definitions of legal basepoint and legal arc from Section \ref{section:framings}. 

Suppose $(\Sigma, \vec P)$ is a pronged surface equipped with a compatible framing $\phi$. Let $\vec p_i$ be a prong point of order $k_i$. The prongs $v_1, \dots, v_{k_i} \in T_{p_i} \Sigma$ correspond to $k_i$ distinct points on the corresponding boundary component $\Delta_i$ of $\Sigma^*$, and by the compatibility assumption, each $v_i$ is a legal basepoint in the sense of Section \ref{section:framings}. We say that an arc $\alpha$ on the pronged surface $(\Sigma, \vec P)$ (or equivalently on the blowup $\Sigma^*$) is {\em legal} if $\alpha$ is properly embedded, each endpoint is some legal basepoint on $\Sigma^*$, and if $\alpha'(0)$ is orthogonally inward-pointing and $\alpha'(1)$ is orthogonally outward-pointing. In short, the theory of legal arcs on pronged surfaces differs from the theory on surfaces with boundary only in that we allow arcs to be based at {\em any} legal basepoint, not at one fixed point per boundary component. Fractional twists about the boundary may change the basepoint, and so we must consider all legal basepoints at once, instead of a single one.

Under this definition, legal arcs have half-integral winding number as before, and moreover $\Mod_{g,n}^*$ acts on the set of isotopy classes of relative arcs. The twist--linearity formula (Lemma \ref{lemma:HJ}.\ref{item:TL}) generalizes to fractional twists as follows (the proof is straightforward and is omitted).
\begin{lemma}\label{lemma:fractionalTL}
Let $a$ be a legal arc on a pronged surface $(\Sigma, \vec P)$ equipped with a compatible framing $\phi$. If $a$ has an endpoint at a legal basepoint on $\Delta_i$, then
\[
\phi(T_{\Delta_i}^{1/k_i}(a)) = \phi(a) \pm 1,
\]
with the sign positive if and only if $a$ is oriented as to be incoming at $\Delta_i$.
\end{lemma}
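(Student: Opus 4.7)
The proof will be a direct computation whose key inputs are the $\mu_{k_i}$-invariance of $\phi$ near $\Delta_i$ built into Definition \ref{definition:prong_compat} and the standard integer twist--linearity formula (Lemma \ref{lemma:HJ}.\ref{item:TL}) applied to the identity $T_{\Delta_i} = (T_{\Delta_i}^{1/k_i})^{k_i}$. My first step would be to show that the increment
\[
\epsilon := \phi(T_{\Delta_i}^{1/k_i}(a)) - \phi(a)
\]
depends only on whether $a$ is incoming or outgoing at $\Delta_i$, and not on the particular prong $v_j$ to which $a$ is anchored nor on the global shape of $a$. This is a local assertion, since $T_{\Delta_i}^{1/k_i}$ is supported in a small collar $N$ of $\Delta_i$; after an isotopy through legal arcs (which does not change the winding number), one may assume $a$ enters $N$ radially at $v_j$. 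The $\mu_{k_i}$-rotation of $N$ cycles the prongs and, by the second clause of Definition \ref{definition:prong_compat}, preserves $\phi|_N$; hence the local model near each prong is identical, so $\epsilon$ is a single constant depending only on orientation.

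Next I would iterate. Each of the $k_i$ successive applications of $T_{\Delta_i}^{1/k_i}$ moves the endpoint to the next prong and (after a winding-number-preserving isotopy putting the intermediate arc back in radial standard position) contributes the same increment $\epsilon$. Summing these $k_i$ contributions and invoking Lemma \ref{lemma:HJ}.\ref{item:TL} for the full Dehn twist $T_{\Delta_i}$ gives
\[
k_i\,\epsilon \;=\; \phi(T_{\Delta_i}(a)) - \phi(a) \;=\; \langle a,\Delta_i\rangle\,\phi(\Delta_i).
\]
By Remark \ref{remark:WN} and the holomorphic-type hypothesis, $\phi(\Delta_i) = -k_i$. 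A short orientation computation using the convention that $\Delta_i$ is oriented with $\Sigma$ on its left shows that $\langle a,\Delta_i\rangle = -1$ when $a$ is incoming at $\Delta_i$ and $\langle a,\Delta_i\rangle = +1$ when $a$ is outgoing. Substituting yields $\epsilon = +1$ in the incoming case and $\epsilon = -1$ in the outgoing case, as claimed.

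The main thing to verify carefully is the intermediate-step bookkeeping: after the first application of $T_{\Delta_i}^{1/k_i}$, the resulting arc no longer enters the collar radially (its tangent at the new prong acquires a $\partial_\theta$-component from the local model $(r,\theta) \mapsto (r,\theta + 2\pi(1-r)/k_i)$), so one must check that the corrective isotopy restoring radial entry alters no winding numbers and hence that the $\mu_{k_i}$-symmetry uniformly produces the same $\epsilon$ at every stage. Once this is cleanly set up, the rest is a direct unpacking of the local model.
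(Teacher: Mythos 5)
The paper omits its own proof of this lemma (``the proof is straightforward and is omitted''), and the intended argument is surely the direct local computation: the fractional twist drags the endpoint of $a$ one prong--step along $\Delta_i$, so $T_{\Delta_i}^{1/k_i}(a)$ is legally isotopic to $a$ concatenated with one of the $k_i$ boundary segments between consecutive prongs, and by the $\mu_{k_i}$--invariance of $\phi|_{\Delta_i}$ each such segment carries exactly $1/k_i$ of the total boundary winding $\phi(\Delta_i)=-k_i$. Your route is genuinely different: you first use locality of the twist plus the rotational symmetry to show the increment $\epsilon$ is a single constant (per orientation), then iterate $k_i$ times and divide the integral twist--linearity formula by $k_i$. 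This reduction is valid and has a real advantage --- it absorbs all of the fiddly corner corrections and the legalization bookkeeping into the already--established Lemma \ref{lemma:HJ}.\ref{item:TL} --- and your two flagged checks (that the corrective isotopy restoring radial entry is a legal isotopy, hence winding--number--preserving, and that the increments agree at every stage) are exactly the right ones. Two small caveats on the symmetry step: Definition \ref{definition:prong_compat} only gives $\mu_{k_i}$--invariance of $\phi$ on $\Delta_i$ itself, not on a collar, but this suffices since the increment depends only on the boundary values (or isotope $\phi$ to a rotation--invariant representative on the collar); and your argument tacitly assumes $a$ has exactly one endpoint on $\Delta_i$, since otherwise $\pair{a,\Delta_i}=0$ and the division yields nothing --- this matches the lemma's intended scope.

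The one place I would not sign off as written is the sign. Once $|\epsilon|=1$ is established, the sign \emph{is} the entire content of the lemma, and in your argument it is funneled through the single asserted identity $\pair{a,\Delta_i}=-1$ for incoming arcs. That evaluation tangles three conventions at once: the sign convention for the relative intersection pairing, the handedness of the Dehn twist for which Lemma \ref{lemma:HJ}.\ref{item:TL} holds, and the fact that the relevant boundary twist is specifically $(T_{\vec p_i})^{k_i}$ with $T_{\vec p_i}(z)=ze^{2\pi i(1-|z|)/k_i}$ (whose handedness is fixed by that formula, not by a choice). Under the most common convention (positive crossing when the ordered pair of tangent vectors is positively oriented, $\Delta_i$ oriented with $\Sigma$ to its left) an incoming arc crosses the interior pushoff of $\Delta_i$ positively, which would flip your conclusion; the stated answer is then rescued only by a compensating handedness convention. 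Rather than leaving this to ``a short orientation computation,'' fix the sign by a one--prong computation in the flat local model (the cone of angle $2\pi k_i$ with its horizontal framing), where one sees directly that the spiral created by the fractional twist advances the tangent by $+2\pi$ relative to the framing for an incoming arc; alternatively, cross--check against Proposition \ref{theorem:pronged}, where the lemma's sign is exactly what is needed for $A_i=T_{\Delta_i}^{1/k_i}T_{d_i}^{-1}$ to preserve $\phi$.
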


We also have the following straightforward extension of Lemma \ref{lemma:framingWNF}.

\begin{lemma}\label{lemma:prongframingWNF}
Let $\phi$ and $\psi$ be two framings of the pronged surface $(\Sigma, \vec P)$, both compatible with the prong structure. Then $\phi$ and $\psi$ are relatively isotopic if and only if the associated relative winding number functions are equal. Moreover, $\phi = \psi$ as relative winding number functions if and only if $\phi(b) = \psi(b)$ for all elements $b$ of a distinguished geometric basis $\mathcal B$.
\end{lemma}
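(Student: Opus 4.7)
The plan is to reduce the statement to its already-established counterpart on surfaces with boundary, namely Lemma \ref{lemma:framingWNF}, by passing to the blowup $\Sigma^*$. The first preliminary observation is that compatibility with $\vec P$ determines the boundary behavior of $\phi$ up to isotopy: on each $\Delta_i \cong UT_{p_i}\Sigma$, the $\mu_{k_i}$-invariance of $\phi|_{\Delta_i}$, combined with the stipulation that the prongs are precisely the points where the framing vector is orthogonally inward-pointing and the holomorphic-type condition $\phi(\Delta_i) = -k_i$ (c.f. Remark \ref{remark:WN}), forces the framing vector to rotate through total angle $-2\pi/k_i$ between consecutive prongs. Thus any two compatible framings $\phi,\psi$ agree on $\partial\Sigma^*$ up to isotopy, and after an initial isotopy we may assume $\phi|_{\partial\Sigma^*} = \psi|_{\partial\Sigma^*}$.

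With that reduction in hand, I would select once and for all a single prong $p_i^\flat \in \Delta_i$ on each boundary component to serve as a legal basepoint in the sense of Section \ref{section:framings}. Under this choice, a legal arc on the pronged surface based at the chosen prongs is precisely a legal arc on $\Sigma^*$ in the sense of Section \ref{section:framings}, and the two notions of winding number coincide. The ``only if'' direction of the first claim is immediate, since winding numbers of curves and of relatively isotoped legal arcs are invariants. For the ``if'' direction, suppose the pronged relative winding number functions of $\phi$ and $\psi$ agree. They then agree in particular on all simple closed curves and on all legal arcs based at the $p_i^\flat$, and so Lemma \ref{lemma:framingWNF} applied to $\Sigma^*$ produces a relative isotopy from $\phi$ to $\psi$, which is simultaneously a relative isotopy of pronged framings.

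The only point requiring a moment's care is that the pronged winding number function is defined on legal arcs based at \emph{any} prong, not merely the chosen basepoints $p_i^\flat$, so one might worry that equality on ``$\Sigma^*$-legal'' arcs is strictly weaker than equality as pronged winding number functions. But any legal arc $\alpha$ on the pronged surface is isotopic (through legal arcs) to an arc of the form $T_{\Delta_i}^{j/k_i} T_{\Delta_{i'}}^{j'/k_{i'}}(\alpha^\flat)$ for some arc $\alpha^\flat$ based at the chosen prongs; the fractional twist linearity formula (Lemma \ref{lemma:fractionalTL}) then shows that $\phi(\alpha)$ and $\psi(\alpha)$ differ from $\phi(\alpha^\flat)$ and $\psi(\alpha^\flat)$ by the same integer, so equality on $\alpha^\flat$ implies equality on $\alpha$.

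For the second assertion, a distinguished geometric basis $\mathcal B$ (based at the prongs $p_i^\flat$) endows $\Sigma^*$ with a CW-structure of the form required by Lemma \ref{lemma:framingWNF}, by Remark \ref{remark:gsbCW}. That lemma then gives that $\phi$ and $\psi$ have equal relative winding number functions on $\Sigma^*$ if and only if $\phi(b) = \psi(b)$ for every $b \in \mathcal B$; combined with the previous paragraph, this upgrades to the desired statement on the pronged surface. The whole argument is thus essentially bookkeeping; the only real obstacle is the preliminary step of verifying that the compatibility condition trivializes the boundary behavior, which is what allows the reduction to Lemma \ref{lemma:framingWNF} to go through.
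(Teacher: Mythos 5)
Your proof is correct and follows the route the paper intends: Lemma \ref{lemma:prongframingWNF} is stated there without proof as a ``straightforward extension'' of Lemma \ref{lemma:framingWNF}, and your reduction to that lemma on the blowup $\Sigma^*$ --- normalizing the boundary restriction via compatibility and transporting legal arcs based at arbitrary prongs back to the chosen basepoints with Lemma \ref{lemma:fractionalTL} --- is exactly the missing bookkeeping. The only blemish is the claim that the framing rotates through total angle $-2\pi/k_i$ between consecutive prongs (the rotation relative to the inward normal on each of the $k_i$ fundamental segments is a full $-2\pi$, consistent with $\phi(\Delta_i)=-k_i$), but this numerical slip does not affect your correct conclusion that compatibility pins down the boundary restriction up to an isotopy through compatible framings.
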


We come now to the main result of the section. This describes the relationship between the stabilizer $\Mod_{g,n}^*[\phi]$ in the pronged mapping class group, and its subgroup $\Mod_{g,n}[\phi]$ where each prong is required to be individually fixed. In order to do so, we define a certain subgroup of $PR$. For convenience, we will switch to additive notation and identify $\mu_k \cong \Z/k\Z$, writing
\[
PR = \left\{ \sum c_i e_i \mid c_i \in \Z/k_i \Z \right\}.
\]
We furthermore write $\sum'$ to indicate a sum over all indices $i$ such that $k_i$ is even. Then define
\begin{equation}\label{eqn:PRprime}
PR' := \left\{\sum c_i e_i \in PR \mid \sum{}' c_i \equiv 0 \pmod 2 \right\}.
\end{equation}
Observe that if all $k_i$ are odd then $PR' = PR$.

\begin{proposition}\label{theorem:pronged}
Let $(\Sigma, \vec P)$ be a pronged surface and let $\phi$ be a compatible framing. Then the map $D: \Mod_{g,n}^* \to PR$ induces the short exact sequence
\begin{equation}\label{ses:pronged}
1 \to \Mod_{g,n}[\phi] \to \Mod_{g,n}^*[\phi] \to PR' \to 1.
\end{equation}
\end{proposition}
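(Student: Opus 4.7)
The plan is to observe that the sequence \eqref{ses:pronged} is exact at its first two terms essentially for free, then identify the image of the rightmost map with $PR'$ via an Arf invariant computation. Indeed, since $\Mod_{g,n} = \ker(D)$ by \eqref{ses:PR1}, the restriction of $D$ to $\Mod_{g,n}^*[\phi]$ has kernel $\Mod_{g,n}^*[\phi] \cap \Mod_{g,n} = \Mod_{g,n}[\phi]$, so everything reduces to proving $D(\Mod_{g,n}^*[\phi]) = PR'$.

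Given $\vec c = (c_1, \dots, c_n) \in PR$, choose integer lifts $0 \le c_i < k_i$ and form $T^{\vec c} := T_{\vec p_1}^{c_1} \cdots T_{\vec p_n}^{c_n} \in \Mod_{g,n}^*$. Then $\vec c$ lies in the image of $D$ on $\Mod_{g,n}^*[\phi]$ if and only if there is some $g \in \Mod_{g,n}$ with $g T^{\vec c} \in \Mod_{g,n}^*[\phi]$, equivalently $T^{\vec c} \cdot \phi$ lies in the $\Mod_{g,n}$-orbit of $\phi$. Compatibility of $\phi$ (Definition \ref{definition:prong_compat}) forces $\phi$ to be $\mu_{k_i}$-invariant on each $\Delta_i$, so the two framings agree on $\partial \Sigma^*$, and (assuming $g \ge 2$) Proposition \ref{proposition:orbits} turns the orbit question into the equality $\Arf(T^{\vec c} \cdot \phi) = \Arf(\phi)$.

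Next, I will compute this Arf difference on a distinguished geometric basis $\{x_i,y_i\}_{i=1}^g \cup \{a_j\}_{j=2}^n$. The interior closed curves $x_i, y_i$ are unaffected by fractional boundary twists, whereas each arc $a_j$ is outgoing at $\Delta_1$ and incoming at $\Delta_j$, so iterating Lemma \ref{lemma:fractionalTL} gives
\[
(T^{\vec c}\cdot\phi)(a_j) = \phi(a_j) + c_1 - c_j.
\]
Substituting into the Arf formula \eqref{equation:arf} and using $\phi(\Delta_j) = -k_j$ (Remark \ref{remark:WN}), the difference reduces modulo $2$ to
\[
\Arf(T^{\vec c}\cdot\phi) - \Arf(\phi) \equiv \sum_{j=2}^n (c_1 - c_j)(1-k_j) \pmod 2,
\]
so only indices $j$ with $k_j$ even contribute.

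Finally, I will reconcile the surviving sum with the definition \eqref{eqn:PRprime} of $PR'$. Homological coherence (Lemma \ref{lemma:HJ}.\ref{item:HC}) applied to all of $\Sigma^*$ yields $\sum k_i = 2g + n - 2$, and a brief parity count forces $|E| := |\{i : k_i \text{ even}\}|$ to always be even. A case split on whether $1 \in E$ then rearranges the surviving sum into $\sum_{i \in E} c_i \pmod 2$ in both cases, so the Arf criterion coincides exactly with $\vec c \in PR'$; this condition is manifestly well-defined on $PR$, since changing any $c_i$ with $i \in E$ by $k_i$ shifts the sum by the even integer $k_i$. The main obstacle I anticipate is the parity bookkeeping in this last step: it is the identity $|E| \equiv 0 \pmod 2$ that makes the Arf condition match the definition of $PR'$ on the nose, and without it the two conditions would differ by an asymmetric contribution from $c_1$.
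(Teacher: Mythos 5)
Your proposal is correct, and for the key containment $PR' \le D(\Mod_{g,n}^*[\phi])$ it takes a genuinely different route from the paper. Both arguments reduce immediately to computing $D(\Mod_{g,n}^*[\phi])$, and both establish the containment $D(\Mod_{g,n}^*[\phi]) \le PR'$ by essentially the same Arf computation (the paper's set-theoretic retraction $r(f) = sD(f^{-1})f$ is just your observation that $f = g\,T^{\vec c}$ with $g \in \Mod_{g,n}$, packaged differently, and its terse formula $\Arf(r(f)\cdot\phi)-\Arf(\phi) = \sum' D_i(f)$ hides exactly the parity bookkeeping you carry out, including the fact that the number of even $k_i$ is even). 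Where you diverge is the reverse containment: the paper constructs explicit elements of $\Mod_{g,n}^*[\phi]$ — the auxiliary twists $A_i = T_{\Delta_i}^{1/k_i}T_{d_i}^{-1}$ (or $T_{\Delta_i}^{2/k_i}T_{d_i}^{-1}$) and $B_{i,j} = T_{\Delta_i}^{\phi(c_{i,j})/k_i}T_{\Delta_j}^{\phi(c_{i,j})/k_j}T_{c_{i,j}}^{-1}$ — hitting a generating set of $PR'$, whereas you invoke the orbit classification (Proposition \ref{proposition:orbits}) to convert "$\vec c$ is in the image" into the single equation $\Arf(T^{\vec c}\cdot\phi) = \Arf(\phi)$, so that one Arf computation settles both containments at once. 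Your route is shorter and avoids constructing auxiliary curves, at the cost of being non-constructive and of needing $g \ge 2$ for the converse direction of Proposition \ref{proposition:orbits} (harmless in this paper, where $g \ge 5$ throughout); the paper's explicit auxiliary twists are not wasted effort, since they are reused verbatim to produce the generating set for $\PMod_g^n[\bar\phi]$ in Corollary \ref{cor:genset_abs}. Your verification that the compatibility condition makes $T^{\vec c}\cdot\phi$ and $\phi$ agree on $\partial\Sigma^*$ (so that Proposition \ref{proposition:orbits} applies) and your well-definedness check for the parity condition on $PR$ are both correct and necessary.
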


Before we begin the proof, we introduce the notion of an {\em auxiliary curve}. 

\begin{definition}[Auxiliary curves]
Let $(\Sigma_{g,n}, \phi)$ be a framed surface with boundary components $\Delta_1, \dots, \Delta_n$.
An {\em auxiliary curve for $\Delta_k$} is a separating curve $d_k$ such that $d_k$ separates $\Delta_k$ from the remaining boundary components, and so that $\phi(d_k) = \pm 1$ or $\pm 2$ according to whether $\phi(\Delta_k)$ is odd or even. 
An {\em auxiliary curve for $\Delta_i$ and $\Delta_j$} is any separating curve $c_{i,j}$ that separates boundary components $\Delta_i, \Delta_j$ from the remaining components.
\end{definition}

\begin{proof}[Proof of Proposition \ref{theorem:pronged}]
By definition,
\[
\Mod_{g,n}[\phi] := \Mod_{g,n} \cap \Mod_{g,n}^*[\phi],
\]
and $\Mod_{g,n}$ is the kernel of $D$. This establishes exactness at $\Mod_{g,n}[\phi]$ and at $\Mod_{g,n}^*[\phi]$.

It remains to be seen that $D(\Mod_{g,n}^*[\phi]) = PR'$. We first show that $PR' \le D(\Mod_{g,n}^*[\phi])$ by explicit construction. Observe that $PR'$ is generated by elements of three kinds:
\begin{enumerate}[(G1)]
\item $e_i$ for $i$ such that $k_i$ is odd,
\item $2 e_i$ for $i$ such that $k_i$ is even, 
\item $m(e_i + e_j)$ for $m$ odd and $i,j$ such that $k_i$ and $k_j$ are even. 
\end{enumerate}
Let $\Delta_i \in \partial \Sigma^*$ be given. Choose a curve $d_i$ in the following way: if $k_i$ is odd, pick $d_i \subset \Sigma^*$ such that $d_i$ separates $\Delta_i$ from all remaining boundary components and the subsurface bounded by $\Delta_i$ and $d_i$ has genus $(k_i+1)/2$. If $k_i$ is even, $d_i$ may be defined identically except that $\Delta_i \cup d_i$ must cobound a surface of genus $(k_i+2)/2$. By Remark \ref{remark:WN}, if $\phi$ is compatible with $\vec P$, then $\phi$ is of holomorphic type, and hence $k_i \le 2g - 1$ for all $i$. Thus the genus of the surface cobounded by $\Delta_i \cup d_i$ is at most $g$, and hence such $d_i$ exist for all boundary components $\Delta_i$. In both cases, orient $d_i$ so that $\Delta_i$ is on its left.

By Remark \ref{remark:WN} and the homological coherence property (Lemma \ref{lemma:HJ}.\ref{item:HC}), if $k_i$ is odd, then $\phi(d_i) = -1$ and if $k_i$ is even, then $\phi(d_i) = -2$. Therefore, $d_i$ is an auxiliary curve for $\Delta_i$.

If $k_i$ is odd, we define an {\em auxiliary twist of type 1} to be the mapping class
\[
A_i := T_{\Delta_i}^{1/k_i} T_{d_i}^{-1}
\]
(where $d_i$ is as above), and if $k_i$ is even, we define it to be
\[
A_i:= T_{\Delta_i}^{2/k_i} T_{d_i}^{-1}.
\]
Observe that $D(A_i) = e_i$ if $k_i$ is odd, and $D(A_i) = 2e_i$ if $k_i$ is even. By the twist-linearity formula (Lemma \ref{lemma:HJ}.\ref{item:TL}) and its extension to fractional twists Lemma (\ref{lemma:fractionalTL}), one verifies that $A_i \in \Mod_{g,n}^*[\phi]$.

Thus we have exhibited generators of the form $(G1), (G2)$ for $PR'$. It remains to construct elements mapping to generators of type $(G3)$. Let $k_i, k_j$ be even and choose an auxiliary curve $c_{i,j}$ for $\Delta_i$ and $\Delta_j$. By homological coherence (Lemma \ref{lemma:HJ}.\ref{item:HC}), $\phi(c_{i,j})$ is odd. Define the {\em auxiliary twist of type 2} to be the mapping class
\[
B_{i,j} := T_{\Delta_i}^{\phi(c_{i,j})/k_i}T_{\Delta_j}^{\phi(c_{i,j})/k_j} T_{c_{i,j}}^{-1}.
\]
Then $D(B_{i,j}) = \phi(c_{i,j})(e_i + e_j)$ represents a generator of type (G3), and as before, the twist--linearity formula shows that $B_{i,j} \in \Mod_{g,n}^*[\phi]$. \smallskip

We now establish the converse assertion $D(\Mod_{g,n}^*[\phi]) \le PR'$. For this, we recall that there is a set--theoretic splitting $s: PR \to \Mod_{g,n}^*$ given by fractional twists, and define the set--theoretic retraction $r: \Mod_{g,n}^* \to \Mod_{g,n}$ by $r(f) = sD(f^{-1}) f$. 

Let $f \in \Mod_{g,n}^*[\phi]$ be given. Then $r(f) \in \Mod_{g,n}$ by construction. The Arf invariant classifies orbits of relative framings under the action of $\Mod_{g,n}$, and hence we must have
\[
\Arf(r(f) \cdot \phi) = \Arf(\phi).
\]
Let $\mathcal B = \{x_1, \dots, y_g\} \cup \{a_2, \dots, a_n\}$ be a distinguished geometric basis. By hypothesis, $\phi(f \cdot b) = \phi(b)$ for all $b \in \mathcal B$. Also note that 
\[
\phi(T_{\Delta_i}^{1/k_i}(a_i)) = \phi(a_i) + 1
\]
while fixing the $\phi$ values of all other elements of $\mathcal B$. Likewise,
\[
\phi(T_{\Delta_1}^{1/k_1}(a_i)) = \phi(a_i) - 1
\]
for $i = 2, \dots, n$ while $T_{\Delta_1}^{1/k_1}$ fixes the $\phi$ value of each of the curves $x_1, \dots, y_g$. Considering the Arf invariant formula \eqref{equation:arf}, it follows that
\[
\Arf(r(f) \cdot \phi) - \Arf(\phi) = \sum{}'D_i(f).
\]
Thus $\Arf(r(f)\cdot \phi) = \Arf(\phi)$ exactly when $\sum' D_i(f) = 0$, i.e., when $D(f) \in PR'$. 
\end{proof}

\begin{remark}\label{remark:meroprong}
As noted above, this theory generalizes to arbitrary framings compatible with a prong structure. When boundary components have positive winding number the signs of the formula in \ref{lemma:fractionalTL} reverse. More substantially, the proof of Proposition \ref{theorem:pronged} must be altered, for there do not exist auxiliary curves for a boundary component $\Delta$ of arbitrary winding number. Instead, one must take a combination of twists on curves separating $\Delta$ from the other boundary components (together with a fractional twist about $\Delta$) to produce the generators (G1) and (G2).

If one wishes to include boundary components of winding number 0 in this theory, the easiest method is to introduce them separately and consider framings on surfaces with both boundary (of winding number 0) and prongs. In this case, the corresponding factor in the prong rotation group is trivial (see below), and boundary twists about winding number 0 curves are all in the stabilizer of the framing.
\end{remark}

\subsection{Pointed surfaces and absolute framed mapping class groups}\label{subsection:punctured}
The second variant of a framed mapping class group we consider is the most coarse. As in the pronged setting, we consider a closed genus $g$ surface $\Sigma$ with a collection $P = \{p_1, \dots, p_n\}$ of marked points, but we do not equip each $p_i$ with the structure of a prong point. Thus the mapping class group acting up to isotopy on $(\Sigma, P)$ is the familiar punctured mapping class group $\Mod_g^n$. 

Forgetting the prong structure induces the following short exact sequence of mapping class groups (c.f. \cite[Lemma 2.4]{BSW_horo}):
\begin{equation}\label{eqn:BSW_tw}
1\rightarrow FT  \rightarrow \Mod_{g,n}^*
\rightarrow \PMod_g^n \rightarrow 1,
\end{equation}
where we recall that $FT$ is the group generated by the fractional twists at each $p_i$.

Suppose that $\xi$ is a vector field on $\Sigma$ vanishing only at $P$. Then $\xi$ determines a framing of the punctured surface $\Sigma \setminus P$. Since we do not fix any boundary data, the notion of relative isotopy is ill--defined. To emphasize this, we use the term {\em absolute} in this setting, and so we speak of {\em absolute isotopy classes} of framings, {\em absolute winding number functions} (which measure winding numbers only of oriented simple closed curves, not arcs), and the {\em absolute framed mapping class group}.
In this language, the pointed mapping class group $\Mod_g^n$ acts on the set of isotopy classes of absolute framings, or equivalently on the set of absolute winding number functions. If $\overline \phi$ is an absolute framing/winding number function, we write $\Mod_g^n[\overline \phi]$ to denote the stabilizer. The main result concerning $\Mod_g^n[\overline \phi]$ that we will need for later use is the following.

\begin{theorem}\label{theorem:absolute}
Let $(\Sigma, \vec P)$ be a pronged surface equipped with a compatible framing $\phi$. The forgetful map $p: (\Sigma, \vec P) \to (\Sigma, P)$ induces a surjection
\[
p_*: \Mod_{g,n}^*[\phi] \to \PMod_g^n[\overline \phi].
\]
\end{theorem}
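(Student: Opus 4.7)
The strategy is to lift $f \in \PMod_g^n[\bar\phi]$ via the short exact sequence \eqref{eqn:BSW_tw} to some $\tilde f \in \Mod_{g,n}^*$, and then correct $\tilde f$ by an element $g$ of the kernel $FT$ so that $g\tilde f$ stabilizes the relative framing $\phi$. The underlying idea is that an element of $FT$ acts trivially on closed curves but shifts arc winding numbers by integer amounts (Lemma \ref{lemma:fractionalTL}), while the hypothesis $f \in \PMod_g^n[\bar\phi]$ already forces $\tilde f \cdot \phi$ to agree with $\phi$ on closed curves. Thus only the arc values need correcting, and fractional twists supply exactly the right tool.

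More concretely, set $\psi := \tilde f \cdot \phi$, which is again a framing compatible with the prong structure. For any oriented simple closed curve $c \subset \Sigma^*$ one has $\phi(c) = \bar\phi(c)$, and since $f$ stabilizes $\bar\phi$,
\[
\psi(c) = \phi(\tilde f^{-1}(c)) = \bar\phi(f^{-1}(c)) = \bar\phi(c) = \phi(c).
\]
Hence $\psi$ and $\phi$ agree on every simple closed curve and can differ only on arcs. Fix a distinguished geometric basis $\mathcal B = \{x_1, y_1, \dots, x_g, y_g\} \cup \{a_2, \dots, a_n\}$, with each $a_j$ a legal arc from $p_1$ to $p_j$ based at chosen prongs, and define $c_j := \psi(a_j) - \phi(a_j)$; this is an integer because both $\psi(a_j)$ and $\phi(a_j)$ lie in $\Z + \tfrac{1}{2}$.

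Set $g := \prod_{j=2}^n T_{\vec p_j}^{c_j} \in FT$. Each factor is supported near $p_j$, so $g$ acts trivially on every $x_i$ and $y_i$; and by Lemma \ref{lemma:fractionalTL} (with $a_j$ incoming at $\vec p_j$), $\psi(g^{-1}(a_j)) = \psi(a_j) - c_j = \phi(a_j)$. Thus $g \cdot \psi$ agrees with $\phi$ on all of $\mathcal B$, so by Lemma \ref{lemma:prongframingWNF}, $g \cdot \psi = \phi$, whence $g\tilde f \in \Mod_{g,n}^*[\phi]$ with $p_*(g\tilde f) = f$. The main conceptual step is the equality $\phi(c) = \bar\phi(c)$ on closed curves, which translates the absolute hypothesis on $f$ into relative data about $\psi$; once this is in hand, the corrective element $g$ can be read off directly from the winding-number discrepancies on the arcs of $\mathcal B$.
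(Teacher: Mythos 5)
Your proposal is correct and follows essentially the same route as the paper: lift $f$ through the sequence \eqref{eqn:BSW_tw}, observe that the hypothesis forces agreement of the framings on all simple closed curves, and then use Lemma \ref{lemma:fractionalTL} to correct the arc winding numbers on a distinguished geometric basis by a fractional multitwist, concluding via Lemma \ref{lemma:prongframingWNF}. The only difference is that you write out the explicit corrective element $\prod_j T_{\vec p_j}^{c_j}$ where the paper simply invokes transitivity of $FT$ on the tuple of arc values.
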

\begin{proof}
Let $\overline f \in \Mod_g^n[\overline \phi]$ be given, and choose a lift $f \in \Mod_{g,n}^*$. The set of lifts is a torsor on the kernel $FT$ of the forgetful map $p_*: \Mod_{g,n}^* \to \Mod_g^n$. The group of fractional twists $FT$ preserves all absolute winding numbers. If $\mathcal B = \{x_1, \dots, y_g\} \cup \{a_2, \dots, a_n\}$ is a distinguished geometric basis, then by Lemma \ref{lemma:fractionalTL}, $FT$ acts transitively on the set of values $(\phi(a_2), \dots, \phi(a_n))$. Thus there is $g \in FT$ such that $\phi(gf(b)) = \phi(b)$ for all $b \in \mathcal B$. By Lemma \ref{lemma:prongframingWNF}, such $gf$ is an element of $\Mod_{g,n}^*[\phi]$, and by construction $p_*(gf) = \overline f$. 
\end{proof}

Combining this result with Proposition \ref{theorem:pronged} yields an explicit generating set for $\PMod_g^n[\overline \phi]$.

\begin{definition}
Let $\Sigma_{g,n}$ have boundary components $\Delta_1, \ldots, \Delta_n$. An {\em auxiliary curve system} is a collection of the following auxiliary curves:
\begin{itemize}
\item Auxiliary curves $c_{i,j}$ for all pairs $i,j$ such that both $\phi(\Delta_i), \phi(\Delta_j)$ are even,
\item Auxiliary curves $d_k$ for all indices $k$
\end{itemize}
No requirements are imposed on the intersection pattern of curves in an auxiliary curve system.
\end{definition}

\begin{corollary}\label{cor:genset_abs}
Let $\sing$ be a partition of $2g-2$ by positive integers. Let $\phi$ be a relative framing with signature $-1-\sing$ and let $\bar \phi$ denote the absolute framing induced on $\Sigma_g^n$ by capping off the boundary components of $\Sigma_{g,n}$ with punctured disks. Then $\PMod_g^n[\bar \phi]$ is generated by $p_*( \Mod_{g,n}[\phi])$ together with the twists about an auxiliary curve system $\mathcal A$.

In particular, when $g \ge 5$, then $\PMod_g^n[\bar \phi]$ is generated by the Dehn twists in the curves of $\mathcal C \cup \mathcal A$, where $\mathcal C$ is as in Figure \ref{figure:genset}.
\end{corollary}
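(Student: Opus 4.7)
The plan is to chain together Theorem \ref{theorem:absolute}, Proposition \ref{theorem:pronged}, and (for the second assertion) Theorem \ref{mainthm:genset}.(I). First, I would observe that since each $\kappa_i$ is a positive integer, the signature $-1-\sing$ has every entry $\le -2$, so $\phi$ is of holomorphic type and Definition \ref{definition:prong_compat} applies to the corresponding compatible framing on the pronged surface $(\Sigma, \vec P)$ obtained by endowing each boundary component $\Delta_i$ with $k_i := 1+\kappa_i$ prongs at the legal basepoints of $\phi$. The natural forgetful map from this pronged surface to the pointed surface $(\Sigma_g, P)$ recovers $\bar\phi$ as the induced absolute framing.

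Next, by Theorem \ref{theorem:absolute} the map $p_* : \Mod_{g,n}^*[\phi] \twoheadrightarrow \PMod_g^n[\bar\phi]$ is surjective. By Proposition \ref{theorem:pronged}, the group $\Mod_{g,n}^*[\phi]$ is generated by $\Mod_{g,n}[\phi]$ together with any choice of lifts of a generating set for the finite abelian group $PR'$. In the proof of that proposition, explicit lifts of generators of types (G1), (G2), (G3) were constructed as the auxiliary twists
\[
A_i = T_{\Delta_i}^{1/k_i} T_{d_i}^{-1} \quad \text{or} \quad T_{\Delta_i}^{2/k_i} T_{d_i}^{-1}, \qquad B_{i,j} = T_{\Delta_i}^{\phi(c_{i,j})/k_i} T_{\Delta_j}^{\phi(c_{i,j})/k_j} T_{c_{i,j}}^{-1},
\]
where $d_i$ and $c_{i,j}$ are exactly the elements of an auxiliary curve system $\mathcal{A}$.

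Now I would apply $p_*$. Since fractional twists $T_{\Delta_i}^{1/k_i}$ lie in the subgroup $FT$, and $FT$ is precisely the kernel of the forgetful map $\Mod_{g,n}^* \to \PMod_g^n$ (c.f. \eqref{eqn:BSW_tw}), we have $p_*(A_i) = T_{d_i}^{-1}$ and $p_*(B_{i,j}) = T_{c_{i,j}}^{-1}$ in $\PMod_g^n[\bar\phi]$. Combining this with the surjectivity from Theorem \ref{theorem:absolute} yields the first claim: $\PMod_g^n[\bar\phi]$ is generated by $p_*(\Mod_{g,n}[\phi])$ together with the Dehn twists about the auxiliary curves in $\mathcal{A}$.

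For the ``in particular'' assertion with $g \ge 5$, I would invoke Theorem \ref{mainthm:genset}.(I) to conclude that $\Mod_{g,n}[\phi]$ is generated by $\{T_c : c \in \mathcal{C}\}$, where $\mathcal{C}$ is the $E$-arboreal spanning configuration of Figure \ref{figure:genset}. Applying $p_*$ and combining with the previous paragraph, $\PMod_g^n[\bar\phi]$ is generated by the Dehn twists in $\mathcal{C} \cup \mathcal{A}$, as claimed. There is no substantive obstacle here; the only bookkeeping to verify is that the auxiliary curves $c_{i,j}, d_k$ project to honest simple closed curves on $\Sigma_g^n$ (which is immediate since they are already disjoint from the boundary of $\Sigma_{g,n}$), and that $p_*$ intertwines Dehn twists on either side, which is standard for the boundary-capping homomorphism.
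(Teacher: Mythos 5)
Your proposal is correct and follows essentially the same route as the paper: both use the exact sequence of Proposition \ref{theorem:pronged} to write $\Mod_{g,n}^*[\phi] = \langle \Mod_{g,n}[\phi], A_k, B_{i,j}\rangle$, push forward via the surjection of Theorem \ref{theorem:absolute}, and note that $p_*$ kills the fractional-twist factors so that the auxiliary twists project to $T_{d_k}^{-1}$ and $T_{c_{i,j}}^{-1}$. The extra bookkeeping you supply (holomorphic type, the appeal to Theorem \ref{mainthm:genset}.(I) for the ``in particular'' clause) is exactly what the paper leaves implicit.
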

\begin{proof}
As in the proof of Proposition \ref{theorem:pronged}, the group of auxiliary twists $\pair{ A_k, B_{i,j} }$ surjects onto $PR'$ and hence by Proposition \ref{theorem:pronged}
\[ \left\langle \Mod_{g,n}[\phi], A_k, B_{i,j} \right\rangle = \Mod_{g,n}^*[\phi].\]
Now it remains to observe that by Theorem \ref{theorem:absolute} this group surjects onto $\PMod_g^n[\bar \phi]$ and that by construction $p_*(A_k) = T_{d_k}^{-1}$ and $p_*(B_{i,j}) = c_{i,j}^{-1}$.
\end{proof}

\begin{remark}\label{remark:mero_absgens}
As observed in Remark \ref{remark:meroprong}, auxiliary curve systems do not always exist for arbitrary framings. The substitutions outlined there can be similarly be used to give a generating set for arbitrary $\PMod_g^n[\bar \phi]$ in terms of $p_*( \Mod_{g,n}[\phi])$ and combinations of separating twists.
\end{remark}

\subsection{The image of the relatively framed mapping class group}\label{subsec:reltoabs} The final result we consider here determines the image of $\Mod_{g,n}[\phi]$ in $\PMod_g^n[\bar \phi]$ induced by the boundary-capping map $\Sigma_{g,n} \to \Sigma_{g}^n$. Again, we restrict to the case when $\phi$ is of holomorphic type; the corresponding statements and proofs for framings of arbitrary type are left to the interested reader (one needs only change the signs of some generators).

\begin{proposition}\label{proposition:relimg}
The image of $\Mod_{g,n}[\phi]$ in $\PMod_g^n[\bar \phi]$ is a normal subgroup with quotient isomorphic to $PR' / \pair{(1,\dots, 1)}$.
\end{proposition}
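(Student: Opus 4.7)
The plan is to combine the structural exact sequence from Proposition \ref{theorem:pronged},
\[
1 \to \Mod_{g,n}[\phi] \to \Mod_{g,n}^*[\phi] \xrightarrow{D} PR' \to 1,
\]
with the surjection $p_*: \Mod_{g,n}^*[\phi] \twoheadrightarrow \PMod_g^n[\bar\phi]$ of Theorem \ref{theorem:absolute}. The kernel of the forgetful map $p_*$ on all of $\Mod_{g,n}^*$ is the fractional twist group $FT \cong \Z^n$ by \eqref{eqn:BSW_tw}, so the first task is to identify which elements $T_{\vec p_1}^{c_1} \cdots T_{\vec p_n}^{c_n}$ lie in $\Mod_{g,n}^*[\phi]$. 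Applying the fractional twist--linearity formula (Lemma \ref{lemma:fractionalTL}) to a legal arc running from $p_i$ to $p_j$, the winding number changes by $c_j - c_i$, so preservation of $\phi$ forces $c_1 = \cdots = c_n$, giving
\[
K := FT \cap \Mod_{g,n}^*[\phi] = \pair{T_{\vec p_1} T_{\vec p_2} \cdots T_{\vec p_n}} \cong \Z.
\]

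Next I would observe that $K$ is central in $\Mod_{g,n}^*$: each $T_{\vec p_i}$ is supported in a neighborhood of $\Delta_i$ and commutes up to isotopy with any homeomorphism that fixes $p_i$ and permutes its prongs cyclically, which is exactly what elements of $\Mod_{g,n}^*$ do near each prong point. Since $\Mod_{g,n}[\phi]$ is normal in $\Mod_{g,n}^*[\phi]$ (it is the kernel of $D$), the product $\Mod_{g,n}[\phi]\cdot K$ is normal in $\Mod_{g,n}^*[\phi]$, and its image under $p_*$ coincides with $p_*(\Mod_{g,n}[\phi])$, which is therefore normal in $\PMod_g^n[\bar\phi]$.

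Finally, the quotient is computed via the third isomorphism theorem together with the identification $\PMod_g^n[\bar\phi] \cong \Mod_{g,n}^*[\phi]/K$ coming from Theorem \ref{theorem:absolute}:
\[
\frac{\PMod_g^n[\bar\phi]}{p_*(\Mod_{g,n}[\phi])} \;\cong\; \frac{\Mod_{g,n}^*[\phi]}{\Mod_{g,n}[\phi] \cdot K} \;\cong\; \frac{PR'}{D(K)}.
\]
Since $D(T_{\vec p_i}) = e_i$, we have $D(K) = \pair{(1,\ldots,1)}$; this element indeed lies in $PR'$ because $\sum \kappa_i = 2g-2$ is even, forcing the number of odd $\kappa_i$ (equivalently, the number of even $k_i = 1+\kappa_i$) to be even. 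The only step requiring real work is the arc computation identifying $K$; the rest is formal diagram chasing.
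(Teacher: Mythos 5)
Your proof is correct and follows essentially the same route as the paper: the same exact sequence from Proposition \ref{theorem:pronged}, the same surjection from Theorem \ref{theorem:absolute}, the same arc computation via Lemma \ref{lemma:fractionalTL} identifying $FT \cap \Mod_{g,n}^*[\phi]$ as the cyclic group generated by the product of all fractional twists, and the same third-isomorphism-theorem bookkeeping. Your appeal to centrality of $K$ and the explicit check that $(1,\dots,1)\in PR'$ are harmless extras; the paper instead uses that a product of two normal subgroups is normal, and the containment $D(K)\le PR'$ is automatic since $K\le\Mod_{g,n}^*[\phi]$.
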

\begin{proof}
By Proposition \ref{theorem:pronged}, $\Mod_{g,n}[\phi] \normal \Mod_{g,n}^*[\phi]$ is a normal subgroup with quotient $PR'$ induced by the ``prong rotation map'' 
\[
D: \Mod_{g,n}^* \to PR.
\] 
By Theorem \ref{theorem:absolute}, the boundary-capping map $p_*: \Mod_{g,n}^* \to \Mod_g^n$ restricts to a surjection $p_*: \Mod_{g,n}^*[\phi] \to \Mod_g^n[\bar \phi]$. Thus the image of $\Mod_{g,n}[\phi]$ is a normal subgroup of $\Mod_g^n[\bar \phi]$. 

The quotient $\Mod_g^n[\bar \phi] / p_*(\Mod_{g,n}[\phi])$ can be identified by the Isomorphism Theorems. Suppose that $G$ is a group and $N_1, N_2$ are normal subgroups. Then $N_1 N_2$ is normal in $G$, and by the third isomorphism theorem, 
\[
(G/N_1)/(N_1 N_2/N_1) \cong G/(N_1 N_2) \cong (G/N_2)/ (N_1 N_2/ N_2).
\]
We apply this here with 
\[
G = \Mod_{g,n}^*[\phi], \qquad N_1 = \ker p_*, \qquad N_2 = \Mod_{g,n}[\phi].
\]
Then
\[
G/N_1 \cong \Mod_g^n[\bar \phi], \qquad G/N_2 \cong PR',
\] \[
N_1N_2/N_1 \cong p_*(N_2) \cong p_*(\Mod_{g,n}[\phi]), \qquad N_1 N_2/ N_2 \cong D(N_1) \cong D(\ker p_*).
\]
Altogether,
\[
\Mod_g^n[\bar \phi] / p_*(\Mod_{g,n}[\phi])\cong PR' / D(\ker p_*).
\]

To complete the argument, it therefore suffices to show that $D(\ker p_*) \cong \pair{(1,\dots, 1)}$. According to \eqref{eqn:BSW_tw}, the kernel of $p_*$ on $\Mod_{g,n}^*$ is the group $FT$ of fractional twists. Thus we must identify $FT \cap \Mod_{g,n}^*[\phi]$. We claim that 
\[
FT \cap \Mod_{g,n}^*[\phi] \cong \Z
\]
generated by the fractional twist $\prod_{i = 1}^n T_{\Delta_i}^{1/k_i}$. Note that 
\[
D(\prod_{i = 1}^n T_{\Delta_i}^{1/k_i}) = (1,\dots, 1),
\]
so that showing this isomorphism will complete the argument. 

To show this claim, consider an arbitrary element 
\[
f : = \prod_{i = 1}^n T_{\Delta_i}^{a_i/k_i} 
\] 
of $FT \cap \Mod_{g,n}^*[\phi]$. Let $\alpha_{i,j}$ be a legal arc connecting $\Delta_i$ to $\Delta_j$. By Lemma \ref{lemma:fractionalTL}, 
\[
\phi(f(\alpha_{i,j})) - \phi(\alpha_{i,j}) = a_j - a_i,
\]
so that if $f \in \Mod_{g,n}^*[\phi]$, necessarily $a_i = a_j$ for all pairs $i,j$ as claimed.
\end{proof}

We now unravel the condition that $PR' / \pair{(1,\dots, 1)}$ is trivial using some elementary group theory.

\begin{corollary}\label{corollary:whentrivial}
Let $\sing$ be a partition of $2g-2$ and write $\sing = (\eta_1, \ldots, \eta_p, \upsilon_1, \ldots, \upsilon_q)$ where $\eta_i$ are even, $\upsilon_j$ are odd, and $p+q=n$. Then $\Mod_{g,n}[\phi]$ surjects onto $\PMod_g^n[\bar \phi]$ if and only if $q \le 2$ and
\[ \left\{\eta_1 + 1, \ldots, \eta_p + 1, \frac{\upsilon_1 + 1}{2}, \ldots, \frac{\upsilon_q + 1}{2} \right\}\]
are pairwise coprime.
\end{corollary}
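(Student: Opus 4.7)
The plan is to analyze the obstruction group $PR'/\langle (1,\ldots,1)\rangle$ furnished by Proposition \ref{proposition:relimg}. Translating into the corollary's notation, $PR \cong \prod_{i=1}^n \Z/k_i\Z$ with $k_i = \kappa_i+1$; the first $p$ indices (the $\eta_i$'s) have $k_i$ odd and the last $q$ indices (the $\upsilon_j$'s) have $k_i$ even. Since $\sum \kappa_i = 2g-2$ is even, a parity count immediately forces $q$ to be even, so in particular $(1,\ldots,1)$ automatically lies in $PR'$. The surjectivity statement is thus equivalent to the subgroup equality $PR' = \langle(1,\ldots,1)\rangle$ inside $PR$.

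To force $q \le 2$, I would examine the 2-torsion. The subgroup $PR[2] \cong (\Z/2)^q$ is supported on the factors with even $k_i$, and the parity condition defining $PR'$ is a single linear constraint on this subgroup, so $|PR'[2]| \ge 2^{q-1}$. Since $\langle(1,\ldots,1)\rangle$ is cyclic and therefore contains at most two elements of order dividing 2, equality with $PR'$ forces $q \le 2$, so $q \in \{0,2\}$.

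Given $q \le 2$, since we already have the containment $\langle(1,\ldots,1)\rangle \le PR'$, equality reduces to matching orders: $(1,\ldots,1)$ has order $\lcm(k_i)$ in $PR$, while $|PR'| = \prod k_i$ if $q=0$ and $\prod k_i /2$ if $q=2$. Hence the criterion becomes $\prod_i k_i / \lcm_i(k_i) \in \{1,2\}$ according as $q=0$ or $q=2$.

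The main arithmetic step — and the only point needing real care — is to translate this ratio condition into the claimed pairwise coprimality of $\{\eta_1+1, \ldots, \eta_p+1, (\upsilon_1+1)/2, \ldots, (\upsilon_q+1)/2\}$. A prime-by-prime comparison of $p$-adic valuations suffices. For odd primes $p$, the condition $\sum_i v_p(k_i) = \max_i v_p(k_i)$ says no such $p$ divides two $k_i$'s, which is unchanged by dividing $k_{p+j} = 2m_j$ by 2 since $p$ is odd. For $p=2$ in the $q=2$ case, writing $k_{p+j} = 2m_j$, the 2-part of the relation $\prod k_i/\lcm(k_i) = 2$ becomes $\min(v_2(m_1), v_2(m_2)) = 0$, i.e., at most one of $m_1, m_2$ is even, which is coprimality at 2; the $q=0$ case is automatic at the prime 2. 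Combining these across all primes yields the pairwise coprimality of the displayed set, matching the corollary. The bookkeeping around the factor of 2 in the $q=2$ case is the only subtle point; all other ingredients are routine group theory and arithmetic.
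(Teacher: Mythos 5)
Your proposal is correct, and it begins with the same reduction as the paper (via Proposition \ref{proposition:relimg}, surjectivity is equivalent to $PR' = \pair{(1,\dots,1)}$), but the group-theoretic analysis afterward is genuinely different. The paper splits $PR = PR_e \times PR_o$ according to the parity of the factors, shows $PR'$ is cyclic iff $PR_o$ and $PR_e'$ are cyclic of coprime order, and extracts $q \le 2$ from the extension $1 \to PR_e' \to PR_e \to \Z/2 \to 1$ together with the fact that $PR_e$ is a product of $q$ cyclic groups of even order; the coprimality conditions then come from the usual cyclicity criterion for products of cyclic groups. You instead bound the $2$-rank directly ($|PR'[2]| \ge 2^{q-1}$ versus at most $2$ for a cyclic group) and then, having checked the containment $\pair{(1,\dots,1)} \le PR'$, compare orders via $\prod_i k_i / \lcm_i(k_i) \in \{1,2\}$ and a prime-by-prime valuation count. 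The two routes are comparable in length; yours has the small advantage of verifying explicitly that $(1,\dots,1)$ generates $PR'$ when the latter is cyclic, a point the paper passes over (it follows because the order of $(1,\dots,1)$ is the exponent of $PR$, hence at least that of any cyclic subgroup containing it, but this deserves a word). Your handling of the prime $2$ in the $q=2$ case --- reducing $v_2(\prod k_i) - v_2(\lcm k_i) = 1$ to $\min(v_2(m_1), v_2(m_2)) = 0$ --- is exactly the needed computation and correctly recovers coprimality of $(\upsilon_1+1)/2$ and $(\upsilon_2+1)/2$ at the prime $2$.
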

\begin{proof}
By Proposition \ref{proposition:relimg}, it suffices to determine when $PR'$ is cyclic (with generator $(1, \dots, 1)$).

Using the additive notation introduced above, write
\[ PR_e := \left\{ \sum_{j=1}^q c_j e_j \mid c_j \in \Z/(\upsilon_j+1) \Z \right\}
\text{ and } 
PR_{o} := \left\{ \sum_{i=1}^p c_i e_i \mid c_i \in \Z/(\eta_i+1) \Z \right\}.\]
Even though $PR_e$ is a product over the odd $\upsilon_j$'s, our notation reflects the fact that each of its factors has even order.

Now by definition $PR = PR_e \times PR_o$, and likewise we can write $PR' = PR_e' \times PR_o$ where
\[PR_e' := \left\{\sum_j c_j e_j \in PR_e \mid \sum c_j \equiv 0 \pmod 2 \right\}\]
as in \eqref{eqn:PRprime}. We observe that $PR'$ is cyclic if and only if $PR_o$ and $PR_e'$ are cyclic of coprime order, and that $PR_o$ is cyclic if and only if the set of $\eta_i+1$ are all pairwise coprime.

Suppose that $PR_e'$ is cyclic. If $PR_e$ (and hence also $PR_e'$) is trivial, then the claim holds. Otherwise, there is a short exact sequence
\[ 1 \rightarrow PR_e' \rightarrow PR_e \rightarrow \Z/2 \rightarrow 1.\]
It follows that $PR_e$ is either cyclic of order $2 |PR_e'|$ or else is isomorphic to $\Z/2 \times PR_e'$. Now as each factor of $PR_e$ is a cyclic group of even order, this implies that $PR_e$ has at most two factors, i.e., $q \le 2$. 
Since $PR_e$ is a product over the odd $\upsilon_j$'s and we have that $\sum \eta_i + \sum \upsilon_j = 2g-2$, this implies that $q$ is even and is therefore either $0$ or $2$. So if $PR_e$ is nontrivial it must be isomorphic to $(PR_e') \times \Z/2$. Necessarily then $(\upsilon_1 + 1)/{2}$ and $(\upsilon_2 + 1)/{2}$ are coprime and
\[
PR_e' \cong \Z / \left(\frac{\upsilon_1 + 1}{2}\right) \Z \times \Z / \left(\frac{\upsilon_2 + 1}{2}\right)\Z.
\]

The remaining hypothesis that $PR_o$ and $PR_e'$ be cyclic of coprime order readily implies the claim that the elements of
\[ \left\{\eta_1 + 1, \ldots, \eta_p + 1, \frac{\upsilon_1 + 1}{2}, \ldots, \frac{\upsilon_q + 1}{2} \right\}\]
are pairwise coprime as required.
\end{proof}

\section{Monodromy of strata}\label{section:monodromy}

In this section, we fuse our discussion of framings and mapping class groups with the theory of abelian differentials to deduce Theorem \ref{mainthm:absolute_monodromy} from Theorem \ref{mainthm:genset}.

We begin in Section \ref{subsec:flatbasics} by collecting basic results about abelian differentials and their strata. We also record Kontsevich and Zorich's seminal classification (Theorem \ref{thm:KZ_strata}) of the connected components of strata, together with a slight variation in which one labels the zeros of the differential (Lemma \ref{lemma:permutation_monodromy}).

With these foundations laid, we proceed to discuss the relationship between abelian differentials and framings on punctured, bordered, and pronged surfaces in Section \ref{subsec:flattoframed}. This section also contains a detailed description of the real oriented blow-up of an abelian differential along its zero locus (Construction \ref{construction:blowup}). The main result of this subsection is Boissy's classification of the components of strata of {\em prong--marked} differentials (Theorem \ref{theorem:Boissy_framed}) and its implications for monodromy (Corollary \ref{corollary:PR_monodromy}).

Mapping class groups enter the picture in Section \ref{subsec:markings}, in which we define a family of coverings of strata (first introduced in \cite{BSW_horo}) whose deck groups are mapping class groups of punctured, bordered, and pronged surfaces (see Diagram \eqref{spaces}). Using the relations between these spaces, we then prove that the monodromy of each covering must preserve the appropriate framing datum (Lemma \ref{lemma:mon1_preserves_framing} and Corollaries \ref{cor:mon2_preserves_framing} and \ref{cor:mon3_preserves_framing}).

We establish the reverse inclusions (that the monodromy is the entire stabilizer of the framing) in Section \ref{subsec:genmon} as Theorems \ref{thm:relative_monodromy}, \ref{thm:blowup_monodromy}, and \ref{mainthm:absolute_monodromy}. 

\subsection{Abelian differentials}\label{subsec:flatbasics}

An {\em abelian differential} $\omega$ is a holomorphic 1--form on a Riemann surface $X$. The collection of all abelian differentials forms a vector bundle (in the orbifold sense) $\OM_g$ over the moduli space of curves. The complement of the zero section of this bundle is naturally partitioned into disjoint subvarieties called {\em strata} which have a fixed number and order of zeros. For $\sing = (\kappa_1, \ldots, \kappa_n)$, we will let $\OM_g(\sing)$ denote the space of all  pairs $(X, \omega)$ where $\omega$ is an abelian differential on $X$ which has zeros of order $\kappa_1, \ldots, \kappa_n$. Throughout this section, we will use $Z$ to denote the set of zeros of $\omega$.

Away from its zeros, an abelian differential has canonical local coordinates in which it can be written as $dz$; the transition maps between these coordinate charts are translations, so the data of an abelian differential $\omega$ on a Riemann surface $X$ is also sometimes called a {\em translation structure}. Pulling back the Euclidean metric of $\C$ along the canonical coordinates defines a flat metric on $X$ with a cone point of angle $2\pi(k+1)$ at each zero of order $k$.

Every abelian differential $\omega$ also defines a horizontal vector field $H_\omega:= 1/\omega$ on $X$ with singularities of order $-\kappa_1, \ldots, -\kappa_n$, and hence gives rise to a prong structure $\vec Z$ with a prong point of order $\kappa_i+1$ at the $i^\text{th}$ zero of $\omega$. Forgetting the prong structure and the marked points, the differential induces a $\gcd(\sing)$--spin structure $\phi$ on $X$ (see Definition \ref{definition:rspin} and the discussion which follows it). In particular, if $\gcd(\sing)$ is even, then there is a well-defined mod-$2$ reduction of the spin structure.

\begin{remark}
For a more thorough treatment of the relationship between (higher) spin structures and abelian differentials, the reader is directed to \cite{C_strata1} and \cite{CS_strata2}.
\end{remark}

While it does not make sense to compare spin structures on different (unmarked) surfaces, when $\gcd(\sing)$ is even the Arf invariant of $\phi$ is well--defined even without choice of marking. Moreover, $\Arf(\phi)$ is invariant under deformation and classifies the non-hyperelliptic components of $\OM_g(\sing)$.

\begin{theorem}[Theorem 1 of \cite{KZ_strata}]\label{thm:KZ_strata}
Let $g \ge 4$ and $\sing=(\kappa_1, \ldots, \kappa_n)$ be a partition of $2g-2$. Then $\OM_g(\sing)$ has at most three components:
\begin{itemize}
\item If $\sing = (2g-2)$ or $(g-1, g-1)$ then there is a unique component of $\OM_g(\sing)$ that consists entirely of hyperelliptic differentials.
\footnote{Recall that an abelian differential is {\em hyperelliptic} if arises as the global square root of a quadratic differential on $\widehat{\C}$ with at worst simple poles.}
\item If $\gcd(\sing)$ is even then there are two components containing non-hyperelliptic differentials, classified by the Arf invariant of their induced $2$--spin structure.
\item If $\gcd(\sing)$ is odd then there is a unique component containing non-hyperelliptic differentials.
\end{itemize}
\end{theorem}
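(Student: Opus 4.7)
The plan is to adapt the strategy of Kontsevich--Zorich \cite{KZ_strata}. The first step is to exhibit the invariants that distinguish components, thereby giving an \emph{upper} bound on the number of components. First, I would check that hyperellipticity is both an open and closed condition on $\OM_g(\sing)$ when $\sing = (2g-2)$ or $(g-1,g-1)$: openness follows because the existence of the hyperelliptic involution is stable under small deformations, and closedness follows because a limit of hyperelliptic translation surfaces (in the same stratum) remains hyperelliptic. This carves off the hyperelliptic component. Second, I would verify that when $\gcd(\sing)$ is even, the $2$--spin structure $\phi$ associated to the horizontal vector field has an Arf invariant that is a deformation invariant of the stratum; this is essentially immediate from the fact that $\Arf(\phi)$ is a locally constant $\Z/2$--valued function on $\OM_g(\sing)$ (since the mod-$2$ winding numbers along a homology basis cannot jump under continuous deformation). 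Together these give at most the claimed number of components.

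The second and substantively harder step is the \emph{lower} bound, i.e., showing that any two differentials sharing the invariants above lie in a common component. My plan is to proceed by induction on $n$ (the number of zeros). The inductive step reduces any stratum to a minimal stratum by the technique of \emph{breaking up a zero}: near any zero of order $k_1 + k_2$ one can perform a local surgery replacing the zero by a pair of zeros of orders $k_1, k_2$ joined by a short saddle connection, and conversely one can shrink a saddle connection to coalesce two zeros. Carefully checking that the local models for these operations form connected families, and that the operations respect the hyperelliptic locus and the Arf invariant in the expected way, reduces the problem to showing that each minimal stratum $\OM_g(2g-2)$ has the claimed number of components.

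For the minimal strata, I would invoke the classical surgery of \emph{bubbling a handle}: starting from a simpler differential (say, in a lower-genus stratum or a stratum with more zeros), one can attach a handle along a slit to increase genus by one while preserving or shifting the Arf invariant in a controlled way. Using these handle attachments together with the induction, one shows that every non-hyperelliptic differential in $\OM_g(2g-2)$ can be connected via an explicit path to a fixed ``prototype'' surface with prescribed Arf invariant. Combined with the inductive reduction to minimal strata, this establishes that the invariants (hyperellipticity and $\Arf(\phi)$) classify components.

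The main obstacle will be the connectedness argument for minimal strata, and in particular showing that bubbling-a-handle surgeries suffice to traverse between any two non-hyperelliptic differentials with the same Arf invariant. This is delicate because the space of configurations along which one can perform the surgery is nontrivial, and one must argue that variations in the surgery data do not introduce new components. A secondary obstacle is correctly tracking the interaction between hyperellipticity and the Arf invariant (when both notions apply, as in $\OM_g(2g-2)$ with $g$ odd): here one must verify that the hyperelliptic component has a definite Arf invariant, so that it does not spuriously combine with or detach from the non-hyperelliptic components of the same spin parity. I would handle this last point by computing $\Arf(\phi)$ on an explicit hyperelliptic prototype constructed from a double cover of a disk with marked points.
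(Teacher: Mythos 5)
This statement is imported verbatim from Kontsevich--Zorich (it is cited as Theorem 1 of \cite{KZ_strata}); the paper gives no proof of it, so there is no internal argument to compare against. Judged on its own terms, your outline is a faithful summary of the actual Kontsevich--Zorich strategy: locally constant invariants (hyperellipticity and the Arf invariant of the induced $2$--spin structure), the two surgeries (breaking up a zero, bubbling a handle), induction reducing to the minimal strata, and the check that the hyperelliptic component carries a definite Arf invariant. Two corrections, though. First, you have the bounds backwards: exhibiting locally constant invariants gives a \emph{lower} bound on the number of components (at least as many as the distinct values realized), while the connectedness/surgery argument gives the \emph{upper} bound ``at most three.'' Second, your justification for openness of the hyperelliptic locus --- ``the involution is stable under small deformations'' --- is not right as stated: hyperellipticity is a closed, positive-codimension condition in $\M_g$, and a small deformation of a hyperelliptic surface generically destroys the involution. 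What actually makes the hyperelliptic locus open in $\OM_g(2g-2)$ and $\OM_g(g-1,g-1)$ is a dimension count: it is the pullback of a stratum of quadratic differentials on $\widehat{\C}$ of the same dimension as the ambient stratum (and this is also why, e.g., the locus in $\OM_g(g-1,g-1)$ where the involution fixes each zero is \emph{not} a component).

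Beyond that, the proposal defers exactly the parts where the real work lives: the connectedness of the non-hyperelliptic loci in the minimal strata is established in \cite{KZ_strata} not by an abstract argument about ``variations in the surgery data'' but by an explicit combinatorial classification of representatives (separatrix diagrams / extended Rauzy classes) in low genus, followed by the handle-bubbling induction; and one must also verify where the hypothesis $g \ge 4$ enters, since for small $g$ the invariants coincide or fail to separate (e.g. $\OM_3(4)$ has only two components). As a blind sketch of the known proof your plan is the right one, but as written it is a roadmap rather than a proof: every step you flag as an ``obstacle'' is precisely a step that cannot be waved through.
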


We will focus our attention on the non-hyperelliptic components of $\OM_g(\sing)$; the hyperelliptic components are $K(G, 1)$'s for (finite extensions of) spherical braid groups \cite[\S1.4]{LM_strata} and therefore their monodromy can be understood entirely through Birman--Hilden theory. See \cite[\S2]{C_strata1} for a more thorough discussion.

We will often find it convenient to label the zeros of an abelian differential so as to distinguish them. The corresponding stratum $\OM_g^{\lab}(\sing)$ of abelian differentials with labeled singularities is clearly a finite cover of $\OM_g(\sing)$ with deck group 
\[\Sym(\sing) = \prod_{j=1}^m \Sym(r_j)\]
where $\sing=(k_1^{r_1}, \ldots, k_m^{r_m})$ and $\Sym(n)$ is the symmetric group on $n$ letters. Moreover, it is not hard to show that each preimage of a connected component of $\OM_g(\sing)$ is itself connected (see, e.g., \cite[Proposition 4.1]{Boissy_Rauzy}), and hence the monodromy of this covering map is the entirety of the deck group.

\begin{lemma}\label{lemma:permutation_monodromy}
Let $\comp$ be a component of $\OM(\sing)$. Then the monodromy homomorphism
\[\pi^{\orb}_1(\comp) \rightarrow \Sym(\sing)\]
associated to the covering $\OM_g^{\lab}(\sing) \rightarrow \OM_g(\sing)$ is surjective.
\end{lemma}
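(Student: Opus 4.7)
The plan is to reduce to realizing a single transposition as monodromy. Because $\Sym(\sing) = \prod_j \Sym(r_j)$ is generated by transpositions of zeros sharing a common order, it suffices to show that for any $(X, \omega) \in \comp$ and any two zeros $z_a, z_b \in Z$ of the same order $k$, there is a loop $\gamma \in \pi_1^{\orb}(\comp, (X,\omega))$ whose monodromy in $\Sym(\sing)$ is the transposition $(z_a\; z_b)$.

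To construct such a loop geometrically, I would connect $z_a$ to $z_b$ by an embedded arc $\alpha \subset X$ disjoint from the remaining zeros and then realize a ``braid half-twist'' along $\alpha$ as a loop in the stratum. Concretely, one first deforms $(X, \omega)$ within $\comp$ so that $\alpha$ is a saddle connection between $z_a$ and $z_b$ having small holonomy $v$, and then varies $v$ along a carefully chosen arc in $\C^*$ (essentially rotating by $\pi$ near the origin, combined with a compensating deformation that restores the remaining period data) so that at the end of the family the abelian differential is again $(X, \omega)$ but the roles of $z_a$ and $z_b$ have been swapped. Since $\alpha$ is disjoint from the other zeros, their labels are preserved; since $z_a$ and $z_b$ share an order, the resulting swap is an allowed deck transformation.

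The main difficulty is verifying rigorously that this ``braid half-twist'' executes as a continuous loop in $\comp$ that does not exit the stratum; tracking the deformation in period coordinates requires some bookkeeping. The cleanest way to sidestep this is to invoke Boissy's connectedness theorem \cite[Proposition 4.1]{Boissy_Rauzy}, which asserts that the preimage of any connected component of $\OM_g(\sing)$ under the covering $\OM_g^{\lab}(\sing) \to \OM_g(\sing)$ is itself connected. By covering space theory this is equivalent to the monodromy of the covering surjecting onto the full deck group $\Sym(\sing)$, which is exactly the conclusion of the lemma.
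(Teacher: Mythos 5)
Your proposal is correct and lands on exactly the same argument the paper uses: the paper justifies the lemma by citing Boissy's result \cite[Proposition 4.1]{Boissy_Rauzy} that the preimage in $\OM_g^{\lab}(\sing)$ of each connected component of $\OM_g(\sing)$ is connected, which by covering space theory is equivalent to the monodromy surjecting onto the deck group $\Sym(\sing)$. Your preliminary sketch of realizing transpositions by a half-twist along a saddle connection between two zeros of equal order is a reasonable outline of how such a connectedness statement is proved, but since you ultimately defer to the same citation, the two proofs coincide.
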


\subsection{From differentials to framings}\label{subsec:flattoframed}

To relate our results on framed mapping class groups to strata, we must first understand the different types of framings which an abelian differential induces.

As observed above, the horizontal vector field of an abelian differential $(X,\omega) \in \OM_g(\sing)$ induces a $\gcd(\sing)$--spin structure on the underlying closed surface $X$. Moreover, since $\omega$ has canonical coordinates (in which it looks like $dz$) away from its zeros, we see that $\omega$ induces a trivialization of $T(X \setminus Z)$ and hence an absolute framing of $X \setminus Z$ (in the sense of
\S\ref{subsection:punctured}).

To obtain a relative framing from $\omega$, we must first identify the surface $X^*$ which is to be framed. Informally, this ``real oriented blow-up'' $X^*$ is obtained by replacing each zero of $\omega$ by the circle of directions at that point; the horizontal vector field then extends by continuity along rays to a vector field (and eventually, a framing) on $X^*$ whose boundary data can be read off from the order of the singularities.
For more on the real oriented blow-up construction in the context of translation surfaces, see \cite[\S2.5]{BSW_horo}.

\begin{construction}[Real oriented blow-ups]\label{construction:blowup}
We begin by first describing the real oriented blow-up of $0 \in \C$; this toy example will provide a local model for the blow-up of a translation surface. Equipping $\C$ with polar coordinates $z = re^{i\theta}$ gives a parametrization of $\C \setminus \{0\}$ by the infinite open half--cylinder $\R_{>0} \times [0, 2\pi]/(0=2\pi)$.
The {\em real oriented blow-up} of $0 \in \C$ is the closed half--cylinder $\R_{\ge 0} \times [0, 2\pi]/(0=2\pi)$, which has a natural surjective map onto $\C$ extending polar coordinates. The fiber of this map above $0$ is therefore identified with the circle of directions at $0$.

To blow up a cone point, let $k \ge 1$ and consider the branched cover of $\C$ given by $z \mapsto z^k$. The Euclidean metric of $\C$ pulls back to a cone metric with cone angle $2k \pi$ at $0$, and similarly the polar parametrization of $\C \setminus \{0\}$ pulls back to a parametrization by $\R_{>0} \times [0, 2k\pi]/(0=2k\pi)$. Therefore, the blow-up of a cone point of angle $2k\pi$ is the corresponding closed cylinder $\R_{\ge 0} \times [0, 2k\pi]/(0=2k\pi)$, and the fiber above $0$ corresponds to the $2k\pi$'s worth of directions at $0$.

Now suppose that $(X,\omega) \in \OM^{\lab}_g(\sing)$ with zeros at points $p_1, \ldots, p_n$. The {\em real oriented blow-up} $X^*$ of $X$ is the space obtained after blowing up each cone point $p_i$ via the above construction.
\end{construction}

Observe that $X^*$ is naturally a surface of the same genus as $X$ with boundary components $\Delta_1, \ldots, \Delta_n$, the $i^{\text{th}}$ of which comes with an identification with the $(k_i+1)$--fold cover of the circle (which is of course just a circle itself, equipped with a cyclic symmetry of order $(k_i+1)$).

Moreover, the unit horizontal vector field $H$ of $\omega$ induces a (nonvanishing) unit vector field $H^*$ on $X^*$ by extending $H$ continuously along rays into each cone point. For each boundary component $\Delta_i$, the vector field $H^*|_{\Delta_i}$ is invariant under the cyclic symmetry described above, and its winding number is $-1-k_i$.

Hence $H^*$ induces a framing $\phi$ of $X^*$ with boundary signature
\[ \sig(\phi) = (-1 - k_1, \ldots, -1 - k_n) \]
which is compatible (in the sense of Definition \ref{definition:prong_compat}) with the prong structure $(X, \vec Z)$ induced by $\omega$.

\para{Prong markings}
While the blowup $X^*$ of $(X, \omega)$ is topologically a surface with boundary, it is more accurate to view $X^*$ as (the blow-up of) a surface with prong structure. In particular, there exist loops
\footnote{For example, the $\textsf{SO}(2)$ action on $\OM_g^{\lab}(\sing)$ \cite[\S2.10]{BSW_horo}.}
in $\OM_g^{\lab}(\sing)$ which rotate the prongs of $(X, \vec Z)$ and hence act by fractional twists on $\partial X^*$. This phenomenon can be interpreted as the monodromy of $\OM_g^{\lab}(\sing)$ taking values in $\Mod^*_{g,n}$ rather than $\Mod_{g,n}$ (see Section \ref{subsec:markings} below).

By passing to a finite cover of $\OM_g^{\lab}(\sing)$ which remembers more information, we may therefore constrain the monodromy to lie in $\Mod_{g,n}$. To that end, define a {\em prong marking} of an abelian differential $\omega$ to be a labeling of its zeros together with a choice of positive horizontal separatrix at each zero (these are also sometimes called {\em framings of $\omega$}, as in \cite{Boissy_framed}). In terms of the prong structure $\vec Z$ on $X$ induced by $\omega$, a prong marking chooses a prong at each zero.

The (components of the) space $\OM^{\pr}_g(\sing)$ of prong--marked abelian differentials are finite covers of (the components of) a stratum $\OM^{\lab}_g(\sing)$. Moreover, the deck transformations of this covering rotate the choice of specified prong at each zero and hence the deck group is exactly $PR$.

Any loop in $\OM^{\pr}_g(\sing)$ preserves the prong marking and so acts as the identity on $\partial X^*$ by the correspondence outlined in Section \ref{section:pronged}. Therefore, the real oriented blow-up of a prong--marked abelian differential can be consistently interpreted as a surface with boundary (and hence the monodromy of $\OM^{\pr}_g(\sing)$ is in $\Mod_{g,n}$, as we will see in \S\ref{subsec:markings}).

We now record Boissy's classification of the components of $\OM^{\pr}(\sing)$. Our statement of following theorem looks rather different than that which appears in \cite{Boissy_framed}; we reconcile these differences in Remark \ref{remark:Boissy_Arf} at the end of this section.

\begin{theorem}[c.f. Theorem 1.3 of \cite{Boissy_framed}] \label{theorem:Boissy_framed}
Suppose that $\comp$ is a non-hyperelliptic connected component of $\OM^{\lab}_g(\sing)$. Then the preimage of $\comp$ in $\OM_g^{\pr}(\sing)$ has
\begin{itemize}
\item one connected component if $\gcd(\sing)$ is even, and
\item two connected components if $\gcd(\sing)$ is odd, distinguished by the generalized Arf invariant \eqref{equation:arf} of the relative framing on the real oriented blow-up.
\end{itemize}
\end{theorem}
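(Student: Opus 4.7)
The plan is to identify components of the preimage with cosets of the monodromy in the deck group of the covering $\OM_g^{\pr}(\sing) \to \OM_g^{\lab}(\sing)$. This covering is regular with deck group $PR = \prod_{i=1}^n \Z/(\kappa_i+1)\Z$ acting by permuting prong markings, so the number of components over a connected component $\comp$ equals $[PR : G]$, where $G = \im(\pi_1^{\orb}(\comp) \to PR)$. The theorem will then follow from the identification $G = PR'$, where $PR' \le PR$ is the subgroup appearing in Proposition \ref{theorem:pronged}: when $\gcd(\sing)$ is even all $\kappa_i$ are even, whence $PR' = PR$ and the preimage is connected; when $\gcd(\sing)$ is odd, $[PR:PR']=2$, giving two components distinguished by $\Arf(\phi)$.

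For the upper bound $G \subseteq PR'$, observe that each $(X, \omega, \vec p) \in \OM_g^{\pr}(\sing)$ yields a compatible relative framing $\phi$ on the blow-up $X^*$, and the generalized Arf invariant $\Arf(\phi) \in \Z/2$ is locally constant on $\OM_g^{\pr}(\sing)$. A loop $\gamma$ in $\comp$ lifts to a path in $\OM_g^{\pr}(\sing)$ whose endpoints $(X, \omega, \vec p)$ and $(X, \omega, g \cdot \vec p)$ differ by the deck transformation $g \in PR$ corresponding to the monodromy image of $\gamma$; under this transformation, $\phi$ is sent to $g \cdot \phi$ via the $PR$-action on framings studied in Section \ref{section:pronged}. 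Constancy of Arf along the lifted path forces $\Arf(g \cdot \phi) = \Arf(\phi)$, which is precisely the condition $g \in PR'$ isolated in the proof of Proposition \ref{theorem:pronged}.

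For the lower bound $G \supseteq PR'$, one would exhibit loops realizing a generating set of $PR'$. The $\SO(2)$-action rotating a differential through a full turn yields a loop in $\OM_g^{\lab}(\sing)$ whose monodromy shifts every prong by one position, realizing $(1,1,\ldots,1) \in PR$; this element lies in $PR'$ because the number of indices $i$ with $\kappa_i$ odd is even (since $\sum \kappa_i = 2g-2$). The remaining generators $e_i$ (for $\kappa_i$ even) and $e_i + e_j$ (for $\kappa_i, \kappa_j$ odd) of $PR'$ are to be produced by loops that encircle principal boundary strata where two zeros coalesce: going around such a boundary divisor in a small loop induces a controlled rotation of the prongs at the merging zeros, and suitable choices of collision patterns realize each required generator. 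The main obstacle is the delicate local analysis needed to compute the prong-rotation effect of each such surgery, which forms the technical heart of the argument in \cite{Boissy_framed}; indeed, the entire lower bound can alternatively be imported directly from that work, from which the present formulation is derived.
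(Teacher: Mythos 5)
Your proposal is structurally sound, but you should be aware that the paper does not actually prove this statement: it is imported wholesale from Boissy (hence the ``c.f.\ Theorem 1.3 of \cite{Boissy_framed}'' in the header), and the only original content the authors supply is the reconciliation of Boissy's invariant with the generalized Arf invariant, carried out in Remark \ref{remark:Boissy_Arf} and, in monodromy language, in the proof of Corollary \ref{corollary:PR_monodromy}. Measured against that, your upper-bound half is correct and genuinely useful: the identification of components of the preimage with cosets of the monodromy image $G$ in the deck group $PR$, the local constancy of the generalized Arf invariant along lifted paths, and the computation that $\Arf(g\cdot\phi)-\Arf(\phi)\equiv \sum{}' g_i \pmod 2$ (so that Arf-preservation is exactly membership in $PR'$) is essentially the same calculation the authors perform --- although they run the logic in the opposite direction, \emph{deriving} the monodromy image $PR'$ as a corollary of the already-known component count rather than using the monodromy to prove the count.

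The lower bound is where your argument has a genuine gap if read as an independent proof. The $\SO(2)$ loop realizing $(1,\dots,1)\in PR'$ is fine, but the production of the remaining generators of $PR'$ by loops encircling principal boundary strata is only gestured at; computing the prong-rotation effect of such degenerations is precisely the technical heart of \cite{Boissy_framed}, and you have not carried it out. Your fallback of importing the lower bound from Boissy is legitimate but collapses the argument into the paper's own treatment: Boissy's theorem \emph{is} the component count, and once one knows there are two components (for $\gcd(\sing)$ odd), the containment $G\le PR'$ together with $[PR:PR']=2$ already forces $G=PR'$ and the Arf-distinguishing claim, with no explicit loop constructions needed. So: correct framework, correct upper bound (which is the part the paper actually needs to supply beyond the citation), and a lower bound that is either unproven or a citation --- matching the paper's own reliance on Boissy.
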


Combining this classification with Theorem \ref{thm:KZ_strata} immediately implies that for $g \ge 4$ there are exactly two non-hyperelliptic  components of $\OM^{\pr}_g(\sing)$, classified by generalized Arf invariant (compare with Proposition \ref{proposition:orbits}).

Translating Theorem \ref{theorem:Boissy_framed} into the action of the deck group $PR$ therefore identifies the monodromy of the covering $\OM_g^{\pr}(\sing) \rightarrow \OM^{\lab}_g(\sing)$:

\begin{corollary}\label{corollary:PR_monodromy}
Suppose that $g \ge 3$ and let $\comp$ be a non-hyperelliptic connected component of $\OM^{\lab}_g(\sing)$. Then the image of the monodromy homomorphism $\pi^{\orb}_1(\comp) \rightarrow PR$ is exactly the subgroup $PR'$ of \eqref{eqn:PRprime}.
\end{corollary}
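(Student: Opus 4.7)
The plan is to identify the monodromy image with a component stabilizer in the deck group $PR$ and then compute that stabilizer using Boissy's classification Theorem~\ref{theorem:Boissy_framed} together with the Arf invariant computation already present in the proof of Proposition~\ref{theorem:pronged}.

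Since $\OM_g^{\pr}(\sing) \to \OM_g^{\lab}(\sing)$ is a finite orbifold covering with deck group $PR$, the image of the monodromy homomorphism is precisely the $PR$-stabilizer of the component of $\OM_g^{\pr}(\sing)$ containing the chosen basepoint. When $\gcd(\sing)$ is even, Theorem~\ref{theorem:Boissy_framed} tells us this preimage is already connected, so the stabilizer is all of $PR$; but $\gcd(\sing)$ even forces every $k_i$ to be even, and hence the set of indices with $k_i+1$ even (over which the sum $\sum'$ defining $PR'$ is taken) is empty, so $PR' = PR$. This handles the even case.

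For $\gcd(\sing)$ odd, the preimage of $\comp$ has two connected components distinguished by the generalized Arf invariant of the induced relative framing $\phi$ on the real oriented blow-up; the subgroup I seek is therefore the kernel of the $PR$-action on this Arf invariant. To identify it, I would first observe, using Construction~\ref{construction:blowup}, that an element $(c_1,\ldots,c_n) \in PR$ acts on the framed blow-up as the composition of fractional twists $f := \prod_{i=1}^n T_{\vec p_i}^{c_i} \in \Mod_{g,n}^*$. On a distinguished geometric basis $\{x_1,\ldots,y_g\} \cup \{a_2,\ldots,a_n\}$, Lemma~\ref{lemma:fractionalTL} gives $\phi(f(a_i)) - \phi(a_i) = c_i - c_1$ for $i \ge 2$, while $f$ fixes the winding numbers of the curves $x_j, y_j$. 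Substituting into the Arf formula~\eqref{equation:arf} and using that $\phi(\Delta_i) + 1 \equiv k_i \pmod 2$ together with $\sum_i k_i \equiv 0 \pmod 2$ will yield
\[
\Arf(f \cdot \phi) - \Arf(\phi) \equiv \sum_{i=2}^n (c_i - c_1) k_i \equiv \sum_{i \,:\, k_i \text{ odd}} c_i \pmod 2,
\]
which is exactly the defining condition for $(c_1,\ldots,c_n) \in PR'$, completing the argument.

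Most of the work is therefore already done: the bulk of the Arf computation is essentially recycled from the proof of Proposition~\ref{theorem:pronged}, and the translation of the deck action into fractional twists is immediate from the blow-up construction. I do not anticipate any serious obstacles.
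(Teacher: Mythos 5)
Your proposal is correct and follows essentially the same route as the paper: identify the monodromy image with the $PR$-stabilizer of a component of $\OM_g^{\pr}(\sing)$, invoke Theorem \ref{theorem:Boissy_framed}, and compute the change in the generalized Arf invariant \eqref{equation:arf} under a fractional multitwist via Lemma \ref{lemma:fractionalTL}. The only cosmetic difference is that the paper phrases the computation as comparing distinguished geometric bases attached to two prong markings (and separately checks independence of the choice of multitwist up to full boundary twists), whereas you act directly on a fixed framing by $\prod T_{\vec p_i}^{c_i}$; the arithmetic is identical.
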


In particular, when $\gcd(\sing)$ is even the monodromy is all of $PR$.

\begin{proof}
This is an easy consequence of Lemma \ref{lemma:fractionalTL} together with the formula for the generalized Arf invariant \eqref{equation:arf}; simply observe that a prong rotation changes the winding number of every arc incident to the prong point by $\pm 1$.

More explicitly, pick $\omega \in \comp$ and a preimage $\omega^{\pr}_1 \in \OM_g^{\pr}(\sing)$. Choose a distinguished geometric basis $\mathcal B_1$ on $X^*$ with its basepoints specified by $\omega^{\pr}_1$. Then given another preimage $\omega^{\pr}_2$ of the same $\omega \in \comp$ there is a corresponding basis $\mathcal B_2$ which differs from $\mathcal B_1$ by a fractional multitwist $\tau$ (this choice is not unique, but is determined up to full twists about the boundary of the blowup $X^*$ of $\omega$).

Computing the Arf invariants of $\omega^{\pr}_1$ and $\omega^{\pr}_2$ with respect to these bases, we see that the two Arf invariants agree (and hence $\omega^{\pr}_1$ and $\omega^{\pr}_2$ live in the same component of $\OM_g^{\pr}(\sing)$) if and only if
\[\sum_{i=2}^{n} \phi(a_i)\kappa_i = \sum{}' \phi(a_i)
 = \sum{}' \phi(\tau(a_i)) = \sum_{i=2}^{n}  \phi(\tau(a_i)) \kappa_i \pmod 2.\]
In particular, this equality holds if and only if $D(\tau) \in PR'$. 

We observe that the choice of $\tau$ does not matter as any two choices $\tau$ and $\tau'$ differ by full twists about the boundary of $X^*$: twists about boundary components with even winding number do not change the parity of $\phi(\tau(a_i))$ while twists about odd boundaries change parity of arcs which do not end up contributing to $\sum{}'$.
\end{proof}

\begin{remark}\label{remark:Boissy_Arf}
In addition to the differences in terminology, the statement and proof of Theorem \ref{theorem:Boissy_framed} in \cite{Boissy_framed} distinguishes the two non-hyperelliptic components of $\OM_g^{\pr}(\sing)$ by a different invariant. There, Boissy differentiates the two by first choosing a set of arcs on $\omega \in \OM_g^{\pr}(\sing)$ which pair up the odd order zeros, are transverse to the horizontal foliation, and are tangent to the specified prongs. Applying the ``parallelogram construction'' of \cite{EMZ} to $\omega$ along these arcs results in a new differential $\omega'$ of higher genus with all even zeros; the Arf invariant of the 2--spin structure induced by $\omega'$ then distinguishes the components of $\OM_g^{\pr}(\sing)$. 

The reader may verify that Boissy's invariant coincides with the generalized Arf invariant by computing the contribution to $\Arf(\omega')$ of each new handle and comparing it to the corresponding term in the expression for $\Arf(\phi)$ (where $\phi$ is interpreted as a framing of $X^*$).
\end{remark}

\subsection{Markings and monodromy}\label{subsec:markings}

In order to compare the framings induced by differentials on different Riemann surfaces, we pull them back to a framings of a reference topological surface. To that end, we need to understand markings of $X$, $X^*$, and $(X, \vec Z)$, together with the corresponding spaces of marked differentials.

The coarsest type of marking data we consider in this section are homeomorphisms from a closed surface $\Sigma_g$ to $X$ which take a specified set of (labeled) marked points $P$ to the (labeled) zeros $Z$ of $\omega$.
With this data, we can define the corresponding space $\OT_g^{\lab}(\sing)$ of marked differentials with marked points as the space of triples $(X, \omega, f)$, where $(X, \omega) \in \OM_g^{\lab}(\sing)$ and $f:(\Sigma, P) \rightarrow (X, Z)$ is an isotopy class of homeomorphisms of pairs. 
The space of marked differentials with marked points is naturally a (disconnected, orbifold) covering space of $\OM_g^{\lab}(\sing)$ whose deck transformations correspond to changing the marking, so its deck group is $\PMod_g^n$.

On the other end of the spectrum, we may also mark a differential $\omega \in \OM_g^{\lab}(\sing)$ by a pronged surface.
Fix a topological pronged surface $(\Sigma, \vec P)$ with prong points of order $\kappa_1 +1, \ldots, \kappa_n+1$, and recall that that any differential $\omega$ naturally equips its underlying surface $X$ with a prong structure $\vec Z$ of the same prong type.
Then a {\em marking} of $(X, \omega)$ by $(\Sigma, \vec P)$ is a diffeomorphism of pairs $f$ from $(\Sigma, P)$ to $(X, Z)$ such that $Df$ takes the prong structure of $\vec P$ to that of $\vec Z$.
Equivalently, $f$ is a diffeomorphism from the real oriented blow-up $\Sigma^*$ of $\Sigma$ to $X^*$ which takes the distinguished points
\footnote{Recall that $\partial \Sigma^*$ can be identified with the circle of directions above each blown--up point, hence a prong point of order $k$ corresponds to a boundary component with $k$ distinguished points.} 
of $\partial \Sigma^*$ to those of $\partial X^*$.

We may now record the definition of the corresponding space of marked differentials:

\begin{definition}[c.f. \S2.9 of \cite{BSW_horo}]
$\OT_g^{\pr}(\sing)$ is the space of marked differentials with a prong--marking, that is, the space of triples $(X, \omega, f)$ where $(X, \omega)$ is an abelian differential in $\OM_g^{\lab}(\sing)$ and $f:(\Sigma, \vec P) \rightarrow (X, \vec Z)$ is an isotopy class of marking of pronged surfaces.
\end{definition}

Forgetting the the prong structure induces a covering map from $\OT_g^{\pr}(\sing)$ to $\OT_g^{\lab}(\sing)$, the deck group of which is exactly $FT$. Similarly, forgetting the marking of the surface but {\em remembering the marking of its boundary} (i.e., remembering $\partial f: \partial \Sigma_{g,n} \rightarrow \partial X^*$) induces a map from $\OT_g^{\pr}(\sing)$ to $\OM_g^{\pr}(\sing)$. Since this map remembers the boundary marking, hence the specified prong, the deck group of this covering is thus a change--of--marking group which preserves the boundary pointwise.

\begin{lemma}[Corollary 2.7 of \cite{BSW_horo}]\label{lemma:OT_cover}
The space $\OT_g^{\pr}(\sing)$ is a (disconnected, orbifold) covering of both $\OM_g^{\pr}(\sing)$ and $\OT_g^{\lab}(\sing)$. Moreover, the deck group of the former covering is $\Mod_{g,n}$, and the latter $FT$.
\end{lemma}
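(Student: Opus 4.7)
The plan is to read both coverings off a single principal orbifold bundle structure, namely the free (up to isotopy) right action of the pronged mapping class group $\Mod^*_{g,n} = \Mod(\Sigma, \vec P)$ on $\OT_g^{\pr}(\sing)$ by pre-composition with the marking. Under this action, pre-composing $(X,\omega,f)$ by $\psi \in \Mod^*_{g,n}$ sends $f$ to $f\circ \psi^{-1}$, and the orbit space is the moduli space $\OM_g^{\lab}(\sing)$ (or a labeled/prong-marked refinement depending on which subgroup we quotient by). Both maps in the lemma can then be realized as quotients of $\OT_g^{\pr}(\sing)$ by distinguished subgroups of $\Mod^*_{g,n}$ appearing in the two short exact sequences \eqref{ses:PR1} and \eqref{eqn:BSW_tw}.

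For the covering $\OT_g^{\pr}(\sing) \to \OT_g^{\lab}(\sing)$, I would argue that two prong-marked triples $(X,\omega,f_1)$ and $(X,\omega,f_2)$ project to the same point of $\OT_g^{\lab}(\sing)$ exactly when their underlying markings of pointed surfaces are isotopic, i.e.\ when $f_2 = f_1 \circ \psi$ for some $\psi$ in the kernel of the forgetful map $\Mod^*_{g,n} \to \PMod_g^n$. By \eqref{eqn:BSW_tw}, this kernel is $FT$, and $FT$ acts freely on $\OT_g^{\pr}(\sing)$ because a nontrivial fractional twist rotates the distinguished prong at some $\vec p_i$ and so cannot preserve a pronged marking up to isotopy. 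This identifies the deck group with $FT$ and simultaneously shows the map is a covering in the orbifold sense.

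For the covering $\OT_g^{\pr}(\sing) \to \OM_g^{\pr}(\sing)$, the key point (already noted in the excerpt) is that forgetting the marking but remembering its restriction $\partial f \colon \partial \Sigma^* \to \partial X^*$ fixes the identification between the prongs of $\vec P$ and those of $\vec Z$. Hence two markings have the same image iff they differ by pre-composition with some $\psi \in \Mod^*_{g,n}$ acting trivially on $\partial \Sigma^*$, i.e.\ by an element of the kernel of the prong rotation map $D \colon \Mod^*_{g,n} \to PR$ from \eqref{ses:PR1}. This kernel is precisely $\Mod_{g,n}$, which again acts freely on $\OT_g^{\pr}(\sing)$ by the usual discreteness of the Teichmüller-type mapping class group action.

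The main technical obstacle is the orbifold nature of the claim: points of $\OM_g^{\pr}(\sing)$ and $\OT_g^{\lab}(\sing)$ may have nontrivial automorphism groups coming from symmetries of the underlying differential that preserve the relevant marking data. My plan to handle this is to work exclusively at the level of the Teichmüller-type object $\OT_g^{\pr}(\sing)$, where the $\Mod^*_{g,n}$ action is properly discontinuous with finite (possibly trivial) stabilizers, and then descend; the resulting quotients are orbifold coverings in the sense of Thurston, and the identification of deck groups with $FT$ and $\Mod_{g,n}$ goes through unchanged.
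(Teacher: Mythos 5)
The paper does not actually prove this lemma: it is imported verbatim as Corollary 2.7 of \cite{BSW_horo}, so there is no internal argument to compare against. Your proposal supplies one by realizing both maps as quotients of $\OT_g^{\pr}(\sing)$ by the kernels of the two homomorphisms out of $\Mod_{g,n}^*$ appearing in \eqref{ses:PR1} and \eqref{eqn:BSW_tw}, namely $\Mod_{g,n} = \ker D$ and $FT = \ker\bigl(\Mod_{g,n}^* \to \PMod_g^n\bigr)$. This is the right mechanism and is exactly how the paper uses the lemma (the triangles in \eqref{spaces}); note only that it presupposes those two exact sequences, which the paper likewise imports from \cite{BSW_horo}, so your argument is best read as a derivation of the covering statement from that group-theoretic input rather than a from-scratch proof.

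The one genuine soft spot is the freeness of the $FT$-action. You justify it by saying a nontrivial fractional twist rotates a distinguished prong, but this fails for the full boundary twists $T_{\Delta_i} = T_{\vec p_i}^{k_i}$, which are nontrivial elements of $FT \cong \Z^n$ with $D(T_{\Delta_i}) = 0$: they rotate no prong, yet you still need them to act without fixed points. (A similar remark applies to the claim that $\Mod_{g,n}$ acts freely ``by discreteness''; discreteness alone does not exclude fixed points.) The uniform repair is to observe that the stabilizer in $\Mod_{g,n}^*$ of any point of $\OT_g^{\pr}(\sing)$ is a conjugate of the finite group of translation automorphisms of the underlying $(X,\omega)$, while both $FT \cong \Z^n$ and $\Mod_{g,n}$ (a mapping class group rel nonempty boundary) are torsion-free, so each meets every point stabilizer trivially. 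With that fix, together with the routine check that every pointed marking in $\OT_g^{\lab}(\sing)$ can be isotoped to respect the prong structures (so that the quotient of $\OT_g^{\pr}(\sing)$ by $FT$ is all of $\OT_g^{\lab}(\sing)$ and not a proper subspace), the argument is complete.
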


Putting this Lemma together with either \eqref{ses:PR1} or \eqref{eqn:BSW_tw}, we see that the deck group of the covering
$\OT_g^{\pr}(\sing) \rightarrow \OM_g^{\lab}(\sing)$
is exactly the pronged mapping class group $\Mod_{g,n}^*$.

We summarize the relationship between all of these spaces (and their deck groups) in the following diagram, in which arrow labels correspond to the deck group of the covering:
\begin{equation}\label{spaces}
\begin{tikzcd}[row sep=7em, column sep = 7em, font = \normalsize]
\OT_g^{\pr}(\sing)
		\arrow[r, "FT"] 	
		\arrow[d, "\Mod_{g,n}"]
		\arrow[dr, "\Mod^*_{g,n}"]
	& \OT_g^{\lab}(\sing)
		\arrow[d, "\PMod_g^n"] 
		\arrow[dr, "\Mod_g^n"] \\
\OM_g^{\pr}(\sing)
		\arrow[r, swap, "PR"] 
	& \OM_g^{\lab}(\sing)
		\arrow[r, swap, "\Sym(\sing)"] 
	& \OM_g(\sing)
\end{tikzcd}
\end{equation}
Observe that the furthest left triangle demonstrates the exact sequence \eqref{ses:PR1}, while the center triangle corresponds to \eqref{eqn:BSW_tw}.

\para{Constraining the monodromy}
While the deck groups of the coverings in \eqref{spaces} are easy to describe, their elements generally permute the components of the corresponding covers. We now shift our focus to the {\em stabilizer} of a component of one of these covers; for concreteness, throughout the rest of this section we will focus on the covering $\OT_g^{\pr}(\sing) \rightarrow \OM_g^{\lab}(\sing)$ and use this discussion to deduce the corresponding results for each of the intermediate coverings.

We observe that by the path--lifting property, understanding the stabilizer of a component of $\OT_g^{\pr}(\sing)$ is equivalent to understanding the monodromy of the component of $\OM_g^{\lab}(\sing)$ which it covers (c.f. \cite[Proposition 3.7]{CS_strata2}).

Monodromy groups are always only defined up to conjugacy, so we fix some reference marking $f: (\Sigma, \vec P) \rightarrow (X, \vec Z)$ (equivalently, a lift of a basepoint in $\OM_g^{\lab}(\sing)$ to $\OT^{\pr}(\sing)$). Pulling back the framing of $(X, \vec Z)$ induced by $\omega$ along $f$ induces a framing $\phi$ of $\Sigma$ compatible with $\vec P$. Path--lifting then allows us to place the following constraint on the monodromy, which is just a version of \cite[Corollary 4.8]{C_strata1} adapted to the setting of framings rather than $r$--spin structures.

\begin{lemma}\label{lemma:mon1_preserves_framing}
Let $\comp$ be a component of $\OM_g^{\lab}(\sing)$ and fix some basepoint $(X, \omega) \in \comp$. Then the monodromy of the covering of $\comp$ by (a component of) $\OT^{\pr}(\sing)$ preserves the induced framing $\phi$ on the pronged surface $X$. In other words, the image of 
\[\rho: \pi_1^{\orb}(\comp, (X, \omega)) \rightarrow \Mod_{g,n}^*\]
is contained inside of $\Mod_{g,n}^*[\phi]$.
\end{lemma}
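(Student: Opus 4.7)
The plan is a standard continuity argument: the framing $\phi$ arises as the pullback (via the reference marking $f$) of the framing on $X^*$ induced by the horizontal vector field $1/\omega$; as $\omega$ varies continuously along a loop in $\comp$, so does this pullback framing, but (relative) framings are rigid up to isotopy, so the isotopy class must remain constant throughout the loop. Comparing the marking at the beginning and end of the loop then forces the monodromy element to preserve $\phi$.

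First, given $[\gamma] \in \pi_1^{\orb}(\comp, (X,\omega))$, I would lift $\gamma$ through the covering $\OT_g^{\pr}(\sing) \to \OM_g^{\lab}(\sing)$ in \eqref{spaces} to a path $\tilde \gamma(t) = (X_t, \omega_t, f_t)$ starting at the chosen basepoint $(X,\omega,f)$. By the definition of the monodromy representation and the deck action of Lemma \ref{lemma:OT_cover}, the endpoint satisfies $(X_1, \omega_1) = (X, \omega)$ and $f_1 = f \circ h^{-1}$, where $h := \rho([\gamma]) \in \Mod_{g,n}^*$. (Any subtleties with path-lifting coming from the orbifold structure are standard and can be handled by a small perturbation avoiding the orbifold locus, or by passing to a manifold cover; this is not a real obstacle.)

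Second, each $\omega_t$ gives a nonvanishing horizontal vector field $1/\omega_t$ on $X_t \setminus Z_t$ which, by Construction \ref{construction:blowup}, extends continuously along rays to a nonvanishing vector field on the real oriented blow-up $X_t^*$. The corresponding framing of $(X_t, \vec Z_t)$ is compatible with the prong structure and varies continuously with $t$ in the $C^0$ topology. Pulling back via $f_t$ yields a continuous family $\phi_t$ of compatible framings of $(\Sigma, \vec P)$. Since any two sufficiently $C^0$-close nowhere-vanishing vector fields are homotopic through nowhere-vanishing vector fields (normalize the linear interpolation), the relative isotopy class of $\phi_t$ is locally — and hence globally — constant on $[0,1]$.

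Finally, I would compare the endpoints. At $t=0$ one has $\phi_0 = f^* \phi_\omega = \phi$ by the choice of reference framing, while at $t=1$,
\[
\phi_1 = (f \circ h^{-1})^* \phi_\omega = (h^{-1})^*(f^* \phi_\omega) = (h^{-1})^* \phi = h \cdot \phi,
\]
using the left-action convention $(f \cdot \phi)(x) = \phi(f^{-1}(x))$ from Section \ref{section:framingaction}. The equality $\phi_0 = \phi_1$ of relative isotopy classes of compatible framings then gives $h \cdot \phi = \phi$, i.e.\ $\rho([\gamma]) \in \Mod_{g,n}^*[\phi]$, as desired.
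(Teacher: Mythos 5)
Your proposal is correct and follows essentially the same route as the paper: lift the loop to a path of marked prong--marked differentials, pull back the horizontal framings along the markings to get a continuous family $\phi_t$ of compatible framings of $(\Sigma,\vec P)$, and conclude that the isotopy class is constant. The only difference is how local constancy is established --- the paper notes that the winding number $\phi_t(a)$ of each closed curve or legal arc is continuous in $t$ and valued in $\Z$ or $\Z+\tfrac{1}{2}$, hence constant (and then invokes the correspondence between relative winding number functions and relative isotopy classes), whereas you interpolate the vector fields directly; both are standard and your version just needs the routine remark that the interpolation can be kept compatible with the prong structure so that it witnesses a \emph{relative} isotopy.
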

\begin{proof}
Let $f$ be a marking of $(X, \vec Z)$ by $(\Sigma, \vec P)$ and let $\gamma$ be a loop in $\comp$ based at $(X, \omega)$. 

By Lemma \ref{lemma:OT_cover}, the loop $\gamma$ lifts to a path $\tilde \gamma$ in $\OT_g^{\pr}(\sing)$, and hence a path of marked prong--marked abelian differentials $(X_t, \omega_t, f_t)$ with horizontal vector fields $H_t$. Pulling back $H_t$ by $f_t$ yields a continuous family of vector fields on $(\Sigma, \vec P)$, all compatible with the prong structure. Let $\phi_t$ denote the associated framing of $(\Sigma, \vec P)$.

Then since the vector fields vary continuously (and do not vanish on $\Sigma \setminus P$), the winding number $\phi_t(a)$ of every simple closed curve or legal arc $a$ is continuous in $t$. However, $\phi_t(a)$ takes values only in $\Z$ or $\Z + (1/2)$ and must therefore be constant over the entire path $\tilde \gamma$. Thus $\rho(\gamma)$ preserves the winding number of every simple closed curve and legal arc, hence the entire framing.
\end{proof}

In view of the sequences \eqref{ses:pronged} and \eqref{eqn:BSW_tw}, together with \eqref{spaces}, this implies the following two results, where $\phi$ and $\bar \phi$ denote the relative and absolute framings induced on $X^*$ and $X \setminus Z$, respectively.

\begin{corollary}\label{cor:mon2_preserves_framing}
The monodromy of the covering $\OT^{\pr}_g(\sing) \rightarrow \OM^{\pr}_g(\sing)$ lies inside of $\Mod_{g,n}[\phi]$.
\end{corollary}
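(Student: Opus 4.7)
The plan is to deduce this corollary directly from Lemma \ref{lemma:mon1_preserves_framing} together with the functoriality built into the diagram \eqref{spaces}. The key observation is that the monodromy representation of $\OM_g^{\pr}(\sing)$ with values in $\Mod_{g,n}$ factors through the monodromy of $\OM_g^{\lab}(\sing)$ with values in $\Mod_{g,n}^*$: a loop $\gamma$ in $\OM_g^{\pr}(\sing)$ projects under the covering $\OM_g^{\pr}(\sing) \to \OM_g^{\lab}(\sing)$ to a loop $\bar \gamma$ in $\OM_g^{\lab}(\sing)$, and one can then compare the two monodromies.

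Concretely, fix a lift of the basepoint of $\OM_g^{\pr}(\sing)$ to $\OT_g^{\pr}(\sing)$; this simultaneously serves as a lift of the basepoint of $\OM_g^{\lab}(\sing)$ and determines a reference marking $f\colon (\Sigma,\vec P) \to (X,\vec Z)$. Lifting $\gamma$ (equivalently, $\bar \gamma$) to $\OT_g^{\pr}(\sing)$ produces a single endpoint, and hence a single change-of-marking element. Viewed as an element of $\Mod_{g,n}^*$, this is the $\Mod_{g,n}^*$-valued monodromy of $\bar\gamma$, and viewed as an element of $\Mod_{g,n}$ (using that $\Mod_{g,n}\le \Mod_{g,n}^*$ under the left triangle of \eqref{spaces}), it is the $\Mod_{g,n}$-valued monodromy of $\gamma$. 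The former lies in $\Mod_{g,n}^*[\phi]$ by Lemma \ref{lemma:mon1_preserves_framing}.

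To see that this element actually lies in $\Mod_{g,n}$, note that because $\gamma$ itself is a loop in the {\em prong-marked} stratum $\OM_g^{\pr}(\sing)$, the chosen prong at each zero returns to itself along $\gamma$. In terms of the sequence \eqref{ses:PR1}, this means that the image under $D\colon \Mod_{g,n}^* \to PR$ of the monodromy is trivial, i.e. the monodromy lies in $\ker D = \Mod_{g,n}$. Combining the two containments yields
\[
\rho(\gamma) \in \Mod_{g,n}^*[\phi] \cap \Mod_{g,n} = \Mod_{g,n}[\phi],
\]
where the final equality is the definition of $\Mod_{g,n}[\phi]$ as the stabilizer of $\phi$ inside the ambient mapping class group (equivalently, this is the left-hand exactness in \eqref{ses:pronged}). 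There is no serious obstacle here: the entire content of the corollary is a diagram chase through \eqref{spaces}, once Lemma \ref{lemma:mon1_preserves_framing} is in hand.
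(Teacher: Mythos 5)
Your proof is correct and follows essentially the same route as the paper, which derives this corollary from Lemma \ref{lemma:mon1_preserves_framing} ``in view of the sequences \eqref{ses:pronged} and \eqref{eqn:BSW_tw}, together with \eqref{spaces}'' --- precisely the diagram chase you spell out, using that a loop in $\OM_g^{\pr}(\sing)$ has monodromy in $\ker D = \Mod_{g,n}$ and in $\Mod_{g,n}^*[\phi]$, whose intersection is $\Mod_{g,n}[\phi]$. The paper also remarks that one could instead rerun the continuity argument of Lemma \ref{lemma:mon1_preserves_framing} directly, but your factorization through the $\Mod_{g,n}^*$-valued monodromy is the intended argument.
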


\begin{corollary}\label{cor:mon3_preserves_framing}
The monodromy of the covering $\OT^{\lab}_g(\sing) \rightarrow \OM^{\lab}_g(\sing)$ lies inside of $\PMod_g^n[\bar \phi]$.
\end{corollary}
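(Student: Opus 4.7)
The plan is to deduce this from Lemma \ref{lemma:mon1_preserves_framing} using the commutative structure of diagram \eqref{spaces}. The idea is that the $\PMod_g^n$--valued monodromy of $\OT^{\lab}_g(\sing) \to \OM^{\lab}_g(\sing)$ factors through the $\Mod_{g,n}^*$--valued monodromy of $\OT^{\pr}_g(\sing) \to \OM^{\lab}_g(\sing)$, and so the preservation of framings already proved in Lemma \ref{lemma:mon1_preserves_framing} descends.

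More explicitly, the first step is to observe that the left triangle of \eqref{spaces} gives a factorization
\[
\OT^{\pr}_g(\sing) \to \OT^{\lab}_g(\sing) \to \OM^{\lab}_g(\sing)
\]
whose composition coincides with the map $\OT^{\pr}_g(\sing) \to \OM^{\lab}_g(\sing)$ used in Lemma \ref{lemma:mon1_preserves_framing}. Translating covering--space data into monodromy, we obtain a commutative triangle of homomorphisms
\[
\begin{tikzcd}[row sep=1.2em, column sep=1.8em]
& \Mod_{g,n}^* \arrow[dd, "p_*"] \\
\pi_1^{\orb}(\comp) \arrow[ur, "\rho^*"] \arrow[dr, swap, "\bar\rho"] & \\
& \PMod_g^n
\end{tikzcd}
\]
where $p_*$ is the forgetful map from \eqref{eqn:BSW_tw}.

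The second step is to note that $p_*$ sends $\Mod_{g,n}^*[\phi]$ into $\PMod_g^n[\bar\phi]$. This is immediate from the definitions: $\bar\phi$ is the absolute framing on $\Sigma \setminus P$ underlying $\phi$, so it is determined by the winding numbers of simple closed curves alone. Any $f \in \Mod_{g,n}^*$ preserving $\phi$ in particular preserves the winding numbers of all simple closed curves, and hence its image $p_*(f) \in \PMod_g^n$ preserves $\bar\phi$. (Alternatively, this containment is a byproduct of Theorem \ref{theorem:absolute}, which in fact identifies the image precisely.)

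Combining these two steps, Lemma \ref{lemma:mon1_preserves_framing} gives $\rho^*(\pi_1^{\orb}(\comp)) \subseteq \Mod_{g,n}^*[\phi]$, and applying $p_*$ yields
\[
\bar\rho(\pi_1^{\orb}(\comp)) = p_*\bigl(\rho^*(\pi_1^{\orb}(\comp))\bigr) \subseteq p_*\bigl(\Mod_{g,n}^*[\phi]\bigr) \subseteq \PMod_g^n[\bar\phi],
\]
which is the desired conclusion. There is no real obstacle here; the work has all been done in establishing the diagram \eqref{spaces} and Lemma \ref{lemma:mon1_preserves_framing}. (A direct proof, parallel to that of Lemma \ref{lemma:mon1_preserves_framing}, would also go through by lifting a loop $\gamma \subset \comp$ to $\OT^{\lab}_g(\sing)$, pulling back the horizontal vector fields along the marking, and noting that the resulting winding numbers of closed curves are integer--valued continuous functions of the loop parameter, hence constant.)
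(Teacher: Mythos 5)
Your argument is correct and is essentially the paper's own: the paper deduces this corollary from Lemma \ref{lemma:mon1_preserves_framing} via the factorization of coverings in diagram \eqref{spaces} and the quotient maps \eqref{ses:pronged} and \eqref{eqn:BSW_tw}, exactly as you do, and it likewise notes that a direct continuity argument would also work. No issues.
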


We note that these corollaries can also be proven directly using the same argument as Lemma \ref{lemma:mon1_preserves_framing}.

For brevity, in the sequel we use $\mathcal G^*$, $\mathcal G^{\pr}$, and $\mathcal G^{\lab}$ to denote the monodromy groups of the coverings appearing in Lemma \ref{lemma:mon1_preserves_framing} and Corollaries \ref{cor:mon2_preserves_framing} and \ref{cor:mon3_preserves_framing}, respectively.\\

\subsection{Generating framed mapping class groups}\label{subsec:genmon}
Now that we have shown that the monodromy of each of the coverings under consideration stabilizes the relevant framing, we can use Theorem \ref{mainthm:genset} to show that the group is the {\em entire} stabilizer of the framing:

\begin{theorem}\label{thm:relative_monodromy}
Suppose that $g \ge 5$ and let $\comp$ be a non-hyperelliptic component of $\OM_g^{\pr}(\sing)$. Then 
\[\mathcal G^{\pr}(\comp) \cong \Mod_{g,n}[\phi]\]
where $\phi$ is the relative framing of the real oriented blow-up $X^*$ induced by the horizontal vector field on any $(X, \omega) \in \OM_g^{\pr}(\sing)$.
\end{theorem}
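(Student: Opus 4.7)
The plan is to establish the reverse inclusion $\mathcal G^{\pr}(\comp) \supseteq \Mod_{g,n}[\phi]$, since Corollary \ref{cor:mon2_preserves_framing} already gives the inclusion $\mathcal G^{\pr}(\comp) \subseteq \Mod_{g,n}[\phi]$. Because $\omega$ is holomorphic, its induced relative framing $\phi$ is of holomorphic type, so by Theorem \ref{mainthm:genset}.(I) it suffices to exhibit an $E$-arboreal spanning configuration $\mathcal C = \{c_1, \dots, c_k\}$ of admissible curves on $(\Sigma^*, \phi)$ together with, for each $c_i \in \mathcal C$, a loop $\gamma_i$ in $\comp$ whose monodromy in $\Mod_{g,n}$ is the Dehn twist $T_{c_i}$.

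The key construction will be to produce a prototype differential $(X_0, \omega_0) \in \comp$ on which every curve of $\mathcal C$ is simultaneously realized as the core curve of a horizontal cylinder. This will be achieved by the Thurston--Veech construction, which, given an arboreal configuration of curves whose intersection graph is a tree, produces a translation surface in which each of these curves appears as the core of a horizontal cylinder (cf. the argument in Lemma \ref{lemma:arbholo}). The topological data (orders of the zeros, boundary winding numbers) of the Thurston--Veech surface is determined by the combinatorics of the configuration, and by Proposition \ref{proposition:CCP} together with Theorem \ref{theorem:Boissy_framed} we have the flexibility to select $\mathcal C$ so that the generalized Arf invariant of the induced framing matches that of $\comp$. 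Thus $(X_0, \omega_0)$ lies in $\comp$, and after choosing a prong marking compatible with the horizontal separatrices we obtain a lift to $\comp^{\pr}$.

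Given such a prototype, each horizontal cylinder $C_i$ with core curve $c_i$ yields a loop $\gamma_i$ in $\comp^{\pr}$ by the standard cylinder shear: continuously twist the cylinder $C_i$ by one full period while holding $\omega_0$ fixed on the complement $(X_0 \setminus C_i, \omega_0)$. Since the complement is fixed pointwise, the prong marking at each zero is preserved, so this is a genuine loop in $\comp^{\pr}$ rather than merely in $\comp$. The monodromy of $\gamma_i$ is precisely $T_{c_i}$, by the well-known identification of cylinder shears with Dehn twists about the core.

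The main obstacle will be step 2 of this outline: verifying that the Thurston--Veech surface can indeed be arranged to lie in the prescribed component $\comp^{\pr}$. This requires calibrating the choice of $\mathcal C$ so that both the combinatorics of the complementary polygons yield the correct partition $\sing$ of zeros, \emph{and} the resulting framing on the blown-up surface has the correct generalized Arf invariant as dictated by Theorem \ref{theorem:Boissy_framed}. The framed change-of-coordinates principle (Proposition \ref{proposition:CCP}) provides the requisite flexibility, but some care is needed to select the configuration explicitly. Once this is done, combining the realization of all $T_{c_i}$ as monodromy with the generation result of Theorem \ref{mainthm:genset}.(I) completes the identification $\mathcal G^{\pr}(\comp) = \Mod_{g,n}[\phi]$.
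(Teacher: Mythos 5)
Your proposal follows the paper's proof essentially step for step: the paper likewise takes a prototype differential on which an $E$-arboreal spanning configuration (the curve system $\mathcal C$ of Figure \ref{figure:genset}) is realized by cylinders (horizontal \emph{and} vertical -- note that adjacent curves of the tree intersect once and so cannot all be horizontal, a small slip in your write-up that is harmless since Lemma \ref{lemma:cyltwist} applies to cylinders in any direction), realizes each $T_{c_i}$ in $\mathcal G^{\pr}$ by a cylinder shear, and closes the sandwich with Theorem \ref{mainthm:genset}.(I) and Corollary \ref{cor:mon2_preserves_framing}. The one point where the paper's mechanism differs from yours is the step you flag as the main obstacle: rather than recalibrating the configuration to hit the right generalized Arf invariant, the paper fixes the underlying differential once and for all (Lemma \ref{lemma:prototypes}, imported from \cite{CS_strata2}, with the two configuration types accounting for the absolute Arf invariant) and then, when $\gcd(\sing)$ is odd, simply rotates the prong marking at an even-order prong point to switch between the two components of the preimage in $\OM_g^{\pr}(\sing)$ (Lemma \ref{lemma:framed_prototypes}, via Corollary \ref{corollary:PR_monodromy}), which cleanly discharges the calibration you leave unfinished.
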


Assuming this theorem, we can leverage our understanding of the relationship between framing stabilizers to characterize both $\mathcal G^*$ and $\mathcal G^{\lab}$.

\begin{theorem}\label{thm:blowup_monodromy}
Suppose that $g \ge 5$ and let $\comp$ be a non-hyperelliptic component of $\OM_g^{\lab}(\sing)$. Then 
\[\mathcal G^*(\comp) \cong \Mod_{g,n}^*[\phi]\]
where $\phi$ is the relative framing of a pronged surface induced by the horizontal vector field of any $(X, \omega) \in \OM_g^{\lab}(\sing)$.
\end{theorem}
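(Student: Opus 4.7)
The plan is to deduce Theorem \ref{thm:blowup_monodromy} from Theorem \ref{thm:relative_monodromy} by a short diagram chase using the intermediate covering $\OM_g^{\pr}(\sing) \to \OM_g^{\lab}(\sing)$ together with Boissy's computation of its monodromy recorded in Corollary \ref{corollary:PR_monodromy}.

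First I would fix compatible basepoints $\tilde x \in \OT_g^{\pr}(\sing)$, $x^{\pr} \in \OM_g^{\pr}(\sing)$, and $x \in \comp \subset \OM_g^{\lab}(\sing)$, and let $\comp^{\pr}$ denote the component of the preimage of $\comp$ in $\OM_g^{\pr}(\sing)$ containing $x^{\pr}$. Since $\comp$ is non-hyperelliptic, $\comp^{\pr}$ is too, so Theorem \ref{thm:relative_monodromy} applies and gives $\mathcal G^{\pr}(\comp^{\pr}) = \Mod_{g,n}[\phi]$. Meanwhile, Corollary \ref{corollary:PR_monodromy} identifies the image of the monodromy homomorphism $\pi_1^{\orb}(\comp) \to PR$ for the covering $\OM_g^{\pr}(\sing) \to \OM_g^{\lab}(\sing)$ as precisely the subgroup $PR'$.

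Pushing these observations through the tower \eqref{spaces} yields a short exact sequence
\[
1 \to \mathcal G^{\pr}(\comp^{\pr}) \to \mathcal G^*(\comp) \xrightarrow{D} PR' \to 1,
\]
whose kernel consists of those monodromies whose prong-rotation part vanishes, i.e.\ of those representing loops in $\comp$ that lift to loops in $\comp^{\pr}$. Comparing with the sequence of Proposition \ref{theorem:pronged} produces a commutative diagram
\[
\begin{tikzcd}
1 \ar[r] & \Mod_{g,n}[\phi] \ar[r] \ar[d, equal] & \mathcal G^*(\comp) \ar[r] \ar[d, hook] & PR' \ar[r] \ar[d, equal] & 1 \\
1 \ar[r] & \Mod_{g,n}[\phi] \ar[r] & \Mod_{g,n}^*[\phi] \ar[r] & PR' \ar[r] & 1
\end{tikzcd}
\]
whose middle vertical arrow is the inclusion supplied by Lemma \ref{lemma:mon1_preserves_framing}. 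The five lemma then forces this inclusion to be an isomorphism, completing the proof.

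The main obstacle is verifying that the top sequence is genuinely exact rather than merely left-exact. Surjectivity onto $PR'$ is exactly Corollary \ref{corollary:PR_monodromy}, while identifying the kernel with $\mathcal G^{\pr}(\comp^{\pr})$ is a routine application of the covering-space dictionary: a loop in $\comp$ lifts to a loop in $\comp^{\pr}$ precisely when its $PR$-monodromy is trivial, and the monodromies of such lifted loops furnish all of $\mathcal G^{\pr}(\comp^{\pr})$. Given Theorem \ref{thm:relative_monodromy}, no further geometric input is required.
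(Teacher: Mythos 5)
Your proposal is correct and is essentially the paper's own argument: the paper likewise combines Lemma \ref{lemma:mon1_preserves_framing} (the inclusion $\mathcal G^* \le \Mod_{g,n}^*[\phi]$), the surjectivity of $D\colon \mathcal G^* \to PR'$ from Boissy's classification, and Theorems \ref{thm:relative_monodromy} and \ref{theorem:pronged} to conclude equality. Your five-lemma diagram is just a more formal packaging of that same three-step comparison of extensions.
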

\begin{proof}
By Lemma \ref{lemma:mon1_preserves_framing}, $\mathcal G^* \le \Mod_{g,n}^*[\phi]$. By Theorem \ref{theorem:Boissy_framed}, $\mathcal G^*$ surjects onto $PR'$, and so by Theorems \ref{thm:relative_monodromy} and \ref{theorem:pronged}, it follows that $\Mod_{g,n}^*[\phi] \le \mathcal G^*$.
\end{proof}

Forgetting the prongs, we can push this result down to the mapping class group with marked points to complete the proof of Theorem \ref{mainthm:absolute_monodromy}.

\begin{proof}[Proof of Theorem \ref{mainthm:absolute_monodromy}]
Observe that $\mathcal G^{\lab}$ is the image of $\mathcal{G}^*$ under the surjection of \eqref{eqn:BSW_tw}. Therefore, Theorems \ref{thm:blowup_monodromy} and \ref{theorem:absolute} together imply that 
\[\mathcal G^{\lab} = \PMod_g^n[\bar \phi].\]
Combining this result with Lemma \ref{lemma:permutation_monodromy} and the short exact sequence
\[1 \rightarrow \PMod_g^n[\bar \phi] \rightarrow \Mod_g^n[\bar \phi] \rightarrow \Sym(\sing) \rightarrow 1\]
completes the proof of the theorem.
\end{proof}

\para{Cylinder shears and prototypes}
It therefore remains to show that $\Mod_{g,n}[\phi] \le \mathcal G^{\pr}$. In order to demonstrate this, we will need to build a collection of loops of abelian differentials with prescribed monodromy.

Recall that a {\em cylinder} on an abelian differential is an embedded Euclidean cylinder with no singularities in its interior. Shearing the cylinder while leaving the rest of the surface fixed results in a loop in a stratum whose monodromy is the Dehn twist of the core curve of the cylinder.

\begin{lemma}[c.f. Lemma 6.2 of \cite{C_strata1}]\label{lemma:cyltwist}
Let $\comp$ be a component of $\OM_g^{\pr}(\sing)$ and suppose that $\omega \in \comp$ has a cylinder with core curve $c$. Then $T_c \in \mathcal G^{\pr}$.
\end{lemma}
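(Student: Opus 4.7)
The plan is to construct an explicit loop in $\comp$ whose monodromy is precisely $T_c$ by performing a continuous ``cylinder shear'' deformation. Fix the cylinder $C$ of circumference $\ell$ and height $h$ with core curve $c$; using the canonical coordinates of $\omega$, identify $C$ with the Euclidean cylinder $[0,\ell] \times [0,h] / (0,y) \sim (\ell, y)$. For each $t \in [0,1]$, define an abelian differential $\omega_t$ which agrees with $\omega$ outside of $C$ and whose restriction to $C$ is the pullback of $dz$ under the affine map $(x,y) \mapsto (x + t\ell y/h, y)$. The ambient Riemann surface structure varies holomorphically in $t$, so this defines a continuous path $t \mapsto \omega_t$ in $\OM_g(\sing)$, and since the singular set and the orders of the zeros are untouched we remain in the same stratum throughout.

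To promote $\{\omega_t\}$ to a loop in the prong--marked stratum $\OM_g^{\pr}(\sing)$, observe that the shear is the identity in a neighborhood of each zero of $\omega$, so the labeling of zeros and the choice of horizontal positive separatrix at each zero are preserved along the path. At the endpoint $t=1$, the shear has become the affine map $(x,y) \mapsto (x+\ell y/h, y)$, which descends to a well-defined isometry of the flat cylinder (modulo the identification $(0,y) \sim (\ell, y)$) acting as the identity on $\partial C$. Consequently $\omega_1 = \omega_0 = \omega$ as prong--marked abelian differentials, so $\gamma(t) := \omega_t$ is a genuine loop in $\comp$ based at $\omega$.

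It remains to identify the monodromy $\rho(\gamma) \in \mathcal G^{\pr}$. Lifting $\gamma$ to the marked cover $\OT_g^{\pr}(\sing)$ and tracking how the marking $f_t \colon (\Sigma, \vec P) \to (X_t, \vec Z_t)$ must be adjusted at $t=1$ to compensate for the shearing, one sees that the change--of--marking mapping class is supported in a neighborhood of $f_0^{-1}(C)$ and acts there by a full right Dehn twist about $f_0^{-1}(c)$; outside of this neighborhood it is the identity. Thus $\rho(\gamma) = T_c$, which proves $T_c \in \mathcal G^{\pr}$.

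The main content here is bookkeeping rather than obstacle: one must check that the shear path descends to a closed loop in $\OM_g^{\pr}(\sing)$ (not merely in $\OM_g(\sing)$), which is the reason we insist that the deformation be localized in $C$ and hence trivial near the prong--marked zeros.
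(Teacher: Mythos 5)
Your proof is correct and is exactly the cylinder--shear argument the paper has in mind; the paper does not reprove this lemma but cites Lemma 6.2 of \cite{C_strata1} and simply notes that shearing a cylinder while fixing the rest of the surface yields a loop in the stratum whose monodromy is the twist about the core curve, with the prong--marking preserved for the reason you give. The one point worth tightening is the claim that the shear is ``the identity in a neighborhood of each zero'': a maximal cylinder has zeros on its boundary, so one should either shrink to a compactly contained subcylinder (as you implicitly do) or observe that the shear preserves the horizontal direction and hence deforms the marked horizontal separatrices continuously, which equally shows the path closes up in $\OM_g^{\pr}(\sing)$.
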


Now that we can use cylinder shears to exhibit Dehn twists, it remains to show that there exist differentials in a stratum with a configuration of cylinders to which we can apply Theorem \ref{mainthm:genset}.

\begin{figure}[ht]
\labellist
\small
\pinlabel $a_0=b_0$ [tl] at 115.20 133.92
\pinlabel $a_1$ [br] at 65.56 202.36
\pinlabel $a_2$ [bl] at 90.04 177.12
\pinlabel $a_3$ [t] at 37.44 146.88
\pinlabel $a_4$ [t] at 72.00 157.20
\pinlabel $a_5$ [bl] at 120.96 167.04
\pinlabel $a_6$ [t] at 141.12 157.16
\pinlabel $a_{2g-1}$ [t] at 296.64 146.88
\pinlabel $b_1$ [l] at 169.92 138.24
\pinlabel $b_2$ [l] at 230.40 138.24
\pinlabel $b_{g-3}$ [bl] at 314.32 164.16
\pinlabel $b_{2g-8}$ [l] at 224.64 198.72
\pinlabel $b_{2g-7}$ [l] at 167.04 198.72
\pinlabel $b_{2g-6}$ [l] at 109.44 198.72
\pinlabel $b_{2g-5}$ [tr] at 74.88 223.20
\pinlabel $b_{2g-4}$ [tr] at 43.20 195.84
\pinlabel $b_{2g-3}$ [tr] at 23.64 171.84
\pinlabel $a_0=b_0$ [tl] at 164.16 80.64
\pinlabel $a_1$ [b] at 48.96 63.36
\pinlabel $a_2$ [t] at 74.88 51.84
\pinlabel $a_{2g-1}$ [t] at 293.76 38.88
\pinlabel $b_1$ [l] at 103.68 83.52
\pinlabel $b_2$ [t] at 17.28 51.84
\pinlabel $b_3$ [l] at 103.68 20.16
\pinlabel $b_4$ [l] at 161.28 20.16
\pinlabel $b_5$ [l] at 218.88 20.16
\pinlabel $b_{g+1}$ [t] at 325.44 53.28
\pinlabel $b_{2g-3}$ [l] at 224.64 78.52
\pinlabel Type\hspace{3pt}1 [c] at 270 180
\pinlabel Type\hspace{3pt}2 [c] at 270 10
\pinlabel $\Delta_1$ at 200 138.24
\pinlabel $\Delta_1$ at 35 80.52
\endlabellist
\includegraphics[scale=1.2]{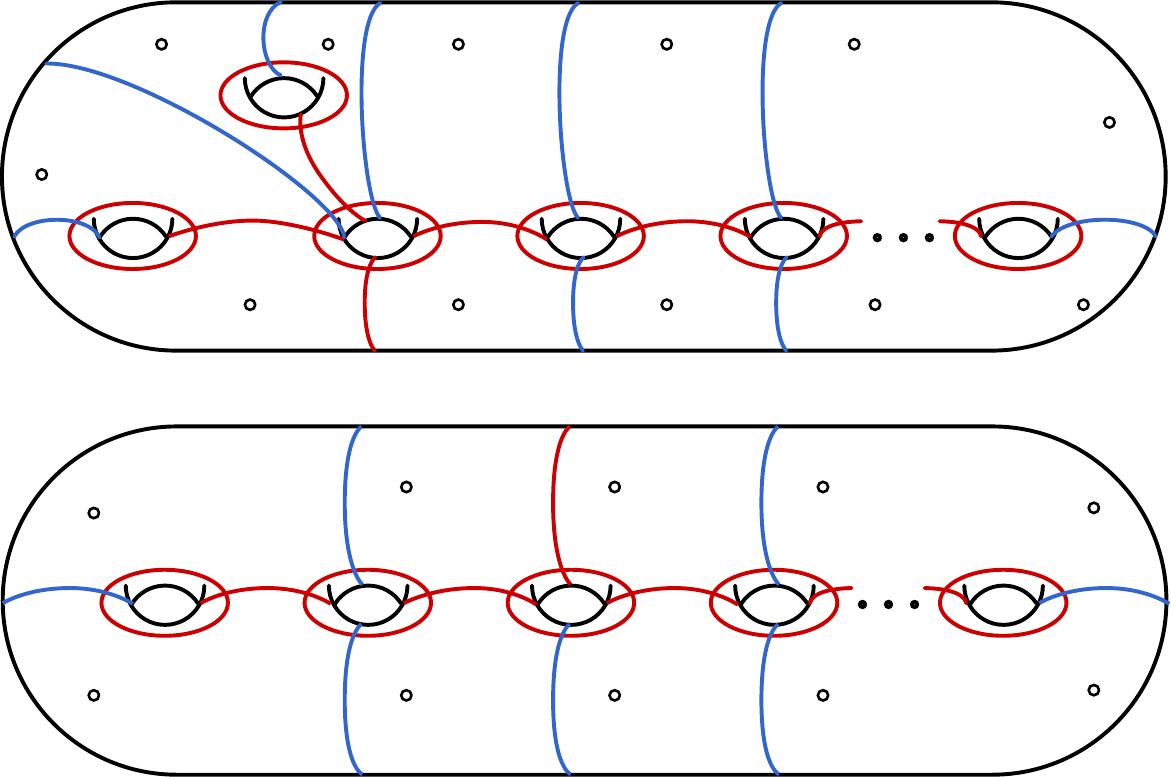}
\caption{Configurations of types 1 and 2 determining generating sets for $\Mod_{g,n}[\phi]$. We label the boundary components $\Delta_{i}$ for $i = 0, \dots, 2g-3$, with $\Delta_i$ positioned between $b_i$ and $b_{i+1}$ (for clarity most of the labels have been omitted). Given a partition $\sing = (\kappa_1, \dots, \kappa_n)$ of $2g-2$, only the $n$ boundary components $\Delta_{i_\ell}$ for $i_\ell$ of the form $\sum_{j = 1}^\ell \kappa_j$ (interpreted mod $2g-2$) are included, and $\Delta_{i_\ell}$ is assigned the signature $-1-\kappa_{\ell}$. Under this scheme, each complementary region determined by $\mathcal C$ contains exactly one boundary component, and the signatures are arranged so that each curve in $\mathcal C$ is admissible.}
\label{figure:genset}
\end{figure}

\begin{lemma}[Lemma 3.14 of \cite{CS_strata2}]\label{lemma:prototypes}
Suppose that $g \ge 5$ and $\sing = (\kappa_1, \ldots, \kappa_n)$ is a partition of $2g-2$. Let $\comp$ be a non-hyperelliptic component of $\OM_g(\sing)$. Set $\mathcal C$ to be the curve system specified in Figure \ref{figure:genset}, of type 1 if $\gcd(\kappa_1, \ldots, \kappa_n)$ is even and
\[
\Arf(\comp) = \begin{cases}
1 	& g \equiv 0,3 \pmod 4\\
0	& g \equiv 1,2 \pmod 4,
\end{cases}
\]
and of type 2 otherwise. Then there exists some $\omega \in \comp$ whose horizontal and vertical cylinders are exactly the curves of $\mathcal{C}$.
\end{lemma}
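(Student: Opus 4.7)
The plan is to construct $\omega$ via the Thurston--Veech construction applied to the curve system $\mathcal C$ of Figure~\ref{figure:genset}, then verify (a) that the resulting differential lies in $\OM_g(\sing)$, and (b) that it lies in the specified component $\comp$. The lemma is essentially about ensuring the combinatorics of $\mathcal C$ match up to produce a differential in the correct stratum with the correct Arf invariant.

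First I would note that since $\Lambda_{\mathcal C}$ is a tree (and hence bipartite) in both configurations of type~1 and type~2, the curves can be partitioned as $\mathcal C = \mathcal C_H \sqcup \mathcal C_V$ into two disjoint multicurves. Because $\mathcal C$ is a spanning configuration, the pair $(\mathcal C_H, \mathcal C_V)$ fills $\Sigma_g$. Applying the Thurston--Veech construction (see e.g. \cite[Section 3.3]{CS_strata2}) to this filling pair produces an abelian differential $\omega$ on $\Sigma_g$ whose horizontal cylinders have core curves exactly $\mathcal C_H$ and whose vertical cylinders have core curves exactly $\mathcal C_V$. By construction, the complementary regions of $\mathcal C$ on $\Sigma_g$ become the union of the rectangles in the cylinder decomposition, and the corners of these rectangles accumulate at the zeros of $\omega$.

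Next I would verify the stratum. As explained in the caption of Figure~\ref{figure:genset}, the boundary components $\Delta_{i_\ell}$ are positioned so that each complementary region of $\mathcal C$ in $\Sigma_g$ contains exactly one point at which the rectangles meet, with the signature $-1-\kappa_\ell$ dictating that $2(\kappa_\ell+1)$ rectangle corners accumulate there. In Thurston--Veech this is precisely the cone angle data for a zero of order $\kappa_\ell$, so the $n$ zeros of $\omega$ have the prescribed orders $\kappa_1, \dots, \kappa_n$, giving $\omega \in \OM_g(\sing)$.

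It remains to check that $\omega$ lies in the non-hyperelliptic component of the prescribed Arf type. Every core curve of a horizontal or vertical cylinder is admissible for the framing induced by the horizontal vector field of $\omega$ (it has winding number $0$), so each $c \in \mathcal C$ is admissible. Since $\mathcal C$ is a spanning configuration, these admissibility conditions completely determine the induced framing of $\Sigma_g \setminus Z$ up to isotopy, and hence determine the induced $\gcd(\sing)$-spin structure $\widehat\phi$. When $\gcd(\sing)$ is even, the Arf invariant can then be computed directly from the configuration $\mathcal C$ by choosing a distinguished geometric basis adapted to Figure~\ref{figure:genset} and applying the formula \eqref{equation:arf}; the two cases (types~1 and 2) are arranged so as to produce exactly the Arf invariants $(0, 0), (1,1), (2,1), (3,0)$ from \eqref{equation:goodarf} in the values of $g \pmod 4$ listed in the statement. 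Finally, non-hyperellipticity can be ruled out in the two exceptional strata $\OM_g(2g-2)$ and $\OM_g(g-1,g-1)$ (the only strata containing hyperelliptic components) either by a direct check that no hyperelliptic involution permutes the cylinders of $\mathcal C$ compatibly, or by noting that the Arf invariant of the hyperelliptic component is forced by $g$ and so must be excluded from our prescribed $\Arf(\comp)$ in the even $\gcd$ case, while in the odd $\gcd$ case one appeals to the explicit classification in Theorem~\ref{thm:KZ_strata}. The main obstacle is bookkeeping: the hard part is keeping track of how the boundary signatures in Figure~\ref{figure:genset} translate through Thurston--Veech into cone angles at the zeros, and then matching the resulting Arf computation against \eqref{equation:goodarf}; both steps are essentially mechanical once the correspondence is set up.
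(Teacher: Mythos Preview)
Your approach via the Thurston--Veech construction is correct and is exactly the method of the cited reference \cite[Lemma~3.14]{CS_strata2}; the present paper does not reprove this lemma but simply imports it, so there is no in-paper proof to compare against. Two small points are worth tightening. First, your invocation of \eqref{equation:goodarf} is misplaced: that list records the $(g \bmod 4, \Arf)$ pairs for which a \emph{maximal chain} exists, whereas the type~1 configuration in Figure~\ref{figure:genset} is not a maximal chain and in fact realizes precisely the complementary pairs $(0,1),(1,0),(2,0),(3,1)$; the Arf computation must be done directly from a basis adapted to Figure~\ref{figure:genset}. Second, your non-hyperellipticity argument by Arf invariant alone is insufficient in the strata $\OM_g(2g-2)$ and $\OM_g(g-1,g-1)$, since the hyperelliptic component can share its Arf invariant with one of the non-hyperelliptic components; you genuinely need the direct check that the Thurston--Veech surface admits no hyperelliptic involution (which is what \cite{CS_strata2} does).
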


By labeling a separatrix at each zero, these prototype surfaces also give rise to prong--marked abelian differentials with specified Arf invariant.

\begin{lemma}\label{lemma:framed_prototypes}
Let $g$, $\sing$, $\comp$, and $\mathcal C$ be as above. Let $\comp_{\pr}$ be a (non-hyperelliptic) component of the stratum of prong--marked abelian differentials which covers $\comp$. Then there exists some $\omega \in \comp_{\pr}$ whose horizontal and vertical cylinders are exactly the curves of $\mathcal{C}$.
\end{lemma}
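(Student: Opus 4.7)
The plan is to reduce to Lemma \ref{lemma:prototypes} and then use the deck group action to adjust the prong marking. First I would invoke Lemma \ref{lemma:prototypes} to produce a differential $\omega \in \comp$ whose horizontal and vertical cylinders are exactly the curves of $\mathcal{C}$. Since the cylinder decomposition depends only on the underlying flat structure and is insensitive to any labeling of zeros or choice of prong, every lift of $\omega$ in the tower $\OM_g^{\pr}(\sing) \to \OM_g^{\lab}(\sing) \to \OM_g(\sing)$ will automatically have $\mathcal{C}$ as its horizontal and vertical cylinders. The problem therefore reduces to producing \emph{some} lift of $\omega$ that lands in the prescribed component $\comp_{\pr}$.

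I would construct a lift in two steps. First, choose any labeling of the zeros of $\omega$ to obtain $\omega^{\lab} \in \OM_g^{\lab}(\sing)$; by the discussion preceding Lemma \ref{lemma:permutation_monodromy}, the preimage of $\comp$ in $\OM_g^{\lab}(\sing)$ is a single connected component $\comp^{\lab}$, so $\omega^{\lab}$ automatically lies in the correct labeled component. Next, pick an arbitrary prong marking to obtain a point $\omega^{\pr} \in \OM_g^{\pr}(\sing)$ lying in some non-hyperelliptic component $\comp'_{\pr}$ above $\comp^{\lab}$.

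The only real content is showing that we may adjust $\omega^{\pr}$ so that $\comp'_{\pr} = \comp_{\pr}$. If $\gcd(\sing)$ is even, Theorem \ref{theorem:Boissy_framed} gives a unique non-hyperelliptic component above $\comp^{\lab}$ and there is nothing to do. Suppose instead that $\gcd(\sing)$ is odd, so that at least one $\kappa_i$ is odd and hence the corresponding factor of $PR$ has even order. Then by \eqref{eqn:PRprime}, $PR'$ is a proper index-two subgroup of $PR$, and by Corollary \ref{corollary:PR_monodromy} the monodromy of $\OM_g^{\pr}(\sing) \to \OM_g^{\lab}(\sing)$ is exactly $PR'$. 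Since the full deck group $PR$ acts transitively on fibers of this cover while $PR'$ stabilizes each component, the quotient $PR/PR' \cong \Z/2\Z$ acts freely and transitively on the two components above $\comp^{\lab}$, which matches the count in Theorem \ref{theorem:Boissy_framed}. Consequently, replacing $\omega^{\pr}$ by its image under an element of $PR \setminus PR'$ if necessary produces a lift of $\omega$ in $\comp_{\pr}$, completing the argument.

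I do not anticipate a genuine obstacle here; the proof is essentially a formal consequence of Lemma \ref{lemma:prototypes}, Theorem \ref{theorem:Boissy_framed}, and Corollary \ref{corollary:PR_monodromy}. The only point requiring care is the conceptual distinction between the deck group $PR$ (which permutes the fiber of $\OM_g^{\pr}(\sing) \to \OM_g^{\lab}(\sing)$ transitively) and the monodromy subgroup $PR'$ (which preserves each connected component), and checking that the index-two quotient provides precisely the flexibility needed to switch components when the generalized Arf invariants disagree.
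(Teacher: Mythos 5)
Your proposal is correct and follows essentially the same route as the paper: take the prototype from Lemma \ref{lemma:prototypes}, note that any lift has the same cylinders, and use Theorem \ref{theorem:Boissy_framed} together with Corollary \ref{corollary:PR_monodromy} to move between the (at most two) prong-marked components via an element of $PR \setminus PR'$. The only cosmetic difference is that the paper exhibits a concrete such element, namely the prong rotation $T_{\vec p_i}$ at a zero of odd order, whereas you argue abstractly with the index-two quotient $PR/PR'$.
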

\begin{proof}
Let $\omega \in \comp$ be the differential constructed in Lemma \ref{lemma:prototypes}.
If $\gcd(\sing)$ is even, then by Theorem \ref{theorem:Boissy_framed} the entire preimage of $\comp$ in $\OM_g^{\lab}(\sing)$ is $\comp_{\pr}$ and so any prong marking of $\omega$ lives in $\comp_{\pr}$.
If $\gcd(\sing)$ is odd, then choose an arbitrary prong marking $\omega_{\pr}$ of $\omega$ and let $p_i$ be some zero of odd order (corresponding to a prong point of even order).

Now by Corollary \ref{corollary:PR_monodromy}, since $T_{\vec p_i} \notin PR'$ the differentials $\omega_{\pr}$ and $T_{\vec p_i}(\omega_{\pr})$ must lie in different components of $\OM_g^{\pr}(\sing)$. Since there are only two components (by Theorem \ref{theorem:Boissy_framed}), one of $\{T_{\vec p_i}(\omega_{\pr}), \omega_{\pr} \}$ must be in $\comp_{\pr}$.
\end{proof}

It is now completely straightforward to deduce that the monodromy group of a component $\comp_{\pr}$ of $\OM_g^{\pr}(\sing)$ is exactly the stabilizer of the relative framing induced on the real oriented blow-up.

\begin{proof}[Proof of Theorem \ref{thm:relative_monodromy}]
Let $(X, \omega) \in \comp_{\pr}$ be the prototype surface built in Lemma \ref{lemma:framed_prototypes} above. Observe that each configuration of curves specified in Figure \ref{figure:genset} spans the indicated surface, has intersection graph a tree, and contains $E_6$ as a subgraph. Then by Theorem \ref{mainthm:genset}, Lemma \ref{lemma:cyltwist}, and Corollary \ref{cor:mon2_preserves_framing}, we have
\[\Mod_{g,n}[\phi] =  \langle T_c \mid c \in \mathcal C \rangle
\le \mathcal G^{\pr} \le \Mod_{g,n}[\phi]\] 
completing the proof of Theorem \ref{thm:relative_monodromy}.
\end{proof}

\section{Further corollaries}\label{section:corollaries}
In this final section we collect some further corollaries of the work we have done.

\subsection{Classification of components}\label{section:markingcor} The monodromy computations in Theorems \ref{mainthm:absolute_monodromy}, \ref{thm:relative_monodromy}, and \ref{thm:blowup_monodromy} lead to the following classification of the non-hyperelliptic components of strata of marked differentials (c.f. Theorem A of \cite{CS_strata2}):

\begin{corollary}\label{cor:compOTlab}
There is a bijection between the non-hyperelliptic components of $\OT_g^{\lab}(\sing)$ and the isotopy classes of absolute framings of $\Sigma_{g}^n$ with signature $-1 - \sing$.

If $\gcd(\sing)$ is odd then the permutation action of $\Mod_{g}^n$ is transitive, while if $\gcd(\sing)$ is even there are two orbits, classified by Arf invariant.
\end{corollary}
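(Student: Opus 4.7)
The plan is to exhibit the bijection via the pullback map
\[
\bar\Phi: \pi_0(\OT_g^{\lab}(\sing)) \to \{\text{absolute framings of signature } {-1-\sing}\}, \quad [(X,\omega,f)] \mapsto f^*\bar\phi_\omega,
\]
and then appeal to Theorem \ref{mainthm:absolute_monodromy} for injectivity and to Theorem \ref{theorem:Boissy_framed} together with Proposition \ref{proposition:orbits} for surjectivity. Well-definedness on components follows from the same continuity argument as in Lemma \ref{lemma:mon1_preserves_framing} (now incorporating the marking $f$), and one checks directly that $f^*\bar\phi_\omega$ has signature $-1-\sing$ since a cone point of angle $2\pi(\kappa_i+1)$ corresponds under $f$ to a puncture whose encircling loop has winding number $-1-\kappa_i$.

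For injectivity on non-hyperelliptic components, I would first show that if $\bar\Phi(C_1)=\bar\Phi(C_2) = \bar\phi$ then $C_1, C_2$ project to the same component $\comp \subset \OM_g^{\lab}(\sing)$. When $\gcd(\sing)$ is even the mod-$2$ reduction of $\bar\phi$ is a well-defined $2$-spin structure whose Arf invariant equals that of any differential mapping to $\bar\phi$, so the two projections share the Kontsevich--Zorich invariant; Theorem \ref{thm:KZ_strata}, lifted to $\OM_g^{\lab}(\sing)$ via Lemma \ref{lemma:permutation_monodromy}, then gives the equality. When $\gcd(\sing)$ is odd, there is a unique non-hyperelliptic component. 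Once $C_1, C_2$ lie over the same $\comp$, they differ by a deck transformation in $\PMod_g^n$ which must lie in $\PMod_g^n[\bar\phi]$. Combining Theorem \ref{mainthm:absolute_monodromy} with Lemma \ref{lemma:permutation_monodromy} and the short exact sequence
\[
1 \to \PMod_g^n[\bar\phi] \to \Mod_g^n[\bar\phi] \to \Sym(\sing) \to 1
\]
(appearing in the proof of that theorem) identifies the monodromy of $\comp$ in $\PMod_g^n$ with the entire stabilizer $\PMod_g^n[\bar\phi]$, forcing $C_1 = C_2$.

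For surjectivity, given an absolute framing $\bar\phi$ of the prescribed signature, I would first extend it to a relative framing $\phi$ on the blowup $\Sigma_{g,n}$ by choosing prong data compatible with the winding about each puncture and prescribing arbitrary winding numbers on the arcs of a distinguished geometric basis. Let $\epsilon$ denote the Arf invariant of $\phi$. Theorems \ref{thm:KZ_strata} and \ref{theorem:Boissy_framed} together provide a non-hyperelliptic component $\comp_{\pr} \subset \OM_g^{\pr}(\sing)$ whose prong-framed Arf invariant is $\epsilon$. Choosing $(X,\omega) \in \comp_{\pr}$ together with any prong-preserving marking $f$, the pullback $f^*\phi_\omega$ is a relative framing of $\Sigma_{g,n}$ with the same boundary data and Arf as $\phi$, so Proposition \ref{proposition:orbits} provides $g \in \Mod_{g,n}$ with $g\cdot f^*\phi_\omega = \phi$. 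Setting $f' = f \circ g^{-1}$ then gives $(X,\omega,f') \in \OT_g^{\pr}(\sing)$ realizing $\phi$; forgetting prongs produces the required preimage in $\OT_g^{\lab}(\sing)$.

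Finally, for the orbit statement, the composite covering $\OT_g^{\lab}(\sing) \to \OM_g(\sing)$ has deck group $\PMod_g^n \rtimes \Sym(\sing) = \Mod_g^n$, and the $\Mod_g^n$-action on components corresponds, under $\bar\Phi$, to the pullback action on framings. Hence $\Mod_g^n$-orbits of framings are in bijection with non-hyperelliptic components of $\OM_g(\sing)$, which by Theorem \ref{thm:KZ_strata} number one when $\gcd(\sing)$ is odd and two (distinguished by Arf) when $\gcd(\sing)$ is even. I expect the chief technical hurdle to be the injectivity step, which rests on the full strength of Theorem \ref{mainthm:absolute_monodromy}; the remainder is essentially an exercise in matching Arf invariants across the tower \eqref{spaces}.
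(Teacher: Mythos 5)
Your proposal is correct and is essentially the paper's argument unpacked: the paper dispatches this corollary in one sentence as an orbit--stabilizer computation for the $\PMod_g^n$-action on components of $\OT_g^{\lab}(\sing)$, where the stabilizer of a component is $\PMod_g^n[\bar\phi]$ by Theorem \ref{mainthm:absolute_monodromy} and the orbit classification comes from Theorems \ref{thm:KZ_strata} and \ref{theorem:Boissy_framed}. Your explicit bijection $\bar\Phi$ with separate injectivity and surjectivity checks uses exactly these same inputs, just written out in full.
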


\begin{corollary}\label{cor:compOTpr}
There is a bijection between the non-hyperelliptic components of $\OT_g^{\pr}(\sing)$ and the (relative) isotopy classes of relative framings of $\Sigma_{g,n}$ with signature $-1 - \sing$.

The action of $\Mod_{g,n}^*$ is transitive if $\gcd(\sing)$ is odd and has two orbits if $\gcd(\sing)$ is even, classified by the Arf invariant of the absolute framing. The action of $\Mod_{g,n}$ has two orbits no matter the parity of $\gcd(\sing)$, classified by the generalized Arf invariant.
\end{corollary}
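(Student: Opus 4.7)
The plan is to exploit the tower of coverings \eqref{spaces} together with the monodromy calculation of Theorem \ref{thm:relative_monodromy} and the pronged/unpronged comparison of Proposition \ref{theorem:pronged}. First, define a map
\[
\Phi: \pi_0(\OT_g^{\pr}(\sing))^{\text{non-hyp}} \longrightarrow \mathcal{F}(\sing)
\]
where $\mathcal{F}(\sing)$ denotes the set of relative isotopy classes of relative framings on $\Sigma_{g,n}$ of signature $-1-\sing$, by sending $(X,\omega,f)$ to $f^{*}\phi_{\omega}$, the pullback via the marking of the relative framing on the real oriented blow-up $X^{*}$. The same continuity argument as in the proof of Lemma \ref{lemma:mon1_preserves_framing} shows that this framing is locally constant on $\OT_g^{\pr}(\sing)$, so $\Phi$ descends to components.

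To establish bijectivity, fix a non-hyperelliptic component $\comp^{\pr}\subset\OM_g^{\pr}(\sing)$ and a basepoint with framing $\phi$. By Lemma \ref{lemma:OT_cover} the covering $\OT_g^{\pr}(\sing)\to\OM_g^{\pr}(\sing)$ has deck group $\Mod_{g,n}$, so the components of $\OT_g^{\pr}(\sing)$ above $\comp^{\pr}$ are in bijection with the coset space $\Mod_{g,n}/\mathcal{G}^{\pr}(\comp^{\pr})$. Theorem \ref{thm:relative_monodromy} identifies $\mathcal{G}^{\pr}(\comp^{\pr})$ with $\Mod_{g,n}[\phi]$, and orbit-stabilizer then gives a bijection with the $\Mod_{g,n}$-orbit of $\phi$ in $\mathcal{F}(\sing)$; by Proposition \ref{proposition:orbits} this orbit is precisely the set of framings in $\mathcal{F}(\sing)$ sharing the generalized Arf invariant of $\phi$. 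On the other hand, Theorem \ref{theorem:Boissy_framed} combined with Theorem \ref{thm:KZ_strata} shows that for $g\ge 4$ there are exactly two non-hyperelliptic components of $\OM_g^{\pr}(\sing)$, distinguished by the generalized Arf invariant (regardless of the parity of $\gcd(\sing)$). Taking the union of the two orbits thus exhausts all of $\mathcal{F}(\sing)$, proving that $\Phi$ is a bijection.

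For the orbit counts, observe that the previous paragraph already shows $\Mod_{g,n}$ acts with two orbits on $\mathcal{F}(\sing)$, classified by the generalized Arf invariant — one for each of the two components of $\OM_g^{\pr}(\sing)$. For the pronged action, Proposition \ref{theorem:pronged} gives
\[
[\Mod_{g,n}^{*}:\Mod_{g,n}^{*}[\phi]] \;=\; \frac{|PR|}{|PR'|}\,[\Mod_{g,n}:\Mod_{g,n}[\phi]],
\]
so each $\Mod_{g,n}^{*}$-orbit decomposes into $|PR/PR'|$ many $\Mod_{g,n}$-orbits. When $\gcd(\sing)$ is even every $\kappa_i$ is even, hence every prong order $k_i=\kappa_i+1$ is odd, so $PR'=PR$ and the two actions have the same orbits — giving two $\Mod_{g,n}^{*}$-orbits. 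When $\gcd(\sing)$ is odd at least one $\kappa_i$ must be odd (otherwise $\gcd$ would be even), so some $k_i$ is even and $|PR/PR'|=2$; the two $\Mod_{g,n}$-orbits collapse into a single $\Mod_{g,n}^{*}$-orbit, giving transitivity.

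Finally, the classification by the named invariants is immediate: the generalized Arf invariant is the complete $\Mod_{g,n}$-invariant by Proposition \ref{proposition:orbits}, while in the even-$\gcd$ case, fractional twists preserve the winding numbers of simple closed curves (changing only those of arcs, per Lemma \ref{lemma:fractionalTL}), so the absolute Arf invariant — which is well-defined when $\gcd(\sing)$ is even, via the induced $\gcd(\sing)$-spin structure discussed in Section \ref{subsec:flatbasics} — descends to a $\Mod_{g,n}^{*}$-invariant that separates the two orbits. The main subtlety in the argument is the interlocking of Boissy's component count with Proposition \ref{proposition:orbits}: both are classifications by (generalized) Arf, and one must verify that the Arf invariant assigned to a component of $\OM_g^{\pr}(\sing)$ via Boissy agrees with the Arf invariant of the pulled-back framing (this is exactly the content of Remark \ref{remark:Boissy_Arf}, which the argument inherits).
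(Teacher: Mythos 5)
Your proposal is correct and follows essentially the same route as the paper, which disposes of this corollary in one sentence via the orbit--stabilizer theorem applied to the $\Mod_{g,n}$ action on components of $\OT_g^{\pr}(\sing)$, combined with Boissy's classification (Theorem \ref{theorem:Boissy_framed}) and the classification of framing orbits (Proposition \ref{proposition:orbits}); you have simply written out the details. The only cosmetic quibble is that your displayed index identity involves infinite indices and should be read as shorthand for the genuine content, namely that by the exact sequence of Proposition \ref{theorem:pronged} the stabilizer in $PR = \Mod_{g,n}^*/\Mod_{g,n}$ of the $\Mod_{g,n}$-orbit of $\phi$ is exactly $PR'$, so each $\Mod_{g,n}^*$-orbit splits into $|PR/PR'|$ many $\Mod_{g,n}$-orbits.
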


The proofs of both corollaries are simply a consequence of the orbit--stabilizer theorem applied to the $\PMod_g^n$ action on the set of components of $\OT^{\lab}_g(\sing)$, respectively the $\Mod_{g,n}$ action on the components of $\OT^{\pr}_g(\sing)$, together with the classification of  orbits (Theorems \ref{thm:KZ_strata} and \ref{theorem:Boissy_framed}, respectively).

\subsection{Cylinder shears and fundamental groups of strata}\label{section:gencor}
For a component $\mathcal H$ of a general stratum $\OM_g(\sing)$, no explicit set of generators for $\pi_1^{\orb}(\mathcal H)$ is known.
Cylinder shears play a role analogous to Dehn twists in the theory of the mapping class group, and it is natural to wonder about the extent to which shears generate $\pi_1^{\orb}(\mathcal H)$. If the partition $\sing$ contains any repeated elements, the corresponding zeros can be exchanged, but this certainly cannot be accomplished using shears. Thus one must first pass to a ``labeled stratum,'' i.e., a component $\mathcal H_{\lab}$ of the cover $\OM_g^{\lab}(\sing)$. Even here, the work of Boissy (in the guise of Corollary \ref{corollary:PR_monodromy}) implies that $\pi_1^{\orb}(\mathcal H_{\lab})$ is {\em never} generated by shears alone, since shears map trivially onto the prong rotation group $PR$. However, prong rotation is not detected by the monodromy representation $\rho: \pi_1^{\orb}(\mathcal H_{\lab}) \to \PMod_g^n[\bar \phi]$, only by the refinement whose target is the pronged mapping class group. 

As a corollary of our monodromy computations and Corollary \ref{corollary:whentrivial}, we find that the arithmetic of the partition $\sing$ provides an obstruction for the subgroup of $\pi_1^{\orb}(\mathcal H_{\lab})$ generated by cylinder shears to generate the monodromy group in $\PMod_g^n$. Thus, the prong rotation group ``leaves a trace'' in the group $\PMod_g^n[\bar \phi]$, even though there is no way of measuring prong rotation in $\PMod_g^n[\bar \phi]$ directly. 

\begin{corollary}\label{cor:pi1notgenbycyl}
Let $\sing$ be a partition of $2g-2$ for $g \ge 5$ and let $\mathcal H_{\lab}$ be a non-hyperelliptic component of the stratum $\OM_g^{\lab}(\sing)$. Write $\sing = (\eta_1, \ldots, \eta_p, \upsilon_1, \ldots, \upsilon_q)$ where $\eta_i$ are even, $\upsilon_j$ are odd, and $p+q=n$. If $q > 2$, or if the elements of
\[ \left\{\eta_1 + 1, \ldots, \eta_p + 1, \frac{\upsilon_1 + 1}{2}, \ldots, \frac{\upsilon_q + 1}{2} \right\}\]
are not pairwise coprime, then the subgroup of $\pi_1^{\orb}(\mathcal H_{\lab})$ generated by cylinder shears does not surject onto $\mathcal G^{lab}$.
\end{corollary}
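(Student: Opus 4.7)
The plan is to show that every cylinder shear lifts to a loop in the prong-marked cover $\OM_g^{\pr}(\sing)$, so that the image of the subgroup generated by cylinder shears in $\mathcal G^{\lab} = \PMod_g^n[\bar \phi]$ lies in the image of $\mathcal G^{\pr} = \Mod_{g,n}[\phi]$ under the boundary-capping map $p_\ast$. Combining this with Proposition \ref{proposition:relimg} and Corollary \ref{corollary:whentrivial} will produce the desired proper containment.

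First I would argue that a cylinder shear, viewed as a loop $\gamma$ in $\mathcal H_{\lab}$, lifts to a loop in the covering $\OM_g^{\pr}(\sing)\to\OM_g^{\lab}(\sing)$. Concretely, fix a basepoint $(X,\omega)\in\mathcal H_{\lab}$ together with a prong marking $\vec{m}$; since the shear is supported on the interior of a Euclidean cylinder disjoint from $Z$, the deformation fixes every zero of $\omega$ pointwise together with the horizontal separatrices emanating from it. Hence the endpoint of the lift $\tilde\gamma$ is $(X,\omega,\vec{m})$ itself, i.e. $\tilde\gamma$ is a loop in $\OM_g^{\pr}(\sing)$. This is compatible with Corollary \ref{corollary:PR_monodromy}: the prong-rotation obstruction vanishes for $\gamma$.

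Consequently, the monodromy $\rho^{\lab}(\gamma) \in \mathcal G^{\lab}$ lies in the image of $\mathcal G^{\pr}$ under the composition $\Mod_{g,n}[\phi] \hookrightarrow \Mod_{g,n}^\ast[\phi] \twoheadrightarrow \PMod_g^n[\bar\phi]$, which by definition is $p_\ast(\Mod_{g,n}[\phi])$. Taking the subgroup $C \le \pi_1^{\orb}(\mathcal H_{\lab})$ generated by all cylinder shears, we conclude
\[
\rho^{\lab}(C) \le p_\ast\bigl(\Mod_{g,n}[\phi]\bigr).
\]

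To finish, invoke Proposition \ref{proposition:relimg}, which identifies the quotient
\[
\mathcal G^{\lab}/p_\ast(\Mod_{g,n}[\phi]) \cong PR'/\langle (1,\dots,1)\rangle.
\]
By Corollary \ref{corollary:whentrivial}, this quotient is nontrivial precisely when $q > 2$ or the integers
\[
\Bigl\{\eta_1+1,\ldots,\eta_p+1,\tfrac{\upsilon_1+1}{2},\ldots,\tfrac{\upsilon_q+1}{2}\Bigr\}
\]
fail to be pairwise coprime. Under either of these hypotheses, $p_\ast(\Mod_{g,n}[\phi])$ is a proper subgroup of $\mathcal G^{\lab}$, so $\rho^{\lab}(C)$ cannot equal $\mathcal G^{\lab}$. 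The only genuine point requiring care is the lifting assertion of the first paragraph; once one is comfortable that the shear genuinely fixes every specified separatrix (which is built in to the definition of the shear as a compactly-supported deformation of the flat structure), the remainder is a direct appeal to the structural results already established.
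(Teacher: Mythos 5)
Your proposal is correct and follows essentially the same route as the paper: both reduce to the observation that cylinder shears preserve prong--markings (so their monodromy lands in $\mathcal G^{\pr} = \Mod_{g,n}[\phi]$ by Theorem \ref{thm:relative_monodromy}), identify $\mathcal G^{\lab}$ with $\PMod_g^n[\bar\phi]$ via Theorem \ref{mainthm:absolute_monodromy}, and then invoke Corollary \ref{corollary:whentrivial} (equivalently Proposition \ref{proposition:relimg}) to see that $p_*(\Mod_{g,n}[\phi])$ is proper under the stated arithmetic hypotheses. The only cosmetic difference is that the paper is slightly more careful to define the shear subgroup via conjugation by paths to the surface actually carrying the cylinder, a technicality your argument implicitly absorbs.
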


\begin{proof}
Choose an arbitrary component $\mathcal H _{\text{pr}}$ of the preimage of $\mathcal H_{\lab}$ in $\OM_g^{\pr}(\sing)$ and let $\phi$ denote the induced relative framing. Let $\mathcal{C}$ denote the subgroup of $\pi_1^{\orb}(\mathcal H_{\lab})$ generated by cylinder shears (or rather, by elements which are conjugate to cylinder shears by some path connecting a basepoint $(X, \omega)$ to the surface $(Y, \omega)$ realizing the relevant cylinder). As cylinder shears preserve prong--markings, we see that $\mathcal{C} \le \pi_1^{\orb}(\mathcal H_{\text{pr}}).$

Recall that $\rho_{\pr}: \pi_1^{\orb}(\mathcal H_{\text{pr}}) \rightarrow \Mod_{g,n}$ denotes the monodromy representation of $\mathcal H_{\pr}$ with image $\mathcal G^{\pr}$, and that $\mathcal G^{\lab}$ denotes the image of the monodromy representation of $\mathcal H_{\lab}$ into $\PMod_g^n$. Now Theorem \ref{mainthm:absolute_monodromy} finds that $\mathcal G^{\lab} = \PMod_g^n[\bar \phi]$, and Theorem \ref{thm:relative_monodromy} finds that $\rho_{\pr}(\mathcal C) \le \mathcal G^{\pr} = \Mod_{g,n}[\phi]$. 
\footnote{In fact, the proof of Theorem \ref{thm:relative_monodromy} shows that the image of $\mathcal C$ under the monodromy map is all of $\Mod_{g,n}[\phi]$.}
By Corollary \ref{corollary:whentrivial}, under our current hypotheses, the map $\Mod_{g,n}[\phi] \to \PMod_g^n[\bar \phi]$ is never surjective. Therefore, the image of $\mathcal C$ in $\mathcal G^{\lab}$ is a strict subgroup.
\end{proof}

\subsection{Change--of--coordinates for saddle connections}\label{subsec:saddleCCP}

In this section, we use the machinery of prong--markings and the framed change--of--coordinates principle to prove Corollary \ref{cor:when_saddle} (realization of arcs as saddles). As in Corollary \ref{cor:when_cyl}, the idea is to use the framed change--of--coordinates principle to take a given arc to some target arc which is realized as a saddle connection. However, unlike cylinders, saddle connections do not share a common winding number. The main difficulty in the proof of Corollary \ref{cor:when_saddle} is therefore to construct a sufficiently large set of saddle connections to play the role of target arcs (Lemma \ref{lem:sc_wn}).

We begin by clarifying some conventions with regards to arcs on surfaces with boundary versus surfaces with marked points.
Recall that if $(\Sigma_{g,n}, \phi)$ is a relatively framed surface, then we have fixed once and for all a legal basepoint on each boundary component and we only consider arcs ending on this prescribed basepoint. When $(\Sigma_{g,n}, \phi)$ arises from the blow-up of a prong--marked abelian differential, this is equivalent to stipulating that arcs must leave and enter the zeros with prescribed tangent directions.

Upon capping each boundary component of $\Sigma_{g,n}$ with a punctured disk (equivalently, forgetting the prong structure coming from the differential), each (relative) isotopy class of arc $a$ on $\Sigma_{g,n}$ projects to an (absolute) isotopy class of arc on $\Sigma_g^n$, which we will denote by $\pi(a)$. Observe that the map $\pi$ is not injective; elements of its fibers are related by Dehn twists about the endpoint(s) of the arc.

\para{Saddle connections on one--cylinder differentials}
In order to exhibit the desired collection of saddles, it will be convenient to use different model surfaces than the ones introduced in Section \ref{subsec:genmon}. 

To that end, recall that an abelian differential is called {\em Jenkins--Strebel} if it is completely horizontally periodic, i.e., if it can be written as the union of closed horizontal cylinders. We will in particular be interested in those which can be obtained by identifying boundary edges of a single cylinder, called {\em one--cylinder} Jenkins--Strebel differentials.

The existence result we will use is the following; see \cite[Section 2]{Zorich_JS} for an explicit construction.

\begin{proposition}\label{prop:JS}
There exists a one--cylinder Jenkins--Strebel differential $(Y, \eta)$ in every (nonempty) component of every stratum $\OM_g(\sing)$.
\end{proposition}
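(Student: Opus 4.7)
The plan is to invoke the explicit combinatorial construction of Zorich \cite{Zorich_JS}. A one--cylinder Jenkins--Strebel differential $(Y, \eta)$ can be encoded by a permutation $\pi \in \Sym(N)$, together with lengths $\lambda_1, \ldots, \lambda_N > 0$ and a height $h > 0$: starting from a rectangle $R = [0, \sum \lambda_i] \times [0, h]$ (with left and right vertical sides identified to form a cylinder), one partitions the top and bottom edges into $N$ subintervals of lengths $\lambda_1, \ldots, \lambda_N$ (in the same order on top and bottom) and glues the $i$--th bottom interval to the $\pi(i)$--th top interval by a vertical translation.

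First I would recall how $\pi$ determines the topology of the resulting surface. After identification, the corners of $R$ coalesce into zeros of $\eta$, and the orders of the zeros are computed from an explicit auxiliary permutation derived from $\pi$ (tracking the cyclic orbits of corners under the gluing). Conversely, given a partition $\sing$ of $2g-2$, Zorich exhibits an explicit family of permutations realizing a differential in the stratum $\OM_g(\sing)$, thereby establishing the existence statement at the level of strata.

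Next I would confirm that every connected component is attained. By Theorem \ref{thm:KZ_strata}, the non-hyperelliptic components are classified by the parity of the induced spin structure when $\gcd(\sing)$ is even, and are unique otherwise. One computes the spin parity of $(Y, \eta)$ directly from $\pi$ by evaluating the Arf-invariant formula \eqref{equation:arf} on a distinguished geometric basis built from the saddle connections bounding the cylinder together with its core curve and a transverse arc; Zorich verifies that a controlled local modification of $\pi$ (swapping adjacent blocks of letters) flips this parity without altering the stratum, so both non-hyperelliptic components are realized. For the hyperelliptic components $\OM_g(2g-2)^{\text{hyp}}$ and $\OM_g(g-1,g-1)^{\text{hyp}}$, one constructs $(Y, \eta)$ separately by choosing $\pi$ to commute with the involution $i \mapsto N+1-i$ on the index set, so that the $180^\circ$ rotation of $R$ descends to a hyperelliptic involution on $Y$.

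The main obstacle is the spin parity computation in the non-hyperelliptic even-$\gcd$ case, which requires a careful bookkeeping of how the corner identifications produce a system of admissible curves on the resulting surface and how swapping adjacent blocks of $\pi$ affects the Arf invariant; this computation is carried out in detail in \cite[Section 2]{Zorich_JS}, from which the proposition follows.
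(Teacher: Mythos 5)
Your proposal takes essentially the same route as the paper: the paper states this proposition without proof, simply citing the explicit combinatorial construction in \cite[Section 2]{Zorich_JS}, which is exactly where you defer all the substantive work (realizing each stratum via a permutation, computing spin parity, and handling the hyperelliptic components). The only quibble is that your description of the gluing (same ordering of intervals on top and bottom, identified by ``vertical translation'') is not quite the standard setup---the top and bottom partitions carry the same labels and lengths but in orders differing by the permutation---but since you invoke Zorich for the actual construction, this does not affect the argument.
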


As in the proof of Lemma \ref{lemma:framed_prototypes}, we can also upgrade these $\eta$ to yield one--cylinder Jenkins--Strebel differentials in each component of $\OM_g^{\lab}(\sing)$ and $\OM_g^{\pr}(\sing)$ by labeling zeros and prongs, respectively. 

Using these new model differentials, we may now exhibit saddle connections which have (a preimage under $\pi$ which has) arbitrary winding number.

\begin{figure}[ht]
\labellist
\small
\pinlabel $D$ [t] at 15 0
\pinlabel $C$ [t] at 37 0
\pinlabel $B$ [t] at 60 0
\pinlabel $F$ [t] at 84 0
\pinlabel $E$ [t] at 105 0
\pinlabel $A$ [t] at 127 0
\pinlabel $D$ [t] at 180 0
\pinlabel $C$ [t] at 202 0
\pinlabel $B$ [t] at 225 0
\pinlabel $F$ [t] at 249 0
\pinlabel $E$ [t] at 270 0
\pinlabel $A$ [t] at 292 0
\pinlabel $A$ [b] at 12 57
\pinlabel $B$ [b] at 37 57
\pinlabel $C$ [b] at 57 57
\pinlabel $D$ [b] at 78 57
\pinlabel $E$ [b] at 105 57
\pinlabel $F$ [b] at 128 57
\pinlabel $A$ [b] at 177 57
\pinlabel $B$ [b] at 202 57
\pinlabel $C$ [b] at 222 57
\pinlabel $D$ [b] at 243 57
\pinlabel $E$ [b] at 270 57
\pinlabel $F$ [b] at 293 57
\endlabellist
\includegraphics[scale=1.4]{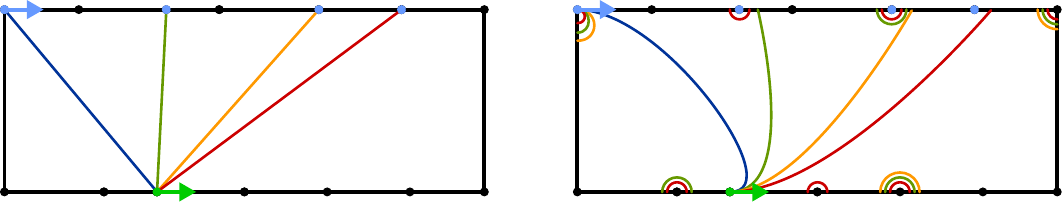}
\caption{A one--cylinder Jenkins--Strebel differential in $\OM_3(3,1)$ and saddle connections on it. On the left the arcs have been realized geodesically; on the right they have been realized with prescribed tangential data.}
\label{figure:JS}
\end{figure}

\begin{lemma}\label{lem:sc_wn}
Let $\comp_{\pr}$ be a component of $\OM_g^{\pr}(\sing)$ and let $p, q$ be two distinct zeros. Then for every $s \in \Z + \frac{1}{2}$, there is a differential $(Y, \eta) \in \comp_{\pr}$ and nonseparating arc $a$ on $Y^*$ from $\Delta_p$ to $\Delta_q$ (based at the legal basepoints prescribed by the prong--marking) such that 
\begin{enumerate}
\item $\phi(a) = s$ for the relative framing $\phi$ induced by $1/\eta$ and
\item the geodesic representative of $\pi(a)$ on $(Y, \text{Zeros}(\eta))$ is a saddle connection.
\end{enumerate}
\end{lemma}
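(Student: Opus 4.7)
I approach the proof by explicit construction on a one-cylinder Jenkins--Strebel differential, supplemented by a careful accounting of how boundary twists and prong rotations affect winding numbers.

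First, by Proposition \ref{prop:JS} there exists a one-cylinder JS differential in the underlying component of $\OM_g(\sing)$, which I lift to a prong-marked representative $(Y, \eta) \in \comp_{\pr}$ as in Lemma \ref{lemma:framed_prototypes}. The single horizontal cylinder has both $p$ and $q$ on its top and bottom boundary (each appearing $\kappa_p + 1$, resp.\ $\kappa_q + 1$, times on each side), so there exists at least one horizontal saddle connection $a_0$ from $p$ to $q$. I write $\widetilde{a_0}$ for the canonical legal lift to $Y^*$ which leaves $p$ and enters $q$ tangent to the flat direction of $a_0$; its winding number $s_0 := \phi(\widetilde{a_0})$ is computed directly from the homological coherence property (Lemma \ref{lemma:HJ}.\ref{item:HC}) applied to the small subsurfaces cut off by $\widetilde{a_0}$ and the prong structure near each endpoint.

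The first source of winding number variation is twisting around the boundary. For integers $m, n$, the arc $T_{\Delta_p}^{m} T_{\Delta_q}^{n}(\widetilde{a_0})$ is another legal preimage of $\pi(a_0)$ under $\pi$, so it projects to the same isotopy class on $(Y, Z)$, whose geodesic representative is still $a_0$, a saddle connection. By the twist-linearity formula (Lemma \ref{lemma:HJ}.\ref{item:TL}), the winding number of this modified arc is
\[
s_0 - m(\kappa_p + 1) - n(\kappa_q + 1),
\]
so we realize every element of the coset $s_0 + d\Z$ where $d := \gcd(\kappa_p+1, \kappa_q+1)$.

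When $d = 1$ this exhausts $\Z + \tfrac{1}{2}$ and we are done. When $d \geq 2$, it remains to exhibit saddle connections representing every residue class modulo $d$. To do so, I exploit three sources of additional saddles on (deformations of) $(Y, \eta)$ within $\comp_{\pr}$: (a) the $2(\kappa_p+1)$ horizontal separatrices emanating from $p$ yield several combinatorially distinct horizontal saddles from $p$ to $q$, whose lifts have differing winding numbers; (b) changing the prong marking at $p$ or $q$ by any element of the subgroup $PR' \le PR$ remains within $\comp_{\pr}$ by Corollary \ref{corollary:PR_monodromy}, and shifts $s_0$ by multiples of $1$; (c) rotating $(Y, \eta)$ by the $\mathsf{SO}(2)$ action produces new (non-horizontal) saddles between $p$ and $q$ whose lifts differ from $s_0$ by the angular turning between prong directions.

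The main obstacle is the verification, in Step~(a)--(c), that the resulting family of base winding numbers surjects onto $\Z/d\Z$. I expect this to reduce to a finite combinatorial check on the boundary word of the one-cylinder JS: as one cycles through adjacent horizontal separatrices at an intermediate zero $r$, the twist-linearity formula applied at $r$ together with the homological coherence across the small pair of pants at $r$ forces the differences in base winding numbers to generate $\Z/d\Z$. Should a single $(Y, \eta)$ prove insufficient for some edge case (e.g.\ when $d$ shares unusual factors with $\kappa_r + 1$ for all intermediate $r$), source (b) combined with Corollary \ref{corollary:PR_monodromy} provides an independent integer shift in $s_0$, which together with the coset $s_0 + d\Z$ will exhaust $\Z + \tfrac{1}{2}$.
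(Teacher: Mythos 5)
There is a genuine gap, and it sits exactly where the content of the lemma lives. Your reduction to residues modulo $d=\gcd(\kappa_p+1,\kappa_q+1)$ via boundary twists is fine (twist-linearity gives the coset $s_0+(\kappa_p+1)\Z+(\kappa_q+1)\Z$), but everything after that is deferred: the claim that the mechanisms (a)--(c) produce base winding numbers surjecting onto $\Z/d\Z$ is precisely the nontrivial assertion, and you explicitly leave it as an expectation (``I expect this to reduce to a finite combinatorial check'') with a fallback that is itself unverified. Mechanism (b) in particular does not obviously supply ``an independent integer shift'': rotating the prong at a single zero of even prong order by one unit is \emph{not} an element of $PR'$, so staying inside $\comp_{\pr}$ constrains which shifts are available, and you have not checked that the allowed ones generate what you need. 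There is also an error earlier: from the fact that $p$ and $q$ both appear on the top and bottom of the single cylinder it does \emph{not} follow that there is a horizontal saddle connection from $p$ to $q$ --- horizontal saddles on a one-cylinder Jenkins--Strebel surface are the edges of the boundary identification, and they join \emph{adjacent} zeros in the cyclic boundary word, which need not include a $pq$-adjacency for the differential supplied by Proposition \ref{prop:JS}. So your base arc $a_0$, and the ``combinatorially distinct horizontal saddles'' of step (a), may not exist.

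The paper's proof avoids both problems by using arcs that \emph{cross} the cylinder rather than horizontal ones. Writing $Y=C/\!\sim$ with $C=S^1\times[0,1]$, the prong marking singles out one point $\tilde p\in Q^{-1}(p)\cap(S^1\times\{0\})$, and one considers the straight-line segments in $C$ from $\tilde p$ to the $\kappa_q+1$ points of $Q^{-1}(q)\cap(S^1\times\{1\})$; these always exist, project to saddle connections, and (after isotoping to be legal at the prescribed basepoints) their winding numbers realize \emph{every} residue class modulo $\phi(\Delta_q)=-1-\kappa_q$, essentially because consecutive choices of endpoint differ by one prong's worth of turning at $q$. Twisting about $\Delta_q$ alone then exhausts $\Z+\tfrac12$. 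If you want to salvage your argument, replace the horizontal $a_0$ by such a crossing arc and prove the mod-$(\kappa_q+1)$ surjectivity directly from the cylinder picture; as written, the proposal does not establish the lemma.
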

\begin{proof}
As twisting around $\Delta_q$ changes $\phi(a)$ by $\pm \phi(\Delta_q)$, it suffices to prove that there exist such arcs for each residue class mod $\phi(\Delta_q)$.

Let $(Y, \eta)$ be a one--cylinder Jenkins--Strebel differential in $\comp_{\pr}$ (which exists by Proposition \ref{prop:JS}). Now by definition, we can write $Y = C / \sim$, where $C = S^1 \times [0,1]$ is a closed cylinder and $\sim$ identifies segments of the ``top boundary'' $S^1 \times \{1\}$ with segments of its ``bottom boundary'' $S^1 \times \{0\}$. Let $Q: C \rightarrow Y$ denote the quotient map.

We observe that our choice of prong at $p$ determines a unique half--separatrix on $(Y, \eta)$ and hence a pair of half--segments in $\partial C$. In particular, the prong--marking determines a unique point $\tilde p \in Q^{-1}(p) \cap (S^1 \times \{0\})$.

Now consider the set $\mathcal A$ of all arcs in $C$ which start at $\tilde p$ and end at a point of $Q^{-1}(q)\cap S^1 \times \{1\}$; up to Dehn twisting along the core curve of $C$, there are exactly $\phi(\Delta_q)$ such arcs (see Figure \ref{figure:JS}). Moreover, since the arcs of $\mathcal A$ are each realized as straight line segments on $C$, the arcs of $\pi (Q (\mathcal A))$ are all realized as saddle connections on $(Y, \eta)$. Isotoping these arcs to leave $p$ and enter $q$ with the prescribed tangential data, we may measure the winding numbers of $a \in Q(\mathcal A)$. Careful inspection of Figure \ref{figure:JS} shows that
\[\{ \phi(a) \text{ mod } \phi(\Delta_q) : a \in Q(\mathcal A) \} = \left\{ \frac{1}{2}, \frac{3}{2}, \ldots, \frac{2\phi(\Delta_q)-1}{2} \right\},\]
finishing the proof of the lemma.
\end{proof}

Now that we have a sufficiently large collection of target saddle connections, we may apply the framed change--of--coordinates principle to deduce Corollary \ref{cor:when_saddle}.

\begin{proof}[Proof of Corollary \ref{cor:when_saddle}]
Let $\bar{a}$ be an arc on $(X, \omega)$ with endpoints $p$ and $q$. Choose an arbitrary prong--marking of $(X, \omega)$ and an arc $a$ on $X^*$ such that $\pi( a) = \bar a$. Let $(Y, \eta)$ be the 1--cylinder Jenkins--Strebel differential from Lemma \ref{lem:sc_wn} in the same component of $\OM_g^{\pr}(\sing)$ as $(X, \omega)$ and let $\phi$ denote the induced relative framing on $Y^*$.

Choose a path $\alpha$ connecting $(X, \omega)$ to $(Y, \eta)$ and let $\alpha_*(a)$ denote the parallel transport of $a$ along $\alpha$ (equivalently, lift $\alpha$ to a path in $\OT_g^{\pr}(\sing)$ and use the marking maps). Then by Lemma \ref{lem:sc_wn} there is an arc $b$ on $Y^*$ with $\phi(b) = \phi( \alpha_*(a))$ and so that $\pi(b)$ is a saddle connection on $(Y, \eta)$. 

Now by the framed change--of--coordinates principle (Proposition \ref{proposition:CCP}), there is an element of $\Mod_{g,n}[\phi]$ taking $\alpha_*(a)$ to $b$. By Theorem \ref{thm:relative_monodromy}, this element can be represented by loop $\beta$ in $\comp_{\pr}$. The concatenated path $\alpha \cdot \beta$ therefore takes $a$ on $X^*$ to $b$ on $Y^*$, and so its projection to $\OM_g(\sing)$ is the desired path.
\end{proof}

\bibliographystyle{alpha}
\bibliography{library}

\end{document}